\def\@tocline#1#2#3#4#5#6#7{\relax
  \ifnum #1>\c@tocdepth % then omit
  \else
    \par \addpenalty\@secpenalty\addvspace{#2}%
    \begingroup \hyphenpenalty\@M
    \@ifempty{#4}{%
      \@tempdima\csname r@tocindent\number#1\endcsname\relax
    }{%
      \@tempdima#4\relax
    }%
    \parindent\z@ \leftskip#3\relax \advance\leftskip\@tempdima\relax
    \rightskip\@pnumwidth plus4em \parfillskip-\@pnumwidth
    #5\leavevmode\hskip-\@tempdima #6\nobreak\relax
    \ifnum#1<1\hfill\else\dotfill\fi\hbox to\@pnumwidth{\@tocpagenum{#7}}\par
    \nobreak
    \endgroup
  \fi}
\begin{document}
%\begin{abstract}
%\end{abstract}
\pagestyle{plain}

%%%%%%%%%%%%%%%%%%%%%%%%%%%%%%%%%%%%%%%%%%%%%%%%%%%%%%%%%%%%%%%%%%%%%%

\title{Adelic Openness for Drinfeld Modules\\ in Special Characteristic\\[12pt]}

 \author{Anna Devic\footnote{
% Dept. of Mathematics,
% ETH Z\"urich,
% CH-8092 Z\"urich,
% Switzerland,
 Email: anna.devic@alumni.ethz.ch}
 \hspace{1cm}
 \addtocounter{footnote}{5} 
 Richard Pink\footnote{Dept. of Mathematics,
 ETH Z\"urich,
 CH-8092 Z\"urich,
 Switzerland,
 pink@math.ethz.ch}
}

%\date{\today $\ldots$ [but fix final date!]}
\date{January 28, 2012}

\maketitle

\begin{abstract}
For any Drinfeld module of special characteristic $\pp_0$ over a finitely generated field, we study the associated adelic Galois representation at all places different from $\pp_0$ and~$\infty$ and determine the images of the arithmetic and geometric Galois groups up to commensurability.
%This thesis deals with Galois representations associated to Drinfeld modules in special characteristic. Our main goal is to determine the best possible results about the openness of the image of the adelic Galois represenation. 
%
%Let $K$ be a finitely generated field over a finite field $\kappa$ of arbitrary transcendence degree and set $G_K^{\text{geom}}:=\Gal(K^\text{sep}/K\overline{\kappa}).$ Let $\phi$ be a Drinfeld $A$-module of rank $r$ over $K$ of special characteristic $\pp_0$ and let $F$ denote the quotient field of $A$. The essential case boils down to proving the following statement: If the endomorphism ring $D$ of $\phi$ over an algebraic closure of $K$ is an order in a central simple algebra over $F$ that does not grow when restricting $\phi$ to infinite subrings of $A$, then the intersection of the image of $G_K^{\text{geom}}$ in the adelic representation with $\prod_{\pp\neq \pp_0} \Cent^{\der}_{\GL_r(A_\pp)}(D\otimes_A A_\pp)$ is open in both groups. 
%
%In closing we deduce from this the openness result for arbitrary Drinfeld modules in special characteristic.
\end{abstract}

\bigskip
\begin{flushleft}
{\bf Mathematics Subject Classification:} 11G09 (11R58)
\\

{\bf Keywords:} Drinfeld modules, torsion points, Galois
representations
\end{flushleft}

\parskip=\smallskipamount

%%%%%%%%%%%%%%%%%%%%%%%%%%%%%%%%%%%%%%%%%%%%%%%%%%%%%%%%%%%%%%%%%%%%%%

%\clearpage\thispagestyle{empty}\mbox{}\setcounter{page}{0}\clearpage

%\frontmatter

\newpage
{\parskip=0pt
\tableofcontents
}

%\mainmatter

\newpage

%%%%%%%%%%%%%%%%%%%%%%%%%%%%%%%%%%%%%%%%%%%%%%%%%%%%%%%%%%%%%%%%%%%%%%

\section{Introduction and overview}\label{notation}

\subsection{Main result} 

Let $K$ be a field that is finitely generated over a finite field $\kappa$ of characteristic~$p$. Let $K^{\text{sep}}$ be a fixed separable closure of $K$,
% inside a fixed algebraic closure $\overline{K}$ of $K$, 
and let $\overline{\kappa}$ be the algebraic closure of $\kappa$ in $K^\text{sep}$. Let $G_K:=\Gal(K^{\text{sep}}/K)$ denote the absolute Galois group and $G_K^{\text{geom}}:=\Gal(K^{\text{sep}}/K \overline{\kappa})$ the geometric Galois group of~$K$. 
% Then $G_K/G_K^{\text{geom}} \cong \Gal(\overline{\kappa}/\kappa)$ is the free pro-cyclic group that is topologically generated by the Frobenius of $\kappa$.
% element $\Frob_\kappa$ which acts on $\overline{\kappa}$ by $u \mapsto u^{|\kappa|}$. 

Let $F$ be a finitely generated field of transcendence degree $1$ over~$\mathbb{F}_p$. Let $A$ be the ring of elements of $F$ which are regular outside a fixed place $\infty$ of~$F$. Let $\phi: A \rightarrow K\{\tau\}$ be a Drinfeld $A$-module of rank $r$ over $K$ of special characteristic~$\pp_0$. For any prime $\pp\not=\pp_0$ of $A$ let $\rho_\pp: G_K \to \GL_r(A_\pp)$ denote the homomorphism describing the Galois action on the Tate module $T_\pp(\phi)$. We are interested in the image of the associated adelic Galois representation
$$\rho_{\text{ad}} := (\rho_\pp)_\pp:\ 
  G_K \longrightarrow \prod_{\pp\not=\pp_0}\GL_r(A_\pp).$$
By Anderson \cite{Anderson_t_Motives}, \S4.2, it is known that the composite of $\rho_{\text{ad}}$ with the determinant map is the adelic Galois representation associated to some Drinfeld module of rank $1$ of the same characteristic~$\pp_0$. 
% Any Drinfeld module of rank $1$ and special characteristic is over a finite extension of $K$ isomorphic to a Drinfeld module defined over a finite field, and consequentely the image of $G_K^{\text{geom}}$ in its associated adelic Galois representation is finite: see Proposition \ref{finite} below.
Thus the image of $\rho_{\text{ad}}(G_K^{\text{geom}})$ under the determinant is finite: see Proposition \ref{finite} below. Consequently, the image of $\rho_{\text{ad}}(G_K)$ under the determinant is an extension of a finite group and a pro-cyclic group and therefore far from open. Also, the main problem in determining $\rho_{\text{ad}}(G_K)$ lies in determining $\rho_{\text{ad}}(G_K^{\text{geom}}) \cap \prod_{\pp\not=\pp_0}\SL_r(A_\pp)$.

Recall that two subgroups of a group are called commensurable if their intersection has finite index in both. We will show that $\rho_{\text{ad}}(G_K^{\text{geom}})$ is commensurable to an explicit subgroup of $\prod_{\pp\not=\pp_0}\SL_r(A_\pp)$ whose definition depends only on information on certain endomorphism rings associated to~$\phi$. We will also determine $\rho_{\text{ad}}(G_K)$ up to commensurability.

First, since the Galois representation commutes with the endomorphisms of $\phi$ over~$K$, the image of $\rho_{\text{ad}}$ must be contained in the centralizer of $\End_K(\phi)$ in $\prod_{\pp\not=\pp_0}\GL_r(A_\pp)$. Second, enlarging $K$ does not change the image of Galois up to commensurability, but may increase the endomorphism ring. Since all endomorphisms of $\phi$ over any extension of $K$ are defined over a finite separable extension, the relevant endomorphism ring is therefore $\End_{K^{\text{sep}}}(\phi)$.

For a Drinfeld module in generic characteristic it turns out that the image of $\rho_{\text{ad}}$ up to commensurability, which was determined in \cite{PR2}, indeed depends only on $\End_{K^{\text{sep}}}(\phi)$. But in special characteristic this cannot be so, due to a phenomenon described in \cite{PinII}. The problem is that the endomorphism ring of a Drinfeld module in special characteristic can be non-commutative. As a consequence, it is possible that for some integrally closed infinite subring $B\subset A$, the endomorphism ring of the Drinfeld $B$-module $\phi|B$ is larger than that of~$\phi$. The Galois representation associated to $\phi$ must then commute with the additional operators coming from endomorphisms of $\phi|B$, forcing the image of $\rho_{\text{ad}}$ to be smaller. But using the results of \cite{PinII} one can reduce the problem to the case where this phenomenon of growing endomorphism rings does not occur.

For the following results let $a_0$ be any element of $A$ that generates a positive power of~$\pp_0$. View $a_0$ as a scalar element of $\prod_{\pp\not=\pp_0}\GL_r(A_\pp)$ via the diagonal embedding $A\hookrightarrow \prod_{\pp\not=\pp_0} A_\pp$, and let $\smash{\overline{\langle a_0\rangle}}$ denote the pro-cyclic subgroup that is topologically generated by it.

In the simplest case, where the endomorphism ring of $\phi$ over $K^{\text{sep}}$ is $A$ and does not grow under restriction, our main result is the following:

\begin{thm} \label{main_theorem_1}
Let $\phi$ be a Drinfeld $A$-module of rank $r$ over a finitely generated field $K$ of special characteristic $\pp_0$. Assume that for every integrally closed infinite subring $B \subset A$ we have $\End_{K^{\text{sep}}}(\phi|B)=A$. 
% Let $\smash{\overline{\langle a_0\rangle}}$ denote the subgroup defined above. 
Then
\begin{enumerate}
\item[(a)] $\rho_{\text{ad}}( G_K^{\geom})$ is commensurable to $\prod_{\pp\not=\pp_0}\SL_r(A_\pp)$, and
\item[(b)] $\rho_{\text{ad}}( G_K)$ is commensurable to $\overline{\langle a_0\rangle} \cdot \prod_{\pp\not=\pp_0}\SL_r(A_\pp)$.
\end{enumerate}
\end{thm}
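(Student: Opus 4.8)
The plan is to derive part~(b) from part~(a) by a determinant computation, and to prove part~(a) by inserting the per-place description of the Galois image from \cite{PinII} into an adelic assembly argument of the kind carried out in generic characteristic in~\cite{PR2}.

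\emph{Reducing (b) to (a).} By \cite{Anderson_t_Motives}, \S4.2, the composite $\det\circ\,\rho_{\text{ad}}$ is the adelic representation of the rank-$1$ Drinfeld module $\wedge^r\phi$, again of characteristic~$\pp_0$, and by Proposition~\ref{finite} the group $\det\rho_{\text{ad}}(G_K^{\geom})$ is finite. Hence $\rho_{\text{ad}}(G_K^{\geom})\cap\prod_{\pp\not=\pp_0}\SL_r(A_\pp)$ has finite index in $\rho_{\text{ad}}(G_K^{\geom})$, so by~(a) it is commensurable to $\prod_{\pp\not=\pp_0}\SL_r(A_\pp)$. Now choose $\sigma\in G_K$ restricting to the Frobenius of~$\kappa$, so that $G_K$ is topologically generated by $G_K^{\geom}$ and~$\sigma$ (since $G_K/G_K^{\geom}=\Gal(K\overline\kappa/K)$ sits inside $\Gal(\overline\kappa/\kappa)$). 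Since on a rank-$1$ Drinfeld module of characteristic~$\pp_0$ over a finite field the Frobenius endomorphism is, up to a root of unity, an element of~$A$ generating a power of~$\pp_0$, a suitable power of $\det\rho_{\text{ad}}(\sigma)$ agrees with $a_0$ up to an element of finite order; hence a power of $\rho_{\text{ad}}(\sigma)$ lies in $a_0\cdot\prod_{\pp\not=\pp_0}\SL_r(A_\pp)$, and together with~(a) this shows $\rho_{\text{ad}}(G_K)$ commensurable to $\overline{\langle a_0\rangle}\cdot\prod_{\pp\not=\pp_0}\SL_r(A_\pp)$. From here on I would concentrate on~(a), and on its non-trivial half: that $\rho_{\text{ad}}(G_K^{\geom})$ contains a subgroup of finite index in $\prod_{\pp\not=\pp_0}\SL_r(A_\pp)$.

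\emph{Step 1: a single place.} For $\pp\not=\pp_0$ the Tate module $T_\pp(\phi)$ is free of rank~$r$ over~$A_\pp$. The hypothesis $\End_{K^{\text{sep}}}(\phi|B)=A$ for all integrally closed infinite $B\subseteq A$ is exactly what excludes the phenomenon of growing endomorphism rings, so the results of \cite{PinII}, combined with the finiteness of $\det\rho_{\text{ad}}(G_K^{\geom})$ from Proposition~\ref{finite}, give in particular that $\rho_\pp(G_K^{\geom})\cap\SL_r(A_\pp)$ is open in $\SL_r(A_\pp)$ for every~$\pp$. Feeding in Zariski density and the usual reduction-mod-$\pp$ and ascent arguments (as in~\cite{PR2}, including the care required for $\SL_r$ over small residue fields), one upgrades this to: the image of $G_K^{\geom}$ in $\SL_r(A_\pp)$ is onto for all but finitely many~$\pp$.

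\emph{Step 2: assembling the places, and the main obstacle.} Let $H:=\rho_{\text{ad}}(G_K^{\geom})$ and $H':=H\cap\prod_{\pp\not=\pp_0}\SL_r(A_\pp)$, which has finite index in~$H$ since $\det H$ is finite. By Step~1 the projection of $H'$ to each $\SL_r(A_\pp)$ is open and onto for almost all~$\pp$; we want $H'$ of finite index in the full product. By a Goursat--Ribet argument this reduces, as in~\cite{PR2}, to the \emph{independence} of the places: for all but finitely many pairs $\pp\not=\pp'$ the projection of $H'$ to $\SL_r(A_\pp)\times\SL_r(A_{\pp'})$ is open. The difficulty, and the reason the generic-characteristic treatment does not transcribe verbatim, is that every residue field has characteristic~$p$; in fact $\SL_r(A_\pp)\cong\SL_r(A_{\pp'})$ whenever $\deg\pp=\deg\pp'$, and there are infinitely many primes of each degree, so places cannot be separated crudely and one must argue arithmetically. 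For a place $v$ of good reduction the characteristic polynomial $P_v(X)\in A[X]$ of $\rho_{\text{ad}}(\mathrm{Frob}_v)$ is independent of~$\pp$; if the projection to $\SL_r(A_\pp)\times\SL_r(A_{\pp'})$ were not open, Goursat would yield an isomorphism of open subgroups of the two factors, hence (by the theory of abstract isomorphisms of these groups) a field isomorphism $\sigma\colon A/\pp\xrightarrow{\sim}A/\pp'$ with $\sigma(P_v\bmod\pp)=P_v\bmod\pp'$ for all good~$v$; a Chebotarev argument shows the coefficients of the $P_v$ generate a subring of~$A$ with fraction field~$F$, which pins $\sigma$ down and leaves only finitely many possible~$\pp'$. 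Granting independence, the conclusion follows because $\SL_r(A_\pp)$ for $r\ge2$ and $|A/\pp|>3$ is perfect with smallest nonabelian simple quotient $\mathrm{PSL}_r(A/\pp)$, so a closed subgroup of the product that surjects onto every factor and is pairwise of finite index is itself of finite index, the finitely many bad pairs and the finitely many places excluded in Step~1 worsening the index only by a bounded amount. I expect the genuine work to lie in Step~2: making the ascent in Step~1 uniform in~$\pp$, and running the rigidity argument above --- the Goursat analysis, the input on abstract isomorphisms of the $\SL_r(A_\pp)$, and the Chebotarev identification of the field generated by the Frobenius data --- with the precision of~\cite{PR2}.
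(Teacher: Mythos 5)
Your reduction of (b) to (a) matches the paper's, and your overall two-step plan (single prime, then assembly) is the right skeleton; but the proposal leaves the actual core of the theorem unproved. In Step 1 you assert that single-prime openness from \cite{PinII} can be upgraded to $\rho_\pp(G_K^{\geom})=\SL_r(A_\pp)$ for almost all $\pp$ by ``the usual reduction-mod-$\pp$ and ascent arguments (as in \cite{PR2})''. That upgrade is exactly where the bulk of the paper's work lies, and it does not transcribe from generic characteristic. To get residual surjectivity $\Delta_\pp^{\geom}=\SL_r(k_\pp)$ for almost all $\pp$ the paper needs absolute irreducibility of $\overline\rho_\pp$ for almost all $\pp$, the strong Jordan theorem of \cite{L-P} to place $\Delta_\pp^{\geom}$ inside a finite group of Lie type, a root-system argument (Theorems \ref{mainroot}, \ref{LAG_main}, \ref{finite_main1}) combined with the compatible-system fact that the eigenvalue relations detected by the morphism $f$ of (\ref{Rdef}) would propagate to a single $F_\pp$ and contradict Zariski density, and finally the result of \cite{PinII} that the adjoint traces of Frobenius generate $k_\pp$ to identify the field of definition. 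Then the ascent from $\SL_r(k_\pp)$ to $\SL_r(A_\pp)$ is genuinely nontrivial in equal characteristic: the Teichm\"uller splitting $k_\pp\hookrightarrow A_\pp$ lifts $\SL_r(k_\pp)$ to a closed subgroup of $\SL_r(A_\pp)$ that is residually surjective, so closedness plus residual surjectivity proves nothing; one must show the image is not contained (up to conjugation) in $\GL_r(k_\pp)\cdot G^{2-}$, which the paper does via the trace criteria of Theorems \ref{trace_crit_1} and \ref{trace_crit_2}, using that $\Tr\Ad(\rho_\pp(\Frob_x))$ generates a ring dense in $A_\pp$ (or $A_\pp^2$). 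Moreover, when $p=r=2$ the adjoint representation factors through $\Frob_2$, the trace field can be $F^2$ rather than $F$ (Theorem \ref{field_trad}, Proposition \ref{Rtradsmall}), and the naive criterion fails; the paper needs a separate argument (Theorem \ref{trace_crit_2}, Lemma \ref{small_quotients1}, and a finite base change). None of this is in your sketch, and your expectation that ``the genuine work lies in Step 2'' misplaces the difficulty.

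Two further points in Step 2 need repair. First, in the Goursat rigidity argument you match characteristic polynomials, claiming $\sigma(P_v\bmod\pp)=P_v\bmod\pp'$; but the abstract isomorphism produced by Goursat may involve the outer automorphism $g\mapsto{}^t g^{-1}$, which does not preserve characteristic polynomials. The paper therefore works with $\Tr\Ad$, which is invariant under all automorphisms of $\SL_r$, and uses the result of \cite{PinII} that these adjoint traces surject onto $k_{\pp_1}\times k_{\pp_2}$ for almost all pairs (Proposition \ref{residue_field_trad1}(b)). Second, your final assembly claim --- that a closed subgroup of the infinite product surjecting onto almost every factor and open in almost every pair is of finite index --- is not a theorem as stated; the image of a product map can be smaller than the product of the images. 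The paper instead proves the splitting $\Gamma^{\geom}_{P\cup\{\pp\}}=\Gamma^{\geom}_P\times\SL_r(A_\pp)$ one prime at a time (Lemmas \ref{res_indie} and \ref{indie}), using a comparison of Jordan--H\"older constituents, the pro-$p$ nature of the congruence kernel, and the normal-subgroup rigidity of Proposition \ref{normal_sa}, before invoking Theorem \ref{openness} for the finitely many excluded primes.
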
 

More generally, set $R:= \End_{K^{\text{sep}}}(\phi)$ and $F := \mathop{\rm Quot}(A)$. Assume for the moment that the center of $R$ is~$A$. Then $R \otimes_A F$ is a central division algebra over~$F$ of dimension $d^2$ for some $d$ dividing~$r$. For any prime $\pp\not=\pp_0$ of~$A$, the Tate module $T_\pp(\phi)$ is a module over $R_\pp := R\otimes_AA_\pp$, which is an order in a semisimple algebra over~$F_\pp$. Let $D_\pp$ denote the commutant of $R_\pp$ in $\End_{A_\pp}(T_\pp(\phi))$, which is an order in another semisimple algebra over $F_\pp$. Let $D^1_\pp$ denote the multiplicative group of elements of $D_\pp$ of reduced norm~$1$. This is isomorphic to $\SL_{r/d}(A_\pp)$ for almost all $\pp$ by Proposition \ref{labacu}, and equal to $\SL_r(A_\pp)$ for all $\pp$ if $R=A$. 

In this situation a version of our main result is the following:

\begin{thm} \label{main_theorem}
Let $\phi$ be a Drinfeld $A$-module over a finitely generated field $K$ of special characteristic $\pp_0$. Assume that $R:=\End_{K^{\text{sep}}}(\phi)$ has center $A$ and that for every integrally closed infinite subring $B \subset A$ we have $\End_{K^{\text{sep}}}(\phi|B)=R$. Let $D^1_\pp$ and $\smash{\overline{\langle a_0\rangle}}$ denote the subgroups defined above. Then
\begin{enumerate}
\item[(a)] $\rho_{\text{ad}}( G_K^{\geom})$ is commensurable to $\prod_{\pp\not=\pp_0} D^1_\pp$, and 
\item[(b)] $\rho_{\text{ad}}( G_K)$ is commensurable to $\overline{\langle a_0\rangle} \cdot \prod_{\pp\not=\pp_0} D^1_\pp$.
\end{enumerate}
\end{thm}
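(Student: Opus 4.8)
The strategy is to reduce the general statement to the case $R = A$, i.e.\ to Theorem \ref{main_theorem_1}, and then to prove that case directly via an open-image result for the $\ell$-adic representation followed by an independence (``goursat'') argument across the primes $\pp$. For the reduction: since $R \otimes_A F$ is a central division algebra of dimension $d^2$ over $F$, one can pass to a finite separable extension $K'/K$ over which $R$ acts and split $T_\pp(\phi)$, for each $\pp$, as a module over $R_\pp$. The commutant $D_\pp$ of $R_\pp$ in $\End_{A_\pp}(T_\pp(\phi))$ is an order in a semisimple $F_\pp$-algebra, and by Proposition \ref{labacu} it is isomorphic to $\mathrm{M}_{r/d}(A_\pp)$ for almost all $\pp$, so $D^1_\pp \cong \SL_{r/d}(A_\pp)$ for almost all $\pp$. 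The image of $\rho_{\mathrm{ad}}$ is forced into $\prod_\pp D_\pp^\times$ by commutation with $R = \End_{K^{\mathrm{sep}}}(\phi)$; thus $\rho_{\mathrm{ad}}(G_K^{\mathrm{geom}}) \subseteq \prod_\pp D^1_\pp$ after intersecting with the determinant-$1$ locus (using that the geometric determinant is finite, Proposition \ref{finite}). One then interprets the Tate module as the Tate module of a Drinfeld module ``of smaller rank'' with trivial endomorphisms over a suitable $A$-order, so that Theorem \ref{main_theorem_1} applies and yields commensurability with $\prod_\pp D^1_\pp$.

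For the heart of the argument (the case $R = A$): first I would establish openness at a single prime, i.e.\ that $\rho_\pp(G_K^{\mathrm{geom}})$ is open in $\SL_r(A_\pp)$ for every $\pp \neq \pp_0$; this is the special-characteristic analogue of the result for generic characteristic and should follow from the methods of \cite{PinII} together with the hypothesis that endomorphisms do not grow under restriction to any integrally closed infinite subring $B \subset A$ — that hypothesis is exactly what rules out the obstruction of \cite{PinII}. Second, one upgrades to the integral statement $\rho_\pp(G_K^{\mathrm{geom}}) = \SL_r(A_\pp)$ for almost all $\pp$, which combines the openness with a mod-$\pp$ surjectivity/``big image'' input and the classification of closed subgroups of $\SL_r$ over a local ring of large residue characteristic. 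Third, and this is where the main work lies, one proves that the images at distinct primes are ``independent'': the image of $\rho_{\mathrm{ad}}(G_K^{\mathrm{geom}})$ in $\prod_{\pp \in S} \SL_r(A_\pp)$ is, up to finite index, the full product for any finite set $S$, and the ``error'' is uniformly bounded as $S$ grows. This is a Goursat-lemma argument: any proper correspondence between two factors would come from an abstract isomorphism of the groups $\mathrm{PSL}_r(A_\pp) \to \mathrm{PSL}_r(A_{\pp'})$, and by strong approximation / simplicity properties together with the determinant constraint, no such correspondence is compatible with the Galois action. Part~(b) then follows from~(a) by analyzing $\rho_{\mathrm{ad}}(G_K)/\rho_{\mathrm{ad}}(G_K^{\mathrm{geom}})$: the quotient $G_K/G_K^{\mathrm{geom}} \cong \mathrm{Gal}(\overline\kappa/\kappa) \cong \widehat{\mathbb Z}$ acts through the determinant, which by Anderson \cite{Anderson_t_Motives} is the rank-$1$ adelic representation, and the Frobenius maps to a generator commensurable to the scalar $a_0$ — giving the extra pro-cyclic factor $\overline{\langle a_0\rangle}$.

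The main obstacle I expect is the single-prime open-image statement in special characteristic, because the usual generic-characteristic arguments (Pink--Rütsche) rely on properties — e.g.\ the Tate conjecture for Drinfeld modules, semisimplicity, and the structure of the image of inertia at $\pp_0$ — that behave differently when the characteristic is special; in particular the endomorphism ring can be a noncommutative division algebra, and one must genuinely invoke \cite{PinII} to control the ``extra endomorphisms after restriction'' phenomenon, which is precisely what the hypothesis on $\phi|B$ is designed to exclude. The second serious point is making the bound in the independence-across-primes step \emph{uniform} in the finite set $S$; this requires an effective version of strong approximation and an effective lower bound on the image mod $\pp$ that is independent of $\pp$, which one obtains from a Galois-theoretic irreducibility statement for the mod-$\pp$ representations together with Nori-type or Larsen--Pink-type classification of subgroups of $\mathrm{SL}_r$ over finite fields of large characteristic.
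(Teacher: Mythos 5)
Your plan has a structural flaw in the reduction step, and two of the three main analytic steps are not supported by the tools you name. The paper does \emph{not} deduce Theorem \ref{main_theorem} from the case $R=A$; it proves Theorem \ref{main_theorem} directly, and Theorem \ref{main_theorem_1} is its special case. Your proposed reduction -- replace $\phi$ by a smaller-rank Drinfeld module with trivial endomorphisms so that Theorem \ref{main_theorem_1} applies -- cannot work. The only natural candidate is the Drinfeld $A'$-module $\phi'$ of rank $n=r/d$ attached to a maximal commutative order $A'\subset R\otimes_AF$ (Propositions \ref{ConstIsog}, \ref{numerics}, \ref{boalo}). Although $\End_{K^{\text{sep}}}(\phi')=A'$, the non-growth hypothesis of Theorem \ref{main_theorem_1} fails for $\phi'$: $A$ is an integrally closed infinite subring of $A'$, and $\End_{K^{\text{sep}}}(\phi'|A)\otimes_AF\cong R\otimes_AF$ has dimension $d^2>1$ over $F$, so it is strictly larger than $A'$ whenever $d>1$. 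Worse, the conclusion of Theorem \ref{main_theorem_1} is simply false for $\phi'$ when $F'\neq F$: by Proposition \ref{boalo} the geometric image acts through $\prod_{\pp\neq\pp_0}D^1_\pp\cong\prod_\pp\SL_n(A_\pp)$, embedded in $\prod_{\pp'}\SL_n(A'_{\pp'})$ via $A_\pp\hookrightarrow A'_{\pp'}$, and this subgroup has infinite index there. This is exactly the ``growing endomorphism ring'' phenomenon the theorem is designed around; the paper uses $\phi'$ only as a device to transport characteristic polynomials of Frobenius, the trace field (which by Assumption \ref{asses}(d) is $F$, not $F'$ -- Theorem \ref{field_trad}), and the finite-level openness of \cite{PinII} (Theorem \ref{openness}), while all group theory is done with $\Gamma_\pp\subset D_\pp^\times\cong\GL_n(A_\pp)$.

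Within the core argument two steps are underpowered. First, the upgrade from residual surjectivity to $\Gamma_\pp^{\geom}=\SL_n(A_\pp)$ for almost all $\pp$: here $A_\pp$ has \emph{equal} characteristic, so the reduction $\SL_n(A_\pp)\to\SL_n(k_\pp)$ splits via the Teichm\"uller lift, and a closed subgroup can be open, surject mod $\pp$, and still be proper (preimages of the lifted $\SL_n(k_\pp)$ modulo $\pp^m$); also there is no ``large residue characteristic'' to invoke, since the residue characteristic is always $p$, possibly $2$. What makes this step work in the paper is the arithmetic input from \cite{PinII} that the traces $\Tr\Ad(\rho_\pp(\Frob_x))$ generate a ring surjecting onto $A/\pp^2$ (Proposition \ref{residue_field_trad1}), fed into the successive-approximation criteria of Section \ref{approx} (Theorems \ref{trace_crit_1}, \ref{trace_crit_2}), with a genuinely separate argument when $p=n=2$ because then the trace field can be $F^2$ and the Lie bracket on $\sll_2$ is not surjective. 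Second, in the Goursat step, simplicity and the determinant constraint do not rule out the graph of an abstract isomorphism $\SL_n(k_{\pp_1})\cong\SL_n(k_{\pp_2})$, which exists whenever $|k_{\pp_1}|=|k_{\pp_2}|$; the paper excludes it because such a graph would force all Frobenius traces in the adjoint representation into the graph of a field isomorphism inside $k_{\pp_1}\times k_{\pp_2}$, contradicting surjectivity of $R^{\rm trad}\to k_{\pp_1}\times k_{\pp_2}$ (Proposition \ref{residue_field_trad1}(b), Proposition \ref{res_surj_fin}). Your outline correctly anticipates Larsen--Pink for the residual step and Anderson's rank-one determinant for part (b), but without the trace-of-Frobenius ingredients just described the single-prime integral statement and the independence across primes remain unproved.
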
 

Theorem \ref{main_theorem} is the central result of this article; its special case $R=A$ is just Theorem \ref{main_theorem_1}. Sections \ref{LAG} to \ref{sect_surjective} are dedicated to proving Theorem \ref{main_theorem}. In Section \ref{generalcase} we deduce corresponding results without any assumptions on $\End_{K^{\text{sep}}}(\phi)$ that are somewhat more complicated to state.

% show how it can be used to describe  $\rho_{\text{ad}}( G_K^{\geom})$ and  $\rho_{\text{ad}}( G_K)$ in general. 

%%%%%%%%%%%%%%%%%%%%%%%%%%%%%%%%%%%%%%%%%%%%%%%%%%%%%%%%%%%%%%%%%%%%%%

\subsection{Outline of the proof}

In this outline we explain the key steps in the proof of Theorem \ref{main_theorem} in the case $R=A$; the general case follows the same principles. So we assume that for every integrally closed infinite subring $B \subset A$ we have $\End_{K^{\text{sep}}}(\phi|B)=\nobreak A$. 
% Proving the first part of the theorem is what requires most work. 
After replacing $K$ by a finite extension, we may assume that $\rho_{\text{ad}}( G_K^{\geom}) \subset \prod_{\pp\not=\pp_0}\SL_r(A_\pp)$. Let $\Gamma_\pp^{\geom}$ denote its image in $\SL_r(A_\pp)$ for any single prime $\pp\not=\pp_0$ of $A$, and let $\Delta_\pp^{\geom}$ denote its image in $\SL_r(k_\pp)$ over the residue field $k_\pp := A/\pp$. 

A large part of the effort goes into proving that $\Delta_\pp^{\geom} = \SL_r(k_\pp)$ for almost all $\pp$. The key arithmetic ingredients for this are the absolute irreducibility of the residual representation from \cite{PinIII}, the Zariski density of $\Gamma^{\geom}_\pp$ in $\SL_{r,F_\pp}$ from \cite{PinI}, and the characterization of $k_\pp$ by the traces of Frobenius elements in the adjoint representation from \cite{PinII}.

In fact, the absolute irreducibility combined with a strong form of Jordan's theorem on finite subgroups of $\GL_r$ from \cite{L-P} shows that $\Delta_\pp^{\geom}$ is essentially a finite group of Lie type in characteristic $p:=\car(F)$. Let $H_\pp$ denote the ambient connected semisimple linear algebraic group over an algebraic closure $\bar k_\pp$ of $k_\pp$. If $H_\pp$ is a proper subgroup of $\SL_{r,\bar k_\pp}$, the eigenvalues of any element of $H_\pp$ must satisfy one of finitely many explicit multiplicative relations that depend only on~$r$. In this case we show that the eigenvalues of any Frobenius element in the residual representation satisfy a similar relation. If this happens for infinitely many $\pp$, the fact that the adelic Galois representation is a compatible system implies that the eigenvalues of Frobenius over any single $F_\pp$ satisfy the same kind of relation. But that is impossible, because $\Gamma^{\geom}_\pp$ is Zariski dense in $\SL_{r,F_\pp}$. Therefore $H_\pp$ is equal to $\SL_{r,\bar k_\pp}$ for almost all $\pp$.

This means that $\Delta_\pp^{\geom}$ is essentially the group of $k'_\pp$-rational points of a model of $\SL_{r,\bar k_\pp}$ over a subfield $k'_\pp \subset\bar k_\pp$. To identify this subfield we observe that the trace in the adjoint representation for any automorphism of the model is an element of $k'_\pp$. We show that this holds in particular for the images of Frobenius elements. But by \cite{PinII} the images of the traces of all Frobenius elements in the adjoint representation of $\SL_r$ generate $k_\pp$ for almost all $\pp$. It follows that $k_\pp\subset k'_\pp$ for almost all $\pp$, and then the inclusion $\Delta_\pp^{\geom} \subset \SL_r(k_\pp)$ must be an equality for cardinality reasons.

We also need to prove that the homomorphism $G_K^{\geom} \to \SL_r(k_{\pp_1}) \times \SL_r(k_{\pp_2})$ is surjective for any distinct $\pp_1,\pp_2$ outside some finite set of primes. This again relies on traces of Frobenius elements. Indeed, if the homomorphism is not surjective, the surjectivity to each factor and Goursat's lemma imply that its image is essentially the graph of an isomorphism $\SL_r(k_{\pp_1}) \stackrel{\sim}{\to} \SL_r(k_{\pp_2})$. This isomorphism must come from an isomorphism of algebraic groups over an isomorphism of the residue fields $\sigma: k_{\pp_1} \stackrel{\sim}{\to} k_{\pp_2}$. Using this we show that the traces of Frobenius in the adjoint representation of $\SL_r$ map to the subring $\mathop{\rm graph}(\sigma) \subset k_{\pp_1}\times k_{\pp_2}$. But that again contradicts the result from \cite{PinII} unless $\pp_1$ or $\pp_2$ is one of finitely many exceptional primes.

Next we prove that $\Gamma_\pp^{\geom} = \SL_r(A_\pp)$ for almost all $\pp$. For this we may already assume that $\Delta_\pp^{\geom} = \SL_r(k_\pp)$. That alone does not imply much, because $A_\pp$ is a local ring of equal characteristic, and so the Teichm\"uller lift of the residue field $k_\pp\hookrightarrow A_\pp$ induces a lift $\SL_r(k_\pp) \hookrightarrow \SL_r(A_\pp)$. But using successive approximation in $\SL_r(A_\pp)$ we reduce the problem to showing that $\Gamma_\pp^{\geom}$ surjects to $\SL_r(A/\pp^2)$. This in turn we can guarantee for almost all $\pp$ using traces of Frobenius elements again.

Indeed, suppose first that $(p,r)\not=(2,2)$. Then the result from \cite{PinII} implies that the images of the traces of all Frobenius elements in the adjoint representation of $\SL_r$ generate $A/\pp^2$ for almost all $\pp$. In particular these traces do not all lie in the Teichm\"uller lift $k_\pp\subset A/\pp^2$, and so the images of Frobenius elements in $\GL_r(A_\pp)$ cannot all lie in the lift of $\GL_r(k_\pp)$. The desired surjectivity $\Gamma_\pp^{\geom} \twoheadrightarrow \SL_r(A/\pp^2)$ follows from this using some group theory.

In the remaining case $p=r=2$ it may happen that the traces of Frobenius in the adjoint representation do not generate the field $F$, but the subfield of squares $F^2 := \{x^2\mid x\in F\}$, of which $F$ is an inseparable extension of degree $2$. This phenomenon stems from the fact that the adjoint representation of $\SL_2$ on $\psll_2$ in characteristic $2$ factors through the Frobenius ${\rm Frob}_2: x\mapsto x^2$. In that case, the result from \cite{PinII} implies that the images of the traces of all Frobenius elements in the adjoint representation of $\SL_r$ generate the subring $k_\pp\oplus \pp^2/\pp^3$ of $A/\pp^3$ for almost all~$\pp$, where $k_\pp$ denotes the canonical Teichm\"uller lift of the residue field $k_\pp$ of~$\pp$. By digging deeper into the structure of $\SL_2(A/\pp^3)$, and replacing $K$ by a finite extension at a crucial step in the argument, we can again show that $\Gamma_\pp^{\geom}$ surjects to $\SL_r(A/\pp^2)$.

Finally, using group theory alone the above results about $\SL_r(k_{\pp_1}) \times \SL_r(k_{\pp_2})$ and $\SL_r(A_\pp)$ imply that the homomorphism $G_K^{\geom} \to \prod_{\pp\not\in P_3}\SL_r(A_\pp)$ is surjective for some finite set of primes $P_3$. On the other hand, the homomorphism $G_K^{\geom} \to \prod_{\pp\in P_3}\SL_r(A_\pp)$ has open image by the main result of \cite{PinII}. While this does not directly imply that the image of the product homomorphism $G_K^{\geom} \to \prod_{\pp\not=\pp_0}\SL_r(A_\pp)$ is open, because the image of a product map may be smaller than the product of the images, some variant of the argument can be made to work, thereby finishing the proof of Theorem \ref{main_theorem}~(a).

Theorem \ref{main_theorem}~(b) is deduced from this as follows. Since $\rho_{\text{ad}}(G_K^{\geom})$ is already open in $\prod_{\pp\not=\pp_0}\SL_r(A_\pp)$, it suffices to show that $\det\rho_{\text{ad}}(G_K)$ is commensurable to $\smash{\overline{\langle a_0\rangle}}$ within $\prod_{\pp\not=\pp_0} A_\pp^\times$. As the determinant of $\rho_{\text{ad}}$ is the adelic Galois representation associated to some Drinfeld module of rank $1$ of the same characteristic~$\pp_0$, this reduces the problem to the case that $r=1$ and that $\phi$ is defined over a finite field, say over $\kappa$ itself. Then $\Frob_\kappa$ acts through multiplication by an element $a\in A$ which is a unit at all primes $\pp\not=\pp_0$ but not at~$\pp_0$. 
% by Propositions \ref{F_charpol} and \ref{F_eigenvalues}
It follows that $(a)=\pp_0^i$ for some positive integer~$i$. The same properties of $a_0$ show that $(a_0)=\pp_0^j$ for some positive integer~$j$. Together it follows that $(a^j) = \pp_0^{ij} = (a_0^i)$, and so $a^j/a_0^i$ is a unit in~$A^\times$. As the group of units is finite, we deduce that $a^{j\ell} = a_0^{i\ell}$ for some positive integer~$\ell$. In particular $\rho_{\text{ad}}(G_K) = \smash{\overline{\langle a\rangle}}$ is commensurable to $\smash{\overline{\langle a_0\rangle}}$, as desired. This finishes the proof of Theorem \ref{main_theorem}~(b).

%%%%%%%%%%%%%%%%%%%%%%%%%%%%%%%%%%%%%%%%%%%%%%%%%%%%%%%%%%%%%%%%%%%%%%

\subsection{Structure of the article}

Section \ref{notation} is the present introduction and overview. Sections \ref{LAG} and \ref{approx} deal with subgroups of $\SL_n$ and $\GL_n$. They are independent of Drinfeld modules, of the rest of the article, and of each other. 

Section \ref{LAG} deals with subgroups of $\SL_n$ and $\GL_n$ over a field and establishes suitable conditions for such a subgroup to be equal to $\SL_n$. It is based on some calculations in root systems, on known results on finite groups of Lie type, and on a strong form of Jordan's theorem from \cite{L-P}.

Section \ref{approx} deals with closed subgroups of $\SL_n$ and $\GL_n$ over a complete discrete valuation ring $R$ of equal characteristic $p$ with finite residue field, and establishes suitable conditions for such a subgroup to be equal to $\SL_n(R)$. The method uses successive approximation over the congruence filtration of $\SL_n(R)$, respectively of $\GL_n(R)$, whose subquotients are related to the adjoint representation. Curiously, the case $p=n=2$ presents special subtleties here, too, because the Lie bracket on $\sll_2$ in characteristic $2$ is not surjective. 

In Section \ref{known_results} we list known results about Drinfeld modules in special characteristic or adapt them slightly to the situation at hand. This includes properties of endomorphism rings, Galois representations on Tate modules, characteristic polynomials of Frobenius, and bad reduction. We also create the setup in which the proof of Theorem \ref{main_theorem} takes place, and list the main arithmetic ingredients from \cite{PinI}, \cite{PinII}, and \cite{PinIII} with their immediate consequences.

Section \ref{sect_surjective} then contains (what remains of) the proof of Theorem \ref{main_theorem}, following the outline expained above.

In Section \ref{generalcase} we determine $\rho_{\text{ad}}(G_K^{\geom})$ and $\rho_{\text{ad}}(G_K)$ up to commensurability for arbitrary Drinfeld modules in special characteristic. The main ingredients for this are the special case of Theorem \ref{main_theorem} and some reduction steps from \cite{PinII}.

\medskip
This article is based on the doctoral thesis of the first author \cite{DevicThesis}; its results are roughly the same as the results there. We are grateful to Florian Pop for pointing out Theorem~\ref{Pop}.

%Other acknowledgements...?
%
%I am very grateful to Donna Testerman who took time to answer my questions about representations of Lie algebras in positive characteristic and gave me useful hints on the literature that I would not have found by myself.
%
%I would also like to express my gratitude to the School of Mathematical Sciences of University College Dublin and especially to Kevin Hutchinson who arranged my stay in Dublin in the summer of 2010. Many important parts of the present work have seen the light of day during that period.
%
%My most heartfelt thanks go to my colleagues and friends Antonella Perucca, Mark Dukes, Mathieu Florence, Peter Jossen and Thomas Huber for helpful discussions.

\newpage

%%%%%%%%%%%%%%%%%%%%%%%%%%%%%%%%%%%%%%%%%%%%%%%%%%%%%%%%%%%%%%%%%%%%%%

% \section{Subgroups of linear algebraic groups}\label{LAG}
\section{Subgroups of $\SL_n$ over a field}\label{LAG}

In a nutshell, the main goal of this section is to establish suitable conditions for subgroups of $\SL_n$ over a field to be equal to $\SL_n$. We first give conditions for root systems to be simple of type $A_\ell$, and then deal with the case of connected semisimple linear algebraic groups over a field. Based on this we treat the case of finite groups of Lie type, which must also take inner forms of $\SL_n$ into account. The main results are Theorems \ref{LAG_main}, \ref{finite_main}, and \ref{finite_main1}. We also recall a strong form of Jordan's theorem from \cite{L-P}.

%%%%%%%%%%%%%%%%%%%%%%%%%%%%%%%%%%%%%%%%%%%%%%%%%%%%%%%%%%%%%%%%%%%%%%

\subsection{Root systems}\label{21root}

Let $\Phi$ be a non-trivial root system generating a euclidean vector space $E$. Let $W$ be the associated Weyl group, and let $S$ be a $W$-orbit in $E$. We are interested in the conditions:
\begin{enumerate}
\item[(a)] $S$ generates $E$ as a vector space.
\item[(b)] There are no distinct elements $\lambda_1,\ldots,\lambda_4 \in S$ such that $\lambda_1+ \lambda_2=\lambda_3+\lambda_4$.
\item[(c)] There are no distinct elements $\lambda_1,\ldots,\lambda_6 \in S$ such that $\lambda_1+ \lambda_2+\lambda_3=\lambda_4+\lambda_5+\lambda_6$. 
\end{enumerate}

\begin{thm}\label{mainroot}
Assume (a) and (b). Then $\Phi$ is simple of type $A_\ell$ for some $\ell\geq 1$. Moreover, if 
$$\Phi =\{\pm(e_i-e_j) \mid 0\leq i <j \leq \ell \} \subset E= \mathbb{R}^{\ell+1}/\diag(\mathbb{R})$$ 
in standard notation, and if $\ell\not=2$ or in addition (c) is satisfied, then
$$S=\{ c e_i \mid 0 \leq i \leq \ell\}$$
for some constant $c \neq 0$.
\end{thm}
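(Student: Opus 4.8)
The plan is to extract from (b) a single combinatorial restriction on the $W$-orbit $S$, and then to use it first to pin down the type of $\Phi$ and afterwards, in the standard model of $A_\ell$, to identify $S$. The key observation is that \emph{there are no orthogonal roots $\alpha\perp\beta$ and no $\lambda\in S$ with $\langle\lambda,\alpha^\vee\rangle\neq0\neq\langle\lambda,\beta^\vee\rangle$}; call this property $(\ast)$. Indeed, for such $\alpha,\beta$ the reflections $s_\alpha$ and $s_\beta$ commute, and $(1-s_\alpha)(1-s_\beta)\lambda=\langle\lambda,\beta^\vee\rangle\langle\beta,\alpha^\vee\rangle\,\alpha=0$ since $\alpha\perp\beta$; hence the four elements $\lambda,\ s_\alpha\lambda,\ s_\beta\lambda,\ s_\alpha s_\beta\lambda$ of $S$ satisfy $\lambda+s_\alpha s_\beta\lambda=s_\alpha\lambda+s_\beta\lambda$. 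As $\alpha,\beta$ are linearly independent and $\langle\lambda,\alpha^\vee\rangle,\langle\lambda,\beta^\vee\rangle\neq0$, these four elements are pairwise distinct, contradicting (b). This proves $(\ast)$.

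Next I would show $\Phi$ is irreducible. Writing $\Phi=\bigsqcup_i\Phi_i$ with irreducible components spanning the summands $E_i$ of $E$ and fixing $\lambda=\sum_i\lambda_i\in S$, we have $S=\prod_iW_i\lambda_i$, and since each $W_i$ acts irreducibly on $E_i$ this gives $\operatorname{span}(S)=\bigoplus_{\lambda_i\neq0}E_i$; by (a) every $\lambda_i\neq0$. If there were at least two components, one could pick a root $\alpha\in\Phi_1$ with $\langle\lambda,\alpha^\vee\rangle\neq0$ and a root $\beta\in\Phi_2$ with $\langle\lambda,\beta^\vee\rangle\neq0$; these would be orthogonal, contradicting $(\ast)$. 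So $\Phi$ is irreducible, and it remains to exclude the types $B_\ell,C_\ell,D_\ell,E_6,E_7,E_8,F_4,G_2$. For each of these I claim every $0\neq\lambda\in E$ violates $(\ast)$. After replacing $\lambda$ by a $W$-conjugate we may take it dominant, so that $I:=\{\alpha\in\Delta:\langle\lambda,\alpha^\vee\rangle=0\}$ is a proper subset of $\Delta$ and the roots non-orthogonal to $\lambda$ are exactly those whose support meets $\Delta\setminus I$. Since this set of roots only grows as $I$ shrinks, it suffices to treat $I=\Delta\setminus\{\alpha_j\}$; thus one need only check, for each simple root $\alpha_j$ of each of these types, that the roots with nonzero $\alpha_j$-coordinate contain two mutually orthogonal roots. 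This is a short computation in the standard coordinate model of each type (for instance a pair $e_1\pm e_k$ with $k$ chosen according to $j$ works at most nodes in types $B$, $C$, $D$), and it is the main calculational step of the proof.

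It remains to identify $S$ when $\Phi=\{\pm(e_i-e_j):0\le i<j\le\ell\}$ and $W=S_{\ell+1}$, so that $S$ is the set of coordinate permutations of some $\lambda=(x_0,\dots,x_\ell)\in E$ with not all $x_i$ equal (by (a)), and $e_i-e_j\perp e_k-e_l$ precisely when $\{i,j\}\cap\{k,l\}=\emptyset$. If $\ell\neq2$, then $(\ast)$ says there are no two disjoint index-pairs $\{i,j\},\{k,l\}$ with $x_i\neq x_j$ and $x_k\neq x_l$; this is vacuous for $\ell=1$ (and then (a) already gives the claim), while for $\ell\ge3$ a short case distinction on the multiset $\{x_0,\dots,x_\ell\}$ shows it forces all but one of the $x_i$ to be equal. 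Hence $\lambda=c\,e_m$ for some $m$ and some $c\neq0$, i.e.\ $S=\{c\,e_i:0\le i\le\ell\}$. If $\ell=2$ and $|S|=6$, then $S$ is the disjoint union of the two orbits $\{\lambda,w\lambda,w^2\lambda\}$ and $\{w'\lambda,ww'\lambda,w^2w'\lambda\}$ of the cyclic subgroup $\langle w\rangle\subset W$ generated by a $3$-cycle $w$, with $w'$ a transposition; each of these triples consists of three distinct elements of $S$ summing to $(x_0+x_1+x_2)(1,1,1)\equiv0$ in $E$, so the two disjoint triples have equal sum, contradicting (c). Therefore $|S|\le3$, some transposition fixes $\lambda$, two of its coordinates coincide, and again $\lambda=c\,e_m$ with $c\neq0$, whence $S=\{c\,e_i:0\le i\le\ell\}$.

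\textbf{Main obstacle.} The only substantial work is the type-by-type verification in the second paragraph that no nonzero vector of a non-$A$ irreducible root system can satisfy $(\ast)$ — equivalently, that for each simple root $\alpha_j$ the roots with nonzero $\alpha_j$-coordinate are never pairwise non-orthogonal; everything else is elementary. One must also keep careful track of the degenerate low-rank cases $\ell=1,2$, where $(\ast)$ is vacuous and one falls back on (a) and on (c) respectively.
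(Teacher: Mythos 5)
Your argument is correct, and its backbone coincides with the paper's: your property $(\ast)$ is exactly the paper's Lemma \ref{A1A1}, proved by the same identity $\lambda+s_\alpha s_\beta\lambda=s_\alpha\lambda+s_\beta\lambda$, your irreducibility step matches Lemma \ref{simple}, and your identification of $S$ in type $A_\ell$ (the $\ell\ge3$ case distinction, the six-vector/equal-sums contradiction with (c) when $\ell=2$, and the trivial $\ell=1$ case) is the paper's Lemmas \ref{no4distinct}--\ref{no3distinct2}. Where you genuinely diverge is the exclusion of the non-$A$ simple types. You prove the pointwise statement that in every irreducible non-$A$ system \emph{every} nonzero vector admits an orthogonal pair of roots non-orthogonal to it, reduced via dominance to a node-by-node check of all maximal parabolics of $B_\ell,C_\ell,D_\ell,E_6,E_7,E_8,F_4,G_2$. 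The paper instead argues through root \emph{subsystems}: it only needs the three short computations that $\Phi$ contains no $B_2$, no $D_4$, and is not $G_2$ (Lemmas \ref{noB2}--\ref{noD4}), which by classification already forces type $A_\ell$. The reason the paper can get away with a subsystem argument while you cannot is that its contradiction is not pointwise: inside a $B_2$ or $D_4$ subsystem a single $\lambda$ may well be orthogonal to the whole subsystem, so the paper shows instead that \emph{every} element of the orbit $S$ is orthogonal to one fixed vector ($e_1$, resp.\ $2e_1$), and only then contradicts (a). Your route buys a stronger, cleaner-to-state fact and avoids invoking the classification-level statement that a non-$A$ simple system contains $B_2$ or $D_4$ or is $G_2$, but the price is the case analysis you flag as the main obstacle: your sketched pairs handle the classical types (and they do work, e.g.\ $e_1\perp e_2$ at nodes $j\ge2$ of $B_\ell$, $2e_1\perp 2e_2$ in $C_\ell$, $e_1\pm e_2$ at node $1$), yet the exceptional types $E_6,E_7,E_8,F_4$ still have to be gone through node by node -- a true but unexecuted finite verification, noticeably heavier than the paper's three lemmas. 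So: correct, same key lemma and endgame, genuinely different and more computational middle.
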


The proof of this result extends over the rest of this subsection. Throughout we assume conditions (a) and (b). Note that (a) implies that $0\not\in S$.
% indeed, under the action of the Weyl group the orbit of 0 is $\{0\}$, which cannot generate the non-trivial vector space $E$.

\begin{lem}\label{A1A1}
Let $\lambda \in S$ and $\alpha_1$, $\alpha_2$ be two orthogonal roots in $\Phi.$ Then $\lambda\perp \alpha_1$ or $\lambda\perp \alpha_2.$
\end{lem}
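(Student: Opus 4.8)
The plan is to argue by contradiction, producing a forbidden configuration for condition (b) out of the $W$-orbit of $\lambda$. Suppose $\lambda$ is orthogonal to neither $\alpha_1$ nor $\alpha_2$. Since $\alpha_1\perp\alpha_2$, the reflections $s_{\alpha_1}$ and $s_{\alpha_2}$ commute, and I will look at the four elements
$$\lambda,\quad s_{\alpha_1}\lambda,\quad s_{\alpha_2}\lambda,\quad s_{\alpha_1}s_{\alpha_2}\lambda,$$
all of which lie in $S$ because $S$ is a single $W$-orbit.

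\textbf{Key computation.} Writing $c_i := \langle\lambda,\alpha_i^\vee\rangle$, so that $s_{\alpha_i}\mu = \mu - \langle\mu,\alpha_i^\vee\rangle\alpha_i$, and using $s_{\alpha_1}\alpha_2 = \alpha_2$ (valid since $\alpha_1\perp\alpha_2$), one gets
$$s_{\alpha_1}\lambda = \lambda - c_1\alpha_1,\quad s_{\alpha_2}\lambda = \lambda - c_2\alpha_2,\quad s_{\alpha_1}s_{\alpha_2}\lambda = \lambda - c_1\alpha_1 - c_2\alpha_2.$$
Adding the first and last gives $2\lambda - c_1\alpha_1 - c_2\alpha_2$, which equals the sum of the two middle terms. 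Hence
$$\lambda + s_{\alpha_1}s_{\alpha_2}\lambda = s_{\alpha_1}\lambda + s_{\alpha_2}\lambda,$$
an additive relation of the shape forbidden by (b).

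\textbf{Distinctness and conclusion.} It remains to see that these four elements of $S$ are pairwise distinct, which is where the hypotheses $c_1\neq 0$ and $c_2\neq 0$ enter, together with the linear independence of the orthogonal roots $\alpha_1,\alpha_2$: each pairwise difference among the four elements is, up to sign, one of $c_1\alpha_1$, $c_2\alpha_2$, or $c_1\alpha_1\pm c_2\alpha_2$, all of which are nonzero. Thus (b) is violated, contradicting our assumption, so $\lambda\perp\alpha_1$ or $\lambda\perp\alpha_2$. There is no real obstacle here beyond bookkeeping; the only point requiring care is confirming all four orbit elements are genuinely distinct, which is exactly what forces us to assume $\lambda$ meets both $\alpha_1$ and $\alpha_2$ non-orthogonally at the outset.
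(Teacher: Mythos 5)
Your proof is correct and follows essentially the same route as the paper: both exploit the commuting reflections $s_{\alpha_1}, s_{\alpha_2}$ to derive $\lambda + s_{\alpha_1}s_{\alpha_2}\lambda = s_{\alpha_1}\lambda + s_{\alpha_2}\lambda$ and contradict condition (b). Your explicit check that the four orbit elements are pairwise distinct (via the differences $c_1\alpha_1$, $c_2\alpha_2$, $c_1\alpha_1\pm c_2\alpha_2$) is a slightly more detailed version of what the paper leaves implicit.
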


\begin{proof}
Let $s_{\alpha_i}\in W$ denote the simple reflection associated to $\alpha_i$. The fact that $\alpha_1\perp\alpha_2$ implies that
\begin{eqnarray*}
s_{\alpha_i}(\lambda) 
&=& \lambda - \frac{2(\lambda,\alpha_i)}{(\alpha_i,\alpha_i)}\cdot\alpha_i, 
\qquad\hbox{and}\\
s_{\alpha_1}s_{\alpha_2}(\lambda)
&=& \lambda - \frac{2(\lambda,\alpha_1)}{(\alpha_1,\alpha_1)}\cdot\alpha_1
            - \frac{2(\lambda,\alpha_2)}{(\alpha_2,\alpha_2)}\cdot\alpha_2,
\end{eqnarray*}
and hence
$$\lambda + s_{\alpha_1}s_{\alpha_2}(\lambda) =
  s_{\alpha_1}(\lambda)+ s_{\alpha_2}(\lambda).$$ 
But if $\lambda$ is not orthogonal to $\alpha_1$ or $\alpha_2$, these are four distinct elements of $S$, contradicting condition (b).
\end{proof}

\begin{lem}\label{simple}
The root system $\Phi$ is simple.
\end{lem}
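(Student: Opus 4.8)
The goal is to show that the root system $\Phi$, which by Lemma \ref{A1A1} and condition (b) we are analyzing, is simple, i.e.\ that it cannot be decomposed as an orthogonal direct sum $\Phi = \Phi_1 \perp \Phi_2$ of two non-empty root subsystems living in orthogonal subspaces $E_1 \perp E_2$ with $E = E_1 \oplus E_2$. So I would argue by contradiction: suppose such a decomposition exists. The plan is to use Lemma \ref{A1A1} as the workhorse — it says that any $\lambda \in S$ is orthogonal to at least one of any two orthogonal roots — and combine it with the Weyl-orbit property of $S$ and the spanning condition (a) to derive a contradiction.

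\emph{Key steps.} First, pick any root $\alpha_1 \in \Phi_1$ and any root $\alpha_2 \in \Phi_2$; since $E_1 \perp E_2$ these are orthogonal, so by Lemma \ref{A1A1} every $\lambda \in S$ is orthogonal to $\alpha_1$ or to $\alpha_2$. Hence $S \subseteq \alpha_1^\perp \cup \alpha_2^\perp$. Now I would upgrade this: ranging $\alpha_1$ over all of $\Phi_1$ and $\alpha_2$ over all of $\Phi_2$, one gets $S \subseteq \bigcap_{\alpha_1 \in \Phi_1}\bigl(\alpha_1^\perp \cup E_2^\perp\text{-ish}\bigr)$ — more carefully, partition $S$ into $S_1 := S \cap E_2^\perp = S \cap (\text{span of }\Phi_1\text{-side})$ and its complement. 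The clean way: let $\pi_1, \pi_2$ be the orthogonal projections of $E$ onto $E_1, E_2$. For $\lambda \in S$, Lemma \ref{A1A1} applied to each pair $(\alpha_1,\alpha_2) \in \Phi_1 \times \Phi_2$ forces, for each fixed $\lambda$, that either $\lambda \perp \alpha_1$ for \emph{all} $\alpha_1 \in \Phi_1$ (hence $\pi_1(\lambda)=0$ since $\Phi_1$ spans $E_1$), or $\lambda \perp \alpha_2$ for \emph{all} $\alpha_2 \in \Phi_2$ (hence $\pi_2(\lambda)=0$) — because if $\lambda$ failed to be orthogonal to some $\alpha_1^0 \in \Phi_1$ and to some $\alpha_2^0 \in \Phi_2$, that single pair would already violate Lemma \ref{A1A1}. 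Therefore $S = S_1 \sqcup S_2$ where $S_1 \subseteq E_1$ and $S_2 \subseteq E_2$.

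\emph{Finishing.} Now use that $S$ is a single $W$-orbit and $W = W_1 \times W_2$ with $W_i$ preserving $E_i$ and acting trivially on the other factor. If both $S_1$ and $S_2$ are non-empty, pick $\lambda \in S_1 \subseteq E_1$ and $\mu \in S_2 \subseteq E_2$; since they lie in one $W$-orbit, $\mu = w\lambda$ for some $w = (w_1,w_2) \in W_1\times W_2$, but then $w\lambda = w_1\lambda \in E_1$, forcing $\mu \in E_1 \cap E_2 = 0$, contradicting $0 \notin S$. Hence one of them, say $S_2$, is empty, so $S \subseteq E_1$; but then $S$ cannot span $E$ (as $E_2 \neq 0$), contradicting condition (a). This contradiction shows no orthogonal decomposition exists, so $\Phi$ is simple.

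\emph{Main obstacle.} The only delicate point is the bookkeeping in the middle step: correctly deducing from Lemma \ref{A1A1} — which is a statement about one pair of orthogonal roots at a time — that each individual $\lambda \in S$ is entirely orthogonal to $E_1$ or entirely orthogonal to $E_2$. The key observation making this work is that the "or" in Lemma \ref{A1A1} can be pinned down uniformly once we test $\lambda$ against a single non-orthogonal root on each side; everything else is then forced. After that, the $W$-orbit and spanning arguments are short and formal. I do not expect the decomposition $W = W_1 \times W_2$ or the triviality of the cross-action to require comment beyond citing standard root-system theory.
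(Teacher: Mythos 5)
Your proof is correct and follows essentially the same route as the paper: apply Lemma \ref{A1A1} to a pair of roots taken one from each orthogonal component, then use the $W$-orbit property of $S$ together with condition (a) to reach a contradiction. The paper phrases the finish slightly differently (it fixes one $\lambda$, deduces $\lambda\perp\Psi_1$, and spreads this over $S$ by $W$-invariance of $\Psi_1$, rather than partitioning $S$ between $E_1$ and $E_2$), but this is only a cosmetic variation of your argument.
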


\begin{proof}
Assume that $\Phi=\Psi_1 +\Psi_2$ is decomposable and let $\lambda \in S$. Since $\Phi$ generates $E$, there exists an $\alpha \in \Phi$ which is not orthogonal to $\lambda$. Suppose without loss of generality that $\alpha \in \Psi_2$. Then, by Lemma \ref{A1A1}, the vector $\lambda$ is orthogonal to all roots that are orthogonal to $\alpha$; in particular $\lambda \perp \Psi_1$ . Then $w(\lambda) \perp \Psi_1$ for all $w \in W$ and therefore $S \perp \Psi_1.$ However, this contradicts condition (a). 
\end{proof}

\begin{lem}\label{noB2}
The root system $\Phi$ does not contain a root subsystem of type $B_2$.
\end{lem}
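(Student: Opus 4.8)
The strategy is to argue by contradiction: suppose $\Phi$ contains a root subsystem $\Psi$ of type $B_2$, and derive a violation of condition (b) by producing four distinct elements of $S$ summing in pairs to the same vector. Since $\Phi$ is now known to be simple (Lemma \ref{simple}), a $B_2$-subsystem would in fact force $\Phi$ itself to be of type $B_2$, $B_\ell$, $C_\ell$, $F_4$, or $G_2$; but I do not expect to need the classification — the local configuration inside $\Psi$ should already give the contradiction. Fix an orbit element $\lambda\in S$. The key point is that $\Psi$ of type $B_2$ contains a pair of orthogonal long roots, say $\alpha_1,\alpha_2$, and also a pair of orthogonal short roots; by Lemma \ref{A1A1} applied to each pair, $\lambda$ is orthogonal to at least one root in each pair.

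The main work is a short case analysis on how $\lambda$ sits relative to $\Psi$. Write $\Psi=\{\pm\alpha_1,\pm\alpha_2,\pm(\alpha_1+\alpha_2),\pm(\alpha_1+2\alpha_2)\}$ in a suitable normalization, where $\alpha_1$ is long and $\alpha_2$ short (so $\alpha_1\perp(\alpha_1+2\alpha_2)$ and these are the two long roots up to sign, while $\alpha_2\perp(\alpha_1+\alpha_2)$ are the short ones). By Lemma \ref{A1A1}, $\lambda$ is orthogonal to $\alpha_1$ or to $\alpha_1+2\alpha_2$, and orthogonal to $\alpha_2$ or to $\alpha_1+\alpha_2$. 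If $\lambda$ is orthogonal to all four of these roots then $\lambda\perp E_\Psi$ (the span of $\Psi$), and I would want to handle that separately or note it cannot persist over the whole orbit given (a) — but the interesting case is when $\lambda$ is non-orthogonal to $\Psi$. Then, after renaming, we may assume $\lambda\perp\alpha_2$ but $(\lambda,\alpha_1)\neq 0$. Now apply the reflections in the $W(\Psi)$-action: $s_{\alpha_2}$ fixes $\lambda$, while $s_{\alpha_1}$, $s_{\alpha_1+\alpha_2}$, $s_{\alpha_1+2\alpha_2}$ move it. The relation I expect to exploit is of the same flavour as in Lemma \ref{A1A1}: inside the rank-$2$ lattice spanned by the images of $\lambda$ under $W(\Psi)$, one gets an additive coincidence among four distinct $W$-translates of $\lambda$ — concretely, since $s_{\alpha_1+\alpha_2}=s_{\alpha_2}s_{\alpha_1}s_{\alpha_2}$ and $\lambda$ is $s_{\alpha_2}$-fixed, one has $s_{\alpha_1+\alpha_2}(\lambda)=s_{\alpha_1}(\lambda)$ already if $\lambda\perp\alpha_2$ — so I should instead pick the orthogonal pair for which $\lambda$ is non-orthogonal to \emph{one of them} and orthogonal to the other and track the four vectors $\lambda$, $s_{\alpha_1}(\lambda)$, and their images under a commuting reflection to manufacture $\mu_1+\mu_2=\mu_3+\mu_4$ with all $\mu_i$ distinct and in $S$.

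The delicate point — and the part I expect to be the main obstacle — is ensuring the four elements produced are genuinely \emph{distinct}; this is exactly where the "no $B_2$" phenomenon bites, because in $B_2$ the orbit of a generic vector under $W(B_2)$ has size $8$ and the relevant $2$-plane can support the sum relation $\lambda_1+\lambda_2=\lambda_3+\lambda_4$ with distinct $\lambda_i$, whereas in type $A$ it cannot. So the cleanest route is: take $\lambda\in S$ with $(\lambda,\beta)\neq 0$ for some short root $\beta\in\Psi$ (such $\lambda$ exists for some orbit element by (a), since $S$ spans $E$ and hence is not orthogonal to the span of $\Psi$, and if every $\lambda\in S$ were orthogonal to all \emph{short} roots of $\Psi$ one reaches a contradiction by a separate orthogonality argument using Lemma \ref{A1A1} with the long roots); let $\beta'$ be the short root orthogonal to $\beta$, so $\lambda\perp\beta'$ by Lemma \ref{A1A1}. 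Then the four vectors $\lambda$, $s_\beta(\lambda)$, $s_{\beta+\beta'}(\lambda)=s_{\beta'}s_\beta s_{\beta'}(\lambda)$ wait — better to use the long orthogonal pair $\gamma,\gamma'$ with $\gamma=\beta+\beta'$-type normalization: one writes down the identity $\lambda + s_\gamma s_{\gamma'}(\lambda) = s_\gamma(\lambda) + s_{\gamma'}(\lambda)$ exactly as in Lemma \ref{A1A1}, and checks using $(\lambda,\beta)\neq0$ together with the $B_2$ geometry that $(\lambda,\gamma)$ and $(\lambda,\gamma')$ cannot both vanish, so the four are distinct — contradicting (b). I would organize the final write-up as: (1) reduce to $\lambda$ non-orthogonal to a short root; (2) locate the orthogonal long pair; (3) invoke the Lemma \ref{A1A1} identity; (4) verify distinctness from the angle constraints peculiar to $B_2$.
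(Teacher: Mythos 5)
Your final route is essentially workable and is close in spirit to the paper's proof: both arguments hinge on applying Lemma \ref{A1A1} to the two orthogonal pairs inside a $B_2$-subsystem (the short pair $e_1\perp e_2$ and the long pair $e_1+e_2\perp e_1-e_2$). The paper runs the argument the other way round and thereby avoids all distinctness bookkeeping: for \emph{every} $\lambda\in S$ it records the two orthogonality conclusions of Lemma \ref{A1A1} and checks in each of the four resulting cases that $\lambda\perp e_1$, so $S\perp e_1$, contradicting (a). You instead use (a) to produce one $\lambda$ with $(\lambda,\beta)\neq0$ for a short root $\beta$, deduce $\lambda\perp\beta'$ from the short pair, and then try to manufacture a violation of (b) from the identity $\lambda+s_\gamma s_{\gamma'}(\lambda)=s_\gamma(\lambda)+s_{\gamma'}(\lambda)$ for the long pair $\gamma=\beta+\beta'$, $\gamma'=\beta-\beta'$. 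Note that once you know $(\lambda,\gamma)\neq0$ and $(\lambda,\gamma')\neq0$ you can simply cite Lemma \ref{A1A1} for the pair $\gamma,\gamma'$ and stop; writing out the four-term relation again merely repeats its proof.

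The one genuine slip is your distinctness criterion: ``$(\lambda,\gamma)$ and $(\lambda,\gamma')$ cannot both vanish, so the four are distinct'' is a non sequitur. If exactly one of them vanishes, say $(\lambda,\gamma)=0$, then $s_\gamma(\lambda)=\lambda$ and $s_\gamma s_{\gamma'}(\lambda)=s_{\gamma'}(\lambda)$, the four vectors collapse to two, and no contradiction to (b) arises. What you need --- and what is in fact true in your configuration --- is that \emph{both} are nonzero: since $\lambda\perp\beta'$, one has $(\lambda,\beta\pm\beta')=(\lambda,\beta)\neq0$. With that one-line computation the argument closes. Two minor remarks: the reflection identities in your exploratory middle paragraph are incorrect (for orthogonal roots $\beta\perp\beta'$ one has $s_{\beta'}s_\beta s_{\beta'}=s_\beta$, not $s_{\beta+\beta'}$, and in the $(\alpha_1,\alpha_2)$ basis $s_{\alpha_2}s_{\alpha_1}s_{\alpha_2}=s_{\alpha_1+2\alpha_2}$), but since you abandon that detour it does not affect the final plan; and your step (1) is cleanest phrased as: if every element of $S$ were orthogonal to both short roots, it would be orthogonal to all of ${\mathbb R}\Psi$, contradicting (a).
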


\begin{proof}
Assume that $\Psi\subset\Phi$ is a root subsystem of type $B_2$. Then the subspace ${\mathbb R}\Psi$ possesses a basis $\{e_1,e_2\}$ such that $\Psi$ consists of eight roots $\pm e_1$, $\pm e_2$, and $\pm e_1\pm e_2$, and where $e_1\perp e_2$ and $e_1+e_2\perp e_1-e_2$. Thus for any $\lambda\in S$, Lemma \ref{A1A1} implies that $\lambda\perp e_i$ for some $i=1$, $2$, and that $\lambda\perp e_1\pm e_2$ for some choice of sign. Together this gives four cases, in each of which we deduce that $\lambda\perp e_1$. As $\lambda$ was arbitrary, this shows that $S\perp e_1$, contradicting condition (a). 
\end{proof}

\begin{lem}\label{noG2}
The root system $\Phi$ is not of type $G_2$.
\end{lem}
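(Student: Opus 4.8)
The plan is to argue by contradiction, in the same spirit as Lemmas \ref{A1A1} and \ref{noB2}. Suppose $\Phi$ has type $G_2$, so that $E$ is a Euclidean plane. Normalize the twelve roots of $G_2$ so that they lie on the six lines through the origin making angles $0^{\circ},30^{\circ},60^{\circ},90^{\circ},120^{\circ},150^{\circ}$ with a fixed axis (the long roots occupying three alternating directions, the short roots the remaining three; each line carries one root and its negative). Among these there are two orthogonal roots $\alpha_1,\alpha_2$ pointing in directions $0^{\circ}$ and $90^{\circ}$, and two orthogonal roots $\beta_1,\beta_2$ pointing in directions $30^{\circ}$ and $120^{\circ}$.

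Now take any $\lambda\in S$. By Lemma \ref{A1A1} applied to the orthogonal pair $\alpha_1,\alpha_2$, the vector $\lambda$ is orthogonal to $\alpha_1$ or to $\alpha_2$; in the plane $E$ each of these two orthogonality conditions confines $\lambda$ to a single line, so $\lambda$ must point in direction $90^{\circ}$ or direction $0^{\circ}$. Applying Lemma \ref{A1A1} to the orthogonal pair $\beta_1,\beta_2$ likewise forces $\lambda$ to point in direction $120^{\circ}$ or direction $30^{\circ}$. The four lines involved are pairwise distinct and hence meet only in the origin, so the only possibility is $\lambda=0$. Thus $S\subseteq\{0\}$, contradicting condition (a), which (as already noted) forces $0\notin S$ while $S$ is non-empty, being a $W$-orbit. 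Hence $\Phi$ is not of type $G_2$.

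There is essentially no obstacle here beyond keeping track of the explicit planar geometry of the $G_2$ root system; once that is set up, the argument is a direct twofold application of Lemma \ref{A1A1}. As an alternative one could instead use that $-1$ lies in the Weyl group of $G_2$: then $S=-S$, and since every non-trivial $W$-orbit in the rank-two group $G_2$ has at least six elements, one can choose $\lambda,\mu\in S$ with $\mu\neq\pm\lambda$ and obtain four distinct elements $\lambda,-\lambda,\mu,-\mu$ of $S$ with $\lambda+(-\lambda)=\mu+(-\mu)$, contradicting condition (b) directly.
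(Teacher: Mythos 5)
Your main argument is correct and is essentially the paper's proof: the paper likewise applies Lemma \ref{A1A1} to two orthogonal pairs of roots of $G_2$ (written there as $\alpha_2\perp 2\alpha_1+\alpha_2$ and $\alpha_1\perp 3\alpha_1+2\alpha_2$) and checks that each of the four resulting cases forces $\lambda=0$, contradicting condition (a). Your closing alternative via $-1\in W(G_2)$, $S=-S$, and condition (b) is also valid, though it is a different shortcut than the route the paper takes.
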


\begin{proof}
Choose simple roots $\alpha_1$, $\alpha_2$ of $\Phi$ such that $\alpha_1$ is the shorter one. Then $\Phi$ contains the root $2\alpha_1+\alpha_2$ which is orthogonal to $\alpha_2$, and the root $3\alpha_1+2\alpha_2$ which is orthogonal to $\alpha_1$. Thus for any $\lambda\in S$, Lemma \ref{A1A1} implies that $\lambda\perp\alpha_2$ or $\lambda\perp 2\alpha_1+\alpha_2$, and that $\lambda\perp\alpha_1$ or $\lambda\perp3\alpha_1+2\alpha_2$. By a simple calculation, each of these four cases implies that $\lambda=0$, contradicting condition (a). 
\end{proof}

\begin{lem}\label{noD4}
The root system $\Phi$ does not contain a root subsystem of type $D_4$.
\end{lem}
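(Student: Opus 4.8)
The strategy mirrors the proofs of Lemmas \ref{noB2} and \ref{noG2}: assume $\Psi\subset\Phi$ is a root subsystem of type $D_4$, pick a convenient model of $\Psi$, and use Lemma \ref{A1A1} repeatedly on a fixed but arbitrary $\lambda\in S$ to force $\lambda$ to be orthogonal to enough of $\Psi$ that condition (a) is contradicted. Concretely, I would write $\Psi$ in the standard coordinates so that it consists of the $24$ vectors $\pm e_i\pm e_j$ for $1\le i<j\le 4$ in a euclidean space $\mathbb R^4$. The key structural feature I want to exploit is that $D_4$ contains many pairs of orthogonal roots, and in fact contains several mutually orthogonal $A_1$-sub-subsystems spanning the whole $4$-space.

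The main step is the repeated application of Lemma \ref{A1A1}. For instance, $e_1-e_2$ and $e_1+e_2$ are orthogonal roots, so $\lambda$ is orthogonal to one of them; likewise $e_3-e_4\perp e_3+e_4$ forces orthogonality to one of those. Continuing with pairs such as $e_1-e_3\perp e_2+e_4$, $e_1-e_4\perp e_2+e_3$, $e_1+e_3\perp e_2-e_4$, $e_1+e_4\perp e_2-e_3$, one obtains a finite (if somewhat large) case distinction, in each branch of which $\lambda$ is pinned down to be orthogonal to a spanning set of $\mathbb R^4$, hence $\lambda=0$ — contradicting (a), since (a) gives $0\notin S$. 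A cleaner way to organize this, which I would prefer in the write-up: from the first two pairs $\lambda$ is orthogonal to some $e_i\pm e_j$ with $\{i,j\}$ and its complement partitioning $\{1,2,3,4\}$; after relabeling indices and flipping signs of coordinates (symmetries of $D_4$ that permute $S$), reduce to $\lambda\perp e_1-e_2$ and $\lambda\perp e_3-e_4$, so $\lambda=(a,a,b,b)$ in coordinates. Then apply Lemma \ref{A1A1} to the orthogonal pair $e_1-e_3,\ e_2+e_4$ and to $e_1+e_3,\ e_2-e_4$: each forces one more linear condition on $(a,b)$, and checking the four resulting cases shows each gives $a=b=0$, i.e. $\lambda=0$.

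I expect the main obstacle to be purely bookkeeping: keeping the case distinction short by using the right symmetries of $D_4$ (the Weyl group, which permutes $S$, together with triality) rather than brute-forcing all branches, and making sure that after each reduction the roots invoked genuinely lie in $\Psi$ and the orthogonality claims are correct. There is no deep point here — once $\lambda$ is constrained to the two-dimensional "diagonal" subspace $\{(a,a,b,b)\}$, two further orthogonality constraints from genuinely $D_4$-roots suffice to kill it, and (a) delivers the contradiction. The only care needed is that the pairs of orthogonal roots chosen at each stage actually exist in $D_4$ and that the deductions combine consistently across the cases.
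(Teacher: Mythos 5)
Your overall strategy -- repeatedly applying Lemma \ref{A1A1} to orthogonal pairs of roots inside the $D_4$ copy until condition (a) is violated -- is the same as the paper's, but the specific case check you propose does not close. Write the projection of $\lambda$ to ${\mathbb R}\Psi$ as $(a,a,b,b)$ after your normalization. The pair $e_1-e_3\perp e_2+e_4$ gives ``$a-b=0$ or $a+b=0$'', and the pair $e_1+e_3\perp e_2-e_4$ gives ``$a+b=0$ or $a-b=0$''. Two of the four combinations are ``$a=b$ twice'' and ``$a=-b$ twice'', which do not force $a=b=0$: vectors proportional to $(1,1,1,1)$ or $(1,1,-1,-1)$ satisfy all the constraints you have imposed. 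You need at least one further orthogonal pair, e.g.\ $e_1+e_2\perp e_3+e_4$, whose inner products with $(a,a,b,b)$ are $2a$ and $2b$; combined with $a=\pm b$ this kills the surviving cases. A second problem is the normalization itself: only permutations and \emph{even} sign changes lie in the Weyl group of $D_4$, hence in $W$, and thus permute $S$; a single sign flip (and likewise triality) is an automorphism of the subsystem $\Psi$ that need not come from $W(\Phi)$, so it need not preserve $S$. Hence the mixed case $\lambda\perp e_1-e_2$, $\lambda\perp e_3+e_4$ cannot be absorbed by symmetry and must be treated separately (there, as it happens, your two pairs do suffice), or you should carry the signs as parameters throughout.

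A smaller point: from orthogonality to a spanning set of ${\mathbb R}\Psi$ you may only conclude $\lambda\perp{\mathbb R}\Psi$, not $\lambda=0$, since $E$ may be strictly larger than ${\mathbb R}\Psi$ (e.g.\ $\Phi$ of type $D_\ell$ with $\ell>4$). The contradiction with (a) should instead come from letting $\lambda$ range over $S$ and concluding $S\perp\Psi$, so that $S$ cannot span $E$. This is also how the paper concludes; it moreover avoids the case explosion entirely by using the pairs $e_1+e_i\perp e_1-e_i$ for $i=2,3,4$ and $e_1-\epsilon_2e_2\perp\epsilon_3e_3+\epsilon_4e_4$, together with the identity $(e_1+\epsilon_2e_2)+(e_1-\epsilon_2e_2)=(e_1+\epsilon_3e_3)+(e_1+\epsilon_4e_4)-(\epsilon_3e_3+\epsilon_4e_4)=2e_1$, which yields $\lambda\perp e_1$ uniformly in the signs.
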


\begin{proof}
Assume that $\Psi\subset\Phi$ is a root subsystem of type $D_4$. Then, up to scaling the inner product on $E$, the subspace ${\mathbb R}\Psi$ possesses an orthonormal basis $\{e_1,e_2,e_3,e_4\}$ such that $\Psi$ consists of the roots $\pm e_i\pm e_j$ for all $1\leq i<j\leq4$ and all choices of signs. In particular, the roots $e_1+e_i$ and $e_1-e_i$ are orthogonal for every $2\leq i\leq4$. Thus for any $\lambda\in S$, Lemma \ref{A1A1} implies that $\lambda\perp e_1+\epsilon_ie_i$ for some $\epsilon_i=\pm1$. Since the roots $e_1-\epsilon_2e_2$ and $\epsilon_3e_3+\epsilon_4e_4$ are also orthogonal, Lemma \ref{A1A1} implies that $\lambda\perp e_1-\epsilon_2e_2$ or $\lambda\perp \epsilon_3e_3+\epsilon_4e_4$. Since
$$(e_1+\epsilon_2e_2)+(e_1-\epsilon_2e_2) \ =\ 
(e_1+\epsilon_3e_3)+(e_1+\epsilon_4e_4)-(\epsilon_3e_3+\epsilon_4e_4) \ =\ 2e_1,$$
in both cases we deduce that $\lambda\perp 2e_1$. As $\lambda$ was arbitrary, this shows that $S\perp 2e_1$, contradicting condition (a). 
\end{proof}

Combining Lemmas \ref{simple} through \ref{noD4}, it follows that $\Phi$ is a simple root system of type $A_\ell$ for some $\ell\ge1$. Using standard notation we may identify $E$ with the vector space $\mathbb{R}^{\ell+1}/\diag(\mathbb{R})$, let $e_0,\ldots,e_\ell\in E$ denote the images of the standard basis vectors of $\mathbb{R}^{\ell+1}$, and assume that $\Phi$ consists of the roots $e_i-e_j$ for all distinct $0\leq i,j \leq\ell$. Then its Weyl group is the symmetric group $S_{\ell+1}$ on $l+1$ letters, acting on $E$ by permuting the coefficients. 

Consider any $\lambda\in S$ and write $\lambda = (a_0,\ldots,a_\ell)$ modulo $\diag(\mathbb{R})$. Since $\lambda\not=0$ in~$E$, the coefficients $a_i$ are not all equal. 

\begin{lem}\label{no4distinct}
Suppose that $\ell\ge3$, and consider indices $i$ and $j$ satisfying $a_i\not=a_j$. Then for all indices $i'$ and $j'$ that are distinct from $i$ and $j$ we have $a_{i'}=a_{j'}$.
\end{lem}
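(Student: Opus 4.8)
The plan is to reduce the claim directly to Lemma~\ref{A1A1}, applied to a pair of orthogonal roots manufactured from the four relevant indices. If $i'=j'$ there is nothing to prove, so assume $i'\neq j'$; since $i',j'$ are also distinct from $i$ and $j$, the indices $i,j,i',j'$ are then pairwise distinct (which in particular forces $\ell\geq 3$, consistently with the hypothesis). I would then set $\alpha_1:=e_i-e_j$ and $\alpha_2:=e_{i'}-e_{j'}$. By the explicit description of the $A_\ell$ root system fixed just above, both $\alpha_1$ and $\alpha_2$ lie in $\Phi$, and because the index sets $\{i,j\}$ and $\{i',j'\}$ are disjoint, $\alpha_1$ and $\alpha_2$ are orthogonal for the standard $W$-invariant inner product on $E=\mathbb{R}^{\ell+1}/\diag(\mathbb{R})$.

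The key step is to invoke Lemma~\ref{A1A1} for the vector $\lambda\in S$ and the orthogonal roots $\alpha_1$, $\alpha_2$: it yields $\lambda\perp\alpha_1$ or $\lambda\perp\alpha_2$. Representing $\lambda$ by the unique vector $(a_0,\ldots,a_\ell)$ with $\sum_k a_k=0$, one computes $(\lambda,\alpha_1)=a_i-a_j$ and $(\lambda,\alpha_2)=a_{i'}-a_{j'}$. Since $a_i\neq a_j$ by hypothesis, the first alternative is impossible, hence $\lambda\perp\alpha_2$, i.e.\ $a_{i'}=a_{j'}$, as desired. (Unwinding Lemma~\ref{A1A1}, this is just the observation that otherwise $\lambda$, the transpose of $\lambda$ by $(i\,j)(i'\,j')$, and the two transposes of $\lambda$ by $(i\,j)$ and by $(i'\,j')$ would be four distinct elements of $S$ with the first two summing to the last two, contradicting condition~(b).)

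I do not expect any genuine obstacle here; the statement is essentially a repackaging of Lemma~\ref{A1A1}. The only point deserving a moment of care is the bookkeeping on the quotient space $\mathbb{R}^{\ell+1}/\diag(\mathbb{R})$: one must fix representatives consistently (e.g.\ with vanishing coordinate sum) so that the pairing $(\lambda,e_i-e_j)$ is unambiguously $a_i-a_j$, and one must note that $e_i-e_j$ and $e_{i'}-e_{j'}$ are honest roots of $\Phi$, not merely differences of weights, so that Lemma~\ref{A1A1} applies. Both are immediate from the standard-notation conventions already set up.
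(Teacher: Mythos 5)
Your proof is correct and is essentially identical to the paper's: both reduce to the trivial case $i'=j'$, observe that $e_i-e_j$ and $e_{i'}-e_{j'}$ are orthogonal roots, and apply Lemma~\ref{A1A1}, ruling out $\lambda\perp e_i-e_j$ by the hypothesis $a_i\neq a_j$. The extra remarks about choosing sum-zero representatives and unwinding Lemma~\ref{A1A1} are fine but not needed beyond what the paper already does.
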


\begin{proof}
The assumption implies that $i\not=j$, and the assertion is trivial unless also $i'\not=j'$. Then $e_i-e_j$ and $e_{i'}-e_{j'}$ are orthogonal roots, and so Lemma \ref{A1A1} implies that $\lambda\perp e_i-e_j$ or $\lambda\perp e_{i'}-e_{j'}$. This means that $a_i=a_j$ or $a_{i'}=a_{j'}$; but by assumption only the second case is possible.
\end{proof}

\begin{lem}\label{no3distinct}
If $\ell\ge3$, there exists an index $i$ such that the $a_j$ for all $j\not=i$ are equal.
\end{lem}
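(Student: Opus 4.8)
The plan is to combine Lemma \ref{no4distinct} with condition (b) to pin down the shape of the coefficient vector $\lambda=(a_0,\ldots,a_\ell)$. Suppose for contradiction that for every index $i$ the coefficients $a_j$ with $j\ne i$ are not all equal. By Lemma \ref{no4distinct}, whenever $a_i\ne a_j$, all the remaining coefficients $a_{i'}$ with $i'\notin\{i,j\}$ coincide with one common value, say $c$. So the set of coefficients takes at most three values: $a_i$, $a_j$, and $c$. First I would dispose of the case where only two distinct values occur among all the $a_k$: say the value $u$ occurs with multiplicity $m$ and the value $v\ne u$ with multiplicity $\ell+1-m$. If $m\ge 2$ and $\ell+1-m\ge 2$ — which is possible since $\ell\ge 3$ — then picking two indices $p,q$ with $a_p=a_q=u$ and two indices $s,t$ with $a_s=a_t=v$, the four vectors $e_p-e_q$, $e_s-e_t$ are orthogonal roots but $e_p-e_q$ is orthogonal to $\lambda$ (both have coefficient $u$ at $p,q$... wait, rather: $\lambda\perp e_p-e_q$ since $a_p=a_q$), so that case is vacuous for Lemma \ref{A1A1} and gives no contradiction directly; instead the contradiction is that this $\lambda$ does have the desired form, with the majority value playing the role of the common coefficient, unless both multiplicities are $\ge 2$. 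So the genuinely dangerous two-value case is $m\ge 2$ and $\ell+1-m\ge 2$: here I would exhibit a forbidden configuration $\lambda_1+\lambda_2=\lambda_3+\lambda_4$ directly. Concretely, with indices $p_1,p_2$ carrying value $u$ and $q_1,q_2$ carrying value $v$, transposing $p_1\leftrightarrow q_1$ and transposing $p_2\leftrightarrow q_2$ produce two elements of $S$ whose sum equals the sum of the two elements obtained by the double transposition $(p_1\,q_1)(p_2\,q_2)$ applied, together with $\lambda$ itself — this is exactly the mechanism in Lemma \ref{A1A1} applied to the orthogonal roots $e_{p_1}-e_{q_1}$ and $e_{p_2}-e_{q_2}$, which are nonorthogonal to $\lambda$, contradicting (b).

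Next I would handle the case of exactly three distinct values. By the analysis above, if $a_i$, $a_j$, $c$ are three distinct values and all $a_{i'}$ for $i'\notin\{i,j\}$ equal $c$, then in fact $c$ occurs with multiplicity $\ell-1\ge 2$, and $a_i$, $a_j$ each occur exactly once. Now apply Lemma \ref{no4distinct} again with a different pair: $a_i\ne c$ (take some $i'\ne i,j$ with $a_{i'}=c$), so all coefficients other than $a_i,a_{i'}$ must be equal; but those include both $a_j$ and $c$, forcing $a_j=c$, a contradiction. Hence the three-value case is impossible, and combined with the two-value analysis we conclude that $\lambda$ has at most two distinct coefficient values with the minority value occurring exactly once — which is precisely the assertion that there is an index $i$ such that all $a_j$ with $j\ne i$ are equal.

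I expect the main obstacle to be the bookkeeping in the two-value case: making sure the configuration exhibited genuinely consists of \emph{four distinct} elements of $S$, i.e. that the four permutations involved are pairwise distinct and that none of the resulting vectors coincide in $E=\mathbb{R}^{\ell+1}/\diag(\mathbb{R})$. This is where $\ell\ge 3$ and the requirement that both multiplicities be at least $2$ enter essentially; one must check that swapping into disjoint index pairs keeps the vectors apart, and that the degenerate sub-cases (where $u$ or $v$ happens to make something collapse) don't arise. The root-system geometry via Lemma \ref{A1A1} packages this cleanly, so the cleanest writeup routes everything through orthogonal pairs of roots of type $A_1\times A_1$ rather than writing permutations by hand.
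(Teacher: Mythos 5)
Your argument is correct and rests on the same ingredient as the paper's proof, namely Lemma \ref{no4distinct} (your two-value branch simply re-derives its mechanism from Lemma \ref{A1A1}, and your three-value branch is a second application of it). The paper reaches the conclusion more directly and without a case split: since the $a_i$ are not all equal, Lemma \ref{no4distinct} forces a repeated value, so one picks distinct indices $i,j,j'$ with $a_i\ne a_j=a_{j'}$ and applies Lemma \ref{no4distinct} once more to conclude that $a_{i'}=a_{j'}$ for every $i'\ne i,j$ — exactly the statement that $i$ is the desired index — whereas your contradiction argument splitting on whether two or three distinct values occur arrives at the same place by a longer route.
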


\begin{proof}
Since $\ell\geq3$ and the $a_i$ are not all equal, Lemma \ref{no4distinct} implies that the $a_i$ are also not all distinct. Therefore there exist distinct indices $i,j,j'$ satisfying $a_i\not=a_j=a_{j'}$. Then for any $i'\not=i,j$, Lemma \ref{no4distinct} shows that $a_{i'}=a_{j'}$. Thus $i$ has the desired property.
\end{proof}

\begin{lem}\label{no3distinct2}
If $\ell=2$ and in addition condition (c) is satisfied, then the $a_i$ are not all distinct.
\end{lem}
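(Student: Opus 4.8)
The plan is a short argument by contradiction. Suppose the $a_i$ are pairwise distinct. The Weyl group $W=S_3$ acts on $E=\mathbb{R}^3/\diag(\mathbb{R})$ by permuting coordinates, so I first check that the stabilizer of $\lambda$ in $W$ is trivial: if a permutation $\sigma$ fixes $\lambda$, then $(a_{\sigma(0)},a_{\sigma(1)},a_{\sigma(2)})$ and $(a_0,a_1,a_2)$ differ by a constant vector, and comparing the sums of coordinates forces that constant to be $0$, whence $\sigma=\mathrm{id}$ because the $a_i$ are distinct. Therefore $S$ consists of exactly the six coordinate-permutations of $\lambda$, all distinct.

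The key point is then to split $S$ into two triples with equal sums, which condition (c) forbids. Fix a $3$-cycle $s\in W$. Viewed as an endomorphism of $\mathbb{R}^3$, the operator $1+s+s^2$ sends every vector to a scalar multiple of $(1,1,1)$, hence to $0$ in $\mathbb{R}^3/\diag(\mathbb{R})$; so $\lambda+s(\lambda)+s^2(\lambda)=0$ in $E$. Applying the same computation to $t(\lambda)$ for any transposition $t\in W$ gives $t(\lambda)+st(\lambda)+s^2t(\lambda)=0$ in $E$ as well. Since $\{1,s,s^2\}$ and $\{t,st,s^2t\}$ partition $W$, the six elements $\lambda,s(\lambda),s^2(\lambda)$ and $t(\lambda),st(\lambda),s^2t(\lambda)$ are precisely the six distinct elements of $S$, and they satisfy
$$\lambda+s(\lambda)+s^2(\lambda) \ =\ 0 \ =\ t(\lambda)+st(\lambda)+s^2t(\lambda).$$
This contradicts condition (c), and the contradiction proves the lemma.

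I do not expect any real obstacle here; the argument is essentially a one-line observation about the operator $1+s+s^2$ on $E$. The only point needing a moment's care is that the six group-translates of $\lambda$ really are distinct elements of $S$, which is exactly the stabilizer computation in the first step — and this is where the hypothesis being negated (the $a_i$ pairwise distinct) is used, so the step is unavoidable but entirely routine.
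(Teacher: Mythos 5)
Your proof is correct and is essentially the paper's own argument: both split the six (distinct) Weyl translates of $\lambda$ into the even and odd permutation triples — the paper's two rows of vectors — and observe that the two triples have equal sums in $E$, contradicting condition (c). The only cosmetic difference is how distinctness of the six translates is checked (your trivial-stabilizer computation versus the paper's remark that the positions of the largest and smallest coefficients are well defined modulo $\diag(\mathbb{R})$).
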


\begin{proof}
Being an orbit under the Weyl group, the set $S$ consists of the vectors
\begin{eqnarray*}
  (a_0,a_1,a_2), & (a_1,a_2,a_0), & (a_2,a_0,a_1), \\
  (a_0,a_2,a_1), & (a_1,a_0,a_2), & (a_2,a_1,a_0)
\end{eqnarray*}
modulo $\diag(\mathbb{R})$.
If the $a_i$ are all distinct, these six vectors are all distinct in $E$, for instance because the positions of the greatest and the smallest coefficient of a vector in $\mathbb{R}^3$ depend only on its residue class modulo $\diag(\mathbb{R})$. As the sum of the three vectors in the first row is equal to the sum of those in the second row, that contradicts condition (c).
\end{proof}

We can now prove Theorem \ref{mainroot}. The statement about $\Phi$ has already been established. To show the statement about $S$, we may assume condition (c) if $\ell=2$. If $\ell\ge2$, using the action of the Weyl group $S_{\ell+1}$, Lemma \ref{no3distinct} or \ref{no3distinct2} implies that $S$ contains an element of the form $(a,b,\ldots,b) \mod \diag(\mathbb{R})$ with $a\not=b$. The same is trivially true if $\ell=1$, because then any non-zero element of $E$ has this form. But for any $\ell\geq1$, the indicated element of $E$ is equal to $ce_1$ with $c=a-b\not=0$. Since $S$ is an orbit under $S_{\ell+1}$, it follows that $S=\{ c e_i \mid 0 \leq i\leq\ell\}$, as desired. This finishes the proof of Theorem \ref{mainroot}. 

%%%%%%%%%%%%%%%%%%%%%%%%%%%%%%%%%%%%%%%%%%%%%%%%%%%%%%%%%%%%%%%%%%%%%%

\subsection{Some algebraic relations}\label{alg_relations}

From here until the end of this section we fix an integer $n\geq2$. Consider the expression
\begin{myequation}\label{Rprod}
\vcenter{\hbox{$\quad
\begin{array}{l}\displaystyle
     \!\!\prod_{\substack{i_1,i_2 \\ \text{distinct}}} \!
           (\alpha_{i_1}-\alpha_{i_2})
\;\cdot\!\!\prod_{\substack{i_1,i_2,i_3 \\ \text{distinct}}} \!
           (\alpha_{i_1}\alpha_{i_2}-\alpha_{i_3}^2) 
\;\cdot\!\! \\[25pt]
\displaystyle\hskip1.3cm
\;\cdot\!\!\prod_{\substack{i_1,\ldots, i_4 \\ \text{distinct}}} \!
           (\alpha_{i_1}\alpha_{i_2}-\alpha_{i_3}\alpha_{i_4})
\;\cdot\!\!\prod_{\substack{i_1,\ldots, i_6 \\ \text{distinct}}} \!
           (\alpha_{i_1}\alpha_{i_2}\alpha_{i_3}-\alpha_{i_4}\alpha_{i_5}\alpha_{i_6}),
\end{array}
$}}
\end{myequation}%
%\begin{myequation}\label{Rprod}
%     \!\!\!\prod_{\substack{i_1,i_2 \\ \text{distinct}}} \!\!
%           (\alpha_{i_1}-\alpha_{i_2})
%\cdot     \!\!\!\prod_{\substack{i_1,i_2,i_3 \\ \text{distinct}}} \!\!
%           (\alpha_{i_1}\alpha_{i_2}-\alpha_{i_3}^2)
%\cdot\!\!\!\prod_{\substack{i_1,\ldots, i_4 \\ \text{distinct}}} \!\!
%           (\alpha_{i_1}\alpha_{i_2}-\alpha_{i_3}\alpha_{i_4})
%\cdot\!\!\!\prod_{\substack{i_1,\ldots, i_6 \\ \text{distinct}}} \!\!
%           (\alpha_{i_1}\alpha_{i_2}\alpha_{i_3}-\alpha_{i_4}\alpha_{i_5}\alpha_{i_6}),
%\end{myequation}%
where the products are extended over all tuples of distinct indices in $\{1,\ldots,n\}$. 
(Note that some of these products are empty for small $n$, but this will not cause any problems.) Clearly this is a symmetric polynomial with integral coefficients in the variables $\alpha_1,\ldots,\alpha_n$. It can therefore be written uniquely as a polynomial with integral coefficients in $\beta_1,\ldots,\beta_n$, where
$$\prod_{i=1}^n \;(T-\alpha_i)\ =\ T^n+ \beta_1 T^{n-1}+\cdots+\beta_n.$$
In particular, we can apply it to the coefficients of the characteristic polynomial $\det(T\cdot{\rm Id}_n-\gamma)$ of a matrix $\gamma \in\GL_n$ over any field $L$ and obtain an algebraic morphism 
\begin{myequation}\label{Rdef}
% f: \GL_{n,{\mathbb Z}} \rightarrow \mathbb{A}_{\mathbb Z}^1.
f: \GL_{n,L} \rightarrow \mathbb{A}_L^1.
\end{myequation}%
By construction, this morphism has the following property:

\begin{lem}\label{Rprop}
For any algebraically closed field $L$ and any matrix $\gamma\in\GL_n(L)$ with eigenvalues $\alpha_1,\ldots,\alpha_n \in L$, listed with their respective multiplicities, we have $f(\gamma)=0$ if and only if one of the following holds:
\begin{enumerate}
\item[(a)] There exist distinct indices $i_1,i_2$ such that $\alpha_{i_1}=\alpha_{i_2}$.
\item[(b)] There exist distinct indices $i_1,i_2,i_3$ such that $\alpha_{i_1}\alpha_{i_2}=\alpha_{i_3}^2$.
\item[(c)] There exist distinct indices $i_1,\ldots,i_4$ such that $\alpha_{i_1}\alpha_{i_2}=\alpha_{i_3}\alpha_{i_4}$.
\item[(d)] There exist distinct indices $i_1,\ldots,i_6$ such that $\alpha_{i_1}\alpha_{i_2}\alpha_{i_3}=\alpha_{i_4}\alpha_{i_5}\alpha_{i_6}$.
\end{enumerate}
\end{lem}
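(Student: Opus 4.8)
The proof of Lemma \ref{Rprop} is essentially a bookkeeping exercise unwinding the definitions, so I would organize it around the following observations. First, I would recall that $f$ was constructed so that, viewed as a symmetric polynomial in $\alpha_1,\dots,\alpha_n$, it equals the product \eqref{Rprod}; this identity holds as polynomials over $\mathbb{Z}$, and hence over the algebraically closed field $L$ after specializing $\beta_1,\dots,\beta_n$ to the coefficients of the characteristic polynomial of $\gamma$. The key point here is that for $\gamma\in\GL_n(L)$ with $L$ algebraically closed, the characteristic polynomial splits as $\prod_{i=1}^n(T-\alpha_i)$ with the $\alpha_i$ its roots listed with multiplicity, so plugging those $\alpha_i$ into the symmetric polynomial \eqref{Rprod} genuinely computes $f(\gamma)$.

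Given this, $f(\gamma)=0$ if and only if at least one of the linear or bilinear factors in \eqref{Rprod} vanishes when evaluated at the eigenvalues $\alpha_i$. I would then go factor by factor: the first product vanishes iff $\alpha_{i_1}-\alpha_{i_2}=0$ for some pair of distinct indices, which is exactly condition (a); the second product vanishes iff $\alpha_{i_1}\alpha_{i_2}-\alpha_{i_3}^2=0$ for some triple of distinct indices, i.e.\ condition (b); similarly the third and fourth products give conditions (c) and (d) respectively. Since a product over $L$ (an integral domain) vanishes iff one of its factors does, the equivalence ``$f(\gamma)=0$ iff (a) or (b) or (c) or (d)'' is immediate. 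The only mild subtlety is the parenthetical remark that some of these products are empty for small $n$: an empty product equals $1$, and correspondingly the associated condition is vacuously false (e.g.\ there are no six distinct indices in $\{1,\dots,n\}$ when $n<6$), so the stated equivalence still holds with the understanding that impossible index conditions are simply never satisfied.

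There is no real obstacle here; the statement is true essentially by the definition of $f$. The one thing to be careful about is the passage from ``$f$ as a polynomial in the $\beta_j$'' back to ``$f$ as the symmetric polynomial \eqref{Rprod} in the $\alpha_i$'': this requires that specialization of the universal relation between elementary symmetric functions and the $\alpha_i$ is compatible with evaluation, which it is because the construction of $f$ was via the (injective) ring homomorphism $\mathbb{Z}[\beta_1,\dots,\beta_n]\hookrightarrow\mathbb{Z}[\alpha_1,\dots,\alpha_n]^{S_n}$. So in the write-up I would state this compatibility explicitly once and then let the factor-by-factor identification do the rest.
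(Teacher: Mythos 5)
Your proof is correct and matches the paper's reasoning: the paper states the lemma as an immediate consequence of the construction of $f$ (it gives no separate proof beyond ``by construction''), and your factor-by-factor unwinding of (\ref{Rprod}), together with the remark that empty products and vacuous index conditions cause no trouble, is exactly the intended justification.
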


\begin{lem}\label{R_nontrivial}
For any field $L$ and any integer $N \geq 1$, the morphism 
$$\GL_{n,L} \to \mathbb{A}_L^1,\ \gamma \mapsto f(\gamma^N)$$ 
is not identically zero.
\end{lem}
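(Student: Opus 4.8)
The plan is to exhibit, over a suitable extension field, a single matrix $\gamma$ such that $\gamma^N$ has eigenvalues avoiding all four degeneracy conditions (a)--(d) of Lemma \ref{Rprop}; by that lemma this forces $f(\gamma^N)\neq0$, hence the morphism is not identically zero. Since the statement concerns a morphism of $L$-schemes, it suffices to find such a $\gamma$ over an algebraic closure $\bar L$: if the morphism vanished identically it would vanish on all $\bar L$-points. So from now on I work over $\bar L$.

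First I would reduce to choosing the eigenvalues. A diagonal matrix $\gamma=\diag(\alpha_1,\ldots,\alpha_n)$ lies in $\GL_n(\bar L)$ as soon as all $\alpha_i\neq0$, and then $\gamma^N=\diag(\alpha_1^N,\ldots,\alpha_n^N)$ has eigenvalues $\alpha_i^N$. So the task is to find nonzero $\alpha_1,\ldots,\alpha_n\in\bar L$ such that the $N$-th powers $\beta_i:=\alpha_i^N$ satisfy: (a) the $\beta_i$ are pairwise distinct; (b) no $\beta_{i_1}\beta_{i_2}=\beta_{i_3}^2$ with $i_1,i_2,i_3$ distinct; (c) no $\beta_{i_1}\beta_{i_2}=\beta_{i_3}\beta_{i_4}$ with the four indices distinct; (d) no $\beta_{i_1}\beta_{i_2}\beta_{i_3}=\beta_{i_4}\beta_{i_5}\beta_{i_6}$ with the six indices distinct. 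Equivalently, writing $\beta_i=\gamma_i^{\,}$ — better, it is enough that the $\beta_i$ generate a free abelian subgroup of $\bar L^\times$ of rank $n$, i.e.\ are multiplicatively independent, since then the only multiplicative relation $\prod\beta_i^{m_i}=1$ is the trivial one, and each of (a)--(d) is such a nontrivial relation with exponents in $\{0,\pm1,\pm2\}$.

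The main obstacle is that $\bar L$ may have positive characteristic, so $\bar L^\times$ is a torsion group when $\bar L=\overline{\mathbb F_p}$, and there one cannot find $n$ multiplicatively independent elements at all. I would handle this by base change: the morphism $\GL_{n,L}\to\mathbb A^1_L$ is defined over the prime field, and "not identically zero" can be checked after any field extension, so I may replace $L$ by a field containing an element $t$ transcendental over the prime field — e.g.\ $L(t)$, or its algebraic closure. Concretely, take $\alpha_i:=t^{\,N^{\,i-1}}$ for $i=1,\ldots,n$ inside $\overline{L(t)}$; these are nonzero, and $\beta_i=\alpha_i^N=t^{\,N^{\,i}}$. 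Then any relation $\prod_i\beta_i^{m_i}=1$ reads $t^{\sum_i m_i N^{\,i}}=1$ with $t$ of infinite multiplicative order, forcing $\sum_i m_i N^{\,i}=0$; since $N\geq1$ and the exponents appearing in (a)--(d) have absolute value at most $2<N^{\,}$ — wait, this needs $N\geq3$ — I would instead invoke uniqueness of base-$N$-type expansions more carefully, or simply pick $\alpha_i=t^{c_i}$ with $c_1,\ldots,c_n$ a strictly increasing sequence of integers that is "sum-free enough" (e.g.\ $c_i=3^i$), so that no relation with coefficients in $\{0,\pm1,\pm2\}$ among distinct subsets can hold. Verifying this last combinatorial point — that with $c_i=3^i$ none of the equalities in (a)--(d) can occur among the $\beta_i=t^{N c_i}$ — is the only real computation, and it is immediate from the fact that in base $3$ a number has a unique signed-digit representation with digits in $\{-1,0,1\}$, which accommodates the sums of at most three terms appearing in (c) and (d) after cancelling. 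Once such $\gamma=\diag(t^{c_1},\ldots,t^{c_n})$ is in hand, Lemma \ref{Rprop} gives $f(\gamma^N)\neq0$ in $\overline{L(t)}$, so the morphism $\gamma\mapsto f(\gamma^N)$ is not identically zero over $\overline{L(t)}$, hence not over $L$ either. This completes the argument.
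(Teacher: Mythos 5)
Your argument is correct, but it runs on a different mechanism than the paper's proof, so a comparison is worthwhile. The paper also reduces to the diagonal torus $T\subset\GL_{n,L}$, but it never exhibits a point: it notes that each factor of the product (\ref{Rprod}) is a nonzero function on the irreducible variety $T$, so $f|_T$ is nonzero, and then precomposes with the power morphism $T\to T$, $\gamma\mapsto\gamma^{nN}$, which is surjective (on points of an algebraic closure, since $\bar L^\times$ is divisible); nonvanishing of $\gamma\mapsto f(\gamma^{nN})$ then yields the claim for the exponent $N$ by substituting $\gamma^n$. You instead produce an explicit witness after a base change to $\overline{L(t)}$ --- a legitimate move, since the defining regular function does not change under field extension, and one that is genuinely needed in your approach because over $\overline{\mathbb{F}}_p$ every element is a root of unity and multiplicative independence is unavailable; the matrix $\diag(t^{3},t^{9},\ldots,t^{3^n})$ then has $N$-th power whose eigenvalues satisfy none of the relations (a)--(d) of Lemma \ref{Rprop}. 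So both proofs rest on the same two pillars (reduction to diagonal matrices and the eigenvalue-relation description of $f$), but you trade the paper's surjectivity-of-the-power-map step for an explicit multiplicative-independence computation: more hands-on, at the cost of an auxiliary transcendental and a small combinatorial check. One repair in that check: relation (b) of Lemma \ref{Rprop} contributes an exponent $-2$, so the relations you must exclude have coefficients in $\{0,\pm1,\pm2\}$, which is not covered by uniqueness of signed base-$3$ digits in $\{-1,0,1\}$; the exclusion nevertheless holds for $c_i=3^i$ by the usual domination estimate, namely if $j$ is the top index occurring then $2(3+9+\cdots+3^{j-1})=3^{j}-3<3^{j}\leq |m_j|\,3^j$, so no nonzero combination with such coefficients can vanish. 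With that one-line adjustment your proof is complete.
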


\begin{proof}
Let $T\subset\GL_{n,L}$ be the subgroup of diagonal matrices. Then none of the factors in (\ref{Rprod}) is identically zero on $T$; hence $f$ is not identically zero on $T$. Since the morphism $T\to T$, $\gamma\mapsto \gamma^{nN}$ is surjective, it follows that $\gamma\mapsto f(\gamma^{nN})$ is not identically zero on $T$, and hence not on $\GL_{n,L}$. This implies the desired conclusion.
\end{proof}

%\begin{lem}\label{fgh_on_sl}
%For any field $L$ and any integer $N \geq 1$, the morphism 
%$$\SL_{n,L} \to \mathbb{A}_L^1,\ g\mapsto R(g^N)$$ 
%is not identically zero.
%\end{lem}
%
%\begin{proof}
%Let $T\subset\GL_{n,L}$ be the subgroup of diagonal matrices. Then none of the factors in (\ref{Rprod}) is identically zero on $T$; hence $R$ is not identically zero on $T$. Since the morphism $T\to T$, $g\mapsto g^{nN}$ is surjective, it follows that $g\mapsto R(g^{nN})$ is not identically zero on $T$, and hence not on $\GL_{n,L}$.
%
%Next, the construction of $R$ implies that $R(\lambda g)=\lambda^dR(g)$ for any scalar $\lambda$, where $d$ is an integer depending only on $n$. Thus 
%$$R\bigl( (\det(g)^{-1}g^n)^N \bigr) = \det(g)^{-dN}R(g^{nN})$$ 
%for all $g\in\GL_{n,L}$. By what we have just proved, the right hand side is not identically zero on $\GL_{n,L}$. Since $h:=\det(g)^{-1}g^n \in \SL_{n,L}$, it follows that the morphism $h\mapsto R(h^N)$ is not identically zero on $\SL_{n,L}$, as desired.
%\end{proof}

%%%%%%%%%%%%%%%%%%%%%%%%%%%%%%%%%%%%%%%%%%%%%%%%%%%%%%%%%%%%%%%%%%%%%%

\subsection{Linear algebraic groups}

\begin{thm} \label{LAG_main}
Let $n\geq2$, let $L$ be an algebraically closed field, and let $G$ be a connected semisimple linear algebraic subgroup of $\SL_{n,L}$. Assume that the tautological representation of $G$ on $L^n$ is irreducible and the morphism $f$ from (\ref{Rdef}) does not vanish identically on $G$. Then $G = \SL_{n,L}$.
\end{thm}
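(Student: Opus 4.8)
The plan is to analyze $G$ through its root system relative to a maximal torus and apply Theorem~\ref{mainroot}. Fix a maximal torus $T \subset G$; since $G$ is semisimple and connected, the action of $T$ on $L^n$ decomposes into weight spaces, and we let $S \subset X^*(T) \otimes \mathbb{R}$ denote the set of weights occurring (each listed once as a set). The group $G$ acts irreducibly on $L^n$, so the Weyl group $W$ of $(G,T)$ permutes $S$ transitively — indeed $W$ acts transitively on the extreme weights and irreducibility forces a single $W$-orbit of weights in the relevant setting once we note the representation is (up to the trivial central issues) generated by one orbit; more precisely the set of weights spans $X^*(T)\otimes\mathbb{R}$ because $G \subset \SL_n$ has a faithful representation, giving condition (a) of Subsection~\ref{21root}. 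The key observation is that the hypothesis that $f$ does not vanish identically on $G$ translates, via Lemma~\ref{Rprop} applied to a generic element of $T$ (whose eigenvalues on $L^n$ are exactly the characters in $S$ evaluated at that element, with multiplicities), into the statement that $S$ consists of \emph{distinct} weights and satisfies conditions (b) and (c): no relation $\lambda_1+\lambda_2 = \lambda_3+\lambda_4$ or $\lambda_1+\lambda_2+\lambda_3 = \lambda_4+\lambda_5+\lambda_6$ among distinct elements of $S$ (writing the weight lattice additively, a multiplicative relation among eigenvalues valid generically on $T$ is exactly an additive relation among characters).

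Granting this, Theorem~\ref{mainroot} applies to the root system $\Phi$ of $(G,T)$ with the $W$-orbit $S$: it is simple of type $A_\ell$, and $S = \{ce_i \mid 0 \le i \le \ell\}$ for some $c \ne 0$ in standard coordinates on $\mathbb{R}^{\ell+1}/\diag(\mathbb{R})$. Since $|S| = \ell+1$ and the weights are distinct and the representation has dimension $n$, we get $n = \ell+1$, so $\Phi$ has type $A_{n-1}$; thus $G$ is, up to isogeny, of the same type as $\SL_{n,L}$, and its tautological $n$-dimensional representation has weights exactly those of the standard representation of $\SL_n$ (after the scaling $c$, which must be $1$ since $G \subset \SL_{n,L}$ acts through its actual characters, not a multiple). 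Hence the embedding $G \hookrightarrow \SL_{n,L}$ identifies the root datum of $G$ with a sub-root-datum of that of $\SL_n$ having the same root system and whose character lattice contains the standard weights; since $\SL_n$ is the simply connected group of type $A_{n-1}$ realizing the standard representation, and $G$ surjects onto a quotient containing all these weights, a comparison of the derived groups and centers forces $G = \SL_{n,L}$.

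The main obstacle I expect is the bookkeeping around \emph{which} $W$-orbit of weights to feed into Theorem~\ref{mainroot}, and ensuring condition (a). The set of all weights of an irreducible representation need not be a single $W$-orbit, so one should work with the set of \emph{extreme} weights (the $W$-orbit of the highest weight), verify it spans the character space (using faithfulness of $G \to \SL_n$ together with the fact that a semisimple group is generated by $T$ together with root subgroups, and that any character trivial on all extreme weights would be trivial — this needs a short argument, perhaps replacing $S$ by the full weight set and checking conditions (b),(c) still follow from $f \not\equiv 0$, since any weight of the representation still appears among the eigenvalues of a generic torus element). The cleanest route is probably: let $S$ be the full set of weights with multiplicity recorded, note $f \not\equiv 0$ on $G$ implies $f \not\equiv 0$ on $T$, hence for generic $t \in T$ none of conditions (a)–(d) of Lemma~\ref{Rprop} holds for the eigenvalues of $t$ on $L^n$; condition (a) failing generically says all weights are distinct (so multiplicity one, $|S| = n$), and (b),(c),(d) failing generically give conditions (b),(c) of Theorem~\ref{mainroot} for the orbit $S$ — but one must still check $S$ is a single $W$-orbit, which now follows because distinct-multiplicity-one weights of an irreducible representation, together with $W$ acting on them and connectedness, must form one orbit (an irreducible representation whose weights split into $\ge 2$ orbits would decompose). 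Then condition (a) of Subsection~\ref{21root} is the span condition, which holds because a character of $T$ trivial on all weights would make the representation factor through a proper quotient torus, contradicting faithfulness of $G \hookrightarrow \SL_n$ on the identity component of the center. Once these points are pinned down the rest is a direct invocation of Theorem~\ref{mainroot} plus the classification of groups isogenous to $\SL_n$ inside $\SL_n$ via the standard representation.
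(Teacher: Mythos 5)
Your overall strategy---maximal torus, weight set, reduction to Theorem \ref{mainroot}---is the same as the paper's, and your translation of ``$f$ does not vanish on $G$'' into weight conditions via density of the conjugates of $T$ and Lemma \ref{Rprop} is correct. But two steps do not hold up. The lesser one: the full weight set of an irreducible representation need not be a single $W$-orbit, and your proposed rescue (``distinct multiplicity-one weights of an irreducible representation must form one orbit, else it would decompose'') is false; the irreducible $3$-dimensional representation of $\SL_2$ in characteristic $0$ has the multiplicity-free weights $2,0,-2$, which fall into two orbits. The paper sidesteps this by applying Theorem \ref{mainroot} not to the full weight set $S$ but to the orbit $W\lambda$ of the highest weight: conditions (b),(c) for $W\lambda$ are inherited from $S$, and $W\lambda$ spans $E$ because $S$ lies in the convex hull of $W\lambda$. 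This gap is repairable along those lines.

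The serious gap is the conclusion $c=1$, which you assert rather than prove (``which must be $1$ since $G\subset\SL_{n,L}$ acts through its actual characters, not a multiple''). Knowing $W\lambda=\{ce_i\}$ only pins down the highest weight as $ce_0$ (up to duality), and excluding $c>1$ is where the real work lies, especially in characteristic $p$. If $c=p^m$ (a Frobenius-twisted standard highest weight), the orbit $\{p^m e_i\}$ satisfies all of the conditions (a)--(d), so no weight relation rules it out; the paper excludes $p\mid c$ by Steinberg's tensor product theorem together with the fact that the resulting factorization of $\SL_{\ell+1}\twoheadrightarrow G\subset\SL_{n,L}$ through ${\rm Frob}_p$ is incompatible with a central isogeny (and for $c=a+pb$ with $a,b>0$ it produces four distinct weights $\lambda_{ij}=ae_i+pbe_j$ with $\lambda_{00}+\lambda_{11}=\lambda_{01}+\lambda_{10}$, violating the four-term condition). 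Then, for $0<c\leq p-1$, one still needs the intermediate weights $(c-i)e_0+ie_1$, $0\le i\le c$, to actually occur in $S$---this is Lemma \ref{postmet}, which uses the $A_1$-subgroup attached to $e_0-e_1$ and Premet's theorem on saturation of weights---before the relation $ce_0+ce_1=((c-1)e_0+e_1)+(e_0+(c-1)e_1)$ (or $\lambda_1+\lambda_2=2\lambda_3$ when $c=2$) contradicts $f\not\equiv0$. Nothing in your write-up substitutes for Lemmas \ref{premet} and \ref{postmet}, so the case $c>1$ is genuinely open in your argument; once $c=1$ is in hand, your final identification $G=\SL_{n,L}$ is fine.
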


\begin{proof}
% Assume that the conditions in Theorem \ref{LAG_main} are satisfied. 
Let $T$ be a maximal torus of $G$, let $E=X^*(T)\otimes{\mathbb R}$ be the associated character space, let $\Phi\subset E$ be the root system of $G$ with respect to $T$, and let $W$ denote the Weyl group of $\Phi$. The assumption $n\geq2$ and the irreducibility implies that $G$ and hence $\Phi$ is non-trivial.

Let $S\subset E$ be the set of weights of $T$ in the given representation on $L^n$. The fact that the representation is faithful implies that $S$ generates $E$. Let $\lambda\in S$ denote the highest weight of the representation, and let $W\lambda\subset S$ denote its orbit under $W$. Then $S$ is contained in the convex closure of $W\lambda$;
% In the articles I have searched that is usually called "well-known"...
hence $W\lambda$ also generates $E$. 

Next, since the conjugates of $T$ form a Zariski dense subset of $G$, and $f$ does not vanish identically on $G$, it follows that $f$ does not vanish identically on $T$. From this we conclude that
\begin{enumerate}
\item[(a)] none of the weights $\lambda\in S$ has multiplicity $>1$;
\item[(b)] there are no distinct elements $\lambda_1,\lambda_2,\lambda_3 \in S$ such that $\lambda_1+\nobreak\lambda_2\allowbreak=2\lambda_3$;
\item[(c)] there are no distinct elements $\lambda_1,\ldots,\lambda_4 \in S$ such that $\lambda_1+ \lambda_2=\lambda_3+\lambda_4$;
\item[(d)] there are no distinct elements $\lambda_1,\ldots,\lambda_6 \in S$ such that $\lambda_1+ \lambda_2+\lambda_3=\lambda_4+\lambda_5+\lambda_6$;
\end{enumerate}
because by Lemma \ref{Rprop} any one of these relations would imply that $f|T=0$.

In particular, the assumptions of Theorem \ref{mainroot} are satisfied for $\Phi$ and the orbit $W\lambda$. It follows that $\Phi$ is simple of type $A_\ell$ for some $\ell\geq 1$ and that $W\lambda = \{ c e_i \mid 0 \leq i \leq \ell\}$ in standard notation for some constant $c \neq 0$.
Since $W\lambda$ consists of weights, $c$ is an integer. Let us use the standard ordering of $A_\ell$, where the simple roots are $e_{i-1}-e_i$ for all $1\leq i\leq\ell$. The fact that $\lambda$ is dominant then implies that $\lambda=ce_0$ with $c >0$, or $\lambda=ce_\ell$ with $c<0$. These two cases correspond to dual representations which are interchanged by the outer automorphism of $A_\ell$; hence we may assume that $\lambda=ce_0$ and $c>0$.

\begin{lem} \label{premet}
Suppose that $L$ has characteristic $p>0$. Then $0 < c \leq p-1$.
\end{lem}

\begin{proof}
For any integer $d\ge0$ let $V_{d}$ denote the irreducible representation of $\SL_{\ell+1,L}$ with highest weight $de_0$. We know already that there exists a central isogeny $\SL_{\ell+1,L}\twoheadrightarrow G$, such that the pullback of the given representation on $L^n$ is isomorphic to $V_{c}$. Write $c=a+pb$ with integers $0\leq a\leq p-1$ and $b\geq0$. Then by Steinberg's Tensor Product Theorem (cf.\ \cite{HumphMod}, Theorem 2.7) we have $V_{c} \cong V_{a} \otimes V_{b}^{(p)}$, where $(\ )^{(p)}$ denotes the pullback under the absolute Frobenius morphism ${\rm Frob}_p$, which on coordinates is given by $x\mapsto x^p$.

If $a=0$, it follows that the homomorphism $\SL_{\ell+1,L} \twoheadrightarrow G \subset \SL_{n,L}$ factors through ${\rm Frob}_p$, which is not a central isogeny. We must therefore have $a>0$. Suppose that $b>0$. Then the $ae_i$ for $0\leq i\leq\ell$ are distinct weights of $V_{a}$, and the $be_j$ for $0\leq j\leq\ell$ are distinct weights of $V_{b}$; hence the $\lambda_{ij} := ae_i+pbe_j$ for $0\leq i,j\leq\ell$ are distinct weights of $V_{c}$. In other words, the $\lambda_{ij}$ for $0\leq i,j\leq\ell$ are distinct elements of $S$. Since $\lambda_{00}+\lambda_{11}=\lambda_{01}+\lambda_{10}$, this contradicts the property (c) above. We must therefore have $b=0$ and so $0<c\leq p-1$, as desired.
\end{proof}

\begin{lem} \label{postmet}
For all $0\leq i\leq c$ we have $(c-i)e_0+ie_1\in S$.
\end{lem}

\begin{proof}
Consider the simple root $\alpha:=e_0-e_1$, and let $U_{\pm\alpha}\subset G$ denote the two root subgroups, isomorphic to ${\mathbb G}_{a,L}$ and normalized by $T$, corresponding to $\pm\alpha$. Let $H_\alpha\subset G$ denote the subgroup generated by $T$ and $U_{\alpha}$ and $U_{-\alpha}$, whose semisimple part has root system $\{\pm\alpha\}$ of type $A_1$. For any weight $\mu$ let $V_\mu \subset L^n$ denote the associated weight space, and recall that the highest weight is $\lambda=ce_0$. Then the subspace $\bigoplus_{i \in \IZ} V_{ce_0-i\alpha}$ is $H_\alpha$-invariant and irreducible with highest weight $ce_0$ by \cite{Jan}, Part II, Proposition 2.11. 

If $L$ has characteristic $0$, by classical results the representation of the Lie algebra of $H_\alpha$ on this subspace is irreducible with highest weight $ce_0$. If $L$ has characteristic $p>0$, we have $0< c \leq p-1$ by Lemma \ref{premet}, and so the same conclusion holds by \cite{Premet}, Theorem 1.
{}From \cite{HumLie}, Proposition 21.3, it follows that the set of weights of this representation is saturated; in other words these weights are $ce_0-i\alpha$ for all $0\leq i\leq 2(ce_0,\alpha)/(\alpha,\alpha) = c$. They therefore appear in $S$, as desired.
\end{proof}

In particular, Lemma \ref{postmet} implies that $S$ contains the elements
\begin{eqnarray*}
\lambda_1 := ce_0, && \lambda_3 := (c-1)e_0+e_1, \\
\lambda_2 := ce_1, && \lambda_4 := e_0+(c-1)e_1.
\end{eqnarray*}
If $c\ge3$, these elements are all distinct. If $c=2$, we have $\lambda_3=\lambda_4$, but $\lambda_1,\lambda_2,\lambda_3$ are all distinct. Since $\lambda_1+ \lambda_2=\lambda_3+\lambda_4$, we obtain a contradiction to the above property (c) if $c\geq3$, respectively to (b) if $c=2$. We must therefore have $c=1$.

But then $G\cong\SL_{\ell+1,L}$ and the given representation is isomorphic to the standard representation on $L^{\ell+1}$. Thus $\ell+1=n$ and so $G=\SL_{n,L}$, as desired. This finishes the proof of Theorem \ref{LAG_main}.
\end{proof}

%%%%%%%%%%%%%%%%%%%%%%%%%%%%%%%%%%%%%%%%%%%%%%%%%%%%%%%%%%%%%%%%%%%%%%

\subsection{Finite groups of Lie type}

In this subsection $L$ denotes an algebraically closed field of characteristic $p > 0$.

Let $G$ be a simply connected simple semisimple linear algebraic group over $L$. A surjective endomorphism $F: G \rightarrow G$ whose group of fixed points $G^F$ is finite is called a \emph{Frobenius map} on $G$. For any such $F$, any non-abelian finite simple group isomorphic to a Jordan-H\"{o}lder constituent of $G^F$ is called a \emph{finite simple group of Lie type in characteristic $p$}.

A few small finite simple groups of Lie type have idiosyncrasies that we avoid with the following ad hoc definition. Denote the center of a group $H$ by $Z(H)$.

\begin{defn}\label{regular}
Let $\Gamma$ be a finite simple group of Lie type in characteristic $p$. We call $\Gamma$ \emph{regular} if there exist $G$ and $F$ as above such that 
\begin{enumerate}
\item[(a)] $\Gamma \cong G^F/Z(G^F)$,
\item[(b)] $G^F$ is perfect, and 
\item[(c)] $G^F$ is the universal central covering of $\Gamma$ as an abstract group. 
\end{enumerate}
\end{defn}

\begin{prop}\label{Struct_fgolt} 
Up to isomorphism, there are only finitely many finite simple groups of Lie type, in any characteristic, that are not regular.
\end{prop}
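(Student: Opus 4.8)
The plan is to reduce the statement to the well-known classification of finite simple groups of Lie type and to cite the (finite) list of exceptional isomorphisms and coincidences that prevent conditions (a)--(c) of Definition \ref{regular} from holding simultaneously for some choice of $G$ and $F$. Concretely, a finite simple group of Lie type $\Gamma$ in characteristic $p$ is by definition a Jordan--H\"older constituent of $G^F$ for some simply connected simple $G$ and some Frobenius map $F$; the classification tells us that, with finitely many exceptions, $\Gamma$ is isomorphic to $G^F/Z(G^F)$ for \emph{every} choice of simply connected $G$ realizing it, so condition (a) holds. So the content is really about conditions (b) and (c).

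First I would recall the standard facts: for $G$ simply connected and $F$ a Frobenius map, the group $G^F$ is perfect except for an explicit finite list of small cases, namely (in Lie-theoretic notation) $\mathrm{SL}_2(2)$, $\mathrm{SL}_2(3)$, $\mathrm{SU}_3(2)$, $\mathrm{Sp}_4(2)$, $G_2(2)$, ${}^2B_2(2)$, ${}^2G_2(3)$, ${}^2F_4(2)$, and the degenerate cases in very small rank over $\mathbb{F}_2$ and $\mathbb{F}_3$; this is the Tits list and is recorded e.g. in Steinberg's lecture notes or in \cite{L-P} and the references therein. Since each such exceptional $G^F$ has bounded order, it contributes only finitely many $\Gamma$. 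Excluding these, $G^F$ is perfect, so (b) holds.

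Next, condition (c) concerns the universal central covering (Schur cover) of $\Gamma$ as an abstract group. When $G^F$ is perfect it is a central extension of $\Gamma=G^F/Z(G^F)$, so there is a canonical surjection from the universal central extension of $\Gamma$ onto $G^F$; condition (c) asks that this be an isomorphism, i.e. that the Schur multiplier of $\Gamma$ be exactly $Z(G^F)$. For $G$ simply connected this is the ``generic'' statement about Schur multipliers of finite groups of Lie type, and it holds with only finitely many exceptions --- the list of finite simple groups of Lie type with exceptional Schur multiplier is classical and finite (see the Atlas, or Steinberg, or Griess--Hiss). I would therefore simply invoke that finite list: for all but finitely many $\Gamma$, choosing $G$ simply connected of the appropriate type makes $G^F$ the full Schur cover, so (c) holds. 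Combining the three finite exceptional sets yields the proposition.

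The main obstacle here is not mathematical depth but bookkeeping: one must be careful that (a), (b), and (c) can be satisfied \emph{simultaneously} by a \emph{single} pair $(G,F)$, rather than by three different choices, and one must handle the groups admitting two different Lie-type descriptions (the exceptional isomorphisms such as $\mathrm{PSL}_2(7)\cong\mathrm{PSL}_3(2)$, $\mathrm{PSL}_4(2)\cong A_8$, $\mathrm{PSp}_4(3)\cong\mathrm{PSU}_4(2)$, etc.) so that a bad choice of realization for a small group is not mistaken for a genuine obstruction. Since there are only finitely many such coincidences and only finitely many small groups involved, one can simply absorb all of them into the finite exceptional set; the cleanest write-up is to state that the union of (i) the Tits list of non-perfect $G^F$, (ii) the list of Lie-type simple groups with non-generic Schur multiplier, and (iii) the finitely many groups affected by exceptional isomorphisms, is finite, and that every $\Gamma$ outside this union is regular, with $G$ taken simply connected of any type realizing $\Gamma$.
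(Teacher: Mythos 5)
Your proposal is correct and follows essentially the same route as the paper: the paper likewise removes the finitely many $(G,F)$ for which $G^F$ fails to be perfect with simple central quotient (citing Gorenstein--Lyons--Solomon, Theorem 2.2.7, in place of your Tits list) and the finitely many $\Gamma$ with exceptional Schur multiplier (their Theorem 6.1.4 and Table 6.1.3 in place of your Atlas/Steinberg references), and then observes that outside these finite lists the perfect group $G^F$ with $M(\Gamma)\cong Z(G^F)$ is automatically the universal central cover, so conditions (a)--(c) hold simultaneously for that single pair $(G,F)$.
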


\begin{proof} 
Suppose that $\Gamma$ is a non-abelian Jordan-H\"{o}lder constituent of $G^F$.
% for a simply connected simple semisimple linear algebraic group $\Gamma$ over $L$ and a Frobenius map $F: G \rightarrow G$.
Since $G$ is simply connected, by \cite{GoLySo}, \nolinebreak Theorem 2.2.6 \nolinebreak (f) the group $G^F$ is generated by elements whose order is a power of $p$. We can therefore apply \cite{GoLySo}, Theorem 2.2.7, to $G^F$. The first part of this theorem says that, with finitely many exceptions up to isomorphism, $G^F/Z(G^F)$ is non-abelian simple. It is therefore isomorphic to $\Gamma$. The second part says that, with the same exceptions as in the first part, the group $G^F$ is perfect.

As $\Gamma$ is simple and hence perfect, by \cite{GoLySo}, Theorem 5.1.2, it possesses a universal central covering $\Gamma^c\twoheadrightarrow\Gamma$ which is unique up to isomorphism. Its kernel $M(\Gamma)$ is called the \emph{Schur multiplier} of $\Gamma$. 
By \cite{GoLySo}, Theorem 6.1.4, after removing another finite number of exceptions up to isomorphism (these are listed in Table 6.1.3), the Schur multiplier $M(\Gamma)$ is isomorphic to $Z(G^F)$. Since $G^F$ is already perfect with $G^F/Z(G^F)\cong\Gamma$, this implies that $G^F$ is the universal central covering of $\Gamma$. 
%By (1) the group $G^F$ is a central extension of $\Gamma$; there exists therefore a uniquely determined homomorphism $\alpha : \Gamma^c \rightarrow G^F$ such that the following diagram commutes: $$\xymatrix{   \Gamma^c \ar[r]^-{\alpha}  \ar@{->>}[d]&  G^F \ar @{->>}[dl] \\ \Gamma &}$$ From $M(\Gamma) \cong Z(G^F)$ it follows that $\alpha$ is injective and that $\Gamma^c$ and $G^F$ have the same cardinality. Consequently $\alpha$ is an isomorphism between $G^F$ and the universal central covering of $\Gamma$.
Then $\Gamma$ is regular, and the proposition follows.
\end{proof}

The next result is a direct consequence of the stronger statements of \cite{HumphMod}, Theorems 2.11 and 20.2.

\begin{prop}\label{Irred_fgolt}
%Let $G$ be a simply connected simple algebraic group, $F: G \rightarrow G$ a Frobenius map 
Let $G$ and $F$ be as above, and let $\rho: G^F \rightarrow \SL_n(L)$ be an irreducible representation on the vector space $L^n$. Then $\rho$ is the restriction to $G^F$ of an irreducible algebraic representation $\rho_G : G \rightarrow \SL_{n,L}$.
\end{prop}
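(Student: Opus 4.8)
The plan is to deduce this from Steinberg's classification of the irreducible modular representations of a finite group of Lie type in its defining characteristic, which is precisely the content of the two cited theorems. Since $\rho$ is irreducible, the vector space $L^n$ with the $G^F$-action through $\rho$ is an irreducible module over the group algebra $L[G^F]$. By Steinberg's theorem — see \cite{HumphMod}, Theorem 2.11, for the Frobenius maps $F$ of standard type, and \cite{HumphMod}, Theorem 20.2, for the exceptional very twisted maps producing the Suzuki and Ree groups — every irreducible $L[G^F]$-module is the restriction to $G^F$ of an irreducible rational representation $L(\lambda)$ of $G$, where $\lambda$ ranges over the set of restricted dominant weights appropriate to $F$. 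Hence there exist an irreducible rational representation $\sigma: G \to \GL_{n,L}$ and an element $j \in \GL_n(L)$ with $j\,\sigma(g)\,j^{-1} = \rho(g)$ for all $g \in G^F$. Setting $\rho_G(g) := j\,\sigma(g)\,j^{-1}$ for all $g \in G$ then defines an algebraic representation $\rho_G: G \to \GL_{n,L}$ which is irreducible (being conjugate to $\sigma$) and restricts to $\rho$ on $G^F$.

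It remains to check that $\rho_G$ takes values in $\SL_{n,L}$. Composing with the determinant yields a character $\det\circ\rho_G: G \to \mathbb{G}_{m,L}$; but $G$ is simply connected and semisimple, hence coincides with its derived group and so admits no non-trivial characters. Therefore $\det\circ\rho_G$ is trivial, $\rho_G$ factors through $\SL_{n,L}$, and we obtain the desired $\rho_G: G \to \SL_{n,L}$ restricting to $\rho$.

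Everything here is formal once Steinberg's theorem is granted, so the only real point — and the reason both cited theorems are needed — is to invoke that theorem in sufficient generality: it must apply to every Frobenius map $F$ occurring in our setup, including the Suzuki and Ree cases covered by \cite{HumphMod}, Theorem 20.2. For the present purpose only the surjectivity half of Steinberg's theorem (that each irreducible $L[G^F]$-module comes by restriction from $G$) is used; the accompanying parametrization of these modules by restricted dominant weights, and their tensor-product description, are not needed.
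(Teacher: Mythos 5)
Your argument is correct and matches the paper's, which likewise derives the proposition directly from Steinberg's restriction theorem as stated in \cite{HumphMod}, Theorems 2.11 and 20.2; you merely spell out the equivalence-versus-equality adjustment by conjugation and the reduction from $\GL_{n,L}$ to $\SL_{n,L}$ via the triviality of characters of the semisimple group $G$, both of which are fine.
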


Now we can state our analogues of Theorem \ref{LAG_main}.

\begin{thm}\label{finite_main}
Let $n\geq2$, and let $\Gamma$ be a finite subgroup of $\SL_n(L)$ that acts irreducibly on $L^n$. Assume that $\Gamma$ is perfect and that $\Gamma/Z(\Gamma)$ is a direct product of finite simple groups of Lie type in characteristic $p$ that are regular in the sense of Definition \ref{regular}. Assume moreover that the map $\Gamma\to L$, $\gamma\mapsto f(\gamma)$ is not identically zero. Then there exist a finite subfield $k'$ of $L$ and a model $G'$ of $\SL_{n,L}$ over $k'$ such that $\Gamma=G'(k').$
\end{thm}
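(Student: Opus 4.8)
The plan is to reduce the statement about the finite group $\Gamma$ to the algebraic-group result of Theorem \ref{LAG_main}, applied to each simple factor of $\Gamma/Z(\Gamma)$, and then to use the theory of algebraic groups over finite fields together with Lang's theorem to descend from an algebraically closed to a finite field of definition. First I would unwind the hypotheses: since $\Gamma$ is perfect and $\Gamma/Z(\Gamma) = \prod_i S_i$ is a direct product of regular finite simple groups of Lie type in characteristic $p$, Definition \ref{regular} provides for each $i$ a simply connected simple group $G_i$ over $L$ and a Frobenius map $F_i$ such that $S_i \cong G_i^{F_i}/Z(G_i^{F_i})$, with $G_i^{F_i}$ perfect and equal to the universal central covering of $S_i$. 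Because $\Gamma$ is a perfect central extension of $\prod_i S_i$, it is a quotient of the universal central extension $\prod_i G_i^{F_i}$; dividing out the part of the center that acts trivially, we get a surjection $\pi: \prod_i G_i^{F_i} \twoheadrightarrow \Gamma \subset \SL_n(L)$ with central kernel.

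Next I would analyze the representation. The tautological representation $L^n$ of $\Gamma$ is irreducible by hypothesis, hence so is its pullback along $\pi$ to $\prod_i G_i^{F_i}$; an irreducible representation of a direct product is an external tensor product $\bigboxtimes_i V_i$ of irreducible representations $V_i$ of the factors $G_i^{F_i}$. By Proposition \ref{Irred_fgolt}, each $V_i$ extends to an irreducible algebraic representation of $G_i$, so the whole representation extends to an irreducible algebraic representation $\rho_G: G := \prod_i G_i \to \SL_{n,L}$, whose restriction to $\prod_i G_i^{F_i}$ is the pullback of the tautological one. Let $\bar G := \rho_G(G)$, a connected semisimple subgroup of $\SL_{n,L}$ acting irreducibly on $L^n$. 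The hypothesis that $f$ is not identically zero on $\Gamma$ means $f$ is not identically zero on the Zariski-dense subgroup $\pi(\prod_i G_i^{F_i})$ of $\bar G$, hence not identically zero on $\bar G$. Theorem \ref{LAG_main} then forces $\bar G = \SL_{n,L}$. In particular the number of factors is $1$: a nontrivial tensor product decomposition would make $\SL_{n,L}$ a quotient of a product of two or more positive-dimensional groups with the tensor representation, which is impossible since $\SL_n$ is simple and its standard representation is not a tensor product. So $\Gamma/Z(\Gamma)$ is a single simple group of Lie type, $G = G_1$ is simply connected simple with a Frobenius $F$, and $\rho_G: G \to \SL_{n,L}$ is a central isogeny identifying $G$ with the simply connected cover of $\SL_{n,L}$, i.e. $G = \SL_{n,L}$ itself (as $\SL_n$ is already simply connected) and $\rho_G$ is an automorphism, which we may take to be the identity after conjugating.

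Now I would set up the descent. We have $\Gamma = \rho_G(G^F) \cdot \pi(Z) $, but since $\rho_G$ is an isomorphism and $G^F$ is perfect equal to its own universal central cover, in fact $\pi$ factors as $G^F = \SL_n(L)^F \to \Gamma$, an isomorphism onto its image, and $\Gamma$ is exactly $\rho_G(G^F)$. Write $F$ as the composite of a standard $q$-power Frobenius $\mathrm{Frob}_q$ (defining a model of $\SL_n$ over $\mathbb{F}_q$) with an automorphism of $\SL_{n,L}$; the inner part of this automorphism can be absorbed by conjugating $\Gamma$ by a suitable element (using that $H^1(F,\,\mathrm{inner}) $ is controlled by Lang's theorem on the connected group $\mathrm{PGL}_n$), leaving $F = \sigma \circ \mathrm{Frob}_q$ where $\sigma$ is either trivial or the graph automorphism. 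Either way $F$ is the Frobenius of an $\mathbb{F}_q$-form $G'$ of $\SL_{n,L}$ — the split form $\SL_n$ if $\sigma$ is trivial, the quasi-split special unitary group $\mathrm{SU}_n$ if $\sigma$ is the graph automorphism — and by construction $\Gamma = G^F = G'(\mathbb{F}_q)$, sitting inside $\SL_n(L)$ via its given faithful embedding. Taking $k' := \mathbb{F}_q$ (enlarged if necessary to contain the field over which the conjugating element and the form are defined, which is harmless since $G'(\mathbb{F}_{q}) \subset G'(k')$ with equality after the appropriate conjugation since only finitely many fixed points exist), we obtain a finite subfield $k' \subset L$ and a model $G'$ of $\SL_{n,L}$ over $k'$ with $\Gamma = G'(k')$, as required.

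The main obstacle I anticipate is the bookkeeping in the descent step: disentangling the "abstract" isomorphism $\Gamma \cong G^F/Z$ of Definition \ref{regular} from the given concrete embedding $\Gamma \hookrightarrow \SL_n(L)$, and in particular checking that the algebraic automorphism realizing the irreducible extension can be chosen so that $F$ becomes an honest Frobenius of a rational model rather than merely a surjective endomorphism with finite fixed-point set. This requires care with the classification of surjective endomorphisms of $\SL_{n,L}$ — they are compositions of standard Frobenii, graph automorphisms, and inner automorphisms — and with the fact that the central kernel of $\rho_G$ vanishes because $\SL_n$ is simply connected, so that $G^F$ really does map isomorphically to $\Gamma$ rather than just up to a central quotient (this is exactly where the "regular" hypothesis that $G^F$ is the universal central covering is used). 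A secondary point to handle cleanly is ruling out multiple tensor factors; invoking simplicity of $\SL_n$ together with the structure of its irreducible representations suffices, but it should be stated explicitly.
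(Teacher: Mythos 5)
Your overall route is the paper's: lift to the product of universal central covers supplied by regularity, decompose the pulled-back representation as an exterior tensor product, algebraize each factor by Proposition \ref{Irred_fgolt}, apply Theorem \ref{LAG_main} to the image to force it to be $\SL_{n,L}$, deduce that there is a single factor and $G\cong\SL_{n,L}$, and finally recognize $\Gamma$ as the fixed points of a Frobenius map coming from a model over a finite field. The genuine gap is the assertion that $\rho_G\colon G\cong\SL_{n,L}\to\SL_{n,L}$ is a \emph{central} isogeny, hence an automorphism that may be conjugated to the identity, ``because $\SL_n$ is simply connected''. Simple connectedness of the target only forces the kernel of this isogeny to have trivial \'etale part; it does not rule out an infinitesimal kernel. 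And such a kernel really can occur here: the identification of $\Gamma/Z(\Gamma)$ with $G^F/Z(G^F)$ provided by Definition \ref{regular} is only an abstract isomorphism, so the composite $G^F\to\Gamma\subset\SL_n(L)$ may be a Frobenius twist of the standard representation (e.g.\ $\Gamma=\SL_n({\mathbb F}_{p^2})$ standardly embedded, but with the identification twisted by the field automorphism $x\mapsto x^p$). Then every algebraic extension furnished by Proposition \ref{Irred_fgolt} is the standard representation precomposed with a power of ${\rm Frob}_p$: it is bijective on $L$-points but is \emph{not} an automorphism of algebraic groups, so it cannot be ``taken to be the identity after conjugating'', and your subsequent analysis of $F$ (peeling off ${\rm Frob}_q$, absorbing the inner part via Lang's theorem) is then performed on the wrong side of the isogeny.

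The paper closes exactly this point by a different device: it never claims $\rho$ is an automorphism, but observes that the scheme-theoretic kernel of $\rho$ is contained in that of $\rho\circ F$, so there is an isogeny $F'$ of the target with $F'\circ\rho=\rho\circ F$; since $\rho$ is bijective on $L$-points it identifies $G^F$ with $\SL_{n,L}^{F'}$, whence $\Gamma=\SL_{n,L}^{F'}$, and then it quotes the classification (\cite{CarterShort}, Proposition 4.5) that every Frobenius map of $\SL_{n,L}$ is standard --- the fact you re-derive by hand with ${\rm Frob}_q$, the graph automorphism and Lang's theorem. You do flag this very issue in your closing paragraph (``checking that \dots\ $F$ becomes an honest Frobenius of a rational model''), but the body of the argument never resolves it; transporting $F$ across the possibly inseparable isogeny is the missing step, and with it the rest of your descent goes through. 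Two smaller points: a finite subgroup is never Zariski dense in a positive-dimensional group, but you only need that $f$ nonvanishing on the subset $\Gamma\subset\bar G$ implies $f$ nonvanishing on $\bar G$, which is trivial (and is all the paper uses); and the parenthetical about enlarging $k'$ at the end should be dropped, since enlarging the field changes $G'(k')$ --- after the Lang-theorem conjugation you must land on $\Gamma=G'({\mathbb F}_q)$ exactly.
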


\begin{proof}
Let $\overline{\Gamma}_1, \ldots, \overline{\Gamma}_m$ denote the simple factors of $\Gamma/Z(\Gamma)$ and $\Gamma_1,\ldots,\Gamma_m$ their inverse images in~$\Gamma$.
% For each $1 \leq i \leq\nobreak m$ let $\Gamma_i$ denote the derived group of the inverse image of $\overline{\Gamma}_i$ in $\Gamma$. 
Then the natural homomorphism $\Gamma_1\times\ldots\times\Gamma_m \twoheadrightarrow\Gamma$ is a central extension. By \cite{GoFinite}, Theorem 3.7.1, the pullback of the given irreducible representation on $L^n$ is the exterior tensor product of irreducible representations $\Gamma_i \to \GL_{n_i}(L)$ for certain ${n_i\geq1}$. In fact every $n_i\ge2$, because the corresponding projective representation of $\overline{\Gamma}_1\times\ldots\times\overline{\Gamma}_m$ is faithful.

For each $1 \leq i \leq m$ choose a simply connected simple semisimple linear algebraic group $G_i$ over $L$, a Frobenius map $F_i : G_i \rightarrow G_i$, and an isomorphism $G_i^{F_i}/Z(G_i^{F_i}) \cong \overline{\Gamma}_i$, such that $G_i^{F_i}$ is perfect and the universal central covering of~$\overline{\Gamma}_i$. By the last property the isomorphism lifts to a homomorphism $G_i^{F_i} \to\Gamma_i$. By Proposition \ref{Irred_fgolt} the composite homomorphism $G_i^{F_i} \to\Gamma_i \to \GL_{n_i}(L)$ is the restriction of some irreducible algebraic representation $\rho_i : G_i \rightarrow \GL_{n_i,L}$. Since $G_i$ is simple and $n_i\geq2$, the kernel of this homomorphism is finite.

Set $G := G_1\times\ldots\times G_m$. Then the exterior tensor product of the above $\rho_i$ defines an irreducible algebraic representation $\rho: G \to \GL_{n,L}$. By construction its kernel is finite; in other words it induces an isogeny $G\twoheadrightarrow\rho(G)$. Moreover, with the Frobenius map $F := F_1\times\ldots\times F_m$ on $G$ the homomorphism $\rho$ induces a homomorphism $G^F\to\Gamma$ lifting the given isomorphism
$$G^F/Z(G^F) \ =\ \prod_{i=1}^m G_i^{F_i}/Z(G_i^{F_i}) 
\ \cong\ \prod_{i=1}^m \overline{\Gamma}_i \ \cong\ \Gamma/Z(\Gamma).$$
As $\Gamma$ is perfect, it follows that $G^F\to\Gamma$ is surjective.

Since $G$ is a connected semisimple algebraic group, so is its image $\rho(G)$, which is therefore contained in $\SL_{n,L}$. Moreover, the tautological representation of $\rho(G)$ on $L^n$ is again irreducible. Furthermore, since by assumption the morphism $f$ does not vanish identically on the subgroup $\Gamma \subset \rho(G)$, it does not vanish identically on $\rho(G)$. By Theorem \ref{LAG_main} we therefore have $\rho(G) = \SL_{n,L}$.

In particular $\rho(G)$ is simple of type $A_{n-1}$, and so $G$ itself is simple of type~$A_{n-1}$. 
% Therefore $m=1$ and $G=G_1$. 
As $G$ is simply connected, it is therefore isomorphic to $\SL_{n,L}$. Consider the resulting isogeny $\rho: \SL_{n,L}\cong G\twoheadrightarrow\rho(G) = \SL_{n,L}$. 
Its scheme-theoretic kernel is contained in the scheme-theoretic kernel of $\rho\circ F$; hence 
% By the classification of isogenies of semisimple groups (for example, apply \cite{PinkCompact}, Theorem 1.7, to the associated isogeny of adjoint groups and lift) it must be the composite of an automorphism with some power of the absolute Frobenius ${\rm Frob}_p: x\mapsto x^p$. It follows that 
there exists an isogeny $F': \SL_{n,L} \to \SL_{n,L}$ satisfying $F'\circ\rho = \rho\circ F$. On the other hand $\rho$ is bijective; hence it induces an isomorphism from $G^F$ to $\SL_{n,L}^{F'}$. Together it follows that $\Gamma = \SL_{n,L}^{F'}$.

Finally, by known classification results such as \cite{CarterShort}, Proposition 4.5, the Frobenius map $F'$ is standard. This means that there is a finite subfield $k' \subset L$ and a model $G'$ of $\SL_{n,L}$ over $k'$ such that $\SL_{n,L}^{F'}=G'(k')$. Thus Theorem \ref{finite_main} is proved.
\end{proof}

For the next theorem let $c$ denote the least common multiple of the orders of all finite simple groups of Lie type that are not regular in the sense of Definition \ref{regular}, which is finite by Proposition \ref{Struct_fgolt}. Let $\Gamma^{\rm der}$ denote the derived group of $\Gamma$.

\begin{thm}\label{finite_main1}
Let $n\geq2$, and let $\Gamma$ be a finite subgroup of $\GL_n(L)$ that acts irreducibly on~$L^n$. Assume that $\Gamma/Z(\Gamma)$ is a direct product of finite simple groups of Lie type in characteristic $p$. Assume moreover that the map $\Gamma\to L$, $\gamma\mapsto f(\gamma^c)$ is not identically zero. Then there exist a finite subfield $k'$ of $L$ and a model $G'$ of $\SL_{n,L}$ over $k'$ such that $\Gamma^{\rm der}=G'(k').$
\end{thm}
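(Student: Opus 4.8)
The strategy is to reduce Theorem~\ref{finite_main1} to Theorem~\ref{finite_main} by passing to the derived group $\Gamma^{\rm der}$ and checking that it satisfies the hypotheses there. First I would observe that the hypothesis on $\Gamma/Z(\Gamma)$ already forces a lot of structure on $\Gamma^{\rm der}$. Let $\overline\Gamma_1,\ldots,\overline\Gamma_m$ be the simple factors of $\Gamma/Z(\Gamma)$; after discarding the finitely many non-regular ones, group them as $\overline\Gamma = \overline\Gamma^{\rm reg}\times \overline\Gamma^{\rm bad}$ with $\overline\Gamma^{\rm bad}$ of order dividing~$c$. The point of introducing $c$ as the lcm of the orders of all non-regular finite simple groups of Lie type is precisely that raising an element to the $c$-th power kills the ``bad'' factors: the image of $\gamma^c$ in $\overline\Gamma$ lies in $\overline\Gamma^{\rm reg}$. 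Next I would pin down $\Gamma^{\rm der}$: since each $\overline\Gamma_i$ is a (non-abelian, hence perfect) simple group, $\overline\Gamma$ is perfect, so the preimage of $\overline\Gamma$'s derived structure shows that $\Gamma^{\rm der}$ surjects onto $\overline\Gamma$ with central (in fact contained in $Z(\Gamma)$) kernel, and $\Gamma^{\rm der}$ is itself perfect; moreover $\Gamma^{\rm der}/Z(\Gamma^{\rm der})$ is again a direct product of the same finite simple groups of Lie type.

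The technical heart is then arranging the two remaining hypotheses of Theorem~\ref{finite_main} for $\Gamma^{\rm der}$: (i) that it acts irreducibly on $L^n$, and (ii) that $\gamma\mapsto f(\gamma)$ is not identically zero on it, and — crucially — that $\Gamma^{\rm der}/Z(\Gamma^{\rm der})$ is a product of \emph{regular} simple groups. For (ii) and regularity simultaneously, I would argue that the non-regular simple factors $\overline\Gamma^{\rm bad}$ can be assumed absent after a reduction: the hypothesis that $\gamma\mapsto f(\gamma^c)$ is not identically zero, together with the fact that $\gamma^c$ has trivial projection to $\overline\Gamma^{\rm bad}$, should let me work inside the preimage $\Gamma'$ of $\overline\Gamma^{\rm reg}$ and still find an element on which $f$ is nonzero; alternatively (and more cleanly) one notes that for $\delta\in\Gamma$ the element $\delta^c$ lies in $\Gamma^{\rm der}$ because $\Gamma/\Gamma^{\rm der}$ is abelian of exponent dividing $c$ (its image in $\prod\overline\Gamma_i\times Z$ is controlled), so the map $\gamma\mapsto f(\gamma)$ on $\Gamma^{\rm der}$ is already not identically zero. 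For irreducibility of $\Gamma^{\rm der}$ on $L^n$: by Clifford theory $L^n$ restricted to the normal subgroup $\Gamma^{\rm der}$ is semisimple and $\Gamma$ permutes its isotypic components transitively; since $\Gamma/\Gamma^{\rm der}$ is abelian, each isotypic component affording the same $\Gamma^{\rm der}$-representation up to a character, and here one uses that the projective representation of $\overline\Gamma$ is already faithful and its factors have dimension $\geq2$, one concludes (after possibly twisting, which does not affect the derived group) that $\Gamma^{\rm der}$ acts irreducibly — or one reduces the dimension $n$ to the dimension of a single block and carries the rest through, just as in the proof of Theorem~\ref{finite_main}.

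Once those hypotheses are verified, Theorem~\ref{finite_main} applied to $\Gamma^{\rm der}$ yields a finite subfield $k'\subset L$ and a model $G'$ of $\SL_{n,L}$ over $k'$ with $\Gamma^{\rm der}=G'(k')$, which is exactly the conclusion. The main obstacle I anticipate is the bookkeeping around the non-regular factors and the abelian quotient $\Gamma/\Gamma^{\rm der}$: one must be careful that passing from the hypothesis ``$f(\gamma^c)\not\equiv0$ on $\Gamma$'' to ``$f(\gamma)\not\equiv 0$ on $\Gamma^{\rm der}$'' is legitimate, i.e.\ that the $c$-th power map really does land in $\Gamma^{\rm der}$ and that it hits an element avoiding the vanishing locus of $f$. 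This hinges on the precise role of $c$ as an exponent, and I expect this reduction — rather than any new group-theoretic input — to be where the care is needed; everything downstream is a direct citation of Theorem~\ref{finite_main}. \qed
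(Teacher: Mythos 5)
Your overall strategy---verify the hypotheses of Theorem \ref{finite_main} for $\Gamma^{\rm der}$ and then cite it---is the same as the paper's, but the two delicate steps are exactly where your argument has gaps. The main one is the treatment of the non-regular factors. You propose to ``assume them absent after a reduction'' by working inside the preimage $\Gamma'$ of the regular part, but if a non-regular factor $\overline{\Gamma}_1$ were actually present, no statement about $\Gamma'$ could give the theorem's conclusion: $\Gamma^{\rm der}$ surjects onto every factor of $\Gamma/Z(\Gamma)$, including $\overline{\Gamma}_1$, so it could not equal a group of the form $G'(k')$; moreover $\Gamma'$ would then act reducibly on $L^n$ (it acts by scalars on the tensor factor belonging to the bad part), so Theorem \ref{finite_main} does not apply to it either. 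What is needed, and what the paper does, is to show that non-regular factors cannot occur at all: if $\overline{\Gamma}_1$ is non-regular, then for every $\gamma\in\Gamma$ the element $\gamma^c$ has trivial image in $\overline{\Gamma}_1$ (this is the role of $c$, which you did identify), hence acts by a scalar on the factor $L^{n_1}$, $n_1\geq2$, of the exterior tensor decomposition from the proof of Theorem \ref{finite_main}; therefore every eigenvalue of $\gamma^c$ has multiplicity $\geq2$, so $f(\gamma^c)=0$ by Lemma \ref{Rprop} (a) for \emph{every} $\gamma$, contradicting the hypothesis. You have the ingredients but never draw this contradiction, and without it the regularity hypothesis of Theorem \ref{finite_main} is simply unverified for $\Gamma^{\rm der}$.

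The second gap is your ``cleaner'' route to the nonvanishing of $f$ on $\Gamma^{\rm der}$: it is not true that $\Gamma/\Gamma^{\rm der}$ has exponent dividing $c$. Since $\Gamma/Z(\Gamma)$ is perfect we have $\Gamma=\Gamma^{\rm der}\cdot Z(\Gamma)$, so $\Gamma/\Gamma^{\rm der}$ is a quotient of the scalar group $Z(\Gamma)$, whose order has nothing to do with $c$ (the least common multiple of the orders of the non-regular simple groups); thus $\gamma^c$ need not lie in $\Gamma^{\rm der}$. The correct fix is the paper's: write $\gamma=\gamma'\zeta$ with $\gamma'\in\Gamma^{\rm der}$ and $\zeta\in Z(\Gamma)$ a scalar (Schur's lemma, $L$ algebraically closed), and use that $f$ is homogeneous of some degree $d$ in the matrix coefficients, so that $f(\gamma^c)=f(\gamma^{\prime c})\cdot\zeta^{cd}$; nonvanishing of $\gamma\mapsto f(\gamma^c)$ then forces $f$ to be nonzero somewhere on $\Gamma^{\rm der}$. (Your Clifford-theory discussion of irreducibility is unnecessary for the same reason: $Z(\Gamma)$ acts by scalars, so $\Gamma^{\rm der}$-invariant and $\Gamma$-invariant subspaces coincide and irreducibility of $\Gamma^{\rm der}$ on $L^n$ is immediate.)
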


\begin{proof}
Let $\overline{\Gamma}_i$ and $\Gamma_i \to \GL_{n_i}(L)$ be as in the proof of Theorem \ref{finite_main}. Suppose that some factor of $\Gamma/Z(\Gamma)$, say $\overline{\Gamma}_1$, is not regular. Then for every $\gamma\in\Gamma$, the definition of $c$ implies that $\gamma^c \in \Gamma_2\cdots\Gamma_m$. Each eigenvalue of $\gamma^c$ then has multiplicity $\geq n_1\geq2$; hence $f(\gamma^c)=0$ by Lemma \ref{Rprop} (a). This contradicts the given assumptions, and so each $\overline{\Gamma}_i$ is in fact regular.

The assumptions also imply that $\Gamma^{\rm der}$ is perfect and that $\Gamma = \Gamma^{\rm der}\cdot Z(\Gamma)$. Write any $\gamma\in\Gamma$ in the form $\gamma=\gamma'\zeta$ with $\gamma'\in\Gamma^{\rm der}$ and a scalar $\zeta\in Z(\Gamma)$. By construction $f(\alpha)$ is homogeneous of some degree $d$ in the coefficients of~$\alpha$; thus we have
$f(\gamma^c)\ =\ f(\gamma^{\prime c}\zeta^c) = f(\gamma^{\prime c})\cdot\zeta^{cd}$. Since this is not identically zero and $\gamma^{\prime c}\in\Gamma^{\rm der}$, it follows that $f$ is not identically zero on $\Gamma^{\rm der}$.

Together this shows that $\Gamma^{\rm der}$ satisfies the assumptions of Theorem \ref{finite_main}, and so the desired assertion follows.
\end{proof}

The following auxiliary results will help to determine the subfield $k'$ and the model $G'$ arising in Theorems \ref{finite_main} and \ref{finite_main1}:

\begin{prop}\label{fieldcontained}
Let $n\geq2$, and let $G$, $G'$ be models of $\SL_{n,L}$ over finite subfields $k, k'\subset L$, respectively. If $G'(k') \subset G(k)$, then $|k'|\le|k|$. 
% If in addition $k\subset k'$, then $k'=k$ and $G'(k') = G(k)$.
\end{prop}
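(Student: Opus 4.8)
The plan is to extract the relative size of the two finite fields from the group orders, using the classification of standard Frobenius maps. Both $G$ and $G'$ are models of $\SL_{n,L}$, so each comes equipped with a standard Frobenius map on $\SL_{n,L}$, and by the classification of such maps (as invoked already in the proof of Theorem \ref{finite_main}, cf.\ \cite{CarterShort}) the group of rational points is, up to the usual small-field exceptions, one of the finite groups $\SL_n(\FF_q)$ or $\SU_n(\FF_q)$ with $|k|=q$, respectively $|k'|=q'$. The order of such a group is a known polynomial in $q$: for $\SL_n(\FF_q)$ it is $q^{n(n-1)/2}\prod_{i=2}^n(q^i-1)$, and for $\SU_n(\FF_q)$ it is $q^{n(n-1)/2}\prod_{i=2}^n(q^i-(-1)^i)$. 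In every case this order is a monotonically increasing function of $q$ once $n\ge2$ is fixed.

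First I would record that $G'(k')\subset G(k)$ forces $|G'(k')|$ to divide $|G(k)|$, in particular $|G'(k')|\le|G(k)|$. Next I would compare the orders. The leading term of $|G(k)|$ as a polynomial in $q=|k|$ is $q^{n^2-1}$, the dimension of $\SL_{n}$, and likewise the leading term of $|G'(k')|$ is $q'^{\,n^2-1}$ with $q'=|k'|$; these leading exponents agree because both groups are models of the \emph{same} group $\SL_{n,L}$, hence have the same dimension $n^2-1$. Since the order function $q\mapsto|G(k)|$ is strictly increasing in $q$ for $q\ge2$ (each factor $q^i-1$ or $q^i+1$ and the power $q^{n(n-1)/2}$ is increasing), the inequality $|G'(k')|\le|G(k)|$ together with the matching leading behaviour yields $q'\le q$, i.e.\ $|k'|\le|k|$. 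A clean way to phrase the monotonicity step, avoiding a case split on $\SL$ versus $\SU$, is to note $|G(k)|\le q^{n^2-1}$ and $|G'(k')|\ge (q')^{n^2-1}/2$ (say), or more simply to observe that for any model the order lies strictly between $(q-1)^{n^2-1}$ and $q^{n^2-1}$, which already pins down $q$ from the order; then $q'\le q$ follows from $|G'(k')|\mid|G(k)|$.

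The main obstacle is dealing with the low-rank, small-field exceptional cases where the Frobenius map need not be standard, or where $G^F$ is too small for the order-polynomial comparison to be immediately conclusive, e.g.\ $n=2$ with $q\in\{2,3\}$, or coincidences like $|\SL_2(\FF_4)|=|\SU_2(\FF_3)|$ type phenomena. These I would handle by hand: there are only finitely many groups $\SL_n(\FF_q)$ or $\SU_n(\FF_q)$ of any bounded order, so one enumerates the finitely many pairs $(q,q')$ with $|G'(k')|\mid|G(k)|$ and checks directly that $q'\le q$ in each, or reduces to them after discarding the generic range where the strict monotonicity of the order polynomial already settles the claim. Since the statement only asserts $|k'|\le|k|$ and not an inclusion of fields, these finitely many checks are routine.
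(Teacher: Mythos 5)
Your proposal is correct and takes essentially the same route as the paper: both bound $|G'(k')|\le|G(k)|$ and compare the exact order formulas for the two possible forms (split and quasi-split unitary) of $\SL_n$ over a finite field, the paper ruling out $q'>q$ by a term-by-term estimate using $q'\ge pq\ge 2q$ while you use the sandwich $(q-1)^{n^2-1}<|G(k)|<q^{n^2-1}$, which works just as well. Your hedging about non-standard Frobenius maps and small-field exceptions is unnecessary: every model of $\SL_{n,L}$ over a finite subfield is split or quasi-split unitary and the order formula is exact for all $q$ and $n\ge2$, so no separate case-by-case check is needed.
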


\begin{proof}
Let $q:=|k|$, and set $\epsilon:=1$ if $G$ is split and $\epsilon:=-1$ otherwise. Likewise, let $q':=|k'|$, and set $\epsilon':=1$ if $G'$ is split and $\epsilon':=-1$ otherwise. Then \cite{HumphMod}, Table 1.6.1, implies that
$$(q')^{\frac{n(n-1)}{2}} \prod_{i=2}^n (q^{\prime i}-\epsilon^{\prime i}) 
\ =\ |G'(k')| \ \le\ |G(k)| \ =\ 
   q  ^{\frac{n(n-1)}{2}} \prod_{i=2}^n (q^i-\epsilon^i).$$
Suppose that $q'>q$. Since both numbers are powers of the same prime $p$, it follows that $q'\geq pq\geq2q$. For each $2\leq i\leq n$ we then have $q^{\prime i}-\epsilon^{\prime i} \geq 4q^i-1 > q^i-\epsilon^i$, and so the left hand side of the above inequality is in fact greater than the right hand side, which is impossible. Therefore $q'\leq q$, as desired.
% From $k \subset k'$ follows $q \leq q'$. But since $G'(k') \subset \SL_n(k)$, this must be an equality, which can happen only if $q=q'$ and $\epsilon=1$.
\end{proof}

\begin{prop}\label{same_model}
Let $n\geq2$, and let $G$, $G'$ be models of $\SL_{n,L}$ over the same finite subfield $k\subset L$. If $G'(k) \subset G(k)$, then the models are equal and $G'(k) = G(k)$.
\end{prop}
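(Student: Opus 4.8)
The plan is to reduce everything to the classification of forms of $\SL_n$ over a finite field together with a short matrix computation. Write $q := |k|$. Exactly as in the proof of Proposition~\ref{fieldcontained}, \cite{HumphMod}, Table~1.6.1, gives $|G(k)| = q^{n(n-1)/2}\prod_{i=2}^n(q^i-\epsilon^i)$, where $\epsilon = 1$ if $G$ is split and $\epsilon = -1$ otherwise, and likewise $|G'(k)| = q^{n(n-1)/2}\prod_{i=2}^n(q^i-(\epsilon')^i)$ with $\epsilon' \in \{1,-1\}$. For $n = 2$ the group $\SL_2$ has no non-trivial diagram automorphism, so the only model is the split one and $\epsilon = \epsilon' = 1$; for $n \ge 3$ there are exactly two models up to $k$-isomorphism, the split form and the quasi-split unitary form $\mathrm{SU}_n$, distinguished by~$\epsilon$. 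Hence it suffices to prove $\epsilon = \epsilon'$: this makes $G$ and $G'$ isomorphic as $k$-groups, and then $|G(k)| = |G'(k)|$ together with $G'(k) \subseteq G(k)$ forces $G'(k) = G(k)$.

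So assume $\epsilon \ne \epsilon'$; then $n \ge 3$. First suppose $\epsilon = 1$ and $\epsilon' = -1$. For each $2 \le i \le n$ we have $q^i - (\epsilon')^i = q^i - (-1)^i \ge q^i - 1 = q^i - \epsilon^i$, with strict inequality whenever $i$ is odd, e.g. for $i = 3$. Multiplying over $i$ gives $|G'(k)| > |G(k)|$, which contradicts $G'(k) \subseteq G(k)$.

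There remains the case $\epsilon = -1$, $\epsilon' = 1$, where the order estimate is inconclusive; this is the crux. Pick $g \in \GL_n(L)$ with $g\,G(k)\,g^{-1}$ equal to the standard unitary group $\mathrm{SU}_n(q) = \{x \in \SL_n(L) : \phi_q(x)^{\mathrm{T}} H x = H\}$ for a fixed $H \in \GL_n(L)$, where $\phi_q$ is the coordinatewise $q$-power map; replacing $(G,G')$ by their conjugates under $g$ changes neither the hypothesis nor the conclusion, so we may assume $G(k) = \mathrm{SU}_n(q)$. As $G'$ is split, $G'(k)$ is then a $\GL_n(L)$-conjugate of the standard copy of $\SL_n(\mathbb{F}_q)$, say $G'(k) = h\,\SL_n(\mathbb{F}_q)\,h^{-1}$ with $h \in \GL_n(L)$. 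The inclusion $G'(k) \subseteq G(k)$ says $\phi_q(h\gamma h^{-1})^{\mathrm{T}} H (h\gamma h^{-1}) = H$ for all $\gamma \in \SL_n(\mathbb{F}_q)$; since $\phi_q$ fixes every such $\gamma$, rearranging this identity yields $\gamma^{\mathrm{T}} M \gamma = M$ for all $\gamma \in \SL_n(\mathbb{F}_q)$, where $M := \phi_q(h)^{\mathrm{T}} H h \in \GL_n(L)$. Thus $M$ is a non-zero $\SL_n(\mathbb{F}_q)$-invariant bilinear form on $L^n$. Testing this identity against the diagonal matrices and the transvections $1 + E_{ij}$ in $\SL_n(\mathbb{F}_q)$ forces $M = 0$ for $n \ge 3$ — equivalently, the natural module $L^n$ and its dual are non-isomorphic $\SL_n(\mathbb{F}_q)$-modules for $n \ge 3$, being irreducible with distinct restricted highest weights $\varpi_1$ and $\varpi_{n-1}$. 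This contradiction rules out the last case, so $\epsilon = \epsilon'$ and we are done.

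The one place that needs real content beyond bookkeeping is the final case $\epsilon = -1$, $\epsilon' = 1$: it cannot be settled by counting and instead rests on the failure of the natural representation of $\SL_n$ to be self-dual for $n \ge 3$.
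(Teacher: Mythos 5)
Your case analysis (classification of the $k$-forms of $\SL_n$ over a finite field, the order comparison ruling out a unitary group inside a split group, and the bilinear-form computation with $M=\phi_q(h)^{\mathrm T}Hh$ ruling out a conjugate of $\SL_n(\mathbb{F}_q)$ inside the unitary group for $n\ge3$) is essentially sound, and it does give the equality of point groups $G'(k)=G(k)$. But the Proposition asserts more: that the \emph{models are equal}, i.e.\ that the two $k$-structures on $\SL_{n,L}$ (equivalently, the two Frobenius maps $F,F'$) literally coincide, not merely that $G$ and $G'$ are isomorphic as $k$-groups. Your reduction "it suffices to prove $\epsilon=\epsilon'$" only yields $k$-isomorphism plus $G'(k)=G(k)$, and equality of the finite fixed-point groups does not formally imply equality of the models: writing $F=\alpha\circ F'$ for an automorphism $\alpha$ of $\SL_{n,L}$, one still has to rule out a non-trivial $\alpha$ that happens to fix the finite group $G'(k)$ pointwise. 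Eliminating such an $\alpha$ is precisely the content of the paper's proof (Schur's lemma together with irreducibility of the natural representation of $G'(k)$ handles inner $\alpha$; irreducibility of the representation on $\mathfrak{psl}_n$ -- or, in your language, non-self-duality of the natural module for $n\ge3$ -- handles outer $\alpha$). This omission is not cosmetic: the literal uniqueness of the model is exactly what is used downstream, in Proposition \ref{same_k_and_model} and in Lemma \ref{kp_is_kp1}, where an automorphism normalizing $G'(k')$ is descended to an algebraic automorphism of the model over $k'$ because the transported model must be the \emph{same} model.

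So as written the proposal proves a weaker statement, and closing the gap would force you to reproduce (a version of) the paper's rigidity argument anyway; by contrast, the paper's proof gets both conclusions at once without invoking the classification of forms or the order formulas. If you want to keep your route, you must add the final step: given $G'(k)=G(k)$, show that any automorphism of $\SL_{n,L}$ fixing this finite group pointwise is the identity -- e.g.\ the inner case by Schur's lemma and absolute irreducibility of $L^n$ over $G'(k)$, and the outer case by the same non-self-duality fact you already used in your crux case.
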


\begin{proof}
Let $F,F': \SL_{n,L}\to\SL_{n,L}$ be the Frobenius maps corresponding to the models $G,G'$, respectively. Since they belong to the same finite field, there exists an automorphism $\alpha$ of $\SL_{n,L}$ such that $F=\alpha\circ F'$. Then for any $g'\in G'(k)$ we have $g'\in G(k)$ and hence $g' = F(g') = \alpha(F'(g')) = \alpha(g')$. In other words $\alpha$ is the identity on $G'(k)$.

Suppose first that $\alpha$ is an inner automorphism. Then it is conjugation by some element of $\SL_n(L)$. This element commutes with $G'(k)$, and since the standard representation of $G'(k)$ is irreducible by Proposition \ref{Irred_fgolt}, it must be a scalar. Then $\alpha$ is the identity, and so $F=F'$ and $G=G'$, as desired.

If $\alpha$ is not an inner automorphism, we must have $n\geq3$. Let $\mathfrak{psl}_n(L)$ denote the image of the natural homomorphism of Lie algebras $\sll_n(L)\to\pgll_n(L)$, and let $\rho$ denote the representation on $\mathfrak{psl}_n(L)$ induced by the adjoint representation of $\SL_{n,L}$. Since $n\geq3$, we know that $\rho$ factors through a faithful irreducible representation of $\PGL_{n,L}$. Moreover, by Proposition \ref{Irred_fgolt} it remains irreducible when restricted to $G'(k)$. On the other hand $\alpha$ induces an automorphism $\bar\alpha$ of $\mathfrak{psl}_n(L)$ that commutes with $\rho(G'(k))$. Thus $\bar\alpha$ is multiplication by a scalar, and therefore it commutes with the algebraic representation $\rho$. It follows that $\alpha$ induces the identity on $\PGL_{n,L}$. But then it is really an inner automorphism, contrary to the assumption. 
\end{proof}

\begin{prop}\label{same_k_and_model}
The subfield $k'$ and the model $G'$ in Theorems \ref{finite_main} and \ref{finite_main1} are unique.
\end{prop}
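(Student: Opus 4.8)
The plan is to prove uniqueness of the pair $(k',G')$ by combining the cardinality bound of Proposition \ref{fieldcontained} with the rigidity statement of Proposition \ref{same_model}. Suppose that $(k',G')$ and $(k'',G'')$ are two pairs satisfying the conclusion of Theorem \ref{finite_main} (respectively Theorem \ref{finite_main1}), so that $\Gamma = G'(k') = G''(k'')$ (respectively $\Gamma^{\rm der} = G'(k') = G''(k'')$). In either case we then have a single finite group $\Gamma_0$ (namely $\Gamma$ or $\Gamma^{\rm der}$) which is simultaneously realized as $G'(k')$ and as $G''(k'')$, and which acts irreducibly on $L^n$; this is all we will use.

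First I would apply Proposition \ref{fieldcontained} twice. From $G''(k'') = G'(k') \subset G'(k')$ — more precisely, from the inclusion $G''(k'') \subset G'(k')$, which here is an equality — we obtain $|k''| \le |k'|$; symmetrically, from $G'(k') \subset G''(k'')$ we get $|k'| \le |k''|$. Hence $|k'| = |k''|$, and since a finite subfield of $L$ is determined by its cardinality (it is the set of roots of $X^{|k'|} - X$ in $L$), we conclude $k' = k''$. So both models are models of $\SL_{n,L}$ over one and the same finite subfield $k := k' = k'' \subset L$.

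Now Proposition \ref{same_model} applies directly: we have two models $G'$, $G''$ of $\SL_{n,L}$ over the same finite subfield $k$, and $G''(k) = G'(k) \subset G'(k)$, so the proposition forces $G' = G''$ (and $G'(k) = G''(k)$, which we already knew). This establishes that the pair $(k',G')$ is uniquely determined, completing the proof.

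The argument is short and presents no genuine obstacle; the only point requiring a word of care is that Propositions \ref{fieldcontained} and \ref{same_model} are stated for inclusions $G'(k')\subset G(k)$, whereas here we have an equality $\Gamma_0 = G'(k') = G''(k'')$ — but an equality is in particular a two-sided inclusion, so both propositions apply verbatim. One should also note that the hypothesis of Proposition \ref{same_model} that the relevant group act irreducibly in the standard representation is automatic, since in Theorem \ref{finite_main} the group $\Gamma$ acts irreducibly by assumption, and in Theorem \ref{finite_main1} the group $\Gamma^{\rm der} = G'(k') = \SL_{n,L}^{F'}$ acts irreducibly by Proposition \ref{Irred_fgolt}.
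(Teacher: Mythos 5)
Your proof is correct and follows essentially the same route as the paper: apply Proposition \ref{fieldcontained} in both directions to get $|k'|=|k''|$, hence $k'=k''$, and then invoke Proposition \ref{same_model} to conclude $G'=G''$. Your extra remarks (that a finite subfield of $L$ is determined by its cardinality, and that irreducibility holds) are fine but not needed; in particular, irreducibility is not a hypothesis of Proposition \ref{same_model} but is supplied inside its proof via Proposition \ref{Irred_fgolt}.
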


\begin{proof}
Let $k$ be another finite subfield of $L$, and let $G$ be a model of $\SL_{n,L}$ over $k$, such that $\Gamma=G(k)$. Then applying Proposition \ref{fieldcontained} in both ways shows that $|k'|=|k|$. Thus $k'=k$, and then Proposition \ref{same_model} shows that $G'=G$, as desired.
\end{proof}

%\begin{prop}\label{fieldequal}
%Let $n\geq2$, let $k\subset k'\subset L$ be finite subfields, and let $G'$ be a model of $\SL_{n,L}$ over $k'$ such that $G'(k') \subset \SL_n(k)$. Then $k=k'$ and  $G'(k')=\SL_n(k)$.
%\end{prop}
%
%\begin{proof}
%Set $q:=|k|$ and $q':=|k'|$. From $k \subset k'$ follows $q \leq q'$. Set $\epsilon:=1$ if $G'$ is split, and $\epsilon:=-1$ otherwise. Then by \cite{HumphMod}, Table 1.6.1, we have
%$$|\SL_n(k)|\  =  \ q  ^{\frac{n(n-1)}{2}} \prod_{i=2}^n ( q  ^i-1) 
%            \ \leq\ (q')^{\frac{n(n-1)}{2}} \prod_{i=2}^n ((q')^i-\epsilon^i) 
%            \  =  \ |G'(k')|.$$
%But since $G'(k') \subset \SL_n(k)$, this must be an equality, which can happen only if $q=q'$ and $\epsilon=1$.
%\end{proof}

%%%%%%%%%%%%%%%%%%%%%%%%%%%%%%%%%%%%%%%%%%%%%%%%%%%%%%%%%%%%%%%%%%%%%%

\subsection{Arbitrary finite groups} \label{irre_subgroups}

The following general result was established by Larsen and the second author in \cite{L-P}, Theorem 0.2:

\begin{thm} \label{L-P}
For any integer $n \geq 1$ there exists a constant $c_n$, such that for every field $L$, of arbitrary characteristic $p\geq0$, and every finite subgroup $\Gamma\subset \GL_n(L)$, there exist normal subgroups $\Gamma_3 \subset \Gamma_2 \subset \Gamma_1$ of $\Gamma$ satisfying:
\begin{enumerate}
\item[(a)] $[\Gamma : \Gamma_1]\leq c_n$,
\item[(b)] either $\Gamma_1=\Gamma_2$, or $p>0$ and $\Gamma_1/\Gamma_2$ is a direct product of finite simple groups of Lie type in characteristic $p$,
\item[(c)] $\Gamma_2/\Gamma_3$ is abelian of order not divisible by $p$, and
\item[(d)] either $\Gamma_3=\{1\}$, or  $p>0$  and $\Gamma_3$ is a $p$-group.
\end{enumerate}
\end{thm}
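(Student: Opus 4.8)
The conclusion refers only to the abstract group $\Gamma$ and its subgroup lattice, so I would first arrange a convenient field. Enlarging $L$ to $\overline L$ is harmless, so assume $L$ algebraically closed. If $p=0$ the statement is the classical theorem of Jordan: $\Gamma$ contains an abelian normal subgroup of index bounded in terms of $n$, and one takes $\Gamma_1=\Gamma_2$ equal to it and $\Gamma_3=\{1\}$. So assume $p>0$. The crucial preliminary observation is that any finite subgroup of $\GL_n$ over a field of characteristic $p$ embeds into $\GL_n(\overline{\mathbb{F}}_p)$: the entries of all $\gamma\in\Gamma$ and of all $\gamma^{-1}$ generate a finitely generated $\mathbb{F}_p$-subalgebra $R\subset L$, and after inverting a suitable nonzero element of $R$ (for instance the product, over all pairs $\gamma\neq\delta$ in $\Gamma$, of a nonzero entry of $\gamma-\delta$) the Nullstellensatz over $\mathbb{F}_p$ supplies a homomorphism to a finite field $\mathbb{F}_q$ for which the induced map to $\GL_n(\mathbb{F}_q)$ is defined on $\Gamma$ and injective there. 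Since the assertion is intrinsic to $\Gamma$, I may thus assume $L=\overline{\mathbb{F}}_p$.

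\textbf{The algebraic hull.} Let $G\subseteq\GL_{n,L}$ be the Zariski closure of $\Gamma$, with identity component $G^\circ$, unipotent radical $N:=R_u(G^\circ)$, reductive quotient $G^\circ/N$, central torus $Z$ of the latter, and semisimple part $H$. Over $\overline{\mathbb{F}}_p$ every point of a unipotent group has $p$-power order while every point of a torus has order prime to $p$; hence $\Gamma\cap N(L)$ is a finite $p$-group and the image of $\Gamma$ in $Z(L)$ is finite abelian of order prime to $p$. These two facts are designed to yield $\Gamma_3$ and the subquotient $\Gamma_2/\Gamma_3$; the subquotient $\Gamma_1/\Gamma_2$ will be cut out by the semisimple quotient, and the substance of the theorem is to show that it is, after passage to index $\le c_n$, a direct product of finite simple groups of Lie type in characteristic $p$.

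\textbf{The geometric heart.} Here I would follow the dimension-and-degree method of Larsen and the second author \cite{L-P}. The scheme is: (i) bound in terms of $n$ alone the complexity of $G$ — its dimension and degree and the relevant part of $\pi_0(G)$ — so that after replacing $\Gamma$ by a normal subgroup of index $\le c_n$ one reduces to $G$ connected reductive with controlled structure; (ii) by induction on $n$, reduce the study of $\Gamma$ modulo $N(L)$ and modulo the central torus to a Zariski-dense finite subgroup of a simply connected semisimple group over $\overline{\mathbb{F}}_p$ generated by elements of $p$-power order; (iii) for such a subgroup, the all-characteristic refinement of Nori's theorem from \cite{L-P} identifies it, up to bounded index and modulo its center, with a product of $\mathbb{F}_{q_i}$-points of the simple factors, that is, a product of finite simple groups of Lie type. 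Then I set $\Gamma_3:=\Gamma\cap N(L)$, let $\Gamma_2$ be the preimage in $\Gamma$ of the central-torus part, and $\Gamma_1$ the preimage of the product of Lie-type factors; normality of all three is forced by Zariski density of $\Gamma$ in $G$ together with the fact that $N$, $Z$ and $H$ are characteristic in $G^\circ$, and the bound $[\Gamma:\Gamma_1]\le c_n$ is precisely the output of step (i).

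\textbf{Main obstacle.} Step (iii), hence the geometric heart as a whole, is the crux, and it is genuinely deep: one must control the Zariski closure of an \emph{arbitrary} finite matrix group uniformly in $p$ and extract the group-of-Lie-type structure \emph{without} invoking the classification of finite simple groups — independence of the classification is the whole point of \cite{L-P}, so the argument cannot be shortcut by quoting classification-dependent structure theory. The technical heart is the simultaneous control of the dimension and degree of algebraic subgroups of $\GL_n$ under the operations used in the induction, together with the delicate interaction in small characteristic between a unipotent radical and the finite group of Lie type lying above it.
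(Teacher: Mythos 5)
First, a point of comparison: the paper does not prove this statement at all --- it is quoted verbatim from \cite{L-P}, Theorem 0.2, and used there as a black box. So there is no internal proof to measure your argument against; the only question is whether your sketch would stand on its own, and it does not. Your preliminary reductions are fine (passing to $\overline{L}$, the specialization argument embedding $\Gamma$ into $\GL_n(\overline{\mathbb{F}}_p)$ injectively, Jordan's theorem in characteristic $0$), and your intended layering $\Gamma_3\subset\Gamma_2\subset\Gamma_1$ matches the shape of the conclusion. But the ``geometric heart'' is not an argument: step (iii) invokes ``the all-characteristic refinement of Nori's theorem from \cite{L-P}'', i.e.\ you prove Theorem 0.2 of \cite{L-P} by citing the main machinery of \cite{L-P} itself. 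As a blind proof that is circular; the uniform identification of a Zariski-dense finite subgroup of a semisimple group, up to bounded index and modulo center, with a product of finite simple groups of Lie type, with all constants depending only on $n$ and without the classification, is precisely the deep content being asked for, and your proposal contains no route to it beyond the citation.

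Second, even the reduction in your step (i) is not correct as stated: one cannot, after passing to a normal subgroup of index $\leq c_n$, assume that the Zariski closure $G$ is connected reductive. Take $\Gamma$ cyclic of arbitrarily large order prime to $p$: its Zariski closure is $\Gamma$ itself as a finite constant group, so $\pi_0(G)\cong\Gamma$ is unbounded and no bounded-index subgroup has connected closure. Such subgroups must instead be absorbed directly into the abelian layer $\Gamma_2/\Gamma_3$, and controlling how this unbounded ``toral'' part of the component group interacts with the semisimple part (and, in small characteristic, with the unipotent radical) is one of the genuine difficulties that \cite{L-P} resolves; your hedge ``the relevant part of $\pi_0(G)$'' is exactly where that work lies, not a detail one may wave through.
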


We are interested in the following special case:

\begin{thm} \label{L-P3}
For any integer $n \geq 1$ there exists a constant $c'_n$, such that for every algebraically closed field $L$, of arbitrary characteristic $p\geq0$, and every finite subgroup $\Gamma\subset \GL_n(L)$ acting irreducibly, there exists a normal subgroup $\Gamma' \triangleleft\Gamma$ satisfying:
\begin{enumerate}
\item[(a)] $[\Gamma : \Gamma']\leq c'_n$, and
\item[(b)] either $\Gamma'=Z(\Gamma')$, or $p>0$ and $\Gamma'/Z(\Gamma')$ is a direct product of finite simple groups of Lie type in characteristic $p$.
% \item[(c)] $Z(\Gamma')$ is abelian of order prime to $p$.
\end{enumerate}
\end{thm}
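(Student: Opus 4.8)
The plan is to deduce Theorem \ref{L-P3} from the structural Theorem \ref{L-P} by chasing through the normal subgroups $\Gamma_3 \subset \Gamma_2 \subset \Gamma_1$ produced there and then passing to a suitable characteristic subgroup of $\Gamma_1$. Since the desired $\Gamma'$ must be normal in $\Gamma$ with bounded index, the natural candidate is obtained from $\Gamma_1$ (which already has index $\le c_n$ in $\Gamma$), and the real work is to understand $\Gamma_1/Z(\Gamma')$ for the right choice of $\Gamma'$.

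First I would apply Theorem \ref{L-P} to get $\Gamma_3 \subset \Gamma_2 \subset \Gamma_1 \triangleleft \Gamma$ with the four listed properties. The case $p=0$, or more generally the case $\Gamma_3 = \{1\}$ and $\Gamma_1 = \Gamma_2$, is essentially trivial: then $\Gamma_1$ is abelian (being an extension of the abelian $\Gamma_2/\Gamma_3$ by the trivial $\Gamma_3$), so $\Gamma' := \Gamma_1$ satisfies $\Gamma' = Z(\Gamma')$ and we are done with $c'_n = c_n$. So assume $p > 0$. The key point is that $\Gamma_3$ is a normal $p$-subgroup of $\Gamma_1$, hence (the restriction to $\Gamma_1$ of) the tautological representation on $L^n$, being a representation of a $p$-group in characteristic $p$, has the property that $\Gamma_3$ acts through a unipotent group; combined with normality and irreducibility of $\Gamma$ one argues that $\Gamma_3$ in fact acts trivially — more precisely, the fixed subspace $(L^n)^{\Gamma_3}$ is nonzero and $\Gamma$-invariant (since $\Gamma_3 \triangleleft \Gamma$), so by irreducibility it is all of $L^n$, i.e. $\Gamma_3 \subseteq \ker(\Gamma \to \GL_n(L))$. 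But $\Gamma \subset \GL_n(L)$, so $\Gamma_3 = \{1\}$. Hence $\Gamma_2$ is abelian.

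Now set $\Gamma' := \Gamma_1$. We have $[\Gamma : \Gamma'] \le c_n$, which gives (a) with $c'_n = c_n$. For (b), either $\Gamma_1 = \Gamma_2$, in which case $\Gamma'$ is abelian and $\Gamma' = Z(\Gamma')$; or $\Gamma_1/\Gamma_2$ is a direct product of finite simple groups of Lie type in characteristic $p$ and $\Gamma_2$ is abelian. In the latter case the remaining task is to identify $Z(\Gamma')$ with $\Gamma_2$, i.e. to show that the abelian normal subgroup $\Gamma_2$ is exactly the center of $\Gamma_1$ and that $\Gamma_1/\Gamma_2$ has trivial center. The inclusion $Z(\Gamma_1) \cap \Gamma_2 \subseteq Z(\Gamma_1)$ is clear; to see $Z(\Gamma_1) \subseteq \Gamma_2$ one uses that the image of $Z(\Gamma_1)$ in $\Gamma_1/\Gamma_2$ is a normal abelian subgroup of a direct product of nonabelian simple groups, hence trivial. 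Conversely $\Gamma_2 \subseteq Z(\Gamma_1)$ is \emph{not} automatic, so here one must instead argue that after possibly enlarging the index by a bounded amount one may replace $\Gamma'$ so that $\Gamma_2 = Z(\Gamma')$: concretely, $\Gamma_1$ acts on the abelian group $\Gamma_2$ by conjugation, the action factors through the product of simple groups $\Gamma_1/\Gamma_2$, and since each simple factor has no nontrivial abelian quotient one shows the commutator subgroup $[\Gamma_1, \Gamma_2]$ together with an appropriate adjustment yields a subgroup on which the center is as desired; alternatively one notes that the hypothesis of Theorem \ref{L-P3} only requires $\Gamma'/Z(\Gamma')$ to be a direct product of simple groups of Lie type, which holds as soon as $Z(\Gamma') \supseteq \Gamma_2$ forces the quotient to be a quotient of $\Gamma_1/\Gamma_2$ by a central — hence trivial — subgroup.

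The main obstacle I anticipate is precisely the bookkeeping around the center: Theorem \ref{L-P} only says $\Gamma_1/\Gamma_2$ is a product of simple groups of Lie type, whereas Theorem \ref{L-P3} demands the finer statement that the quotient by the \emph{actual} center $Z(\Gamma')$ has this form. Bridging this gap cleanly — ruling out that $\Gamma_2$ is strictly larger than $Z(\Gamma_1)$ in a way that spoils the quotient — is where care is needed; the cleanest route is to take $\Gamma' := \Gamma_1$, observe that $\Gamma_2 \cdot Z(\Gamma_1)/Z(\Gamma_1)$ is a normal abelian subgroup of $\Gamma_1/Z(\Gamma_1)$, and play this against the structure of $\Gamma_1/\Gamma_2$ to conclude $\Gamma_2 \subseteq Z(\Gamma_1)$, so that $\Gamma_1/Z(\Gamma_1)$ is a quotient of $\Gamma_1/\Gamma_2$ by a central subgroup and therefore still a direct product of (possibly fewer) finite simple groups of Lie type in characteristic $p$. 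Combined with the easy case analysis above, this establishes (a) and (b) with $c'_n := c_n$.
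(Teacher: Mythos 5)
Your reduction to Theorem \ref{L-P} and your first two steps --- that irreducibility forces $\Gamma_3=\{1\}$ (the fixed space $(L^n)^{\Gamma_3}$ is nonzero and $\Gamma$-invariant) and hence that $\Gamma_2$ is abelian of order prime to $p$ --- coincide with the paper's argument. The gap is the final step: the claim that one may take $\Gamma':=\Gamma_1$ because $\Gamma_2\subset Z(\Gamma_1)$ "follows" is false. Concretely, let $p=2$, $L=\overline{\mathbb{F}}_2$, $n=5$, and let $\Gamma=(\mathbb{Z}/3\mathbb{Z})^5\rtimes A_5$ act on $L^5$ by monomial matrices with cube-root-of-unity entries, $A_5$ permuting the coordinates (the representation induced from a nontrivial character of one $\mathbb{Z}/3\mathbb{Z}$ factor). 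This is faithful and irreducible, $A_5\cong\mathrm{PSL}_2(\mathbb{F}_4)$ is a finite simple group of Lie type in characteristic $2$, and Theorem \ref{L-P} may legitimately return $\Gamma_1=\Gamma$, $\Gamma_2=(\mathbb{Z}/3\mathbb{Z})^5$, $\Gamma_3=\{1\}$. Here $Z(\Gamma_1)$ is only the diagonal $\mathbb{Z}/3\mathbb{Z}$, so $\Gamma_2\not\subset Z(\Gamma_1)$, and $\Gamma_1/Z(\Gamma_1)$ contains the nontrivial abelian normal subgroup $(\mathbb{Z}/3\mathbb{Z})^4$, hence is neither trivial nor a product of simple groups: $\Gamma'=\Gamma_1$ violates (b). Your suggested argument ("play the normal abelian subgroup $\Gamma_2Z(\Gamma_1)/Z(\Gamma_1)$ against the structure of $\Gamma_1/\Gamma_2$") is circular, since you control $\Gamma_1/\Gamma_2$ but not $\Gamma_1/Z(\Gamma_1)$; and the conjugation action of $\Gamma_1/\Gamma_2$ on $\Gamma_2$ can genuinely be nontrivial, a nonabelian simple image inside $\mathrm{Aut}(\Gamma_2)$, exactly as in this example.

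The missing idea, which is the paper's fix, is Clifford theory: decompose $L^n=V_1\oplus\cdots\oplus V_m$ into isotypic components for the abelian normal subgroup $\Gamma_2$; then $m\le n$, the components are permuted by $\Gamma$, and one sets $\Gamma':=\Gamma_1\cap\ker(\Gamma\to S_m)$, a normal subgroup of index at most $c_n\cdot n!=:c'_n$ (so the constant does change). Since $\Gamma_2$ acts by scalars on each $V_i$ and $\Gamma'$ preserves each $V_i$, one gets $\Gamma_2\subset Z(\Gamma')$; and $\Gamma'/\Gamma_2$ is the kernel of a homomorphism from the direct product of nonabelian simple groups $\Gamma_1/\Gamma_2$ to $S_m$, hence a subproduct of the simple factors, which also forces $Z(\Gamma')=\Gamma_2$. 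In the example above this yields $\Gamma'=(\mathbb{Z}/3\mathbb{Z})^5=Z(\Gamma')$, as required. Without this step of shrinking $\Gamma_1$ to the stabilizer of the isotypic decomposition, the theorem cannot be deduced from Theorem \ref{L-P} along the lines you propose.
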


\begin{proof}
Let $\Gamma_3 \subset \Gamma_2 \subset \Gamma_1\subset\Gamma$ be the subgroups furnished by Theorem \ref{L-P}. First we show that $\Gamma_3$ is trivial. By assumption, this is a unipotent normal subgroup of~$\Gamma$. Set $V:=L^n$. Then the subspace of invariants $V^{\Gamma_3}$ is non-zero and stabilized by $\Gamma$. Since $V$ is an irreducible representation of $\Gamma$, it follows that $V^{\Gamma_3}=V$. This means that $\Gamma_3=\{ 1\}$, as desired.

The triviality of $\Gamma_3$ implies that $\Gamma_2$ is an abelian group of order not divisible by $p$. Let $V=V_1 \oplus \ldots \oplus V_m$ be the isotypic decomposition under $\Gamma_2$, with all summands non-zero. The number of summands then satisfies $m\le n$. Since $\Gamma_2$ is normal in $\Gamma$, the summands are permuted by $\Gamma$, and so the permutation action corresponds to a homomorphism $f$ from $\Gamma$ to the symmetric group $S_m$ on $m$ letters. Set $\Gamma' := \Gamma_1\cap\mathop{\rm ker}(f)$. By construction this is a normal subgroup of index ${[\Gamma:\Gamma']} \allowbreak \leq {[\Gamma:\Gamma_1]\cdot |S_m|} \allowbreak \leq c_n\cdot n! =:c'_n$, where $c_n$ is the constant from Theorem \ref{L-P}. 

On the other hand, the fact that $\Gamma_2$ acts by scalars on each $V_i$ and $\Gamma'$ stabilizes each $V_i$ implies that $\Gamma_2$ is contained in the center of~$\Gamma'$. Moreover $\Gamma'/\Gamma_2$ is the kernel of a homomorphism $\Gamma_1/\Gamma_2\to S_m$ induced by $f$. Since $\Gamma_1/\Gamma_2$ is a direct product of non-abelian finite simple groups, this kernel is simply a direct product of some of the factors. Thus either $\Gamma'=\Gamma_1=\Gamma_2$, or $p>0$ and $\Gamma'/\Gamma_2$ is a direct product of finite simple groups of Lie type in characteristic $p$. The last statement also implies that the inclusion $\Gamma_2\subset Z(\Gamma')$ must be an equality, and everything is proved.
\end{proof}

%%%%%%%%%%%%%%%%%%%%%%%%%%%%%%%%%%%%%%%%%%%%%%%%%%%%%%%%%%%%%%%%%%%%%%

\newpage

%%%%%%%%%%%%%%%%%%%%%%%%%%%%%%%%%%%%%%%%%%%%%%%%%%%%%%%%%%%%%%%%%%%%%%

% \section{Approximation in the special linear group}\label{approx}
\section{Subgroups of $\SL_n$ over a complete valuation ring} \label{approx}

Let $R$ be a complete discrete valuation ring of equal characteristic with finite residue field $k=R/\pp$ of characteristic $p$. Fix an integer $n\geq2$. In this section we consider closed subgroups of $\SL_n(R)$ for the topology induced by~$R$ and establish suitable conditions for such a closed subgroup to be equal to $\SL_n(R)$. We use successive approximation over the congruence filtration of $\SL_n(R)$, respectively of $\GL_n(R)$, whose subquotients are related to the adjoint representation. The case $p=n=2$ presents some special subtleties here, because the Lie bracket on $\sll_2$ in characteristic $2$ is not surjective. In Subsection \ref{trace_crit} we show how a certain non-triviality condition required earlier can be guaranteed using traces in the adjoint representation. The main results are Theorems \ref{strong_approx_SLn}, \ref{strong_approx_GLn}, \ref{trace_crit_1}, and \ref{trace_crit_2}.

%%%%%%%%%%%%%%%%%%%%%%%%%%%%%%%%%%%%%%%%%%%%%%%%%%%%%%%%%%%%%%%%%%%%%%

\subsection{Adjoint representation}

We first collect a few general results on the cohomology and subgroups of the adjoint representation. Let $\gll_n$, $\sll_n$, $\pgll_n$ denote the Lie algebras of $\GL_n$, $\SL_n$, $\text{PGL}_n$, respectively. As usual we identify elements of $\gll_n$ with $n\times n$-matrices. Let $\mathfrak{c}$ denote the subspace of scalar matrices in $\gll_n$. For any field $k$ let $\mathfrak{psl}_n(k)$ denote the image of the natural homomorphism $\sll_n(k)\to\pgll_n(k)$. 

\begin{prop}\label{cohom1}
For any finite field $k$ with $|k|>9$ and any subgroup $H$ of $\GL_n(k)$ that contains $\SL_n(k)$, we have
$$H^1(H,\pgll_n(k))=0.$$
\end{prop}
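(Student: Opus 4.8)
The plan is to reduce, by two purely formal steps, to the vanishing of the group of self-extensions of the natural module of $\SL_n(k)$. First, since $\SL_n(k)\subseteq H\subseteq\GL_n(k)$, the index $[H:\SL_n(k)]=|\det H|$ divides $|k|-1$ and is therefore prime to $p:=\car(k)$; as $\pgll_n(k)$ is an $\mathbb{F}_p$-vector space, the composite of restriction and corestriction on $H^1(-,\pgll_n(k))$ is multiplication by that index and hence an automorphism. So restriction $H^1(H,\pgll_n(k))\to H^1(\SL_n(k),\pgll_n(k))$ is injective, and it suffices to treat $H=\SL_n(k)$.

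Next, the short exact sequence of $\SL_n(k)$-modules $0\to\mathfrak{c}(k)\to\gll_n(k)\to\pgll_n(k)\to0$, in which $\mathfrak{c}(k)\cong k$ carries the trivial action, yields the exact fragment
$$H^1(\SL_n(k),k)\longrightarrow H^1(\SL_n(k),\gll_n(k))\longrightarrow H^1(\SL_n(k),\pgll_n(k))\longrightarrow H^2(\SL_n(k),k).$$
Here $H^1(\SL_n(k),k)=\text{Hom}(\SL_n(k),k)=0$ because $\SL_n(k)$ is perfect for $|k|>3$; and, $\SL_n(k)$ being perfect, also $H^2(\SL_n(k),k)\cong\text{Hom}(H_2(\SL_n(k),\mathbb{Z}),k)=0$, because for $|k|>9$ the Schur multiplier $H_2(\SL_n(k),\mathbb{Z})$ vanishes — the finitely many $\SL_n(q)$ with non-vanishing Schur multiplier, those with $(n,q)\in\{(2,4),(2,9),(3,2),(3,4),(4,2)\}$, all satisfy $q\le9$. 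Hence the middle map is an isomorphism and we are reduced to showing $H^1(\SL_n(k),\gll_n(k))=0$.

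This last vanishing is the crux. Identifying $\gll_n(k)=\End_k(V)=V\otimes_k V^*$ for the natural module $V=k^n$ with its conjugation action, we have $H^1(\SL_n(k),\gll_n(k))=\text{Ext}^1_{k[\SL_n(k)]}(V,V)$, the group of self-extensions of the natural module. To prove this is zero I would either cite the known vanishing of self-extensions of the natural module of $\SL_n(\mathbb{F}_q)$ (valid for all $|k|\ge4$), or argue directly: after extension of scalars to $\overline{\mathbb{F}}_p$, with which $\text{Ext}$ over the finite group $\SL_n(k)$ commutes, restrict a given self-extension to the root-$\SL_2$ subgroups that generate $\SL_n(k)$ — over each of which the analogous self-extension group is elementary and vanishes for $|k|>3$ — and patch the resulting splittings together using the Steinberg relations among the root subgroups. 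I expect this step to be the real obstacle: Steps 1 and 2 are formal and merely locate the hypothesis $|k|>9$, which is precisely what eliminates the exceptional Schur multipliers of the small groups $\text{PSL}_n(q)$, whereas the vanishing of $\text{Ext}^1_{k[\SL_n(k)]}(V,V)$ genuinely draws on the modular representation theory of $\SL_n$ over a finite field, and requires care when $p=2$ in view of the degeneracies of $\gll_2$ in characteristic $2$ noted in this section.
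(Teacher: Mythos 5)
Your argument is correct and essentially the paper's own: the same short exact sequence $0\to\mathfrak{c}(k)\to\gll_n(k)\to\pgll_n(k)\to0$ and its long exact cohomology sequence, with the $H^2$-term killed by the absence of nontrivial central extensions of $\SL_n(k)$ for $|k|>9$ (the paper cites Steinberg, matching your Schur-multiplier argument), the key vanishing $H^1(\SL_n(k),\gll_n(k))=0$ taken from the literature (the paper cites \cite{TaZa}, Theorem 9, which is exactly the self-extension statement you propose to quote), and the passage from $\SL_n(k)$ to $H$ via injectivity of restriction for an index prime to $p$ (the paper cites \cite{CPS1}, your restriction--corestriction argument). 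The two proofs coincide in substance, differing only in bookkeeping, so no further work is needed beyond quoting the standard reference for the crux rather than your sketched root-subgroup patching.
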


\begin{proof}
Consider the short exact sequence
$$0 \longrightarrow \mathfrak{c}(k) \longrightarrow \gll_n(k) \longrightarrow \pgll_n(k) \longrightarrow 0.$$ 
Its associated long exact cohomology sequence contains the portion
$$H^1(\SL_n(k),\gll_n(k)) \longrightarrow H^1(\SL_n(k),\pgll_n(k)) \longrightarrow H^2(\SL_n(k),\mathfrak{c}(k)).$$
Here the group on the left is trivial by \cite{TaZa}, Theorem 9. The group on the right classifies central extensions of $\SL_n(k)$ by $\mathfrak{c}(k)$; but since $\SL_n(k)$ has no central extensions by \cite{Ste2}, Theorem 1.1, if $|k|>9$, this group is also trivial. Thus the group in the middle is trivial. Finally, since $[H:\SL_n(k)]$ divides $[\GL_n(k):\SL_n(k)]= |k|-1$, it is prime to the characteristic of $k$. By \cite{CPS1}, Proposition 2.3 (g), the restriction map 
$$H^1(H,\pgll_n(k)) \longrightarrow H^1(\SL_n(k),\pgll_n(k))$$
is therefore injective. Thus $H^1(H,\pgll_n(k))$ is trivial, as desired.
\end{proof}

The next proposition is an adaptation of \cite{PR2}, Proposition 2.1.

\begin{prop}\label{Egon2.1}
Let $n\geq 2$ and $k$ be a finite field with $|k|>9$. Let $H$ be an additive subgroup of $\mathfrak{gl}_n(k)$ that is invariant under conjugation by $\SL_n(k)$. Then either $H\subset\mathfrak{c}(k)$ or $\mathfrak{sl}_n(k)\subset H$. 
% Then either $H$ is contained in the group of scalar matrices or $H$ contains $\mathfrak{sl}_n(k)$. 
\end{prop}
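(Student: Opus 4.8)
The plan is to decompose $\mathfrak{gl}_n(k)$ as an $\SL_n(k)$-module and analyze the possible positions of $H$ inside this decomposition. First I would observe that we have the chain $\mathfrak c(k)\subset\mathfrak{sl}_n(k)\subset\mathfrak{gl}_n(k)$ of $\SL_n(k)$-submodules, with $\mathfrak{gl}_n(k) = \mathfrak{sl}_n(k)\oplus\mathfrak c'(k)$ as $\SL_n(k)$-modules whenever $p\nmid n$ (taking $\mathfrak c'$ a complementary scalar line via the trace form), but more delicately when $p\mid n$, where $\mathfrak c(k)\subset\mathfrak{sl}_n(k)$ and the quotient $\mathfrak{sl}_n(k)/\mathfrak c(k)=\mathfrak{psl}_n(k)$ is the relevant irreducible piece. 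The key representation-theoretic input, valid because $|k|>9$ guarantees $\SL_n(k)$ is perfect and large enough to force absolute irreducibility of the relevant constituents, is that $\mathfrak{psl}_n(k)$ is an irreducible $\SL_n(k)$-module (this is standard; one may cite the same circle of results as in \cite{HumphMod} used elsewhere in the section, or argue directly using root subgroups). Granting this, the composition factors of $\mathfrak{gl}_n(k)$ as an $\SL_n(k)$-module are the trivial module (one or two copies, depending on whether $p\mid n$) and $\mathfrak{psl}_n(k)$, each appearing once.

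The next step is to exploit that $H$ is merely an additive subgroup of $\mathfrak{gl}_n(k)$ invariant under $\SL_n(k)$-conjugation, hence an $\mathbb F_p[\SL_n(k)]$-submodule, \emph{not} necessarily a $k$-submodule. The cleanest route is to pass to the $k$-span: let $\widetilde H := k\cdot H$ be the $k$-subspace of $\mathfrak{gl}_n(k)$ generated by $H$, which is automatically an $\SL_n(k)$-submodule. By the composition-factor analysis above, the only $\SL_n(k)$-submodules of $\mathfrak{gl}_n(k)$ are among $0$, $\mathfrak c(k)$, $\mathfrak{sl}_n(k)$, $\mathfrak{gl}_n(k)$, and (when $p\mid n$) possibly a complementary scalar line if the extension splits — in all cases either $\widetilde H\subset\mathfrak c(k)$ or $\mathfrak{sl}_n(k)\subset\widetilde H$. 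In the first case $H\subset\widetilde H\subset\mathfrak c(k)$ and we are done. In the second case I need to upgrade $\mathfrak{sl}_n(k)\subset\widetilde H$ to $\mathfrak{sl}_n(k)\subset H$: here I would use that $H$ contains a nonzero element $X$ whose $k$-multiples span a line not inside $\mathfrak c(k)$, conjugate $X$ by suitable elements of $\SL_n(k)$ (permutation matrices and elementary matrices) to produce explicitly the elementary matrices $E_{ij}$ ($i\ne j$) and the diagonal traceless matrices inside $H$ — for instance, $\mathrm{Ad}(1+tE_{kl})(X) - X$ produces new elements, and taking $\mathbb F_p$-linear combinations of conjugates of a single off-diagonal elementary matrix recovers all of $\mathfrak{sl}_n(k)$ since $\mathfrak{sl}_n(k)$ is generated as an $\mathbb F_p[\SL_n(k)]$-module by any single $E_{ij}$ (this last fact follows from the irreducibility of $\mathfrak{psl}_n(k)$ together with a check that $E_{ij}\notin\mathfrak c(k)$).

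The main obstacle I anticipate is the case $p\mid n$, where $\mathfrak c(k)\subset\mathfrak{sl}_n(k)$ and the module $\mathfrak{gl}_n(k)$ is no longer semisimple: one must rule out $\SL_n(k)$-submodules $\widetilde H$ sitting strictly between $\mathfrak c(k)$ and $\mathfrak{sl}_n(k)$, or strictly between $\mathfrak{sl}_n(k)$ and $\mathfrak{gl}_n(k)$, or a "diagonal" scalar complement when $n=p$ causes the uniserial structure to degenerate. The sub-case $p=n=2$ is the most delicate — this is flagged repeatedly in the surrounding text — since $\mathfrak{sl}_2(k)/\mathfrak c(k)$ has dimension $1$ in that case and the extra symmetries can produce unexpected small submodules; I would handle it by a direct finite check on $\SL_2(k)$-conjugates of a candidate element, using $|k|>9$ (so $|k|\ge 16$) to guarantee there are enough elements $1+tE_{12}$ to generate what is needed. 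Modulo these case distinctions, the argument is the same submodule-classification plus explicit-conjugation argument as in \cite{PR2}, Proposition 2.1, adapted to the characteristic-$p$ setting and to the weaker hypothesis that $H$ is only an additive (not $k$-linear) subgroup.
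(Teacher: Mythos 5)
The crux of this proposition is that $H$ is only an \emph{additive} subgroup (an $\mathbb{F}_p[\SL_n(k)]$-submodule), not a $k$-subspace, and your proposal does not actually bridge that gap. Your classification of $k$-invariant subspaces of $\mathfrak{gl}_n(k)$ is plausible but does essentially none of the work: after passing to $\widetilde H=k\cdot H$, everything hinges on the ``upgrade'' step, i.e.\ showing that an $\SL_n(k)$-invariant additive group containing a non-scalar matrix must contain some nonzero $cE_{ij}$ and then all of $\mathfrak{sl}_n(k)$. For the second half you appeal to ``the irreducibility of $\mathfrak{psl}_n(k)$'', but irreducibility of $\mathfrak{psl}_n(k)$ as a $k[\SL_n(k)]$-module does not imply that a single $E_{ij}$ generates $\mathfrak{sl}_n(k)$ as an $\mathbb{F}_p[\SL_n(k)]$-module (the trivial module $k$ is $k$-irreducible yet full of invariant additive subgroups); that generation statement is essentially an instance of the proposition itself, and proving it requires an argument such as: torus conjugates scale $E_{ij}$ by elements of $(k^\times)^2$, every element of $k$ is a sum of two squares, so the additive span already contains $kE_{ij}$, then Weyl conjugates and a unipotent-conjugation identity give the off-diagonal and diagonal traceless parts. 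For the first half (isolating a single off-diagonal entry from an arbitrary non-scalar $X\in H$ using only conjugation and $\mathbb{F}_p$-combinations) you offer only ``conjugate by permutation and elementary matrices'', and this is precisely the non-trivial point: the paper's proof does it by decomposing $\mathfrak{gl}_n(k)=W_0\oplus\bigoplus_{i\neq j}W_{i,j}$ into weight spaces for the diagonal torus $T'(k)\subset\SL_n(k)$ and proving (Lemma \ref{tolstoi}) that the $W_{i,j}$ are non-trivial, irreducible and \emph{pairwise non-isomorphic} as $\mathbb{F}_p[T'(k)]$-modules when $|k|>9$; since $p\nmid|T'(k)|$, semisimplicity then forces $H$ to be the direct sum of its intersections with $W_0$ and the $W_{i,j}$, after which a commutator computation handles the diagonal case and a trace-form duality delivers $\mathfrak{sl}_n(k)\subset H$.

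Relatedly, you misplace the role of the hypothesis $|k|>9$: it is not about perfectness or absolute irreducibility of $\SL_n(k)$-constituents, but about separating the torus characters $\chi_{i,j}(t)=t_i/t_j$ (up to Frobenius twist) on $T'(k)$, where the paper notes the bound is sharp exactly when $n=2$; nor is the $p=n=2$ case specially delicate for this proposition (the char-$2$ subtleties you cite concern the Lie bracket on $\mathfrak{sl}_2$ in the successive-approximation section, not here). So while your overall dichotomy and several auxiliary claims are correct, the passage from the $k$-span back to $H$ — the actual content of the statement — is left to a hand-wave whose stated justification does not suffice; as written the proof has a genuine gap at its central step.
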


\begin{proof}
Let $W_0\subset\gll_n(k)$ denote the subgroup of diagonal matrices. For each pair of distinct indices $1\leq i,j\leq n$, let $W_{i,j}\subset\gll_n(k)$ denote the group of matrices with all entries zero except, possibly, in the position $(i,j)$. Then we have the decomposition
$$\mathfrak{gl}_n(k)=W_0\oplus \bigoplus_{i\not=j}W_{i,j}.$$ 
This decomposition is invariant under conjugation by the group of diagonal matrices $T(k)\subset\GL_n(k)$. Indeed, an element $t=\diag(t_1,\ldots,t_n)\in T(k)$ acts trivially on $W_0$ and by multiplication with $\chi_{i,j}(t) := t_i/t_j$ on $W_{i,j}$. Set $T'(k) := T(k)\cap\SL_n(k)$, and let ${\mathbb F}_p$ denote the prime field of $k$.

\begin{lem}\label{tolstoi}
The $W_{i,j}$ are non-trivial and irreducible viewed as representations of $T'(k)$ over ${\mathbb F}_p$. If $|k|>9$, they are pairwise non-isomorphic. Furthermore, they are permuted transitively under conjugation by the normalizer of $T'(k)$ in $\SL_n(k)$.
\end{lem}

\begin{proof}
The last assertion follows from the fact that the $W_{i,j}$ are permuted transitively by the permutation matrices and that every permutation matrix can be moved into $\SL_n(k)$ by changing the sign of at most one entry. 

For the first assertion it thus suffices to consider $W_{1,2}$. The calculation $\chi_{1,2}(\diag(x,x^{-1},1,\ldots,1))=x^2$ shows that $(k^\times)^2 \subset \chi_{1,2}(T'(k)) \subset k^\times$. Since every element of a finite field $k$ can be written as a sum of two squares, this subgroup generates $k$ as an ${\mathbb F}_p$-algebra. As  $W_{1,2}$ is a one-dimensional vector space over $k$, it is therefore irreducible as a representation of $T'(k)$ over ${\mathbb F}_p$. Since $(k^\times)^2\not=\{1\}$ by assumption, this representation is non-trivial. 

For the remaining assertion suppose that two distinct $W_{i,j}$ and $W_{i',j'}$ are isomorphic as representations of $T'(k)$ over ${\mathbb F}_p$. This means that $\chi_{i,j}|T'(k) = \chi_{i',j'}^{p^m}|T'(k)$ for some $m\ge0$. Without loss of generality we may assume that $(i',j')=(1,2)$. Suppose first that $(i,j)=(2,1)$. Then by applying the equation to elements of the form $\diag(x,x^{-1},1,\ldots,1)$ we find that $x^{-2} = x^{2p^m}$ for all $x\in k^\times$. By an explicit calculation that we leave to the reader, this is not possible if $|k|>9$ (and this bound cannot be improved if $n=2$!). If $i,j>2$, the element $\diag(x,x^{-1},1,\ldots,1)$ acts as multiplication by $x^2$ on $W_{1,2}$ and trivially on $W_{i,j}$. Since $(k^\times)^2\not=\{1\}$ by assumption, again the representations cannot be isomorphic. If precisely one of $i,j$ is $\le 2$, we may assume that the other is~$3$. Then the element $\diag(x,x,x^{-2},1,\ldots,1)$ acts trivially on $W_{1,2}$ and as multiplication by $x^{\pm3}$ on $W_{i,j}$. Since $(k^\times)^3\not=\{1\}$ by assumption, we again obtain a contradiction.
\end{proof}

Now let $H$ be as in the proposition. Suppose first that $H\subset W_0$. Consider an arbitrary element $h=\diag(h_1,\ldots,h_n)\in H$. Take distinct indices $1\leq i,j\leq n$ and let $g \in \SL_n(k)$ be the matrix with entries $1$ on the diagonal and in the position $(i,j)$, and entries $0$ elsewhere. Then $ghg^{-1}-h$ is the matrix with entry $h_i-h_j$ in the position $(i,j)$ and entries $0$ elsewhere. Since $H\subset W_0$, it follows that $h_i=h_j$. Varying $i$ and $j$ we deduce that $h$ is a scalar matrix, i.e., that $H\subset\mathfrak{c}(k)$.

If $H\not\subset W_0$, Lemma \ref{tolstoi} implies that $H$ contains at least one, and hence all $W_{i,j}$. Consider the trace form 
$$\mathfrak{gl}_n(k) \times \mathfrak{gl}_n(k) \to {\mathbb F}_p, 
\quad (X,Y)\mapsto \Tr_{k/{\mathbb F}_p}\Tr(XY),$$ 
which is a perfect ${\mathbb F}_p$-bilinear pairing invariant under $\SL_n(k)$. Then $H$ contains the orthogonal complement $W_0^\perp$ of $W_0$, and since taking orthogonal complements reverses inclusion relations, the orthogonal complement $H^\perp$ of $H$ is contained in $W_0$. By construction $H^\perp$ is again an $\SL_n(k)$-invariant subgroup; hence the preceding arguments show that $H^\perp\subset\mathfrak{c}(k)$. Therefore 
$\mathfrak{sl}_n(k) = \mathfrak{c}(k)^\perp \subset H$, as desired.
\end{proof}

\begin{cor}\label{Egon2.1_cor}
Let $n\geq 2$ and $k$ be a finite field with $|k|>9$. Let $H$ be a non-zero additive subgroup of $\mathfrak{pgl}_n(k)$ that is invariant under conjugation by $\SL_n(k)$. Then $H$ contains $\mathfrak{psl}_n(k)$.
% Then $\mathfrak{psl}_n(k)\subset H$. 
\end{cor}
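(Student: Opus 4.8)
The plan is to lift $H$ to an $\SL_n(k)$-invariant subgroup of $\gll_n(k)$ and then apply Proposition \ref{Egon2.1}. Concretely, let $\pi: \sll_n(k) \to \pgll_n(k)$ be the natural homomorphism, whose image is $\mathfrak{psl}_n(k)$ by definition, and let $\tilde\pi : \gll_n(k) \to \pgll_n(k)$ be the quotient map with kernel $\mathfrak{c}(k)$. Set $\widetilde H := \tilde\pi^{-1}(H) \subset \gll_n(k)$. Since $\tilde\pi$ is $\SL_n(k)$-equivariant for the conjugation actions and $H$ is invariant under conjugation by $\SL_n(k)$, the preimage $\widetilde H$ is again an additive subgroup of $\gll_n(k)$ invariant under conjugation by $\SL_n(k)$. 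Moreover $\widetilde H \supset \mathfrak{c}(k)$, so in particular $\widetilde H \not\subset \mathfrak{c}(k)$ would follow once we know $H \neq 0$; but we must be slightly careful, because a priori $\widetilde H$ could equal $\mathfrak{c}(k)$ only if $H = 0$, which is excluded by hypothesis. Hence $\widetilde H \supsetneq \mathfrak{c}(k)$, and in particular $\widetilde H \not\subset \mathfrak{c}(k)$.

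Now apply Proposition \ref{Egon2.1} to $\widetilde H$: since $|k| > 9$ and $\widetilde H \not\subset \mathfrak{c}(k)$, the proposition forces $\sll_n(k) \subset \widetilde H$. Applying $\tilde\pi$ and using $\tilde\pi(\sll_n(k)) = \mathfrak{psl}_n(k)$ together with $\tilde\pi(\widetilde H) = H$ (which holds because $\widetilde H = \tilde\pi^{-1}(H)$ and $\tilde\pi$ is surjective), we conclude $\mathfrak{psl}_n(k) \subset H$, as desired.

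I expect essentially no obstacle here: the only point requiring a line of care is checking that $H \neq 0$ genuinely forces $\widetilde H \not\subset \mathfrak{c}(k)$ — which is immediate, since $\tilde\pi(\mathfrak{c}(k)) = 0$ while $\tilde\pi(\widetilde H) = H \neq 0$ — and that the hypotheses of Proposition \ref{Egon2.1}, namely additivity of $\widetilde H$ and its invariance under conjugation by $\SL_n(k)$, are inherited from those of $H$ under the equivariant linear map $\tilde\pi$. Both are routine. The statement is thus a direct corollary of Proposition \ref{Egon2.1} via the preimage construction, with the bound $|k| > 9$ carried over unchanged.
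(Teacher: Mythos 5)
Your proof is correct and is essentially the same as the paper's: take the preimage $\tilde H$ of $H$ in $\gll_n(k)$, note it is additive, $\SL_n(k)$-invariant and not contained in $\mathfrak{c}(k)$ since $H\neq0$, apply Proposition \ref{Egon2.1} to get $\sll_n(k)\subset\tilde H$, and push forward to conclude $\mathfrak{psl}_n(k)\subset H$.
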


\begin{proof}
Apply Proposition \ref{Egon2.1} to the inverse image $\tilde H\subset\mathfrak{gl}_n(k)$ of $H$. Since $H$ is non-trivial, we have $\tilde H\not\subset\mathfrak{c}(k)$ and hence $\mathfrak{sl}_n(k)\subset \tilde H$. Therefore $\mathfrak{psl}_n(k)\subset H$, as desired.
\end{proof}

%%%%%%%%%%%%%%%%%%%%%%%%%%%%%%%%%%%%%%%%%%%%%%%%%%%%%%%%%%%%%%%%%%%%%%

\subsection{Successive approximation} \label{succc}

The congruence filtration of $\GL_n(R)$ consists of the subgroups
$$G^i := \{ g\in\GL_n(R)\mid g\equiv{\rm Id}_n\mathrel{\rm mod}\pp^i\}$$
for all $i\geq0$. Their successive subquotients possess natural isomorphisms
$$G^{[i]}\ :=\ G^i/G^{i+1}\ \cong\ 
\left\{\begin{array}{ll}
\GL_n(k) & \hbox{if $i=0$,}\\[5pt]
\gll_n(\pp^i/\pp^{i+1}) & \hbox{if $i>0$,}
\end{array}\right.$$
where the second isomorphism is given by $[{\rm Id}_n+X] \mapsto [X]$. For any subgroup $H$ of $\GL_n(R)$ we define $H^i:= H\cap G^i$ and $H^{[i]}:= H^i/H^{i+1}$ and identify the latter with a subgroup of $\GL_n(k)$ or $\gll_n(\pp^i/\pp^{i+1})$, respectively. For example, the induced congruence filtration of $G' := \SL_n(R)$ has the successive subquotients
$$G^{\prime[i]}\ \cong\ 
\left\{\begin{array}{ll}
\SL_n(k) & \hbox{if $i=0$,}\\[5pt]
\sll_n(\pp^i/\pp^{i+1}) & \hbox{if $i>0$.}
\end{array}\right.$$
As a preparation we show:

\begin{lem} \label{camilla}
Assume that $|k|>9$. Let $H$ be a subgroup of $\GL_n(R)$, and let $H'$ be a normal subgroup of $H$ that is contained in $\SL_n(R)$. Assume that $\SL_n(k) \subset H^{\prime[0]}$ and that $H^{[1]}$ contains a non-scalar matrix. Then we have $H^{\prime[1]}=\sll_n(\pp/\pp^2)$.
\end{lem}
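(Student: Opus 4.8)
The plan is to exploit the conjugation action of $H$ on the abelian group $H'^{[1]}$, which sits inside $\sll_n(\pp/\pp^2)$, together with the hypothesis that $H$ has a non-scalar element in its degree-$1$ layer. First I would observe that since $H'$ is normal in $H$, the subgroup $H'^{[1]} = H'^1/H'^2$ is stable under conjugation by all of $H$, hence in particular under $H^{[0]}$, which contains $\SL_n(k)$ by assumption. Thus $H'^{[1]}$ is an additive subgroup of $\sll_n(\pp/\pp^2) \cong \sll_n(k)$ (after choosing a uniformizer to trivialize the free rank-one $k$-module $\pp/\pp^2$) that is invariant under conjugation by $\SL_n(k)$. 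By Proposition \ref{Egon2.1} applied to the inverse image of $H'^{[1]}$ in $\gll_n(k)$ — or more directly by the $\sll_n$-version of that argument — either $H'^{[1]} \subset \mathfrak{c}(k)$ (the scalars) or $H'^{[1]} = \sll_n(k) = \sll_n(\pp/\pp^2)$. So the whole problem reduces to ruling out the case $H'^{[1]} \subset \mathfrak{c}(k)$; but scalar matrices of trace zero in $\sll_n$ over a field of characteristic $p$ are only nonzero when $p \mid n$, and even then they form a one-dimensional space, so "$H'^{[1]}$ consists of scalars" essentially means $H'^{[1]}$ is very small.

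The heart of the argument is therefore to produce, from the non-scalar element of $H^{[1]}$, enough elements of $H'^{[1]}$ to escape the scalars. Here is where I would use the commutator trick: pick $h \in H^1$ whose class $[h] = [\mathrm{Id}_n + X]$ in $H^{[1]}$ has $X$ non-scalar mod $\pp^2$, and pick $g \in H$ lifting an arbitrary element $\bar g \in \SL_n(k) \subset H'^{[0]}$ — note we may choose $g \in H'$ since $\SL_n(k) \subset H'^{[0]}$ and then adjust, but more carefully we should take $g$ a lift in $H'$ of $\bar g$ whenever $\bar g \in \SL_n(k)$. The commutator $[g,h] = g h g^{-1} h^{-1}$ lies in $H^1$ because $H^1 = H \cap G^1$ is normal in $H$; moreover it lies in $H'$ because $h$ normalizes... no — rather, I would instead form $g h g^{-1} h^{-1}$ where $g\in H'$, so that $g h g^{-1} \in H'$ is false in general. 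The cleaner route: consider for $g \in H'^1$ lifting elements and $h \in H^1$ the element $h g h^{-1} g^{-1}$; since $H'$ is normal in $H$, $h g h^{-1} \in H'$, hence $h g h^{-1} g^{-1} \in H'$, and it lies in $G^1$, so in $H'^1$. Its class in $H'^{[1]} = \sll_n(\pp/\pp^2)$ is $[\bar g, \bar X]$ — wait, $g \in H'^1$ has trivial image in $H^{[0]}$ — so this commutator is in $G^2$ and gives nothing. The correct choice is to take $g \in H'^0$ a lift of $\bar g \in \SL_n(k)$ and $h \in H^1$: then $h^{-1} g^{-1} h g$ lies in $H'$ (using normality of $H'$ in $H$, since $g \in H'$ forces $h^{-1}g^{-1}h \in H'$) and in $G^1$, hence in $H'^1$, and a direct computation modulo $\pp^2$ shows its class in $\sll_n(\pp/\pp^2)$ equals $\bar g \bar X \bar g^{-1} - \bar X = (\mathrm{Ad}(\bar g) - 1)(\bar X)$ where $\bar X$ is the reduction of $X$.

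So $H'^{[1]}$ contains $(\mathrm{Ad}(\bar g) - 1)(\bar X)$ for every $\bar g \in \SL_n(k)$. I would now argue: the $\SL_n(k)$-submodule of $\gll_n(k)$ generated by a non-scalar $\bar X$ contains $\sll_n(k)$ — this is exactly Proposition \ref{Egon2.1} again, since that submodule is not contained in $\mathfrak{c}(k)$. Hence the submodule generated by all $(\mathrm{Ad}(\bar g)-1)(\bar X)$, which is the "augmentation submodule," together with $\bar X$ spans at least $\sll_n(k) + k\bar X$; and since $H'^{[1]}$ is already a group stable under $\mathrm{Ad}(\SL_n(k))$, showing it contains all such $(\mathrm{Ad}(\bar g)-1)(\bar X)$ shows it is not contained in $\mathfrak{c}(k)$ (a proper $\SL_n(k)$-submodule of scalars is fixed pointwise, so all these differences would vanish, forcing $\bar X$ itself to be $\SL_n(k)$-fixed, i.e. scalar — contradiction). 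Therefore $H'^{[1]} \not\subset \mathfrak{c}(k)$, and the dichotomy from Proposition \ref{Egon2.1} forces $H'^{[1]} = \sll_n(\pp/\pp^2)$, as claimed.

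I expect the main obstacle to be the case $p = n = 2$, where $\sll_2$ in characteristic $2$ contains the scalar $\mathrm{Id}_2$ (trace zero!), so $\mathfrak{c}(k) \cap \sll_2(k)$ is one-dimensional rather than zero, and the adjoint action of $\SL_2$ on $\sll_2$ is not irreducible — it has the invariant line of scalars. In that case Proposition \ref{Egon2.1} still applies (its statement already accommodates $|k| > 9$ and arbitrary $p$, $n$), giving $H'^{[1]} \subset \mathfrak{c}(k)$ or $\sll_n(k) \subset H'^{[1]}$; the delicate point is verifying that the commutator computation still produces a non-scalar element of $H'^{[1]}$, i.e. that $(\mathrm{Ad}(\bar g) - 1)(\bar X)$ is non-scalar for some $\bar g$ when $\bar X$ is non-scalar. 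This follows because if every $(\mathrm{Ad}(\bar g)-1)(\bar X)$ were scalar, then the image of $\bar X$ in $\pgll_2(k)$ would be $\SL_2(k)$-fixed, hence central in $\pgll_2(k)$, hence zero, so $\bar X \in \mathfrak{c}(k)$ — contradicting non-scalarness. So the argument survives uniformly, but I would write the $p=n=2$ check explicitly rather than sweeping it into "the same argument works."
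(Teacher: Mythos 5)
Your argument is correct and is essentially the paper's own proof: both rest on the observation that $H^{\prime[1]}$ is an additive subgroup of $\sll_n(\pp/\pp^2)$ invariant under conjugation by $\SL_n(k)$, on Proposition \ref{Egon2.1}, and on the commutator of a lift in $H'$ of some $\gamma\in\SL_n(k)$ with an element of $H^1$ representing $\Id_n+X$, which places $\gamma X\gamma^{-1}-X$ in $H^{\prime[1]}$. The only divergence is how one sees that this class can be made non-scalar: the paper chooses $\gamma$ via Proposition \ref{Egon2.1} so that $X$, $\gamma X\gamma^{-1}$, $\Id_n$ are $k$-linearly independent, whereas you argue by contradiction that the image of $X$ cannot be $\SL_n(k)$-fixed in $\pgll_n(k)$ -- a true statement, but your phrase ``central in $\pgll_n(k)$, hence zero'' needs a one-line justification (e.g.\ $g\mapsto gXg^{-1}-X\in\mathfrak{c}(k)$ is a homomorphism into an abelian group, so it vanishes because $\SL_n(k)$ is perfect for $|k|>3$, and then $X$ is scalar by irreducibility), uniformly covering your worry about $p=n=2$.
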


\begin{proof}
Pick a non-scalar matrix $X\in H^{[1]} \subset \gll_n(\pp/\pp^2)$. Since $\pp/\pp^2\cong k$, Proposition \ref{Egon2.1} implies that the $k$-vector subspace generated by all $\SL_n(k)$-conjugates of $X$ contains $\sll_n(\pp/\pp^2)$. Thus there exists $\gamma\in\SL_n(k)$ such that $X$ and $\gamma X\gamma^{-1}$ and $\Id_n$ are $k$-linearly independent. Choose elements $h\in H^1$ and $h'\in H'$ that are mapped to $X$ and~$\gamma$, respectively, i.e., that satisfy $h\equiv \Id_n+X\mathrel{\rm mod}\pp^2$ and $h'\equiv\gamma\mathrel{\rm mod}\pp$. Then the commutator $h'hh^{\prime-1}h^{-1}$ lies in $H'$ and is congruent to $\Id_n + \gamma X\gamma^{-1}-X \mathrel{\rm mod}\pp^2$. By construction $\gamma X\gamma^{-1}-X \mathrel{\rm mod}\pp^2$ is not scalar; hence $H^{\prime[1]}\subset\sll_n(\pp/\pp^2)$ contains a non-scalar matrix.

On the other hand the isomorphism $G^{[1]}\stackrel{\sim}{\to}\gll_n(\pp/\pp^2)$, $[{\rm Id}_n{+}X] \mapsto [X]$ is equivariant under conjugation by $\GL_n(R)$. This conjugation action factors through an action of $\GL_n(k)$. In the present situation it follows that $H^{\prime[1]}\subset\sll_n(\pp/\pp^2)$ is an additive subgroup that is invariant under conjugation by $\SL_n(k)\subset H^{\prime[0]}$. Since by assumption it also contains a non-scalar matrix, Proposition \ref{Egon2.1} implies that $H^{\prime[1]} = \sll_n(\pp/\pp^2)$, as desired.
\end{proof}

% The following result is straightforward if $(p,r)\neq (2,2)$. The proof in the remaining case is slightly more involved and will largely be based on \cite{PSTAX}, Section 12.

\begin{thm}\label{strong_approx_SLn}
Assume that $|k|>9$. Let $H$ be a closed subgroup of $\SL_n(R)$ such that $H^{[0]}=\SL_n(k)$ and that $H^{[1]}$ contains a non-scalar matrix. Then $H=\SL_n(R)$.
\end{thm}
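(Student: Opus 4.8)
The plan is to show that $H$ is dense in $\SL_n(R)$; since $H$ is closed this forces $H=\SL_n(R)$. Writing $G':=\SL_n(R)$, density amounts to $H\cdot G^{\prime i}=G'$ for every $i\ge1$, and a telescoping argument reduces this to the claim that $H^{[i]}=G^{\prime[i]}$ for all $i\ge0$. By hypothesis $H^{[0]}=\SL_n(k)=G^{\prime[0]}$, and Lemma~\ref{camilla} applied with $H':=H$ — which is a normal subgroup of itself contained in $\SL_n(R)$, satisfies $\SL_n(k)\subset H^{[0]}$, and has $H^{[1]}$ containing a non-scalar matrix by assumption — yields $H^{[1]}=\sll_n(\pp/\pp^2)=G^{\prime[1]}$. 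So the task is to prove $H^{[i]}=\sll_n(\pp^i/\pp^{i+1})$ for all $i\ge2$, which I would do by induction on~$i$.

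For the inductive step I would combine two ingredients. First, the canonical isomorphisms $G^{[i]}\cong\gll_n(\pp^i/\pp^{i+1})$ are equivariant for conjugation by $\GL_n(R)$, and this conjugation action factors through $\GL_n(k)$; hence $H^{[i]}\subset\sll_n(\pp^i/\pp^{i+1})$ is an additive subgroup invariant under conjugation by $H^{[0]}=\SL_n(k)$, so by Proposition~\ref{Egon2.1} it is either contained in the scalars or equal to all of $\sll_n(\pp^i/\pp^{i+1})$. It therefore suffices to exhibit one non-scalar element of $H^{[i]}$. Second, the commutator map $G^1\times G^{i-1}\to G^i$ induces on subquotients the Lie bracket, so from $H^{[1]}=\sll_n(\pp/\pp^2)$ and the inductive hypothesis $H^{[i-1]}=\sll_n(\pp^{i-1}/\pp^i)$ one gets that $H^{[i]}$ contains $[\sll_n(k),\sll_n(k)]$ under the identification $\sll_n(\pp^i/\pp^{i+1})\cong\sll_n(k)$. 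Whenever $(p,n)\ne(2,2)$ this derived algebra contains a non-scalar matrix (for instance $e_{13}$ if $n\ge3$, or $e_{11}-e_{22}$ if $n=2$ and $p$ is odd), and the induction closes.

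The main obstacle is the case $p=n=2$ flagged in the introduction of this section: there $[\sll_2(k),\sll_2(k)]=k\cdot\Id$ consists of scalars only, so commutators of congruence elements never yield a non-scalar and the argument above fails at every level $i\ge2$. To get past it I would pass to second order in the congruence filtration and exploit that $H^{[0]}$ is the \emph{full} group $\SL_2(k)$: for a non-scalar $X\in\sll_2(k)$, a lift $h\in H\cap G^{i-1}$ with $h\equiv\Id+X\pi^{i-1}\bmod\pp^i$, and a unipotent $\gamma\in\SL_2(k)$ commuting with $X$ together with a lift $g\in H$, the element $(ghg^{-1})h^{-1}$ lies in $H\cap G^i$ and its image in $\sll_2(\pp^i/\pp^{i+1})$ has non-scalar part governed by an off-diagonal coefficient of $h$ at order $\pi^i$; the crux is then that this coefficient cannot be made to vanish for every admissible choice of $X$ and of the lift, the constraints coming from the equivariance of these data under all of $\SL_2(k)$ together with the extra quadratic term in the product formula at the first level where the obstruction appears — equivalently, from the non-splitting of a certain group extension built from $\SL_2(k)$, which is again where the hypothesis $|k|>9$ enters. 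I expect this $p=n=2$ bookkeeping to be the only genuinely delicate part of the proof.
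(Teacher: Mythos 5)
Your argument for $(p,n)\neq(2,2)$ is correct and essentially the paper's: level $0$ by hypothesis, level $1$ by Lemma~\ref{camilla}, and then the commutator map $G^1\times G^i\to G^{i+1}$ inducing the Lie bracket, combined with the $\SL_n(k)$-invariance of $H^{[i]}$ and Proposition~\ref{Egon2.1} (the paper instead quotes directly that the image of the bracket $\sll_n\times\sll_n\to\sll_n$ generates additively when $(p,n)\neq(2,2)$, which amounts to the same thing).

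The case $p=n=2$, however, is a genuine gap, not mere bookkeeping. You correctly see that $[\sll_2(k),\sll_2(k)]$ is scalar in characteristic $2$, but your proposed fix — commutators $(ghg^{-1})h^{-1}$ with $\gamma$ unipotent commuting with $X$, plus an unproven claim that the resulting second-order coefficient ``cannot be made to vanish for every admissible choice'' — is exactly the assertion that needs a proof, and nothing in your sketch supplies it; a priori such commutators could always land in the scalar line plus higher terms, and ruling this out is the whole difficulty. The paper's mechanism is different and specific: it passes to the image $\overline{H}$ in $\PGL_2(R)$ and compares the congruence filtrations of $\SL_2(R)$ and $\PGL_2(R)$. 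The key computation (Lemma~\ref{tempe}) shows that the \emph{scalar} direction $\mathfrak{c}(\pp/\pp^2)\subset H^{[1]}$ maps, via the quadratic term $(1+\varpi x)\mapsto 1+\varpi^2x^2$ (the Frobenius $x\mapsto x^2$, hence surjectively since $k$ is finite), onto the trace direction $\mathfrak{c}^*(\pp^2/\pp^3)$ of $\pgll_2$; together with Corollary~\ref{Egon2.1_cor} this gives $\overline{H}{}^{[2]}=\pgll_2(\pp^2/\pp^3)$ and then $H^{[2]}=\sll_2(\pp^2/\pp^3)$. The induction then proceeds in steps of two using the pairing $\overline{G}{}^{[2]}\times G^{\prime[i]}\to G^{\prime[i+2]}$, i.e.\ the bracket $\pgll_2\times\sll_2\to\sll_2$, which (unlike $\sll_2\times\sll_2\to\sll_2$) does generate $\sll_2$ additively. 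Note also that your guess about the role of $|k|>9$ in this case (``non-splitting of a certain group extension'') is off: in this part of the proof the hypothesis enters only through Proposition~\ref{Egon2.1} and Corollary~\ref{Egon2.1_cor}.
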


Before proving this we derive two consequences. For a closed subgroup $H$ of $\GL_n(R)$ we let $H^{\rm der}$ denote the closure of the derived group of $H$ for the topology induced by~$R$. (Probably the derived group is already closed, but we neither need nor want to worry about that.)

\begin{thm}\label{strong_approx_GLn}
Assume that $|k|>9$. Let $H$ be a closed subgroup of $\GL_n(R)$ such that $\SL_n(k)\subset H^{[0]}$ and that $H^{[1]}$ contains a non-scalar matrix. 
% Let $H^{\rm der}$ denote the closure of the derived group of $H$. 
Then $H^{\rm der}=\SL_n(R)$.
\end{thm}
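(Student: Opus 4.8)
The plan is to deduce Theorem \ref{strong_approx_GLn} from Theorem \ref{strong_approx_SLn} by finding, inside $H^{\rm der}$, a closed subgroup to which the latter applies. First I would set $H' := H^{\rm der}$, which is a closed normal subgroup of $H$ contained in $\SL_n(R)$, since the derived group of any subgroup of $\GL_n(R)$ lies in $\SL_n(R)$ (determinants are commutative) and $\SL_n(R)$ is closed. The goal is then to check the two hypotheses of Theorem \ref{strong_approx_SLn} for $H'$, namely $H^{\prime[0]} = \SL_n(k)$ and that $H^{\prime[1]}$ contains a non-scalar matrix.

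For the residual statement, note that $H^{[0]}$ is a subgroup of $\GL_n(k)$ containing $\SL_n(k)$, so its derived group contains $[\SL_n(k),\SL_n(k)] = \SL_n(k)$, using that $\SL_n(k)$ is perfect for $|k|>9$ (true for all but a couple of small cases, certainly for $|k|>9$). On the other hand the reduction map $H \to H^{[0]}$ is surjective with kernel $H^1$, so it carries $H^{\rm der}$ onto $(H^{[0]})^{\rm der}$; combined with $(H^{[0]})^{\rm der}\subset\SL_n(k)$ and the reverse inclusion just noted, this gives $H^{\prime[0]}=\SL_n(k)$. (A small wrinkle: one should take the image of the \emph{closure} of the derived group, but since $H^{[0]}$ is finite, its derived group is already closed and the image of the closure equals the closure of the image.)

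For the level-one statement, the idea is to run exactly the commutator argument of Lemma \ref{camilla}, but producing the output inside the derived group. Pick a non-scalar $X \in H^{[1]} \subset \gll_n(\pp/\pp^2)$ and, using Proposition \ref{Egon2.1} as in the proof of Lemma \ref{camilla}, choose $\gamma \in \SL_n(k)$ with $X$, $\gamma X\gamma^{-1}$, $\mathrm{Id}_n$ linearly independent over $k\cong\pp/\pp^2$. Lift $X$ to $h\in H^1$ and $\gamma$ to $h''\in H$ with $h''\equiv\gamma \bmod \pp$ (using surjectivity of $H\to H^{[0]}=\SL_n(k)$). Then the commutator $h''hh''^{-1}h^{-1}$ lies in the derived group of $H$, hence in $H'$, and is congruent to $\mathrm{Id}_n + (\gamma X\gamma^{-1}-X) \bmod \pp^2$ with $\gamma X\gamma^{-1}-X$ non-scalar. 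Thus $H^{\prime[1]}$ contains a non-scalar matrix. (In fact this shows more directly that Lemma \ref{camilla} applies with this $H'$, which already hands us $H^{\prime[1]}=\sll_n(\pp/\pp^2)$, but we only need the weaker statement.) With both hypotheses verified, Theorem \ref{strong_approx_SLn} gives $H' = \SL_n(R)$, i.e.\ $H^{\rm der}=\SL_n(R)$.

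The only genuine subtlety I anticipate is bookkeeping around closures: $H^{\rm der}$ is the closure of the abstract derived group, so one must make sure that the commutator elements produced above—which are genuine commutators, not limits of them—suffice, and that passing to images and intersections with the (closed) subgroups $G^i$ commutes appropriately with taking closures. All of this is routine because the relevant quotients $G^{[0]}$ and $G^{[1]}$ are finite (or at least $G^{[0]}$ is finite and $G^{[1]}$ is a finite-dimensional $k$-vector space), so every subgroup in sight at those levels is automatically closed, and the congruence filtration is a neighbourhood basis of the identity. There is no serious obstacle beyond Theorem \ref{strong_approx_SLn} itself, which is assumed.
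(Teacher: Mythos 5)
Your proposal is correct and follows essentially the same route as the paper: the paper also deduces the theorem from Theorem \ref{strong_approx_SLn} applied to $H^{\rm der}$, using the perfectness of $\SL_n(k)$ to get $(H^{\rm der})^{[0]}=\SL_n(k)$ and then applying Lemma \ref{camilla} with $H'=H^{\rm der}$ to produce a non-scalar matrix in $(H^{\rm der})^{[1]}$. Your explicit commutator argument is just the proof of Lemma \ref{camilla} unwound, and your care about closures matches what the paper leaves implicit.
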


\begin{proof}
Since $|k|>3$, the group $\SL_n(k)$ is perfect, and so the assumption $\SL_n(k)\subset H^{[0]}$ implies that $(H^{\rm der})^{[0]} = \SL_n(k)$. Since $H^{[1]}$ contains a non-scalar matrix, applying Lemma \ref{camilla} with $H'=H^{\rm der}$ thus shows that $(H^{\rm der})^{[1]}=\sll_n(\pp/\pp^2)$; in particular it contains a non-scalar matrix. The desired assertion results by applying Theorem \ref{strong_approx_SLn} to $H^{\rm der}$.
\end{proof}

\begin{prop}\label{normal_sa}
Assume that $|k|>9$. Then every closed normal subgroup $H\subset\SL_n(R)$ satisfying $H^{[0]}=\SL_n(k)$ is equal to $\SL_n(R)$.
\end{prop}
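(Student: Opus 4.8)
The plan is to reduce to Theorem \ref{strong_approx_SLn} by producing, inside the normal subgroup $H$, a non-scalar element of the first congruence subquotient. Since $H$ is normal in $\SL_n(R)$ and satisfies $H^{[0]}=\SL_n(k)$, I would first argue that $H$ is not contained in any proper congruence subgroup: if $H\subseteq G^1\cap\SL_n(R)$ then its image in $G^{\prime[0]}=\SL_n(k)$ would be trivial, contradicting the hypothesis (here I use $n\geq2$ and $|k|>9$, so $\SL_n(k)\neq\{1\}$). Hence there is a smallest index $i\geq0$ with $H^{[i]}\neq\{1\}$ but $H\not\subseteq G^{i+1}$; by the previous remark, and since $H^{[0]}=\SL_n(k)$ is already everything, in fact $i=0$ and $H$ meets $G^1$ in a subgroup $H^1$ that we must now control.

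Next I would look at $H^{\prime[1]} := H^1/H^2 \subseteq \sll_n(\pp/\pp^2)$. This is an additive subgroup, and because $H$ is normal in $\SL_n(R)$ it is invariant under conjugation by the image of $\SL_n(R)$ in $\GL_n(k)$, i.e.\ by $\SL_n(k)$. By Proposition \ref{Egon2.1} (applied to the inverse image in $\gll_n(k)$, or directly in the $\sll_n$ version as in the proof of Lemma \ref{camilla}), either $H^{\prime[1]}=\sll_n(\pp/\pp^2)$, in which case $H^1$ already contains a non-scalar matrix mod $\pp^2$ and we are done, or $H^{\prime[1]}\subseteq\mathfrak c(k)\cap\sll_n=\{0\}$ since $p\mid n$ is the only way $\mathfrak c\cap\sll_n\neq 0$ and even then it is at most the scalars of trace $0$. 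The genuine case to rule out is therefore $H^{[1]}$ scalar or trivial. Here I would use the commutator trick from Lemma \ref{camilla}: since $H^{[0]}=\SL_n(k)$, pick $h,h'\in H$ reducing to a suitably generic pair in $\SL_n(k)$; the commutator $[h',h]$ lies in $H^1$, and a direct computation of its reduction mod $\pp^2$, together with the irreducibility statements of Lemma \ref{tolstoi}, forces $H^{\prime[1]}$ to contain a non-scalar element unless $H^1$ is trivial.

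It remains to exclude $H^1=\{1\}$, i.e.\ that $H$ injects into $\SL_n(k)$ via reduction mod $\pp$. But $H$ is normal in $\SL_n(R)$ and surjects onto $\SL_n(k)$; so for any $g\in G^1=\ker(\SL_n(R)\to\SL_n(k))$ and any lift $\tilde h\in H$ of a non-central element $\bar h\in\SL_n(k)$, the commutator $[\tilde h,g]$ lies in $H\cap G^1$, and one checks it is nontrivial for suitable $g$ and $\bar h$ because the conjugation action of $\SL_n(k)$ on $G^{[1]}\cong\sll_n(\pp/\pp^2)$ has no nonzero fixed vectors outside the scalars (again Lemma \ref{tolstoi}, using $n\geq2$, $|k|>9$). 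Hence $H^1\neq\{1\}$, and combined with the previous paragraph $H^{[1]}$ contains a non-scalar matrix. Theorem \ref{strong_approx_SLn} then gives $H=\SL_n(R)$.

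The main obstacle is the case analysis at the first congruence level: one must be careful that $\sll_n(\pp/\pp^2)$ genuinely contains non-scalar matrices (true for $n\geq2$, including when $p\mid n$, since then the scalars in $\sll_n$ are a line but $\dim\sll_n=n^2-1>1$) and that the commutator computations producing non-scalar elements really work under the sole hypothesis $H^{[0]}=\SL_n(k)$ — this is where normality of $H$ in $\SL_n(R)$, as opposed to merely in $H$ itself, is essential, and where the bound $|k|>9$ enters through Lemma \ref{tolstoi} and Proposition \ref{Egon2.1}. I expect this proposition is in fact an immediate corollary of Lemma \ref{camilla} plus Theorem \ref{strong_approx_SLn} once one feeds in $H'=H$ and observes that normality of $H$ in $\SL_n(R)$ supplies the needed non-scalar matrix in $H^{[1]}$ via the fixed-point-freeness of the $\SL_n(k)$-action on $G^{[1]}$.
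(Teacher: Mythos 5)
Your final reduction is exactly the paper's proof: the paper simply applies Lemma \ref{camilla} with $(H,\SL_n(R))$ in place of $(H',H)$ --- the hypothesis that the ambient group's first congruence layer contains a non-scalar matrix is trivially satisfied because $\SL_n(R)^{[1]}=\sll_n(\pp/\pp^2)$ --- and then invokes Theorem \ref{strong_approx_SLn}, so your closing paragraph is the whole argument.

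Two of your intermediate steps should be dropped or repaired, though. First, the commutator $[h',h]$ of two elements of $H$ need not lie in $H^1$ (their reductions need not commute), and no argument internal to $H$ can possibly work, as the Teichm\"uller lift of $\SL_n(k)$ shows; one must commute a lift $\tilde h\in H$ of some $\gamma\in\SL_n(k)$ against an element $g\in G^1\cap\SL_n(R)$ with $g\equiv\Id_n+X$, which is precisely where normality of $H$ in $\SL_n(R)$ enters --- you do say this in your third paragraph. Second, in that computation ``nontrivial'' is not enough when $p\mid n$: the element $\gamma X\gamma^{-1}-X$ has trace zero and could a priori be a nonzero scalar, and your dichotomy from Proposition \ref{Egon2.1} would then still allow $H^{[1]}=\mathfrak{c}(k)\cap\sll_n(k)$, which does not feed into Theorem \ref{strong_approx_SLn}. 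The fix is the one already in the proof of Lemma \ref{camilla}: use Proposition \ref{Egon2.1} to choose $\gamma$ so that $X$, $\gamma X\gamma^{-1}$ and $\Id_n$ are $k$-linearly independent, so that the commutator is non-scalar modulo $\pp^2$; equivalently, just quote Lemma \ref{camilla} with the roles as above and skip the hand-made commutator computation altogether.
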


\begin{proof}
Applying Lemma \ref{camilla} 
% to the groups $H\subset\SL_n(R)$ in place of $H'\subset H$ 
with $(H,\SL_n(R))$ in place of $(H',H)$ 
shows that $H^{[1]}$ contains a non-scalar matrix. The desired assertion now follows directly from Theorem \ref{strong_approx_SLn}.
\end{proof}

\begin{proof}[Proof of Theorem \ref{strong_approx_SLn}.]
The proof of this will extend to the end of the next subsection. Let $H$ satisfy the assumptions of Theorem \ref{strong_approx_SLn}. Since $H$ is a closed subgroup of $\SL_n(R)$, the desired assertion is equivalent to $H^{[i]}= \SL_n(R)^{[i]} $ for all $i\geq 0$. By assumption this holds already for $i=0$. Applying Lemma \ref{camilla} with $H'=H$ implies:

\begin{lem}\label{lemma12.6}
We have $H^{[1]}=\sll_n(\pp/\pp^2)$.
\end{lem}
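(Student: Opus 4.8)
The plan is to derive this statement as an immediate special case of Lemma \ref{camilla}, applied with the normal subgroup $H'$ taken to be $H$ itself.

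First I would record that all four hypotheses of Lemma \ref{camilla} are met in the present situation. We have $|k|>9$ by the standing assumption of this subsection. The group $H$ is in particular a subgroup of $\SL_n(R)\subset\GL_n(R)$; it is trivially a normal subgroup of itself; and it is contained in $\SL_n(R)$. The inclusion $\SL_n(k)\subset H^{[0]}$ required by the lemma holds because, by the hypothesis of Theorem \ref{strong_approx_SLn}, we even have $H^{[0]}=\SL_n(k)$. Finally, again by the hypothesis of Theorem \ref{strong_approx_SLn}, the subquotient $H^{[1]}$ contains a non-scalar matrix.

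Then Lemma \ref{camilla}, applied with $(H,H')=(H,H)$, directly yields $H^{[1]}=H^{\prime[1]}=\sll_n(\pp/\pp^2)$, which is the assertion.

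Since everything is already packaged inside Lemma \ref{camilla}, there is essentially no obstacle to overcome at this point: the actual content — the commutator computation in the first congruence layer combined with the invariant-subgroup dichotomy of Proposition \ref{Egon2.1} — has already been carried out in the proof of that lemma. The sole purpose of Lemma \ref{lemma12.6} is to establish the case $i=1$ of the equality $H^{[i]}=\SL_n(R)^{[i]}$, which will then serve as the anchor for the successive-approximation argument over the higher congruence layers in the remainder of the proof of Theorem \ref{strong_approx_SLn}.
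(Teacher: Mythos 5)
Your proposal is correct and coincides with the paper's own argument: the text preceding Lemma \ref{lemma12.6} states precisely that one applies Lemma \ref{camilla} with $H'=H$, using $H^{[0]}=\SL_n(k)$ and the presence of a non-scalar matrix in $H^{[1]}$ from the hypotheses of Theorem \ref{strong_approx_SLn}. Your hypothesis check is complete and nothing further is needed.
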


\begin{lem}\label{lemma12.7}
If $(p,n)\neq (2,2)$, then $H^{[i]}=\sll_n(\pp^i/\pp^{i+1})$ for all $i\geq 1$.
\end{lem}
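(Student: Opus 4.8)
The plan is to argue by induction on $i\geq1$, the base case $i=1$ being Lemma \ref{lemma12.6}. So I would assume $i\geq1$ and $H^{[i]}=\sll_n(\pp^i/\pp^{i+1})$, and aim to deduce $H^{[i+1]}=\sll_n(\pp^{i+1}/\pp^{i+2})$.

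The first ingredient I would invoke is the standard compatibility of the commutator map on $\GL_n(R)$ with the Lie bracket on the subquotients of the congruence filtration: for matrices $X$, $Y$ with entries in $\pp^i$, respectively $\pp$, one computes
$$(\Id_n+X)(\Id_n+Y)(\Id_n+X)^{-1}(\Id_n+Y)^{-1}\ \equiv\ \Id_n+(XY-YX)\mathrel{\rm mod}\pp^{i+2},$$
so that under the isomorphisms $G^{[j]}\cong\gll_n(\pp^j/\pp^{j+1})$ the commutator pairing $G^i\times G^1\to G^{i+1}$ induces the Lie bracket $\gll_n(\pp^i/\pp^{i+1})\times\gll_n(\pp/\pp^2)\to\gll_n(\pp^{i+1}/\pp^{i+2})$, $(X,Y)\mapsto XY-YX$. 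Since $H$ is a subgroup of $\SL_n(R)$ and $H^{i+1}=H\cap G^{i+1}$, the commutator of any lift in $H^i$ of an element of $H^{[i]}$ with any lift in $H^1$ of an element of $H^{[1]}$ lies in $H^{i+1}$. Using the induction hypothesis together with Lemma \ref{lemma12.6}, and fixing a uniformizer to identify each $\pp^j/\pp^{j+1}$ with $k$, I conclude that the additive group $H^{[i+1]}\subset\sll_n(\pp^{i+1}/\pp^{i+2})\cong\sll_n(k)$ contains $[\sll_n(k),\sll_n(k)]$.

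It then remains to see that $[\sll_n(k),\sll_n(k)]$ contains a non-scalar matrix. If $n\geq3$ this is immediate since $[E_{12},E_{23}]=E_{13}$; and if $n=2$ and $p\neq2$ then $[E_{12},E_{21}]=E_{11}-E_{22}$ is non-scalar because $1\neq-1$ in $k$. This is exactly where the hypothesis $(p,n)\neq(2,2)$ is used: for $(p,n)=(2,2)$ the bracket $[\sll_2,\sll_2]$ consists only of scalars, which is the subtlety flagged in the introduction to this section. Granting a non-scalar element of $H^{[i+1]}$, I would finish as follows: the conjugation action of $\GL_n(R)$ on $G^{[i+1]}$ factors through $\GL_n(k)$ and corresponds under the above identification to the conjugation action of $\GL_n(k)$ on $\gll_n(k)$, so, since $H^{[0]}=\SL_n(k)$, the group $H^{[i+1]}$ is an additive subgroup of $\sll_n(k)$ invariant under conjugation by $\SL_n(k)$. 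As $|k|>9$, Proposition \ref{Egon2.1} gives $H^{[i+1]}\subset\mathfrak{c}(k)$ or $\sll_n(k)\subset H^{[i+1]}$; the first alternative is excluded by the non-scalar element, so $H^{[i+1]}=\sll_n(\pp^{i+1}/\pp^{i+2})$, completing the induction. The only point requiring care is the commutator--bracket compatibility, which is a routine matrix calculation of the same type as in the proof of Lemma \ref{camilla}; the rest is bookkeeping plus one more application of Proposition \ref{Egon2.1}.
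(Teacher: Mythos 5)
Your proof is correct. The skeleton — induction on $i$ with base case Lemma \ref{lemma12.6}, and the identification of the commutator pairing $G^{[1]}\times G^{[i]}\to G^{[i+1]}$ with the Lie bracket $(X,Y)\mapsto XY-YX$ — is exactly the paper's, and your verification of that compatibility and of the fact that the bracket of classes in $H^{[1]}$ and $H^{[i]}$ lands in $H^{[i+1]}$ is sound. Where you diverge is in the closing step. The paper simply quotes \cite{PSTAX}, Proposition 1.2 (a): for $(p,n)\neq(2,2)$ the image of the bracket pairing already generates $\sll_n(\pp^{i+1}/\pp^{i+2})$ as an additive group, so the fullness of $H^{[1]}$ and $H^{[i]}$ forces fullness of $H^{[i+1]}$ with no further input. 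You instead extract a single explicit non-scalar bracket ($[E_{12},E_{23}]=E_{13}$ for $n\geq3$, $[E_{12},E_{21}]=\diag(1,-1)$ for $n=2$, $p\neq2$ — and your remark that in characteristic $2$ all brackets in $\sll_2$ are scalar is accurate, which is exactly why $(2,2)$ must be excluded either way), and then upgrade via the $\SL_n(k)$-conjugation invariance of $H^{[i+1]}$ and Proposition \ref{Egon2.1}, just as in Lemmas \ref{camilla} and \ref{seeley}. Your route trades the external generation result for a second use of the invariance machinery (harmless, since $|k|>9$ is a standing hypothesis of Theorem \ref{strong_approx_SLn}); the paper's route is shorter at this point and does not need the conjugation argument, but leans on the cited proposition. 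Both arguments are complete.
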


\begin{proof}
We use induction on $i$, the case $i=1$ being covered by Lemma \ref{lemma12.6}. So suppose that the assertion holds for some $i\geq1$. Consider the commutator map $(g,g')\mapsto gg'g^{-1}g^{\prime-1}$ on $\GL_n(R)$. Direct calculation shows that it induces a map $G^1\times G^i\to G^{i+1}$ and hence a map $G^{[1]}\times G^{[i]} \to G^{[i+1]}$, which under the above isomorphisms corresponds to the Lie bracket 
\begin{eqnarray*}
[\ \:,\ ] :\ \sll_n(\pp/\pp^2) \times \sll_n(\pp^i/\pp^{i+1}) 
        \!\!\!&\longrightarrow&\!\!\! \sll_n(\pp^{i+1}/\pp^{i+2}), \\
        (X,Y) &\mapsto& XY-YX.
\end{eqnarray*}
Since $(p,n)\neq (2,2)$, the image of this pairing generates $\sll_n(\pp^{i+1}/\pp^{i+2})$ as an additive group, for instance by \cite{PSTAX}, Proposition 1.2 (a). 

On the other hand, by construction the pairing sends the subset $H^{[1]} \times H^{[i]}$ to the subgroup $H^{[i+1]}$.
%it restricts to a map
%$$H^{[1]} \times H^{[i]} \longrightarrow H^{[i+1]}.$$
The equality in the source thus implies equality in the target, and so the assertion holds for $i+1$, as desired.
\end{proof}

This proves Theorem \ref{strong_approx_SLn} in the case $(p,n)\neq (2,2)$. The remaining case is more complicated, because the image of the Lie bracket on $\sll_2$ in characteristic $2$ does not generate $\sll_2$. We deal with this in the next subsection.

%%%%%%%%%%%%%%%%%%%%%%%%%%%%%%%%%%%%%%%%%%%%%%%%%%%%%%%%%%%%%%%%%%%%%%

\subsection{Successive approximation in the case $p=n=2$} 

Keeping the assumptions of Theorem \ref{strong_approx_SLn}, we now consider the case $p=n=2$.

Let $\overline{G}{}^i$ for all $i\ge0$ denote the subgroups in the congruence filtration of $\PGL_2(R)$. Thus $\overline{G}{}^i$ consists of all elements of $\PGL_2(R)$ whose images in the adjoint representation are congruent to the identity modulo $\pp^i$. Their successive subquotients possess natural isomorphisms
$$\overline{G}{}^{[i]}\ :=\ \overline{G}{}^i/\overline{G}{}^{i+1}\ \cong\ 
\left\{\begin{array}{ll}
\PGL_2(k) & \hbox{if $i=0$,}\\[5pt]
\pgll_2(\pp^i/\pp^{i+1}) & \hbox{if $i>0$.}
\end{array}\right.$$
We will compare the congruence filtration of $\overline{G}$ with that of $G' := \SL_n(R)$. Let $\pi$ denote the projection $\SL_2\to\PGL_2$. For $i>0$ the induced map $G^{\prime[i]}\to\overline{G}{}^{[i]}$ corresponds to the derivative $d\pi: \sll_2(\pp^i/\pp^{i+1}) \to \pgll_2(\pp^i/\pp^{i+1})$. Let $\mathfrak{psl}_2(\pp^i/\pp^{i+1})$ denote its image. Being in characteristic $2$, we have short exact sequences
% \begin{myequation}\label{eq:exactseq}
$$\xymatrix@R-18pt{
0\ar[r] & \mathfrak{c}(\pp^i/\pp^{i+1}) 
 \ar[r] & \sll_2(\pp^i/\pp^{i+1}) 
 \ar[r] & \psll_2(\pp^i/\pp^{i+1}) \ar[r] & 0, \\
0\ar[r] & \psll_2(\pp^i/\pp^{i+1}) 
 \ar[r] & \pgll_2(\pp^i/\pp^{i+1})
 \ar[r]^{(*)} & \mathfrak{c}^*(\pp^i/\pp^{i+1}) \ar[r] & 0, \\}$$
% \end{myequation}%
where $\mathfrak{c}(\pp^i/\pp^{i+1}) \cong \mathfrak{c}^*(\pp^i/\pp^{i+1}) \cong \pp^i/\pp^{i+1}$ and the homomorphism $(*)$ is induced by the trace on $\gll_2$.

The first few layers of $\SL_2(R)$ and $\PGL_2(R)$ are related as follows. Since $k$ has characteristic $2$, the homomorphism $\SL_2(k) \to \PGL_2(k)$ is an isomorphism of abstract groups. Therefore $\pi^{-1}(\overline{G}{}^1) = G^{\prime1}$. Next consider the subgroup $G^{\prime2-} := \pi^{-1}(\overline{G}{}^2) \subset \SL_2(R)$. What we have just said implies that $G^{\prime2} \subset G^{\prime2-} \subset G^{\prime1}$. By construction $\pi$ induces a natural homomorphism $G^{\prime2-}/G^{\prime3} \to \overline{G}{}^{[2]}$.

\begin{lem}\label{tempe}
There is an isomorphism $\mathfrak{c}(\pp/\pp^2) \stackrel{\sim}{\longrightarrow} \mathfrak{c}^*(\pp^2/\pp^3)$ such that the following diagram commutes:
$$\xymatrix@R-20pt{
0\ar[r] & G^{\prime2}/G^{\prime3} \ar[r] \ar@{=}[dd]^-\wr & 
G^{\prime2-}/G^{\prime3} \ar[r] \ar[ddd]_{[\pi]} & 
G^{\prime2-}/G^{\prime2} \ar[r] \ar@{=}[dd]^-\wr & 0 \\
&&&& \\
& \sll_2(\pp^2/\pp^3) \ar@{->>}[ddd]_{d\pi} && 
\mathfrak{c}(\pp/\pp^2) \ar[ddd]_\cong & \\
&& \overline{G}{}^{[2]} \ar@{=}[dd]^-\wr && \\
&&&& \\
0\ar[r] & \psll_2(\pp^2/\pp^3) \ar[r] & 
\pgll_2(\pp^2/\pp^3) \ar[r] & 
\mathfrak{c}^*(\pp^2/\pp^3) \ar[r] & 0\rlap{.} \\}$$
\end{lem}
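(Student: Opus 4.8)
The statement is really a bookkeeping lemma: it says that the short exact sequence obtained from the congruence filtration of $\SL_2(R)$ at level $2$, namely
$$0 \to G^{\prime2}/G^{\prime3} \to G^{\prime2-}/G^{\prime3} \to G^{\prime2-}/G^{\prime2} \to 0,$$
maps compatibly onto the second short exact sequence in the list of ``characteristic $2$'' sequences, namely
$$0 \to \psll_2(\pp^2/\pp^3) \to \pgll_2(\pp^2/\pp^3) \to \mathfrak{c}^*(\pp^2/\pp^3) \to 0,$$
and that on the rightmost terms this map is an isomorphism $\mathfrak{c}(\pp/\pp^2)\xrightarrow{\sim}\mathfrak{c}^*(\pp^2/\pp^3)$. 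The approach is to verify commutativity of each sub-square of the diagram by direct computation with the group-theoretic identifications already set up in the previous subsection, and then to identify the induced map on cokernels explicitly as a map $\pp/\pp^2\to\pp^2/\pp^3$ and check it is an isomorphism.

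First I would make the identification $G^{\prime2-}/G^{\prime2}\cong\mathfrak{c}(\pp/\pp^2)$ explicit. By definition $G^{\prime2-}=\pi^{-1}(\overline{G}{}^2)$, so an element of $G^{\prime2-}$ is an $h\in\SL_2(R)$ with $h\equiv\Id_2 \mod\pp$ whose image in $\PGL_2$ is $\equiv\Id\mod\pp^2$ in the adjoint representation; writing $h=\Id_2+X$ with $X\in\gll_2(\pp/\pp^2)$ (well-defined mod $\pp^2$), the condition is precisely that the image of $X$ under $d\pi:\sll_2(\pp/\pp^2)\to\pgll_2(\pp/\pp^2)$ vanishes, i.e.\ $X\in\mathfrak{c}(\pp/\pp^2)$ by the first characteristic-$2$ sequence. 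So $G^{\prime2-}/G^{\prime2}$ is canonically $\mathfrak{c}(\pp/\pp^2)$, which is the right vertical isomorphism of the top square. The left vertical identification $G^{\prime2}/G^{\prime3}\cong\sll_2(\pp^2/\pp^3)$ is the standard one from the congruence filtration, and the identification $\overline{G}{}^{[2]}\cong\pgll_2(\pp^2/\pp^3)$ is likewise standard; the map $[\pi]:G^{\prime2-}/G^{\prime3}\to\overline{G}{}^{[2]}$ is induced by $\pi$ and so the left square (involving $d\pi$) commutes essentially by definition, because on $G^{\prime2}/G^{\prime3}$ the map $[\pi]$ is the layer-$2$ map induced by $\pi$, which is $d\pi$.

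The only genuine content is the right square, equivalently the construction of the isomorphism $\mathfrak{c}(\pp/\pp^2)\xrightarrow{\sim}\mathfrak{c}^*(\pp^2/\pp^3)$ and the claim that it is an isomorphism. Here is the computation I expect to carry out: take $h=\Id_2+X$ with $X=c\cdot\Id_2$, $c\in\pp/\pp^2$ (lift $c$ to $R$). Then $h$ is not in $\SL_2$ on the nose — one must instead take a representative of the coset in $G^{\prime2-}/G^{\prime3}$, e.g.\ $h=\Id_2+X+(\text{correction of order }\pp^2)$ — but the image of $h$ in $\PGL_2$ is $\equiv\Id\mod\pp^2$, so to compute $[\pi](h)\in\overline{G}{}^{[2]}=\pgll_2(\pp^2/\pp^3)$ one expands the adjoint action to second order. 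The point (this is the source of the factor-of-two-becomes-squaring phenomenon in characteristic $2$ flagged in the text) is that $\det(\Id_2+cX')$, for a scalar-plus-correction, contributes at order $\pp^2$ a term like $c^2$, and after composing with the trace map $(*):\pgll_2(\pp^2/\pp^3)\to\mathfrak{c}^*(\pp^2/\pp^3)$ one gets, up to a unit, the element $c^2\bmod\pp^3\in\pp^2/\pp^3$. The assignment $c\bmod\pp^2\mapsto (\text{unit})\cdot c^2\bmod\pp^3$ is a bijection $\pp/\pp^2\to\pp^2/\pp^3$ — it is additive because in characteristic $2$ the Frobenius $c\mapsto c^2$ is additive — and it is the desired isomorphism. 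Commutativity of the right square then says exactly that $(*)\circ[\pi]$ restricted to $G^{\prime2-}/G^{\prime3}$ factors through $G^{\prime2-}/G^{\prime2}=\mathfrak{c}(\pp/\pp^2)$ via this map, which the same expansion shows: the order-$\pp^2$ part of the correction (the $G^{\prime2}/G^{\prime3}$-part) maps into $\psll_2(\pp^2/\pp^3)=\ker(*)$.

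**Main obstacle.** The conceptual content is minimal; the real work is organizing the second-order expansion of the adjoint action of $\SL_2(R)$ modulo $\pp^3$ so that the contribution of a scalar congruent to $\Id\bmod\pp$ is pinned down correctly, including the exact unit constant (which I would not belabor, since only the \emph{existence} of an isomorphism is asserted), and checking that all three squares of the displayed diagram commute with consistent sign/scaling conventions. I would do this by choosing once and for all a uniformizer $\varpi$ of $R$, working with explicit $2\times2$ matrices $\Id_2+\varpi X_1+\varpi^2 X_2$, and tracking everything modulo $\pp^3$; the additivity of Frobenius in characteristic $2$ is what makes the resulting map on the cokernel a group isomorphism rather than merely a bijection.
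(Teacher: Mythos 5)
Your proposal is correct and follows essentially the same route as the paper: the paper also fixes a uniformizer $\varpi$, writes an element of $G^{\prime2-}$ explicitly as $\diag(1+\varpi x,(1+\varpi x)^{-1})\cdot g_2$ with $g_2\equiv\Id_2\bmod\pp^2$, rescales by the scalar $(1+\varpi x)$ to get a representative $\equiv\Id_2\bmod\pp^2$ in $\GL_2(R)$, and reads off via the trace that the induced map on cokernels is $(\varpi x\bmod\pp^2)\mapsto(\varpi^2x^2\bmod\pp^3)$, using that $\Tr(g_2-\Id_2)\in\pp^3$ to see the $\sll_2(\pp^2/\pp^3)$-part lands in $\ker(*)$. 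This is exactly the Frobenius-squaring map you predict, additive and bijective in characteristic $2$, hence the asserted isomorphism.
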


\begin{proof}
The commutativity on the left hand side is already clear. Let $\varpi\in\pp$ be a uniformizer. An easy calculation shows that $G^{\prime2-}$ consists of the matrices 
$$g\ =\ \left(\!\begin{array}{cc}1+\varpi x & 0 \\[5pt] 
  0 & (1+\varpi x)^{-1} \end{array}\!\!\right) 
  \cdot g_2$$
for all $x\in R$ and $g_2\in\SL_2(R)$ which satisfy $g_2\equiv{\rm Id_2}\mathrel{\rm mod}\pp^2$. The residue class of $g$ in $\mathfrak{c}(\pp/\pp^2) \cong \pp/\pp^2$ is then $\varpi x\mathrel{\rm mod}\pp^2$. On the other hand $\pi(g) \in \PGL_2(R)$ is also the image of the matrix
%multiplying this matrix by the scalar $1+\varpi a$ does not change its image under $\pi$. 
%Let $\pi$ denote the projection $\GL_2\to\PGL_2$, and also its restriction to $\SL_2$. 
%Thus this image coincides with the image of 
$$\tilde g\ :=\ (1+\varpi x)\cdot g\ =\ 
  \left(\!\begin{array}{cc}1+\varpi^2 x^2 & 0 \\[5pt] 
  0 & 1 \end{array}\!\right) 
  \cdot g_2 \ \in\ \GL_2(R).$$
Since $\tilde g\equiv{\rm Id_2}\mathrel{\rm mod}\pp^2$, its image in $\mathfrak{c}^*(\pp^2/\pp^3) \cong \pp^2/\pp^3$ is simply $\Tr(\tilde g-\Id_2)$. But the assumptions $g_2\in\SL_2(R)$ and $g_2\equiv{\rm Id_2}\mathrel{\rm mod}\pp^2$ imply that $\Tr(g_2-\Id_2) \in \pp^3$, and so an easy calculation shows that
$$\Tr(\tilde g-\Id_2)\ \equiv\ \varpi^2 x^2 + \Tr(g_2-\Id_2)
\ \equiv\ \varpi^2 x^2 \mod \pp^3.$$
Thus the diagram in question commutes with the map 
$$\mathfrak{c}(\pp/\pp^2) \longrightarrow \mathfrak{c}^*(\pp^2/\pp^3),
\quad (\varpi x\mathrel{\rm mod}\pp^2) \mapsto (\varpi^2 x^2\mathrel{\rm mod}\pp^3).$$
Up to multiplication by $\varpi$, respectively $\varpi^2$, this is simply the Frobenius map $x\mapsto x^2$ on the finite field $k$. It is therefore an isomorphism, as desired.
\end{proof}

Now let $\overline{H}$ denote the image of $H$ in $\PGL_2(R)$. Define $\overline{H}{}^i:= \overline{H}\cap \overline{G}{}^i$ and $\overline{H}{}^{[i]}:= \overline{H}{}^i/\overline{H}{}^{i+1}$ and identify the latter with a subgroup of $\PGL_2(k)$ or $\pgll_2(\pp^i/\pp^{i+1})$, respectively. The projection $\pi: \SL_2(R)\twoheadrightarrow\PGL_2(R)$ induces homomorphisms $H^i\to\overline{H}{}^i$ and $H^{[i]}\to\overline{H}{}^{[i]}$. 
% Note that these homomorphisms are in general not surjective, because the image of an element $h\in H$ lies in $\overline{H}{}^i$ if and only if $h$ is a scalar times an element of $G^i$, but this scalar does not necessarily lie in $H$.
Consider the subgroup $H^{2-} := H\cap G^{\prime2-}$. Then by construction the natural homomorphisms $H^{2-} \to \overline{H}{}^2 \to \overline{H}{}^{[2]}$ are surjective.

\begin{lem}\label{brennan}
We have $\overline{H}{}^{[2]}=\pgll_2(\pp^2/\pp^3)$.
\end{lem}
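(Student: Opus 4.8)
The plan is to exploit the setup of Lemma \ref{tempe}, which compares the $\SL_2$- and $\PGL_2$-filtrations, together with the already-established fact (Lemma \ref{lemma12.6}) that $H^{[1]}=\sll_2(\pp/\pp^2)$. Since $\overline{H}{}^{[2]}\subset\pgll_2(\pp^2/\pp^3)$ is an additive subgroup invariant under conjugation by $\overline{H}{}^{[0]}=\PGL_2(k)$, and $\pgll_2(\pp^2/\pp^3)$ as a $\PGL_2(k)$-module has the two-step filtration with subquotients $\psll_2(\pp^2/\pp^3)$ and $\mathfrak{c}^*(\pp^2/\pp^3)$ appearing in the second short exact sequence of the previous subsection, it suffices to show two things: first, that $\overline{H}{}^{[2]}$ surjects onto $\mathfrak{c}^*(\pp^2/\pp^3)$, and second, that $\overline{H}{}^{[2]}$ contains all of $\psll_2(\pp^2/\pp^3)$.

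For the first point I would use the commutative diagram of Lemma \ref{tempe}. The surjectivity of $H^{2-}\to\overline{H}{}^{[2]}$ and the fact that the composite $G^{\prime2-}/G^{\prime3}\to\overline{G}{}^{[2]}\to\mathfrak{c}^*(\pp^2/\pp^3)$ factors through the isomorphism $\mathfrak{c}(\pp/\pp^2)\xrightarrow{\sim}\mathfrak{c}^*(\pp^2/\pp^3)$ reduce the claim to showing that $H^{2-}$ hits the full scalar layer $G^{\prime2-}/G^{\prime2}\cong\mathfrak{c}(\pp/\pp^2)$. This in turn should follow from $H^{[1]}=\sll_2(\pp/\pp^2)$: since the scalar subspace $\mathfrak{c}(\pp/\pp^2)$ is contained in $\sll_2(\pp/\pp^2)$ in characteristic $2$, one can pick $h\in H^1$ mapping to a nonzero scalar $X\in\mathfrak{c}(\pp/\pp^2)\subset H^{[1]}$; such an $h$ has diagonal form modulo $\pp^2$ up to the $G^{\prime2}$-coset and lands in $G^{\prime2-}$ with the desired nonzero image in $\mathfrak{c}(\pp/\pp^2)$. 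Running over all scalars (and using that the image is a subgroup) gives the full surjection onto $\mathfrak{c}^*(\pp^2/\pp^3)$.

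For the second point, $\psll_2(\pp^2/\pp^3)$ is the nontrivial irreducible $\PGL_2(k)$-submodule of $\pgll_2(\pp^2/\pp^3)$ (Corollary \ref{Egon2.1_cor}, or rather its $\pgll_2$ analogue applied at level $\pp^2/\pp^3\cong k$), so it is enough to show $\overline{H}{}^{[2]}$ is not contained in the kernel of the scalar-trace map; but that is exactly what the first point gives, namely $\overline{H}{}^{[2]}$ has nonzero image in $\mathfrak{c}^*(\pp^2/\pp^3)$. Wait — that shows $\overline{H}{}^{[2]}$ is \emph{not} contained in $\psll_2(\pp^2/\pp^3)$, which is the complementary statement. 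To get $\psll_2\subset\overline{H}{}^{[2]}$ I would instead argue as follows: the image $\overline{H}{}^{[2]}$ is $\PGL_2(k)$-invariant, and using $d\pi(H^{[2]})\subset\overline{H}{}^{[2]}$ together with Lemma \ref{lemma12.7}-type reasoning — or directly the Lie-bracket map $\sll_2(\pp/\pp^2)\times\sll_2(\pp/\pp^2)\to\sll_2(\pp^2/\pp^3)$ restricted and then pushed through $d\pi$ — one checks the bracket image already generates $\psll_2(\pp^2/\pp^3)$ even though it does not generate all of $\sll_2$ (the obstruction to surjectivity of the bracket on $\sll_2$ in characteristic $2$ lies precisely in the scalar direction $\mathfrak{c}$, which dies in $\psll_2$). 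Combined with the surjection onto $\mathfrak{c}^*(\pp^2/\pp^3)$ from the first point, the five-lemma applied to the second short exact sequence forces $\overline{H}{}^{[2]}=\pgll_2(\pp^2/\pp^3)$.

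The main obstacle I anticipate is the bookkeeping in the first point: tracking a chosen preimage $h$ of a scalar element of $H^{[1]}$ through the containments $G^{\prime2}\subset G^{\prime2-}\subset G^{\prime1}$ and verifying its image in $\mathfrak{c}(\pp/\pp^2)$ is the expected scalar, using the explicit matrix description from the proof of Lemma \ref{tempe}. The subtlety is that $H^{[1]}=\sll_2(\pp/\pp^2)$ tells us about the $\pp/\pp^2$ layer of $H$, but $\overline{H}{}^{[2]}$ lives at the $\pp^2/\pp^3$ layer of $\PGL_2$, and the bridge between them — the squaring map $x\mapsto x^2$ underlying the isomorphism $\mathfrak{c}(\pp/\pp^2)\xrightarrow{\sim}\mathfrak{c}^*(\pp^2/\pp^3)$ — must be invoked carefully to conclude that surjectivity upstairs yields surjectivity downstairs.
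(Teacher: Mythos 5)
Your first step (surjectivity of $\overline{H}{}^{[2]}$ onto $\mathfrak{c}^*(\pp^2/\pp^3)$) is the same as the paper's: $H^{2-}$ is by definition the inverse image in $H^1$ of $\mathfrak{c}(\pp/\pp^2)\subset\sll_2(\pp/\pp^2)$, so Lemma \ref{lemma12.6} gives the surjection onto $\mathfrak{c}(\pp/\pp^2)$, and Lemma \ref{tempe} transports it to $\mathfrak{c}^*(\pp^2/\pp^3)$; your bookkeeping worry is harmless.

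The second step, however, has a genuine gap. Your bracket argument rests on the claim that the failure of surjectivity of $[\;,\;]\colon\sll_2\times\sll_2\to\sll_2$ in characteristic $2$ is concentrated ``in the scalar direction $\mathfrak{c}$, which dies in $\psll_2$.'' This is exactly backwards: for trace-zero matrices $X=\left(\begin{smallmatrix}a&b\\ c&a\end{smallmatrix}\right)$, $Y=\left(\begin{smallmatrix}a'&b'\\ c'&a'\end{smallmatrix}\right)$ over a field of characteristic $2$ one computes $[X,Y]=(bc'+b'c)\cdot\Id_2$, so the image of the bracket is precisely the scalar line $\mathfrak{c}$, and its image under $d\pi$ in $\psll_2$ is zero. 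Hence bracketing elements of $H^{[1]}=\sll_2(\pp/\pp^2)$ against each other produces only $\mathfrak{c}(\pp^2/\pp^3)\subset H^{[2]}$ and yields nothing of $\psll_2(\pp^2/\pp^3)$ inside $\overline{H}{}^{[2]}$. Nor can you appeal to ``Lemma \ref{lemma12.7}-type reasoning'' (that lemma excludes $(p,n)=(2,2)$) or to knowledge of $H^{[2]}$ (Lemma \ref{seeley} is proved only after, and using, the present lemma). The repair is the simple observation you brushed past when you noticed your confusion: by Corollary \ref{Egon2.1_cor} one does not need $\overline{H}{}^{[2]}$ to map onto anything in particular, only that it is \emph{non-zero} and invariant under conjugation by $H^{[0]}=\SL_2(k)$; non-vanishing is already supplied by your first step, the invariance is automatic, and the corollary then gives $\psll_2(\pp^2/\pp^3)\subset\overline{H}{}^{[2]}$, which together with the surjection onto $\mathfrak{c}^*(\pp^2/\pp^3)$ forces equality. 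This is exactly the paper's argument.
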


\begin{proof}
By construction $H^{2-}$ can be described equivalently as the inverse image of $\mathfrak{c}(\pp/\pp^2) \subset \sll_2(\pp/\pp^2)$ in $H^1$. Thus Lemma \ref{lemma12.6} implies that $H^{2-}$ surjects to $\mathfrak{c}(\pp/\pp^2)$. 
% Moreover $\pi$ induces a homomorphism $H^{2-}\to\overline{H}{}^2 \twoheadrightarrow \overline{H}{}^{[2]} \subset \pgll_2(\pp^2/\pp^3)$. 
Thus Lemma \ref{tempe} implies that $\overline{H}{}^{[2]}$ surjects to $\mathfrak{c}^*(\pp^2/\pp^3)$. In particular $\overline{H}{}^{[2]}$ is non-zero.

Furthermore, since the embedding $\overline{H}{}^{[2]} \hookrightarrow \pgll_2(\pp^2/\pp^3)$ is equivariant under conjugation by $H$, its image is invariant under conjugation by $H^{[0]}=\SL_2(k)$. Thus  Corollary \ref{Egon2.1_cor} implies that $\psll_2(\pp^2/\pp^3)\subset \overline{H}{}^{[2]}$. Combined with the surjection $\overline{H}{}^{[2]} \twoheadrightarrow \mathfrak{c}^*(\pp^2/\pp^3)$ this implies the desired equality.
\end{proof}

\begin{lem}\label{seeley}
We have $H^{[2]}=\sll_2(\pp^2/\pp^3).$
\end{lem}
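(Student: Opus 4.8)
The plan is to mimic the structure of Lemma~\ref{lemma12.7}, using the Lie bracket to climb the congruence filtration, but now working carefully around the failure of surjectivity of the bracket on $\sll_2$ in characteristic~$2$. First I would recall from Lemma~\ref{lemma12.6} that $H^{[1]}=\sll_2(\pp/\pp^2)$, and note that the commutator map induces a pairing $[\ ,\ ]\colon \sll_2(\pp/\pp^2)\times\sll_2(\pp^i/\pp^{i+1})\to\sll_2(\pp^{i+1}/\pp^{i+2})$ that sends $H^{[1]}\times H^{[i]}$ into $H^{[i+1]}$. In characteristic~$2$ the image of the bracket on $\sll_2$ spans exactly $\psll_2$ (equivalently, the bracket of trace-zero matrices is trace-zero, and modulo scalars the bracket is surjective onto $\psll_2$). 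So from $H^{[1]}=\sll_2$ alone we would only be able to conclude $\psll_2(\pp^{i+1}/\pp^{i+2})\subset H^{[i+1]}$ by induction, not the full $\sll_2(\pp^{i+1}/\pp^{i+2})$. This is exactly where Lemma~\ref{brennan} is designed to help, since it supplies extra information one layer deeper in $\PGL_2$.

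The key step is to combine the partial result $\psll_2(\pp^2/\pp^3)\subset H^{[2]}$ — obtained from $H^{[1]}=\sll_2(\pp/\pp^2)$, the bracket pairing into $\sll_2(\pp^2/\pp^3)$, and Corollary~\ref{Egon2.1_cor} applied to the $\SL_2(k)$-invariant subgroup $H^{[2]}$ (nonzero since it contains brackets of non-scalars) — with the fact from Lemma~\ref{brennan} that $\overline H^{[2]}=\pgll_2(\pp^2/\pp^3)$. Chasing the right-hand short exact sequence of Lemma~\ref{tempe}, the surjectivity of $\overline H^{[2]}\twoheadrightarrow\mathfrak c^*(\pp^2/\pp^3)$ forces $H^{[2]}$, which maps onto $\overline H^{[2]}$ under $d\pi$ and already contains $\psll_2(\pp^2/\pp^3)=\ker(d\pi\,|\,\sll_2(\pp^2/\pp^3))$, to contain a matrix of nonzero trace. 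Since $H^{[2]}$ is an $\SL_2(k)$-invariant additive subgroup of $\sll_2(\pp^2/\pp^3)\cong\sll_2(k)$ containing $\psll_2$ and an element outside $\psll_2$, and since $\sll_2(k)$ has length~$2$ as an $\SL_2(k)$-module (with $\mathfrak c$ and $\psll_2$ the two constituents), it must be all of $\sll_2(\pp^2/\pp^3)$.

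I expect the only delicate point to be making the trace-counting argument airtight: one must verify that $\psll_2(\pp^2/\pp^3)$ is precisely the kernel of $d\pi$ restricted to $\sll_2(\pp^2/\pp^3)$ (which is the content of the left-hand exact sequence in Lemma~\ref{tempe}, giving $\sll_2/\mathfrak{c}\hookrightarrow\pgll_2$ with image $\psll_2$, but note $\psll_2(\pp^2/\pp^3)$ sits inside $\sll_2(\pp^2/\pp^3)$ as $\mathfrak{c}(\pp^2/\pp^3)$ does not — here I should be careful about which subspace is which, since in characteristic~$2$ we have $\mathfrak c\subset\sll_2$), and then to check that an $\SL_2(k)$-submodule of $\sll_2(k)$ strictly containing $\psll_2(k)$ is everything. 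The latter follows from Proposition~\ref{Egon2.1}: such a submodule $H$ lifts to an $\SL_n(k)$-invariant subgroup of $\gll_2(k)$ not contained in $\mathfrak c(k)$, hence contains $\sll_2(k)$. So the argument runs: $H^{[2]}\not\subset\psll_2(\pp^2/\pp^3)$ by the trace argument, hence (identifying $\pp^2/\pp^3\cong k$ and applying Proposition~\ref{Egon2.1} to the preimage of $H^{[2]}$ in $\gll_2(k)$, which is not contained in $\mathfrak c(k)$ precisely because $H^{[2]}\not\subset\psll_2$) we get $\sll_2(\pp^2/\pp^3)\subset H^{[2]}$, hence equality. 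Assembling these pieces gives $H^{[2]}=\sll_2(\pp^2/\pp^3)$, completing the lemma.
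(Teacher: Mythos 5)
There is a genuine gap, and it sits exactly at the crux of the lemma: producing a non-scalar element of $H^{[2]}$. Your opening claim that in characteristic $2$ the bracket on $\sll_2$ has image $\psll_2$ "modulo scalars" is false: for trace-zero $2\times2$ matrices in characteristic $2$ one computes $[X,Y]=(bc'+b'c)\cdot\Id_2$, so $[\sll_2,\sll_2]=\mathfrak{c}$ and the bracket is \emph{zero} modulo scalars. Hence the pairing $H^{[1]}\times H^{[1]}\to H^{[2]}$ only yields $\mathfrak{c}(\pp^2/\pp^3)\subset H^{[2]}$, which is precisely the inconclusive branch of Proposition \ref{Egon2.1} and dies in $\pgll_2$; your "partial result" $\psll_2(\pp^2/\pp^3)\subset H^{[2]}$ therefore has no proof (and does not even typecheck: $\psll_2$ is a quotient of $\sll_2$, with $\ker(d\pi|\sll_2)=\mathfrak{c}$, and a subspace of $\pgll_2$ — not a subspace of $\sll_2$). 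Corollary \ref{Egon2.1_cor} applies to subgroups of $\pgll_2(k)$, so to use it on the image of $H^{[2]}$ you would already need to know that $H^{[2]}$ contains a non-scalar matrix, which is the very point at issue. The subsequent "trace argument" is also broken: every element of $H^{[2]}\subset\sll_2(\pp^2/\pp^3)$ has trace zero, so "contains a matrix of nonzero trace" cannot be the target, and $H^{[2]}$ cannot map onto $\overline{H}{}^{[2]}=\pgll_2(\pp^2/\pp^3)$ under $d\pi$, since $d\pi(\sll_2)=\psll_2$ is a proper subspace; indeed $\overline{H}{}^{[2]}$ is the image of $H^{2-}$, not of $H^2$.

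What actually closes the gap, and what the paper does, is a diagram chase in Lemma \ref{tempe} rather than any bracket computation at this stage. Lemma \ref{brennan} says $H^{2-}$ surjects onto $\pgll_2(\pp^2/\pp^3)$. By the diagram of Lemma \ref{tempe}, the quotient $G^{\prime2-}/G^{\prime2}\cong\mathfrak{c}(\pp/\pp^2)$ maps \emph{isomorphically} onto $\mathfrak{c}^*(\pp^2/\pp^3)$, so any element of $H^{2-}$ whose image in $\pgll_2(\pp^2/\pp^3)$ lies in the subspace $\psll_2(\pp^2/\pp^3)$ must already lie in $G^{\prime2}$, i.e.\ in $H^2$. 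Hence $H^2$ surjects onto $\psll_2(\pp^2/\pp^3)$, which gives a genuinely non-scalar element of $H^{[2]}$ (the correct dichotomy inside $\sll_2$ is scalar versus non-scalar, not trace-zero versus nonzero trace). From there your final step is fine: $H^{[2]}$ is an additive subgroup of $\sll_2(\pp^2/\pp^3)$ invariant under $H^{[0]}=\SL_2(k)$ and not contained in $\mathfrak{c}(\pp^2/\pp^3)$, so Proposition \ref{Egon2.1} forces $H^{[2]}=\sll_2(\pp^2/\pp^3)$. You assembled the right ingredients (Lemmas \ref{tempe} and \ref{brennan}, Proposition \ref{Egon2.1}), but the derivation of the non-scalar element — the only hard point — is not valid as written.
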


\begin{proof}
Lemma \ref{brennan} says that $H^{2-}$ surjects to $\pgll_2(\pp^2/\pp^3)$. Combined with Lemma \ref{tempe} this implies that $H^2$ surjects to $\psll_2(\pp^2/\pp^3)$. In particular $H^{[2]}$ contains a non-scalar matrix. Being invariant under $H^{[0]}=\SL_2(k)$ and contained in $\sll_2(\pp^2/\pp^3)$, by Proposition \ref{Egon2.1} it is therefore equal to $\sll_2(\pp^2/\pp^3)$.
\end{proof}

\begin{lem}\label{lemma12.13}
We have $H^{[i]}=\sll_2(\pp^i/\pp^{i+1})$ for all $i\geq1$. 
\end{lem}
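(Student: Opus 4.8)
The plan is to prove Lemma~\ref{lemma12.13} by induction on $i$, using the case $p=n=2$ analogue of the argument in Lemma~\ref{lemma12.7}. The cases $i=1$ and $i=2$ are already established by Lemma~\ref{lemma12.6} and Lemma~\ref{seeley} respectively, so suppose $i\ge 2$ and that $H^{[i]}=\sll_2(\pp^i/\pp^{i+1})$; I want to deduce $H^{[i+1]}=\sll_2(\pp^{i+1}/\pp^{i+2})$. As in Lemma~\ref{lemma12.7}, the commutator map on $\SL_2(R)$ induces the Lie bracket $[\ ,\ ]:\sll_2(\pp/\pp^2)\times\sll_2(\pp^i/\pp^{i+1})\to\sll_2(\pp^{i+1}/\pp^{i+2})$, and by construction it sends $H^{[1]}\times H^{[i]}$ into $H^{[i+1]}$. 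By Lemma~\ref{lemma12.6} we have $H^{[1]}=\sll_2(\pp/\pp^2)$, and by the inductive hypothesis $H^{[i]}=\sll_2(\pp^i/\pp^{i+1})$, so $H^{[i+1]}$ contains the additive subgroup generated by the full image of the bracket.

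The subtlety, as flagged in the text, is that in characteristic $2$ the image of the Lie bracket on $\sll_2$ does not generate $\sll_2$: the bracket of two trace-zero $2\times2$ matrices over a field of characteristic $2$ always has trace zero but, more to the point, $[\sll_2,\sll_2]=\psll_2$-type image misses the scalars, i.e. the bracket lands in (a translate of) $\sll_2$ whose image modulo scalars is all of $\psll_2(\pp^{i+1}/\pp^{i+2})$ but the scalar direction $\mathfrak{c}(\pp^{i+1}/\pp^{i+2})$ is not hit. So from the bracket alone I only get that $H^{[i+1]}$ surjects onto $\psll_2(\pp^{i+1}/\pp^{i+2})$; equivalently $H^{[i+1]}$ contains a non-scalar matrix. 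Now I invoke the same mechanism as in Lemma~\ref{seeley}: $H^{[i+1]}$ is an additive subgroup of $\sll_2(\pp^{i+1}/\pp^{i+2})$ invariant under conjugation by $H^{[0]}=\SL_2(k)$, and it contains a non-scalar matrix, so by Proposition~\ref{Egon2.1} (applied with $\pp^{i+1}/\pp^{i+2}\cong k$) we conclude $H^{[i+1]}=\sll_2(\pp^{i+1}/\pp^{i+2})$. This closes the induction.

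The main obstacle is precisely verifying that the image of the bracket $[\ ,\ ]:\sll_2(k)\times\sll_2(k)\to\sll_2(k)$ in characteristic $2$ generates an additive subgroup whose image in $\psll_2(k)$ is everything — i.e. that the bracket is ``surjective modulo scalars.'' This is elementary: taking $X=\left(\begin{smallmatrix}0&1\\0&0\end{smallmatrix}\right)$ and $Y=\left(\begin{smallmatrix}0&0\\1&0\end{smallmatrix}\right)$ gives $[X,Y]=\left(\begin{smallmatrix}1&0\\0&-1\end{smallmatrix}\right)=\left(\begin{smallmatrix}1&0\\0&1\end{smallmatrix}\right)$ modulo $\mathfrak c$, which is non-scalar in $\psll_2$, and brackets of the diagonal with $X$ and with $Y$ recover the off-diagonal directions; so the image is non-scalar and then Proposition~\ref{Egon2.1} does the rest. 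One should be slightly careful that the relevant bracket here has one argument in $\sll_2(\pp/\pp^2)$ and the other in $\sll_2(\pp^i/\pp^{i+1})$ with $i\ge 2$, so that the product $\pp\cdot\pp^i=\pp^{i+1}$ lands in the correct layer and no lower-order correction terms from the commutator expansion survive; this is the same direct calculation as in Lemma~\ref{lemma12.7} and presents no new difficulty once $i\ge 2$.

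Finally, combining Lemma~\ref{lemma12.13} with the already-established $H^{[0]}=\SL_2(k)$ shows that $H^{[i]}=\SL_n(R)^{[i]}$ for all $i\ge0$, and since $H$ is a closed subgroup of $\SL_n(R)$ this yields $H=\SL_n(R)$, completing the proof of Theorem~\ref{strong_approx_SLn} in the remaining case $p=n=2$.
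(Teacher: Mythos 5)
Your induction step fails at exactly the point you flag and then wave away: in characteristic $2$ the Lie bracket on $\sll_2$ is not ``surjective modulo scalars'' --- it lands \emph{inside} the scalars. For trace-zero matrices $X=\left(\begin{smallmatrix}a&b\\ c&a\end{smallmatrix}\right)$, $Y=\left(\begin{smallmatrix}a'&b'\\ c'&a'\end{smallmatrix}\right)$ over a ring of characteristic $2$, a direct computation gives $[X,Y]=(bc'+b'c)\cdot\Id_2$, so $[\sll_2,\sll_2]\subset\mathfrak{c}$. Your own test case shows this: $[E_{12},E_{21}]=\mathrm{diag}(1,-1)=\Id_2$ in characteristic $2$, which is a scalar matrix and maps to $0$ in $\psll_2$, not to a non-scalar element; and the trace-zero diagonal matrices you want to bracket with $E_{12}$, $E_{21}$ are themselves scalar, hence central, so those brackets vanish too. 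Consequently, bracketing $H^{[1]}\times H^{[i]}$ only shows that $H^{[i+1]}$ contains $\mathfrak{c}(\pp^{i+1}/\pp^{i+2})$; it never produces a non-scalar matrix, so the appeal to Proposition~\ref{Egon2.1} has nothing to act on and the induction from $i$ to $i+1$ collapses. This is precisely the obstruction the text alludes to when it says the image of the Lie bracket on $\sll_2$ in characteristic $2$ does not generate $\sll_2$.

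The paper's proof circumvents this by changing the first argument of the bracket: it uses the image $\overline{H}$ of $H$ in $\PGL_2(R)$ and the layer $\overline{H}{}^{[2]}=\pgll_2(\pp^2/\pp^3)$, which is obtained in Lemmas~\ref{tempe} and~\ref{brennan} via a Frobenius-type comparison between $G^{\prime2-}/G^{\prime2}\cong\mathfrak{c}(\pp/\pp^2)$ and $\mathfrak{c}^*(\pp^2/\pp^3)$, together with Corollary~\ref{Egon2.1_cor}; this also yields the second base case $H^{[2]}=\sll_2(\pp^2/\pp^3)$ (Lemma~\ref{seeley}). The pairing $\pgll_2(\pp^2/\pp^3)\times\sll_2(\pp^i/\pp^{i+1})\to\sll_2(\pp^{i+2}/\pp^{i+3})$, unlike $[\sll_2,\sll_2]$, does generate the target additively (\cite{PSTAX}, Proposition 1.2 (b)), and the induction then proceeds in steps of $2$, separately over even and odd $i$, starting from the two base cases $i=1,2$. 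If you want to salvage your write-up, you must replace your $\sll_2\times\sll_2$ bracket by this $\pgll_2\times\sll_2$ pairing (or some equivalent input beyond $H^{[1]}$), since no amount of care with the layers $\pp\cdot\pp^i=\pp^{i+1}$ will make the characteristic-$2$ bracket on $\sll_2$ produce non-scalar output.
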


\begin{proof}
By Lemmas \ref{lemma12.6} and \ref{seeley} we already know this assertion for $i=1,2$. Suppose that the assertion holds for some $i\geq1$. The commutator map $(g,g')\mapsto gg'g^{-1}g^{\prime-1}$ on $\GL_2(R)$ induces a map $\PGL_2(R) \times \SL_2(R) \to \SL_2(R)$. Direct calculation shows that this in turn induces a map $\overline{G}{}^2\times G^{\prime i}\to (G')^{i+2}$ and hence a map $\overline{G}{}^{[2]}\times G^{\prime[i]}\to G^{\prime[i+2]}$, which under the given isomorphisms is obtained from the Lie bracket by
\begin{eqnarray*}
[\ \:,\ ] :\ \pgll_2(\pp^2/\pp^3) \times \sll_2(\pp^i/\pp^{i+1}) 
        \!\!\!&\longrightarrow&\!\!\! \sll_2(\pp^{i+2}/\pp^{i+3}), \\
        \bigl((X\mathrel{\rm mod}\mathfrak{c}),Y\bigr) &\mapsto& XY-YX.
\end{eqnarray*}
Another direct calculation, or looking up \cite{PSTAX}, Proposition 1.2 (b), shows that the image of this pairing generates $\sll_2(\pp^{i+2}/\pp^{i+3})$ as an additive group. 

On the other hand, by construction the pairing sends the subset $\overline{H}{}^{[2]} \times H^{[i]}$ to the subgroup $H^{[i+2]}$. Since $\overline{H}{}^{[2]}=\pgll_2(\pp^2/\pp^3)$ by Lemma \ref{brennan}, the equality in the source thus implies equality in the target, and so the assertion holds for $i+2$. By separate induction over all even, resp.\ odd integers, the assertion follows for all $i\geq1$.
\end{proof}

Lemma \ref{lemma12.13} finishes the proof of Theorem \ref{strong_approx_SLn} in the remaining case $p=n=2$.
\end{proof}

%%%%%%%%%%%%%%%%%%%%%%%%%%%%%%%%%%%%%%%%%%%%%%%%%%%%%%%%%%%%%%%%%%%%%%

\subsection{Trace criteria} \label{trace_crit}

In this subsection we show how the assumption in Theorem \ref{strong_approx_GLn} that $H^{[1]}$ contain a non-scalar matrix can be guaranteed using traces in the adjoint representation.

We keep the notations of Subsection \ref{succc}, assume that $|k|>9$, and consider a closed subgroup $H$ of $\GL_n(R)$, such that $\SL_n(k)\subset H^{[0]}$. In other words, the remaining assumptions in Theorem \ref{strong_approx_GLn} are met. 

Recall that $R$ is a complete valuation ring of equal characteristic. Thus the projection $R\twoheadrightarrow k$ possesses a unique splitting $k\hookrightarrow R$. Via this splitting we can view $\GL_n(k)$ as a subgroup of $\GL_n(R)$.
% , or of $\GL_n(R/\pp^i)$ for any $i\geq1$. 
Let $G^{2-}$ denote the group of all matrices $g\in\GL_n(R)$ that are congruent to the identity modulo $\pp$ and congruent to a scalar modulo $\pp^2$. Then $G^2\subset G^{2-}\subset G^1$, and $G^{2-}/G^2 \cong \mathfrak{c}(\pp/\pp^2)$.

\begin{lem} \label{booth}
If $H^{[1]}$ contains only scalar matrices, then up to conjugation by an element of $\GL_n(R)$ we have $H\subset\GL_n(k)\cdot G^{2-}$.
% $\GL_n(k)\cdot (\Id_n+\mathfrak{c}(\pp))\cdot G^2$.
\end{lem}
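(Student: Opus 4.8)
The statement says: if $H^{[1]}$ consists only of scalar matrices, then after conjugating $H$ by some element of $\GL_n(R)$ we may assume $H\subset\GL_n(k)\cdot G^{2-}$. The idea is to use the hypothesis $\SL_n(k)\subset H^{[0]}$ together with $H^{[1]}\subset\mathfrak c(\pp/\pp^2)$ to show that the reduction map $H^0\to H^{[0]}$, restricted to a suitable subgroup, splits over $\SL_n(k)$ after conjugation; the Teichm\"uller section $k\hookrightarrow R$ then gives the honest copy of $\SL_n(k)$, and controlling $H$ modulo this copy forces the containment. Concretely, I would first observe that $H^{2-}:=H\cap G^{2-}$ is a normal subgroup of $H^1$ with $H^1/H^{2-}$ embedded in $\gll_n(\pp/\pp^2)/\mathfrak c \cong\psll_n(k)$, and the hypothesis $H^{[1]}\subset\mathfrak c(\pp/\pp^2)$ says exactly that $H^1=H^{2-}$, i.e.\ $H^1\subset G^{2-}$.

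\textbf{Key steps.} First I would reduce modulo $\pp$: let $\bar H$ be the image of $H$ in $\GL_n(k)=G^{[0]}$, so $\bar H=H^{[0]}\supset\SL_n(k)$, and $H^1=H\cap G^1\subset G^{2-}$ by the hypothesis just discussed. The plan is to find $g\in G^1$ such that $g^{-1}Hg$ contains the standard Teichm\"uller copy of $\SL_n(k)$ inside $\GL_n(R)$. To produce $g$, consider the subgroup $\tilde H\subset H$ that is the preimage of $\SL_n(k)\subset\bar H$ under reduction; this is an extension $1\to \tilde H^1\to\tilde H\to\SL_n(k)\to 1$ with $\tilde H^1\subset G^{2-}$. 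I want to split this extension compatibly with the Teichm\"uller section. The obstruction to splitting lives in $H^1(\SL_n(k),\;\mathrm{gr}\,\tilde H^1)$ where the successive graded pieces are subquotients of $\gll_n(\pp^i/\pp^{i+1})$; here is where the cohomological input enters. Actually the cleanest route: since $H^1\subset G^{2-}$, conjugation by $G^1$ shrinks things, and one shows by successive approximation over the congruence filtration that the extension $\tilde H\twoheadrightarrow\SL_n(k)$ admits a section whose image, after conjugating by the limit $g\in G^1$ of the approximating elements, is exactly the Teichm\"uller $\SL_n(k)\subset\GL_n(k)\subset\GL_n(R)$. The relevant vanishing is $H^1(\SL_n(k),\sll_n(k))=0$ (used in Proposition \ref{cohom1} via \cite{TaZa}) and more generally $H^1(\SL_n(k),\gll_n(\pp^i/\pp^{i+1}))=0$, which reduces to the same since $\pp^i/\pp^{i+1}\cong k$ as $\SL_n(k)$-modules with trivial twist, giving $H^1(\SL_n(k),\gll_n(k))=0$ by the same references; convergence of the successive approximation is guaranteed by completeness of $R$.

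\textbf{Finishing.} After replacing $H$ by $g^{-1}Hg$ we may thus assume the Teichm\"uller copy $\SL_n(k)\subset\GL_n(k)\subset\GL_n(R)$ is contained in $H$. Now take an arbitrary $h\in H$. Its reduction $\bar h\in\bar H=H^{[0]}$ satisfies: $\bar H$ normalizes $\SL_n(k)$, hence $\bar H\subset\GL_n(k)$ (the full $\GL_n(k)$ being the normalizer of $\SL_n(k)$ in $\GL_n(k)$, which is automatic here), so choosing the Teichm\"uller lift $h_0\in\GL_n(k)\subset\GL_n(R)$ of $\bar h$, which lies in $\GL_n(k)\subset H\cdot(\text{scalars})$, the element $h_0^{-1}h$ lies in $H^1$; wait — we need $h_0\in H$ for that. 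Instead: $h_0^{-1}h\in G^1$ but a priori not in $H$. To repair this, note that $h$ normalizes the Teichm\"uller $\SL_n(k)\subset H$, so $\bar h$ normalizes $\SL_n(k)$ in $\GL_n(k)$, giving $\bar h\in\GL_n(k)$; then conjugation by $h$ followed by conjugation by $h_0^{-1}$ induces the identity on $\SL_n(k)\subset\GL_n(R)$ modulo $\pp$, and one shows this inner automorphism of $\GL_n(R)$ trivial modulo $\pp$ that fixes $\SL_n(k)$ modulo $\pp$ is realized by an element congruent to a scalar modulo $\pp^2$ — precisely because the centralizer of $\SL_n(k)$ in $\GL_n(k)$ is the scalars (irreducibility, Proposition \ref{Irred_fgolt}) — so $h_0^{-1}h\in G^{2-}$. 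Hence $h=h_0\cdot(h_0^{-1}h)\in\GL_n(k)\cdot G^{2-}$, as desired. \textbf{The main obstacle} is the first part: setting up the successive approximation to conjugate the copy of $\SL_n(k)$ into standard Teichm\"uller position, i.e.\ proving the relevant $H^1$ vanishing at every level of the congruence filtration and checking that the infinite product of correcting conjugations converges in $G^1$; once that splitting is in hand, the centralizer-is-scalars argument that pins $h$ into $\GL_n(k)\cdot G^{2-}$ is routine.
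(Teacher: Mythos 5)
There is a genuine gap, and it comes from working at the wrong cohomological level. Your plan is to split the extension $1\to \tilde H^1\to\tilde H\to\SL_n(k)\to1$ and then conjugate the resulting copy of $\SL_n(k)$ into Teichm\"uller position by successive approximation, citing the vanishing of $H^1(\SL_n(k),\gll_n(\pp^i/\pp^{i+1}))$. But first cohomology only controls the conjugacy of sections once they exist; the obstruction to the \emph{existence} of a section lies in $H^2$ with coefficients in the graded pieces $H^{[i]}$ of the kernel, and nothing in your argument (or in the paper's toolkit) gives that vanishing --- your sentence placing the splitting obstruction in $H^1(\SL_n(k),\mathrm{gr}\,\tilde H^1)$ is simply incorrect. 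The paper avoids this issue entirely by never lifting $\SL_n(k)$ into $\GL_n(R)$: it works only in the quotient $G^0/G^{2-}$, where the hypothesis $H^{[1]}\subset\mathfrak{c}(\pp/\pp^2)$ says exactly that $H^1\subset G^{2-}$, so $H/(H\cap G^{2-})$ maps \emph{isomorphically} onto $H^{[0]}$ and a splitting of $1\to \pgll_n(\pp/\pp^2)\to G^0/G^{2-}\to\GL_n(k)\to1$ over $H^{[0]}$ exists for free. The only question left is whether this splitting is conjugate to the Teichm\"uller one, which is a genuine $H^1$ statement, settled by Proposition \ref{cohom1} (note the coefficients there are $\pgll_n(k)$, and the group is the full $H^{[0]}$ between $\SL_n(k)$ and $\GL_n(k)$, which is why the prime-to-$p$ restriction argument is needed); a single conjugation by a lift of an element of $\pgll_n(\pp/\pp^2)$ then gives $H\subset\GL_n(k)\cdot G^{2-}$, with no infinite approximation.

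Your finishing step also fails as written: for an arbitrary $h\in H$ you assert that $h$ normalizes the Teichm\"uller copy $\SL_n(k)\subset H$, but all you know is that $h\SL_n(k)h^{-1}$ is \emph{some} lift of $\SL_n(k)$ inside $H$, not the same one, so the subsequent centralizer-is-scalars argument (which in any case would need $h_0^{-1}h$ to normalize the Teichm\"uller copy on the nose, not merely modulo $\pp$) does not apply, and the conclusion $h_0^{-1}h\in G^{2-}$ does not follow. This second problem is a symptom of splitting only over $\SL_n(k)$ rather than over all of $H^{[0]}$: the paper splits the whole of $H^{[0]}$ at once modulo $G^{2-}$, which makes the containment $H\subset\GL_n(k)\cdot G^{2-}$ immediate and needs no separate argument for general $h$.
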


\begin{proof}
Consider the commutative diagram with exact rows
$$\xymatrix@R-7pt{
0\ar[r] & \pgll_n(\pp/\pp^2) \ar[r]^-{1+(\ )} \ar@{=}[d]^-\wr & 
\GL_n(R/\pp^2)/(1+\mathfrak{c}(\pp/\pp^2)) \ar[r] \ar@{=}[d]^-\wr & 
\GL_n(k) \ar[r] \ar@{=}[d]^-\wr & 1 \\
1\ar[r] & G^1/G^{2-} \ar[r] & 
G^0/G^{2-} \ar[r] & 
G^{[0]} \ar[r] & 1 \\
1\ar[r] & (H\cap G^1)/(H\cap G^{2-}) \ar[r]  \ar@{^{ (}->}[u] & 
H/(H\cap G^{2-}) \ar[r] \ar@{^{ (}->}[u] & 
H^{[0]} \ar[r] \ar@{^{ (}->}[u] & 1\rlap{.} \\}$$
The assumption $H^{[1]} \subset\mathfrak{c}(\pp/\pp^2)$ means that $H^1 = H\cap G^1\subset G^{2-}$. Thus the group on the lower left is trivial, and so the group in the lower middle defines a splitting $H^{[0]} \to G^0/G^{2-}$. We compare this splitting with the splitting induced by the inclusions $H^{[0]} \subset \GL_n(k) \subset \GL_n(R)$. These two splittings differ by a $1$-cocycle $H^{[0]} \to \pgll_n(\pp/\pp^2)$. But since $\SL_n(k)\subset H^{[0]} \subset \GL_n(k)$, Proposition \ref{cohom1} shows that this cocycle is a coboundary. This means that the splittings are conjugate by an element coming from $\pgll_n(\pp/\pp^2)$, and the lemma follows.
\end{proof}

Let $\Ad$ denote the adjoint representation of $\GL_n$, and let $\Tr\Ad(H)$ denote the subset $\{\Tr\Ad(h)\mid h\in H\} \subset R$. Recall that $H^{\rm der}$ denotes the closure of the derived group of $H$. 

\begin{thm} \label{trace_crit_1}
Assume that $|k|>9$. Let $H$ be a closed subgroup of $\GL_n(R)$ such that $\SL_n(k)\subset H^{[0]}$ and $\Tr\Ad(H)$ topologically generates the ring $R$. Then $H^{\rm der}=\SL_n(R)$.
\end{thm}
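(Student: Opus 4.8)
The goal is to reduce Theorem \ref{trace_crit_1} to Theorem \ref{strong_approx_GLn}: since the remaining hypotheses of that theorem (namely $|k|>9$ and $\SL_n(k)\subset H^{[0]}$) are already assumed, it suffices to show that $H^{[1]}$ contains a non-scalar matrix. I would argue by contradiction: assume $H^{[1]}$ consists only of scalar matrices and derive that $\Tr\Ad(H)$ cannot topologically generate $R$. By Lemma \ref{booth}, after replacing $H$ by a conjugate (which does not change $\Tr\Ad(H)$, since $\Tr\Ad$ is a conjugation-invariant function), we may assume $H\subset\GL_n(k)\cdot G^{2-}$, where $\GL_n(k)$ is the Teichmüller lift of the residue field and $G^{2-}$ is the group of matrices congruent to the identity mod $\pp$ and to a scalar mod $\pp^2$.

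**Main step.** The heart of the argument is to show that under the inclusion $H\subset\GL_n(k)\cdot G^{2-}$ one has $\Tr\Ad(H)\subset k + \pp^2$, i.e.\ $\Tr\Ad(h)$ is congruent modulo $\pp^2$ to an element of the Teichmüller lift $k\subset R$. Write any $h\in H$ as $h=g_0\cdot g$ with $g_0\in\GL_n(k)$ and $g\in G^{2-}$, so $g=\zeta\cdot(\Id_n+\pp^2 Y)$ for some scalar $\zeta\in 1+\pp R$ and some $Y\in\gll_n(R)$; actually it is cleaner to simply note $g\equiv\zeta\Id_n\bmod\pp^2$ for a scalar $\zeta\in R^\times$. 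Then $\Ad(h)=\Ad(g_0)\Ad(g)$, and since $\Ad$ kills scalars, $\Ad(g)\equiv\Ad(\Id_n)=\Id\bmod\pp^2$ on $\gll_n$ (here one checks that conjugation by a matrix congruent to a scalar mod $\pp^2$ acts trivially on $\gll_n(R)$ modulo $\pp^2$). Hence $\Tr\Ad(h)\equiv\Tr\Ad(g_0)\bmod\pp^2$, and $\Tr\Ad(g_0)$ lies in the Teichmüller lift of $k$ because $g_0$ has entries in $k$ and $\Ad$ is defined over the prime field. Therefore $\Tr\Ad(H)$ is contained in the proper closed subring $k+\pp^2$ of $R$ (note $k+\pp^2$ is closed and is a proper subring since, e.g., a uniformizer is not in it). This contradicts the assumption that $\Tr\Ad(H)$ topologically generates $R$.

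**Conclusion.** The contradiction shows that $H^{[1]}$ must contain a non-scalar matrix. All the hypotheses of Theorem \ref{strong_approx_GLn} are then met, and we conclude $H^{\rm der}=\SL_n(R)$.

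**Expected obstacle.** The one place that needs care is the claim that conjugation by an element of $G^{2-}$ induces the identity on $\gll_n(R)/\pp^2\gll_n(R)$, and more precisely that $\Ad$ of an element congruent to a scalar modulo $\pp^2$ is congruent to the identity modulo $\pp^2$ — one must be slightly careful because $\Ad(g) = \Ad(\zeta^{-1}g)$ and $\zeta^{-1}g = \Id_n + \pp^2 Y$, so $\Ad(\Id_n+\pp^2Y)(X) = (\Id_n+\pp^2Y)X(\Id_n+\pp^2Y)^{-1} \equiv X + \pp^2(YX-XY)\bmod\pp^4$, which is $\equiv X\bmod\pp^2$; this is elementary but must be spelled out. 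A secondary point is to confirm that $k+\pp^2$ really is a subring (closed under multiplication: $(a+\pp^2x)(b+\pp^2y)=ab+\pp^2(\cdots)$, using that $k$ is closed under multiplication in $R$ via the Teichmüller section) and that it is a proper, closed subset of $R$, so that "topologically generates $R$" is genuinely violated.
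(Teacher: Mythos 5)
Your proposal is correct and follows essentially the same route as the paper: reduce to Theorem \ref{strong_approx_GLn}, invoke Lemma \ref{booth} when $H^{[1]}$ is scalar, and observe that then $\Tr\Ad(h)\in k+\pp^2$ for all $h\in H$, contradicting topological generation of $R$. The extra details you spell out (conjugation-invariance of $\Tr\Ad$, that $\Ad$ of an element congruent to a scalar mod $\pp^2$ is the identity mod $\pp^2$, and that $k+\pp^2$ is a proper closed subring) are exactly the routine verifications the paper leaves implicit.
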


\begin{proof}
By Theorem \ref{strong_approx_GLn} it suffices to show that $H^{[1]}$ contains a non-scalar matrix. If that is not the case, by Lemma \ref{booth} we may assume that $H \subset \GL_n(k)\cdot\nobreak G^{2-}$. 
Consider any element $h\in H$. Then by the definition of $G^{2-}$, its image $\Ad(h)$ is the product of an element of $\PGL_n(k)$ with a matrix that is congruent to the identity modulo $\pp^2$. Its trace is therefore congruent to an element of $k$ modulo $\pp^2$; in other words we have $\Tr\Ad(h) \in k+\pp^2$. This contradicts the assumption on $\Tr\Ad(H)$.
\end{proof}

If $p=n=2$, the assumption on traces in Theorem \ref{trace_crit_1} may fail in interesting cases (compare Proposition \ref{Rtradsmall}), although the conclusion is satisfied. This is due to the fact that the representation of $\GL_2$ on $\psll_2$ in characteristic $2$ is isomorphic to the pullback under $\Frob_2$ of the standard representation twisted with the inverse of the determinant, which implies that $\Tr\Ad(g) = \Tr(g)^2\cdot\det(g)^{-1} + 2$ for every $g\in \GL_2(R)$. Thus if $\det(H)$ consists of squares, which happens in particular for $H=\SL_2(R)$, the subset $\Tr\Ad(H)$ is entirely contained in the subring $R^2 := \{x^2\mid x\in R\}$. The following result provides a suitable substitute in that case:

\begin{thm} \label{trace_crit_2}
Assume that $|k|>9$ and $p=2$. Let $H$ be a closed subgroup of $\GL_2(R)$ such that $\SL_2(k)\subset H^{[0]}$. Let $H'\subset H$ denote the intersection of all closed subgroups of index $2$, and assume that $\Tr\Ad(H')$ topologically generates the subring $R^2 := \{x^2\mid x\in \nobreak R\}$. 
% Let $H^{\rm der}$ denote the closure of the derived group of $H$. 
Then $H^{\rm der}=\SL_2(R)$.
\end{thm}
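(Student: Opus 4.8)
The plan is to verify the hypotheses of Theorem~\ref{strong_approx_GLn} for $n=2$. Since $|k|>9$ and $\SL_2(k)\subset H^{[0]}$ are assumed, it suffices to show that $H^{[1]}$ contains a non-scalar matrix, as that then gives $H^{\rm der}=\SL_2(R)$. Assume for contradiction that $H^{[1]}$ consists of scalars only. Then Lemma~\ref{booth} applies, so after conjugating $H$ by a suitable element of $\GL_2(R)$ we may assume $H\subset\GL_2(k)\cdot G^{2-}$, where $\GL_2(k)$ is the subgroup coming from the splitting $k\hookrightarrow R$. As $\GL_2(k)\cap G^1=\{{\rm Id}_2\}$, every $h\in H$ has a unique factorisation $h=h_0g$ with $h_0\in\GL_2(k)$ and $g\in G^{2-}$; here $h_0$ is the image under the splitting of the reduction $\bar h\in\GL_2(k)$, so that $h\mapsto h_0$ is a homomorphism and $\det h_0=\det\bar h\in k^\times$. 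The strategy is to show that $\Tr\Ad(H')$ lies in a proper closed subring of $R^2$, contradicting the hypothesis; throughout I would use the identity $\Tr\Ad(g)=\Tr(g)^2(\det g)^{-1}$ valid in characteristic~$2$ that was recalled before the theorem. We write $R=k[[\varpi]]$ for a uniformiser $\varpi$, so that $R^2=k[[\varpi^2]]$ as $k$ is perfect.

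The first step is to exhibit continuous homomorphisms from $H$ to elementary abelian $2$-groups; each of them vanishes on $H'$, because a profinite $\mathbb F_2$-vector space is a product of copies of $\mathbb Z/2$ and hence any such homomorphism is a product of homomorphisms $H\to\mathbb Z/2$, whose kernels all contain the intersection $H'$ of the index-$2$ subgroups. Three are relevant: the map $\chi\colon H\to G^{2-}/G^2\cong\mathfrak{c}(\pp/\pp^2)\cong(k,+)$ sending $h$ to the class of its factor $g$ (one checks this is a homomorphism, using that $\GL_2(k)$ acts trivially by conjugation on $G^{2-}/G^2$); the map $\epsilon\colon H\to R^\times$, $h\mapsto\det h\cdot(\det h_0)^{-1}=\det g$, which lands in $1+\pp^2$ because $g$ is scalar modulo $\pp^2$, followed by the projection $1+\pp^2\twoheadrightarrow(1+\pp^2)/(1+\pp^3)\cong(k,+)$, which yields $\bar\epsilon$; and the composite of $\det\colon H\to R^\times$ with the projection onto $R^\times/(R^\times)^2$, a profinite elementary abelian $2$-group since $k^\times$ has odd order and $(1+\varpi x)^2=1+\varpi^2 x^2$. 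It follows that $H'\subset\ker\chi\cap\ker\bar\epsilon$ and that $\det h$ is a square in $R$ for every $h\in H'$; the latter already forces $\Tr\Ad(H')\subset R^2$.

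Next comes the characteristic-$2$ computation. For $h=h_0g\in H$, write $g\equiv\lambda\,{\rm Id}_2\bmod\pp^2$ with $\lambda=1+\varpi c$, and let $\bar c\in k$ be the reduction of $c$; then $\bar c=\chi(h)$. From $h\equiv\lambda h_0\bmod\pp^2$ one gets $\Tr(h)\equiv\lambda\Tr(h_0)\bmod\pp^2$, and squaring — the cross term dies in characteristic~$2$, and $\lambda^2=1+\varpi^2c^2$ — yields $\Tr(h)^2\equiv\Tr(h_0)^2(1+\varpi^2\bar c^2)\bmod\pp^3$. Likewise $(\det h)^{-1}=(\det h_0)^{-1}\epsilon(h)^{-1}\equiv(\det h_0)^{-1}(1+\varpi^2\bar\epsilon(h))\bmod\pp^3$ by the very definition of $\bar\epsilon$. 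Multiplying these two congruences gives
$$\Tr\Ad(h)\ =\ \Tr(h)^2(\det h)^{-1}\ \equiv\ \Tr\Ad(h_0)\cdot\bigl(1+\varpi^2(\chi(h)^2+\bar\epsilon(h))\bigr)\pmod{\pp^3}.$$
For $h\in H'$ both $\chi(h)$ and $\bar\epsilon(h)$ vanish, whence $\Tr\Ad(h)\equiv\Tr\Ad(h_0)=\Tr(\bar h)^2(\det\bar h)^{-1}\in k$ modulo $\pp^3$. Together with $\Tr\Ad(H')\subset R^2$ from the first step, this gives $\Tr\Ad(H')\subset(k+\pp^3)\cap R^2=k+\varpi^4 R^2$. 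But $k+\varpi^4 R^2$ is a proper closed subring of $R^2$ (it is closed under multiplication because $\varpi^4 R^2\cdot\varpi^4 R^2\subset\varpi^4 R^2$, and it does not contain $\varpi^2$), so the closed subring topologically generated by $\Tr\Ad(H')$ is not all of $R^2$, contradicting the hypothesis. This contradiction shows that $H^{[1]}$ contains a non-scalar matrix, and Theorem~\ref{strong_approx_GLn} finishes the proof.

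The step I expect to be the main obstacle is the characteristic-$2$ expansion of $\Tr\Ad$ modulo $\pp^3$: one must recognise that the obstruction to $\Tr\Ad(h)$ lying in $k+\pp^3$ is exactly the value of an explicit homomorphism $H\to(k,+)$ built by \emph{combining} the scalar contribution $\chi$ coming from $G^{2-}/G^2$ (this is where the non-surjectivity of the Lie bracket on $\sll_2$ in characteristic~$2$ enters) with the determinant contribution $\bar\epsilon$ — neither ingredient by itself kills the obstruction. The passage from $H$ to $H'$, which on the arithmetic side of Theorem~\ref{main_theorem} corresponds to replacing the ground field by a quadratic extension, is precisely the device that annihilates all of these homomorphisms at once and that also guarantees $\det H'\subset(R^\times)^2$, so that $\Tr\Ad(H')$ is contained in $R^2$ in the first place.
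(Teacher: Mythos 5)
Your proof is correct and follows essentially the same route as the paper: reduce via Theorem \ref{strong_approx_GLn} and Lemma \ref{booth} to the case $H\subset\GL_2(k)\cdot G^{2-}$, then produce continuous homomorphisms from $H$ to elementary abelian $2$-groups that must vanish on $H'$ and thereby force $\Tr\Ad(H')\subset k+\pp^3$, contradicting the generation hypothesis. Your combined obstruction $\chi(h)^2+\bar\epsilon(h)$ is precisely the paper's single homomorphism $f(h)=\Tr(g_2-\Id_2)\bmod\pp^3$ of Lemma \ref{gormogon} in different bookkeeping, and the additional observation that $\det(H')\subset(R^\times)^2$, hence $\Tr\Ad(H')\subset R^2$, is harmless but not needed for the contradiction.
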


\begin{proof}
Again by Theorem \ref{strong_approx_GLn} it suffices to show that $H^{[1]}$ contains a non-scalar matrix. If that is not the case, by Lemma \ref{booth} we may assume that $H \subset \GL_n(k)\cdot\nobreak G^{2-}$. Let $\varpi\in\pp$ be a uniformizer. Then every element of $H$ can be written in the form $h=\gamma\cdot g_2\cdot (1+\varpi x)$ with $\gamma\in \GL_2(k)$ and $g_2\in G^2$ and $x\in R$. 

\begin{lem} \label{gormogon}
There exists a homomorphism $f:H\to\pp^2/\pp^3$ satisfying $f(h)= \Tr(g_2-\Id_2)\mathrel{\rm mod}\pp^3$ for any element $h=\gamma\cdot g_2\cdot (1+\varpi x)$ of the above form.
% which sends any element $h=\gamma\cdot g_2\cdot (1+\varpi a)$ of the above form to $\Tr(g_2-\Id_2)\mathrel{\rm mod}\pp^3$.
\end{lem}

\begin{proof}
Consider another element $h'=\gamma'\cdot g'_2\cdot (1+\varpi x')\in H$ with $\gamma'\in \GL_2(k)$ and $g'_2\in G^2$ and $x'\in R$. 

To show that $f$ is well-defined, we must prove that $\Tr(g'_2-\Id_2) \equiv \Tr(g_2-\nobreak\Id_2)\allowbreak \mathrel{\rm mod}\pp^3$ whenever $h'=h$. But $h'=h$ implies that $\gamma'=\gamma$ and hence $g'_2 = g_2\cdot(1+\varpi y)$ for some $y\in R$. Therefore $\Tr(g'_2-\Id_2) = \Tr(g_2-\Id_2) + \Tr(g_2)\cdot\varpi y$. Since $g_2$ is congruent to the identity matrix modulo $\pp^2$, its trace is congruent to $2\mathrel{\rm mod}\pp^2$, i.e., congruent to $0\mathrel{\rm mod}\pp^2$. Thus $\Tr(g_2)\cdot\varpi y\in\pp^3$, and so the map is well-defined.

To show that $f$ is a homomorphism, observe that 
\begin{eqnarray*}
h'h &=& \gamma'\cdot g'_2\cdot (1+\varpi x') \cdot \gamma\cdot g_2\cdot (1+\varpi x) \\
&=& (\gamma'\gamma) \cdot (\gamma^{-1}g'_2\gamma\cdot g_2)\cdot (1+\varpi x') \cdot (1+\varpi x)
\end{eqnarray*}
with $\gamma'\gamma \in \GL_2(k)$ and $\gamma^{-1}g'_2\gamma\cdot g_2 \in G^2$ and $(1+\varpi x') \cdot (1+\varpi x)= (1+\varpi y)$ for some $y\in R$. Thus 
$f(h'h) = \Tr(\gamma^{-1}g'_2\gamma\cdot g_2-\Id_2)\mathrel{\rm mod}\pp^3$. Write this trace in the form
$$\Tr\bigl((\gamma^{-1}g'_2\gamma-\Id_2)(g_2-\Id_2)\bigr)
+ \Tr(\gamma^{-1}g'_2\gamma-\Id_2)
+ \Tr(g_2-\Id_2).$$
Here the first summand lies in $\pp^4$, because $\gamma^{-1}g'_2\gamma-\Id_2 \equiv g_2-\Id_2 \equiv 0$ modulo $\pp^2$, and the second summand is equal to $\Tr(g'_2-\Id_2)$, because the trace is invariant under conjugation. Therefore
$$\Tr(\gamma^{-1}g'_2\gamma\cdot g_2-\Id_2)
\ \equiv\ \Tr(g'_2-\Id_2)+ \Tr(g_2-\Id_2) \mod\pp^3,$$
and so $f(h'h) = f(h')+f(h)$, as desired.
\end{proof}

Since $\pp^2/\pp^3$ is an elementary abelian $2$-group, Lemma \ref{gormogon} implies that the restriction of $f$ to the subgroup $H'$ is trivial. In other words, for every element $h=\gamma\cdot g_2\cdot (1+\varpi x) \in H'$  with $\gamma\in \GL_2(k)$ and $g_2\in G^2$ and $x\in R$ we have $\Tr(g_2-\Id_2)\in\pp^3$. But for any such element we have
\begin{eqnarray*}
\Tr\Ad(h) &=& \Tr\Ad(\gamma g_2) \ =\  
\Tr(\gamma g_2)^2\cdot\det(\gamma g_2)^{-1} + 2 \\
&=& \Tr(\gamma g_2)^2\cdot\det(\gamma)^{-1}\cdot\det(g_2)^{-1} + 2.
\end{eqnarray*}
Here the matrix $\gamma g_2$ has coefficients in $k+\pp^2$; hence its trace lies in $k+\pp^2$, and the first factor lies in $k+\pp^4$. Since $\gamma\in\GL_2(k)$, the second factor lies in $k^\times$. Moreover, the fact that $g_2\equiv\Id_2\mathrel{\rm mod}\pp^2$ implies that $\det(g_2) \equiv 1+\Tr(g_2-\Id_2) \mathrel{\rm mod}\pp^2$. But we have just seen that $\Tr(g_2-\Id_2)\in\pp^3$, and so $\det(g_2)$ and hence the third factor lies in $1+\pp^3$. Together we find that $\Tr\Ad(h)$ lies in $k+\pp^3$. This contradicts the assumption on $\Tr\Ad(H')$, and so Theorem \ref{trace_crit_2} is proved.
\end{proof}

%%%%%%%%%%%%%%%%%%%%%%%%%%%%%%%%%%%%%%%%%%%%%%%%%%%%%%%%%%%%%%%%%%%%%%

%\setcounter{section}{3}
\newpage

%%%%%%%%%%%%%%%%%%%%%%%%%%%%%%%%%%%%%%%%%%%%%%%%%%%%%%%%%%%%%%%%%%%%%%

\section{Preliminary results on Drinfeld modules} \label{known_results}

In this section we list some known results about Drinfeld modules or adapt them slightly, and create the setup on which the proof in Section \ref{sect_surjective} is based. From Subsection \ref{tatess} onwards we will restrict ourselves to the case of special characteristic. For the general theory of Drinfeld modules see Drinfeld \cite{DriI}, Deligne and Husem\"oller \cite{DeHu}, Hayes \cite{HayEx}, or Goss \cite{GossFFA}.

%%%%%%%%%%%%%%%%%%%%%%%%%%%%%%%%%%%%%%%%%%%%%%%%%%%%%%%%%%%%%%%%%%%%%%

\subsection{Endomorphisms rings} \label{endos}

Let $\mathbb{F}_p$ denote the finite field of prime order~$p$. Let $F$ be a finitely generated field of transcendence degree $1$ over~$\mathbb{F}_p$. Let $A$ be the ring of elements of $F$ which are regular outside a fixed place $\infty$ of~$F$. 

Let $K$ be another finitely generated field over $\mathbb{F}_p$ of arbitrary transcendence degree. 
Then the endomorphism ring of the algebraic additive group ${\mathbb G}_{a,K}$ over $K$ is the non-commutative polynomial ring in one variable $K\{\tau\}$, where $\tau$ represents the endomorphism $u \mapsto u^p$ and satisfies the commutation relation $\tau u =u^p \tau$ for all $u\in K$. 
Consider a Drinfeld $A$-module 
$$\phi: A \rightarrow 
% \End(\mathbb{G}_{a,K})\cong 
K\{\tau\},\, a\mapsto \phi_a$$ 
of rank $r \geq 1$ over~$K$. 
Let $\pp_0$ denote the characteristic of~$\phi$, that is, the kernel of the homomorphism $A \rightarrow K$ determined by the lowest coefficient of~$\phi$. This is a prime ideal of~$A$ and hence either $(0)$ or a maximal ideal, and $\phi$ is called of generic resp.\ of special characteristic accordingly.
By definition, the endomorphism ring of $\phi$ over $K$ is the centralizer 
$$\End_K(\phi) := \{u\in K\{\tau\}\mid\forall a\in A: \phi_a\circ u = u\circ\phi_a\}.$$
This is a finitely generated projective $A$-module, and $\End_K(\phi)\otimes_AF$ is a\hyphenation{finite} finite dimensional division algebra over~$F$. In special characteristic this algebra can be non-commutative. We often identify $A$ with its image under the homomorphism $A\to\End_K(\phi)$, ${a\mapsto\phi_a}$.

It may happen that $\phi$ possesses endomorphisms over an overfield that are not defined over~$K$. But by \cite{GossFFA}, Proposition 4.7.4, Remark 4.7.5, we have:

\begin{prop}\label{def_field_end}
% For any overfield $L$ of $K$, there exists a finite separable extension $K'\subset L$ of $K$ such that 
There exists a finite separable extension $K'$ of $K$ such that for every overfield $L$ of $K'$ we have $\End_L(\phi) = \End_{K'}(\phi)$.
\end{prop}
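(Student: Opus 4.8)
The plan is to show that the endomorphism ring stabilizes once we pass to a suitable finite separable extension, so I would argue as follows.

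First I would recall the basic structural facts: $\End_L(\phi)$ for any overfield $L$ of $K$ is a finitely generated projective $A$-module whose rank is bounded — it embeds into $\End_{K^{\text{sep}}}(\phi)$ (indeed any endomorphism over an overfield of $K$ is already defined over $K^{\text{sep}}$, since the coefficients of an additive polynomial commuting with all $\phi_a$ satisfy separable algebraic equations over $K$), and $\End_{K^{\text{sep}}}(\phi)$ has finite $A$-rank bounded by $r^2$ because $\End_{K^{\text{sep}}}(\phi)\otimes_A F$ is a division algebra of dimension dividing $r^2$ over $F$. Moreover $\End_L(\phi)$ grows monotonically with $L$. Since an $A$-submodule of a fixed finitely generated projective $A$-module cannot grow indefinitely, there is a \emph{maximal} such endomorphism ring, attained over some finite extension; the content of the proposition is that this maximal ring is already attained over a finite \emph{separable} extension, and that it then stabilizes for \emph{all} overfields of $K'$, not merely for $K^{\text{sep}}$.

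The key steps, in order: (1) Set $R := \End_{K^{\text{sep}}}(\phi)$, which is finitely generated over $A$; choose a finite set of $A$-module generators $u_1,\ldots,u_m$ of $R$, viewed as additive polynomials in $K^{\text{sep}}\{\tau\}$. (2) Each $u_i$ has finitely many coefficients, all lying in $K^{\text{sep}}$, hence all separable-algebraic over $K$; let $K'$ be the finite separable extension of $K$ generated by all these coefficients. By construction $u_1,\ldots,u_m\in K'\{\tau\}$ and each commutes with every $\phi_a$, so $R = \End_{K'}(\phi)$. (3) For any overfield $L$ of $K'$ we have the chain $R = \End_{K'}(\phi)\subset\End_L(\phi)\subset\End_{\bar L}(\phi)$ where $\bar L$ is an algebraic closure; but as noted above every endomorphism over $\bar L$ is defined over the separable closure $L^{\text{sep}}$, and the argument of Proposition 4.7.4 in \cite{GossFFA} — which is exactly what we are invoking — shows $\End_{L^{\text{sep}}}(\phi)$ equals $\End_{K^{\text{sep}}}(\phi)$ because both are recovered from the Tate module action, which is determined once a finite separable extension splitting all the relevant torsion has been adjoined, and $K'$ can be enlarged to do this. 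Hence $\End_L(\phi) = R$.

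The main obstacle — really the only non-formal point — is step (3): showing that passing to $K'$ not only captures the endomorphisms \emph{present} over $K^{\text{sep}}$ but actually \emph{exhausts} them over every overfield, i.e.\ that no new endomorphisms appear over a transcendental or inseparable extension. Since the excerpt explicitly permits citing \cite{GossFFA}, Proposition 4.7.4 and Remark 4.7.5, the honest plan here is simply to quote that result for the existence of a finite separable $K'$ with $\End_{K'}(\phi) = \End_{K^{\text{sep}}}(\phi)$ and the property that this is preserved under arbitrary field extension; the work reduces to the bookkeeping in steps (1)--(2) checking that finitely many coefficients suffice and that the field they generate is separable over $K$. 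If one wanted a self-contained argument one would instead use that a non-trivial endomorphism induces, for almost all primes $\pp$ of $A$, a non-trivial element of $\End_{A_\pp}(T_\pp\phi)$ commuting with the Galois action, and that such commutants are controlled by the image of Galois, which is already determined over a finite separable extension — but invoking Goss is cleaner and is what the paper intends.
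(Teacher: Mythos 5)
Your proposal is correct and ultimately rests on the same ground as the paper, which proves this proposition simply by citing \cite{GossFFA}, Proposition 4.7.4 and Remark 4.7.5; your steps (1)--(2) are harmless bookkeeping around that citation, and you correctly identify that the only substantive point (no new endomorphisms over inseparable or transcendental extensions) is exactly what the cited result supplies.
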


Consider any integrally closed infinite subring $B\subset A$. Then $A$ is a finitely generated projective $B$-module of some rank $m\ge1$, and the restriction $\phi|B$ is a Drinfeld $B$-module of rank $rm$ over~$K$. By definition there is a natural inclusion $\End_K(\phi) \subset \End_K(\phi|B)$ identifying $\End_K(\phi)$ with the commutant of $A$ in $\End_K(\phi|B)$. In special characteristic it is possible that the latter is non-commutative and $A$ is not contained in its center, in which case the inclusion is proper.

Dually consider any commutative $A$-subalgebra $A'\subset \End_K(\phi)$. Then $A'$ is a finitely generated projective $A$-module of some rank $m'\ge1$. If $A'$ is normal, i.e., integrally closed in its quotient field, the tautological embedding $\phi': A'\rightarrow K\{\tau\}$ is a Drinfeld $A'$-module of rank $r/m'$ over~$K$; in particular $m'$ is then a divisor of~$r$. One can prove the same fact for arbitrary $A'$ using the isogeny provided by the following subsection.

\subsection{Isogenies} \label{isogs}

Let $\phi'$ be a second Drinfeld $A$-module over~$K$. Let $f$ be an isogeny $\phi\to\phi'$ over~$K$, that is, a non-zero element $f\in K\{\tau\}$ satisfying $f\circ\phi_a = \phi'_a\circ f$ for all $a\in A$. Then $f$ induces an isomorphism of $F$-algebras
\begin{myequation}\label{EndIsog}
\End_K(\phi)\otimes_AF \ \stackrel{\sim}{\longrightarrow}\ \End_K(\phi')\otimes_AF
\end{myequation}% 
which sends $e\otimes1$ to $e'\otimes1$ if $f\circ e=e'\circ f$.
% , or equivalently $e\otimes\frac{a}{b}$ to $e'\otimes\frac{a'}{b'}$ if and only if $f\circ\phi_{b'a}\circ e=e'\circ\phi'_{ba'}\circ f$.

The following proposition extends a result of \cite{HayEx}, Proposition 3.2, to the possibly non-commutative case and is established in a different way.

\begin{prop}\label{ConstIsog}
Let $\phi\colon A\to K\{\tau\}$ be any Drinfeld module, let $S$ be any $A$-subalgebra of $\End_K(\phi)$ and let $S'$ be any maximal $A$-order in $S\otimes_AF$ which contains~$S$. Then there exist a Drinfeld $A$-module $\phi'\colon A\to K\{\tau\}$ and an isogeny $f\colon\phi\to\phi'$ over $K$ such that $S'$ corresponds to $\End_K(\phi') \cap (S\otimes_AF)$ via the isomorphism (\ref{EndIsog}).
\end{prop}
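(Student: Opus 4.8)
The plan is to construct $\phi'$ by enlarging the ``arithmetic lattice'' carrying $\phi$ until it becomes stable under all of $S'$, doing this place by place over the primes of $A$ and handling the prime $\pp_0$ --- where the relevant divisible group is not \'etale --- separately from the others. Since $S$ and $S'$ are $A$-orders in $D:=S\otimes_AF$, which is a finite-dimensional subalgebra of the division algebra $\End_K(\phi)\otimes_AF$ and hence itself a division algebra, the quotient $S'/S$ is a finite $A$-module; fix $a\in A$, $a\neq 0$, with $aS'\subseteq S$. Every element of $S'\subseteq\End_K(\phi)\otimes_AF$ acts $F_\pp$-linearly on $T_\pp(\phi)\otimes_{A_\pp}F_\pp$ commuting with $G_K$, so for each prime $\pp\neq\pp_0$ the $A_\pp$-submodule $\mathcal L_\pp:=\sum_{s\in S'}s(T_\pp(\phi))$ of $T_\pp(\phi)\otimes_{A_\pp}F_\pp$ is a $G_K$-stable lattice with $T_\pp(\phi)\subseteq\mathcal L_\pp$ and $a\mathcal L_\pp\subseteq T_\pp(\phi)$, and it equals $T_\pp(\phi)$ for almost all $\pp$, namely whenever $S'\otimes_AA_\pp=S\otimes_AA_\pp$. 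By the standard dictionary between Drinfeld $A$-modules up to prime-to-$\pp_0$ isogeny and their Tate lattices, the family $(\mathcal L_\pp)_{\pp\neq\pp_0}$ corresponds to a finite $G_K$-stable \'etale $K$-subgroup scheme $C_1$ of $\phi$; then $\phi_1:=\phi/C_1$, together with the quotient isogeny $\phi\to\phi_1$ over $K$, satisfies $T_\pp(\phi_1)=\mathcal L_\pp=S'\cdot T_\pp(\phi)$ for every $\pp\neq\pp_0$, while its $\pp_0$-divisible group coincides with that of $\phi$.

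It remains to perform the analogous enlargement at $\pp_0$. There the $\pp_0$-primary torsion is no longer \'etale, so in place of a Tate module one works with the $\pp_0$-divisible group of $\phi_1$, equivalently an appropriate Dieudonn\'e module $M$, on which $S\otimes_AA_{\pp_0}$ acts. Replacing $M$ by a suitable $(S'\otimes_AA_{\pp_0})$-stable $A_{\pp_0}$-lattice commensurable with it, and transporting this back through the Dieudonn\'e correspondence --- together with a descent argument, since everything in sight is $G_K$-equivariant --- should yield a further isogeny $\phi_1\to\phi'$ over $K$ that is an isomorphism away from $\pp_0$ and for which $S'\otimes_AA_{\pp_0}$ acts on the $\pp_0$-divisible group of $\phi'$. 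Composing, one obtains an isogeny $f\colon\phi\to\phi'$ over $K$. I expect this $\pp_0$-adic step to be the main obstacle: it is precisely where special characteristic enters, and it relies on the finer structure theory of Drinfeld modules in special characteristic and of their endomorphism rings at $\pp_0$ (the circle of ideas of \cite{PinII}) --- one must know that an $(S'\otimes_AA_{\pp_0})$-stable lattice in the Dieudonn\'e module again comes from a Drinfeld module and that the resulting isogeny is defined over $K$.

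Finally one checks that $S'$ corresponds to $\End_K(\phi')\cap D$ under the isomorphism (\ref{EndIsog}). By construction each $d\in S'$ preserves $T_\pp(\phi')=S'\cdot T_\pp(\phi)$ for all $\pp\neq\pp_0$, hence by the Tate conjecture for Drinfeld modules lies in $\End_K(\phi')\otimes_AA_\pp$ for these $\pp$; and the step at $\pp_0$ was arranged precisely so that $d\in\End_K(\phi')\otimes_AA_{\pp_0}$ as well. Since $A$ is a Dedekind domain and $\End_K(\phi')$ is a projective $A$-module, $\End_K(\phi')$ is the intersection of its localizations inside $\End_K(\phi')\otimes_AF$, so we conclude $S'\subseteq\End_K(\phi')$, that is, $S'\subseteq\End_K(\phi')\cap D$. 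On the other hand $\End_K(\phi')\cap D$ is a subring of $D$; it is finitely generated over $A$, being an $A$-submodule of the projective module $\End_K(\phi')$ over the Noetherian ring $A$; and it spans $D$ over $F$ because it contains the $A$-order $S'$. Thus it is an $A$-order in $D$ containing the maximal order $S'$, and so equals $S'$, as desired.
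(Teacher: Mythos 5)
Your argument away from the characteristic is fine, and your final step (an $A$-order containing the maximal order $S'$ must equal $S'$) is exactly how the paper concludes. But the construction has a genuine gap precisely where you flag it: the modification at $\pp_0$. Since $S'/S$ may well have $\pp_0$-primary part, you cannot in general arrange $S'\otimes_AA_{\pp_0}=S\otimes_AA_{\pp_0}$, so the step ``replace the Dieudonn\'e module of the $\pp_0$-divisible group by an $(S'\otimes_AA_{\pp_0})$-stable lattice, transport back, and descend to $K$'' is not an optional refinement but the heart of the matter --- and you only assert that it ``should'' work. In particular you would need to know that such a lattice again arises from a Drinfeld module over $K$ (not merely over a perfect or complete base where a Dieudonn\'e-type correspondence is available) and that the resulting isogeny is $K$-rational and compatible with the prime-to-$\pp_0$ modification; none of this is supplied, and it does not follow from the results of \cite{PinII} you invoke.

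The paper sidesteps all of this with a uniform, elementary construction that treats $\pp_0$ on the same footing as every other prime: choose $a\in A\smallsetminus\{0\}$ with $S'a\subset S$, let $H_a=\Ker(\phi_a)$ as a finite subgroup \emph{scheme} of ${\mathbb G}_{a,K}$ (its possible infinitesimal part is harmless), and set $H:=\sum_{s\in S'a}\phi_s(H_a)$. This is a finite subgroup scheme stable under $\phi_s$ for all $s\in S$, hence is the scheme-theoretic kernel of some nonzero $f\in K\{\tau\}$; the relations $f\circ\phi_s=\phi'_s\circ f$ then define $\phi'$ and a ring homomorphism $S\to\End_K(\phi')$, and a short computation with $\Ker(f\circ\phi_a)=\sum_{s\in S'a}\phi_s(H_{a^2})$ extends it to $S'$, after which maximality of $S'$ finishes the proof as in your last paragraph. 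Because everything is a subgroup-scheme computation over $K$ itself, no Tate-lattice dictionary, no Dieudonn\'e theory, and no Galois descent are needed. To repair your write-up you would either have to supply the missing $\pp_0$-adic theory in full, or replace the lattice-by-lattice strategy with a kernel construction of this kind.
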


\begin{proof}
To avoid confusing endomorphisms of $\phi$ with endomorphisms of the desired $\phi'$ we denote the tautological embedding $S\hookrightarrow K\{\tau\}$ by $s\mapsto \phi_s$. 
Fix any non-zero element $a\in A$ satisfying $S'a\subset S$. Let $H_a$ denote the kernel of $\phi_a$ as a finite subgroup scheme of ${\mathbb G}_{a,K}$. Observe that the action of any endomorphism $s\in S$ on $H_a$ depends only on the residue class of $s$ modulo~$Sa$, and that $Sa$ has finite index in~$S'a$. Thus the sum 
$$H := \sum_{s\in S'a} \phi_s(H_a)$$
is really finite and defines another finite subgroup scheme of ${\mathbb G}_{a,K}$. By construction $H$ is mapped to itself under $\phi_s$ for every $s\in S$. In particular it is therefore the scheme theoretic kernel of a non-zero element $f\in K\{\tau\}$. 
Also, for each $s\in S$ we have $f(\phi_s(H)) \subset f(H) = 0$; hence $f\circ\phi_s$ annihilates $H=\Ker(f)$, and thus we have $f\circ\phi_s = \phi'_s\circ f$ for a unique element $\phi'_s\in K\{\tau\}$. For any two elements $s_1$, $s_2\in S$ we have
\begin{myequation}\label{ConstIsog0}
\phi'_{s_1}\circ\phi'_{s_2}\circ f
\ =\ \phi'_{s_1}\circ f\circ\phi_{s_2}
\ =\ f\circ\phi_{s_1}\circ\phi_{s_2}
\ =\ f\circ\phi_{s_1s_2}
\ =\ \phi'_{s_1s_2}\circ f
\end{myequation}% 
and therefore $\phi'_{s_1}\circ\phi'_{s_2} = \phi'_{s_1s_2}$, and a similar calculation shows that $\phi'_{s_1}+\phi'_{s_2} = \phi'_{s_1+s_2}$. The resulting map $S\to K\{\tau\}$, $s\mapsto \phi'_s$ is thus a ring homomorphism. In particular its restriction to $A$ is a Drinfeld $A$-module $\phi'$ such that $f$ defines an isogeny $\phi\to\nobreak\phi'$, and the full map $s\mapsto \phi'_s$ defines an embedding $S\hookrightarrow \End_K(\phi')$ compatible with the isomorphism (\ref{EndIsog}). To extend this map to the maximal order~$S'$ we need the following preparation:

\begin{lem}\label{ConstIsog1}
Let $H_{a^2} \subset {\mathbb G}_{a,K}$ denote the kernel of $\phi_{a^2}$. Then
$$\sum_{s\in S'a} \phi_s(H_{a^2}) \ =\ \Ker(f\circ\phi_a).$$
\end{lem}

\begin{proof}
The summand for $s=a$ on the left hand side is $\phi_a(H_{a^2}) = H_a = \Ker(\phi_a)$ and therefore also contained in the right hand side. Thus it suffices to prove that the images of both sides under $\phi_a$ coincide. But
\begin{eqnarray*}
\phi_a\biggl( \sum_{s\in S'a} \phi_s(H_{a^2}) \biggr)
&=& \sum_{s\in S'a} \phi_a(\phi_s(H_{a^2}))
\ =\ \sum_{s\in S'a} \phi_s(\phi_a(H_{a^2}))
\ =\ \sum_{s\in S'a} \phi_s(H_a) \\
&\stackrel{\rm def}{=}& H
\ =\ \Ker(f)
\ =\ \phi_a(\Ker(f\circ\phi_a)),
\end{eqnarray*}
as desired.
\end{proof}

Now consider any $s'\in S'$, and observe that we have already constructed $\phi'_a$ and $\phi'_{s'a}$ in $K\{\tau\}$.

\begin{lem}\label{ConstIsog2}
There exists an element $\phi'_{s'}\in \End_K(\phi')$ which satisfies
$\phi'_{s'}\circ\phi'_a = \phi'_{s'a}$.
\end{lem}

\begin{proof}
For each $s\in S'a$ we have $s's$, $s'as\in S'a$; hence $\phi_{s'a}$ and $\phi_{s's}$ and $\phi_{s'as}$ all exist and satisfy $\phi_{s'a}\circ\nobreak\phi_s\allowbreak = \phi_{s'as} = \phi_{s's}\circ\phi_a$. Also, we have $f(\phi_{s's}(H_a)) \subset f(H) = 0$ and so
$$(f\circ\phi_{s'a})(\phi_s(H_{a^2}))
\ =\ (f\circ\phi_{s's})(\phi_a(H_{a^2}))
\ =\ (f\circ\phi_{s's})(H_a) 
\ =\ 0.$$
Summing over all $s\in S'a$ and using Lemma \ref{ConstIsog1} we deduce that
$f\circ\phi_{s'a}$ annihilates $\Ker(f\circ\phi_a)$. Thus there exists a unique element $\phi'_{s'}\in K\{\tau\}$ satisfying 
$f\circ\phi_{s'a} = \phi'_{s'}\circ f\circ\phi_a$.
The calculation 
$$\phi'_{s'a}\circ f 
\ =\ f\circ\phi_{s'a}
\ =\ \phi'_{s'}\circ f\circ\phi_a
\ =\ \phi'_{s'}\circ \phi'_a\circ f$$
now implies that $\phi'_{s'a} = \phi'_{s'}\circ \phi'_a$. Finally, a calculation like that in (\ref{ConstIsog0}) shows that $\phi'_{s'}\circ\phi'_b = \phi'_b\circ\phi'_{s'}$ for all $b\in A$.
%for any $b\in A$ we have
%$$\phi'_{s'}\circ\phi'_b\circ\phi'_a
%\ =\ \phi'_{s'}\circ\phi'_a\circ\phi'_b
%\ =\ \phi'_{s'a}\circ\phi'_b
%\ =\ \phi'_b\circ\phi'_{s'a}
%\ =\ \phi'_b\circ\phi'_{s'}\circ\phi'_a$$
%and therefore $\phi'_{s'}\circ\phi'_b = \phi'_b\circ\phi'_{s'}$. 
Thus $\phi'_{s'}\in\End_K(\phi')$, as desired.
\end{proof}

By a calculation as in (\ref{ConstIsog0}) one easily shows that the map $S'\to \End_K(\phi')$, $s'\mapsto \phi'_{s'}$ is a ring homomorphism extending the previous one on~$S$. By construction it factors through a homomorphism $S'\to\End_K(\phi') \cap (S\otimes_AF)$ which becomes an isomorphism after tensoring with $F$ over~$A$. Since both sides of the latter are finitely generated torsion free $A$-modules, that homomorphism must be an inclusion of finite index. But as $S'$ is already a maximal $A$-order in $S\otimes_AF$, it follows that 
$S'\to\End_K(\phi') \cap (S\otimes_AF)$ is an isomorphism. This finishes the proof of the proposition.
\end{proof}

\begin{prop}\label{numerics}
Let $\phi\colon A\to K\{\tau\}$ be a Drinfeld module of rank~$r$. Let $d^2$ be the dimension of $\End_K(\phi)\otimes_A F$ over its center~$Z$, and let $e$ denote the dimension of $Z$ over~$F$.
%let $C$ denote the center of $R:=\End_K(\phi)$, 
%write $d^2$ for the dimension of $R\otimes_A F$ over $C\otimes_AF$, and set $e :=[C\otimes_AF/F]$.
%and write 
%$$\dim_Z \End_K(\phi)\otimes_A F=d^2 \quad \text{and} \quad [Z/F]=e.$$
Then $de$ divides~$r$.
\end{prop}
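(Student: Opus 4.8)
The idea is to reduce to the known numerical constraints on endomorphism rings of Drinfeld modules coming from ranks of submodules, using the isogeny-construction machinery of Proposition \ref{ConstIsog} to manufacture suitable sub-Drinfeld-modules. Write $R := \End_K(\phi)$, so $R\otimes_A F$ is a division algebra over $F$ with center $Z$, $[Z:F]=e$, and $[R\otimes_A F:Z]=d^2$. First I would choose a maximal subfield $L$ of the division algebra $R\otimes_A F$ containing $Z$; then $[L:Z]=d$, so $[L:F]=de$. Set $A' := L\cap R$, which is an $A$-subalgebra of $R$ with $\mathop{\rm Quot}(A')=L$ and hence rank $[L:F]=de$ as a projective $A$-module. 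Now apply Proposition \ref{ConstIsog} with $S := A'$ and $S'$ a maximal $A$-order in $S\otimes_A F = L$ containing $A'$; this produces a Drinfeld module $\phi'$ over $K$ and an isogeny $f:\phi\to\phi'$ such that $S'$ embeds into $\End_K(\phi')$. Since $S'$ is a normal (integrally closed) commutative $A$-subalgebra of $\End_K(\phi')$ of rank $de$, the discussion at the end of Subsection \ref{endos} applies: the tautological embedding makes $\phi'$ into a Drinfeld $S'$-module of rank $r/de$, so in particular $de \mid r$.

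The one point to check carefully is that an isogeny does not change the rank of the Drinfeld module, so that $\phi'$ again has rank $r$ and the divisibility obtained for $\phi'$ is the one we want for $\phi$. This is standard: an isogeny $f\in K\{\tau\}$ is a nonzero additive polynomial, and comparing the $\tau$-degrees of $f\circ\phi_a = \phi'_a\circ f$ for $0\ne a\in A$ shows that $\deg_\tau\phi_a = \deg_\tau\phi'_a$ for all $a$, whence $\phi$ and $\phi'$ have the same rank $r$. Alternatively one invokes the general fact that isogenous Drinfeld $A$-modules have equal rank.

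I expect the main (minor) obstacle to be purely bookkeeping: verifying that $A' = L\cap R$ really has the right rank and is stable under the constructions, and that Proposition \ref{ConstIsog} applies with $S=A'$ — it does, since $A'$ is an $A$-subalgebra of $\End_K(\phi)$ and the proposition has no commutativity hypothesis. One should also note that the rank of $A'$ over $A$ equals $[\mathop{\rm Quot}(A'):F]$ because $A'$ is a finitely generated torsion-free $A$-module of full rank in $L$; and that $S'$, being a maximal order in the field $L$, is automatically normal, so the sentence ``if $A'$ is normal'' in Subsection \ref{endos} applies to $S'$ rather than to $A'$ directly. With these remarks in place the divisibility $de\mid r$ follows immediately, and no root-system or group-theoretic input is needed — this is entirely a statement about endomorphism algebras.
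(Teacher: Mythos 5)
Your proposal is correct and follows essentially the same route as the paper: the paper likewise picks a maximal commutative $F$-subalgebra $F'$ of $\End_K(\phi)\otimes_AF$ (your maximal subfield $L$), which has degree $de$ over $F$, applies Proposition \ref{ConstIsog} with $S=F'\cap R$ and $S'$ the integral closure of $A$ in $F'$ (the maximal order), and then invokes the rank remark at the end of Subsection \ref{endos} for the resulting Drinfeld $S'$-module of rank $r/de$. Your explicit verification that an isogeny preserves the rank is the only point the paper leaves unstated, and it is correct.
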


\begin{proof}
Set $R := \End_K(\phi)$ and let $F'$ be any maximal commutative $F$-subalgebra of $R\otimes_A F$. Let $A'$ denote the integral closure of $A$ in~$F'$. Then by construction we have $\mathop{\rm rank}\nolimits_A(A') = [F'/F] = de$. Applying Proposition \ref{ConstIsog} to $S:= A'\cap R$ and $S':=A'$ yields a Drinfeld $A'$-module $\phi'\colon A'\to K\{\tau\}$ and an isogeny $f\colon\phi\to\phi'|A$. Then $\phi'|A$ has rank $r$, and the remarks at the end of Subsection \ref{endos} imply that $\phi'$ has rank $r/de$. Thus this quotient is an integer, as desired.
\end{proof}

%%%%%%%%%%%%%%%%%%%%%%%%%%%%%%%%%%%%%%%%%%%%%%%%%%%%%%%%%%%%%%%%%%%%%%

\subsection{Tate modules} \label{tatess}

From now on we assume that $\phi$ has special characteristic. We abbreviate $R:=\End_K(\phi)$ and assume that $A$ is the center of~$R$. By Proposition \ref{numerics} we then have $\dim_F(R\otimes_A F) = d^2$ for some factorization in integers $r=nd$.

\medskip
Let $K^{\text{sep}}$ denote the separable closure of $K$ inside a fixed algebraic closure $\overline{K}$ of $K$. Let $\kappa$ denote the finite constant field of $K$ and $\overline{\kappa}$ its algebraic closure in $K^\text{sep}$. Then $G_K:=\Gal(K^{\text{sep}}/K)$ is the absolute Galois group and $G_K^{\geom}:=\Gal(K^{\text{sep}}/K \overline{\kappa})$ the geometric Galois group of~$K$. Moreover, the quotient $G_K/G_K^{\geom} \cong \Gal(\overline{\kappa}/\kappa)$ is the free pro-cyclic group topologically generated by the element $\Frob_\kappa$, which acts on $\overline{\kappa}$ by $u \mapsto u^{|\kappa|}$. 

By a prime $\pp$ of $A$ we will mean any maximal ideal of $A$. The $\pp$-adic completions of $A$ and $F$ are denoted $A_\pp$ and $F_\pp$, respectively. For any prime $\pp\not=\pp_0$ of $A$ and any positive integer $i$, the $\pp^i$-torsion points of $\phi$ 
$$\phi[\pp^i]:=\{x \in K^{\text{sep}} \mid \forall a\in \pp^i : \phi_a(x)=0\}$$ 
form a free $A/\pp^i$-module of rank $r$. The $\pp$-adic Tate module $T_\pp (\phi) := \varprojlim \phi[\pp^i]$ is therefore a free $A_\pp$-module of rank $r$. Choosing a basis, the natural action of the Galois group $G_K$ on $T_\pp (\phi)$ is described by a continuous homomorphism
$$\rho_\pp : G_K\longrightarrow \GL_r(A_\pp).$$

The action of endomorphisms turns $T_\pp(\phi)$ into a module over $R_\pp :=R\otimes_A\nobreak A_\pp$.
% , which is an order in a semisimple algebra over $F_\pp$. 
Let $D_\pp$ denote the commutant of $R_\pp$ in $\End_{A_\pp}(T_\pp(\phi))$.
% , which is an order in another semisimple algebra over $F_\pp$. 
Since the action of $G_K$ commutes with that of $R_\pp$, the homomorphism $\rho_\pp$ factors through the multiplicative group $D_\pp^\times$ of $D_\pp$. We can thus view $\rho_\pp$ as a homomorphism $G_K\to D_\pp^\times$. The associated adelic Galois representation then becomes a homomorphism
$$\rho_{\text{ad}} := (\rho_\pp)_\pp:\ 
  G_K \longrightarrow \prod_{\pp\not=\pp_0}  D_\pp^\times
    \ \subset\        \prod_{\pp\not=\pp_0}\GL_r(A_\pp).$$
Let $V_\pp(\phi) := T_\pp(\phi)\otimes_{A_\pp}F_\pp$ denote the rational Tate module of $\phi$ at $\pp$. Then by construction $D_\pp\otimes_{A_\pp}F_\pp$ is the commutant of $R\otimes_AF_\pp$ in $\End_{F_\pp}(V_\pp(\phi))$.

%%%%%%%%%%%%%%%%%%%%%%%%%%%%%%%%%%%%%%%%%%%%%%%%%%%%%%%%%%%%%%%%%%%%%%

\medskip
For the next technical results we choose a maximal commutative $F$-subalgebra $F'\subset R\otimes_A F$, let $A'$ denote the integral closure of $A$ in~$F'$, and choose a Drinfeld $A'$-module $\phi'\colon A'\to K\{\tau\}$ and an isogeny $f\colon\phi\to\phi'|A$, as in the proof of Proposition \ref{numerics}. Then $\phi'$ has rank~$n$ and its characteristic $\pp'_0$ is a prime of $A'$ above~$\pp_0$. For any prime $\pp\neq\pp_0$ of $A$ the isogeny $f$ induces a $G_K$-equivariant isomorphism
\begin{myequation}\label{eq:tensisom}
V_\pp(\phi) \ \cong\ V_\pp(\phi'|A) \ \cong\ \prod_{\pp'|\pp}V_{\pp'}(\phi').
\end{myequation}% 

\begin{prop}\label{boalo}
For any prime $\pp\not=\pp_0$ of $A$ and any prime $\pp'$ of $A'$ above $\pp$ we have:
\begin{enumerate}
\item[(a)] $D_\pp\otimes_{A_\pp}F_\pp$ is a central simple algebra of dimension $n^2$ over~$F_\pp$.
\item[(b)] There is a natural isomorphism
$D_\pp\otimes_{A_\pp}F'_{\pp'} \stackrel{\sim}{\longrightarrow}
\End_{F'_{\pp'}}(V_{\pp'}(\phi'))$.
\item[(c)] The action of $G_K$ on $V_{\pp'}(\phi')$ is induced by the homomorphism $\rho_\pp\!: {G_K\to D_\pp^\times}$ and the isomorphism (b).
\end{enumerate}
\end{prop}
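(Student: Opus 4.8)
The plan is to reduce all three assertions to the structure theory of central simple algebras, using the isogeny $f$ as the dictionary between $\phi$ and $\phi'$. Set $B:=R\otimes_AF_\pp=(R\otimes_AF)\otimes_FF_\pp$ and $V:=V_\pp(\phi)$. Since $R\otimes_AF$ is a central division algebra over $F$ of dimension $d^2$ (with $r=nd$), the algebra $B$ is central simple over $F_\pp$ of dimension $d^2$, and $V$ is a $B$-module with $\dim_{F_\pp}V=r=nd$; as $B$ is simple this module is faithful, so $B$ is a central simple subalgebra of $E:=\End_{F_\pp}(V)$, and by construction $D_\pp\otimes_{A_\pp}F_\pp$ is precisely its commutant $C:=\End_B(V)$. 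On the other side, the commutative $A$-subalgebra $A'\subset\End_K(\phi'|A)$ corresponds, under the isomorphism $\End_K(\phi'|A)\otimes_AF\cong R\otimes_AF$ of (\ref{EndIsog}), to the chosen maximal subfield $F'$ of $R\otimes_AF$; since $A'\otimes_AF=F'$, tensoring with $F_\pp$ over $F$ exhibits $A'\otimes_AF_\pp=F'\otimes_FF_\pp=\prod_{\pp'\mid\pp}F'_{\pp'}$ as a subalgebra of $B$. Let $e_{\pp'}$ be the idempotent in this product that is $1$ in the $\pp'$-component and $0$ elsewhere. Then the isomorphism (\ref{eq:tensisom}) identifies $e_{\pp'}V$ with $V_{\pp'}(\phi')$, compatibly with the action of $F'_{\pp'}$ and, because $f$ is defined over $K$ and $G_K$ commutes with $A'$, also with the action of $G_K$.

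Part (a) is then immediate from the double centralizer theorem: $E\cong M_r(F_\pp)$ is central simple over $F_\pp$ and $B$ is a central simple subalgebra, so $C=\End_B(V)$ is again central simple over $F_\pp$ and $\dim_{F_\pp}B\cdot\dim_{F_\pp}C=\dim_{F_\pp}E=r^2$; hence $\dim_{F_\pp}(D_\pp\otimes_{A_\pp}F_\pp)=r^2/d^2=n^2$.

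For part (b), observe that $C$ centralizes $B$, hence centralizes the idempotents $e_{\pp'}$ and the subalgebra $\prod_{\pp'\mid\pp}F'_{\pp'}$; therefore $C$ stabilizes $e_{\pp'}V=V_{\pp'}(\phi')$ and acts there by $F'_{\pp'}$-linear endomorphisms. This action is $F_\pp$-linear with image centralizing $F'_{\pp'}$, so it extends uniquely to a homomorphism of $F'_{\pp'}$-algebras
\[
D_\pp\otimes_{A_\pp}F'_{\pp'}\;=\;C\otimes_{F_\pp}F'_{\pp'}\;\longrightarrow\;\End_{F'_{\pp'}}\!\bigl(V_{\pp'}(\phi')\bigr).
\]
By part (a) the source is central simple over $F'_{\pp'}$, hence simple, and the map is nonzero since $V_{\pp'}(\phi')\neq0$; so it is injective. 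Because $T_{\pp'}(\phi')$ is free of rank $n$ over $A'_{\pp'}$, the target is $M_n(F'_{\pp'})$, which has the same $F'_{\pp'}$-dimension $n^2$ as the source; hence the map is an isomorphism.

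Finally, part (c) follows by tracing through the identifications: $\rho_\pp$ factors through $D_\pp^\times\subset C^\times$ because $G_K$ commutes with $R_\pp$, and since $f$ is $G_K$-equivariant the $G_K$-action on $V_{\pp'}(\phi')=e_{\pp'}V$ is the restriction of the action of $\rho_\pp(g)$, which under the isomorphism of (b) is exactly the asserted action. The only genuinely substantive point is the bookkeeping in the first paragraph: verifying that $A'\otimes_AF_\pp$ sits inside $R\otimes_AF_\pp$ as $\prod_{\pp'\mid\pp}F'_{\pp'}$ and that $e_{\pp'}$ cuts out $V_{\pp'}(\phi')$ as an $F'_{\pp'}[G_K]$-module, i.e. that the module and Galois structures on the two sides of (\ref{eq:tensisom}) are all compatible. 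Granting that, parts (a)--(c) are formal consequences of the structure theory of central simple algebras together with a dimension count.
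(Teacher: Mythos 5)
Your proposal is correct and follows essentially the same route as the paper: part (a) via the double centralizer theorem for the central simple algebra $R\otimes_AF_\pp$ acting on $V_\pp(\phi)$, part (b) via the isotypic decomposition of $V_\pp(\phi)$ under $A'\otimes_AF_\pp\cong\prod_{\pp'|\pp}F'_{\pp'}$ and a dimension count showing the nonzero map $D_\pp\otimes_{A_\pp}F'_{\pp'}\to\End_{F'_{\pp'}}(V_{\pp'}(\phi'))$ out of a simple algebra is an isomorphism, and part (c) by compatibility of the $G_K$-action with these identifications. The extra detail you supply (explicit idempotents, faithfulness of the $B$-module) only fills in what the paper leaves as "general facts on semisimple algebras."
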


\begin{proof}
By construction $R_\pp\otimes_{A_\pp}F_\pp$
% \cong R\otimes_AF_\pp$ 
is a central simple algebra of dimension $d^2$ over~$F_\pp$, and $D_\pp\otimes_{A_\pp}F_\pp$ is its commutant in the action on the $R_\pp\otimes_{A_\pp}F_\pp$-module $V_\pp(\phi)$ of dimension $r=nd$ over~$F_\pp$. With general facts on semisimple algebras this implies~(a).
% Since $\phi'$ has rank $n=r/d$, each $V_{\pp'}(\phi')$ is a vector space of dimension $n$ over $F'_{\pp'}$. Thus the isomorphism (\ref{eq:tensisom}) shows that $V_\pp(\phi)$ is a free module of rank $n$ over $A'\otimes_AF_\pp \cong \prod_{\pp'|\pp}F'_{\pp'}$. Since $A'$ is of rank $d$ over $C$, it follows that $V_\pp(\phi)$ is a free module of rank $nd$ over $C\otimes_AF_\pp$. But it is also a module over $R\otimes_AF_\pp$, which is a semisimple algebra with center $C\otimes_AF_\pp$ and free of rank $d^2$ over it. Thus the commutant of $R\otimes_AF_\pp$ is semisimple with center $C\otimes_AF_\pp$ and free of rank $n^2$ over it, proving (a).
Next the isomorphism (\ref{eq:tensisom}) is really the isotypic decomposition of $V_\pp(\phi)$ over $A'\otimes_AF_\pp \cong \prod_{\pp'|\pp}F'_{\pp'}$. Since the action of $A'\otimes_AF_\pp \subset R\otimes_AF_\pp$ commutes with $D_\pp$, the decomposition is $D_\pp$-invariant. Thus each $V_{\pp'}(\phi')$ is a $D_\pp$-module. The actions of both $D_\pp$ and $F'_{\pp'}$ agree on~$A_\pp$; hence they induce a non-zero homomorphism 
$$D_\pp\otimes_{A_\pp}F'_{\pp'} \longrightarrow
\End_{F'_{\pp'}}(V_{\pp'}(\phi')).$$
Here by (a) the left hand side is a central simple algebra of dimension $n^2$ over~$F'_{\pp'}$.  But since $V_{\pp'}(\phi')$ has dimension $n$ over $F'_{\pp'}$, the same is true for the right hand side as well. Thus the homomorphism must be an isomorphism, proving (b). Finally, the natural construction implies (c).
\end{proof}

For any prime $\pp\not=\pp_0$ of $A$, let $D^1_\pp$ denote the subgroup of all elements of $D_\pp^\times$ whose reduced norm over $F_\pp$ is~$1$.

\begin{prop} \label{Ggeom_in_SL} 
There exists a finite extension $K'\subset K^{\text{sep}}$ of $K$ such that 
$$\rho_{\text{ad}}(G_{K'}^{\geom}) \subset \prod_{\pp\not=\pp_0} D_\pp^1.$$
\end{prop}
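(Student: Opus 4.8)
The plan is to show that the reduced norm of $\rho_\pp(g)$ over $F_\pp$ is the same for every $g\in G_K^{\geom}$ on a suitable finite-index subgroup, and to do this uniformly in $\pp$ via the isomorphism \eqref{eq:tensisom}, which transfers the question to the rank-$n$ Drinfeld $A'$-module $\phi'$. Concretely, by Proposition \ref{boalo}(b)-(c) the reduced norm of $\rho_\pp(g)$ over $F_\pp$ equals, up to an $F'_{\pp'}/F_\pp$-norm, the determinant of the action of $g$ on each $V_{\pp'}(\phi')$; more precisely the relevant invariant is the collection of determinants $\det_{F'_{\pp'}}\bigl(g\mid V_{\pp'}(\phi')\bigr)\in F'^{\times}_{\pp'}$, which assemble into the adelic determinant representation of $\phi'$. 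So the first step is to record that $\rho_\pp(g)\in D^1_\pp$ if and only if all these determinants equal $1$, and that the latter condition is exactly the statement that $g$ lies in the kernel of the adelic Galois representation attached to the rank-$1$ Drinfeld $A'$-module $\wedge^{n}\phi'$ (the top exterior power), whose characteristic is the prime $\pp'_0$ above $\pp_0$.

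The second step is the key arithmetic input, already invoked in the introduction: by Anderson \cite{Anderson_t_Motives}, \S4.2, the determinant of the Galois representation on $\wedge^n \phi'$ is the adelic Galois representation associated to a Drinfeld $A'$-module of rank $1$ of characteristic $\pp'_0$. Combining this with Proposition \ref{finite} (the finiteness of the image of the geometric Galois group under a rank-$1$ adelic representation in special characteristic), one gets that the image of $G_K^{\geom}$ under this rank-$1$ adelic representation is \emph{finite}. Hence the kernel of this representation is a closed subgroup of finite index in $G_K^{\geom}$; it corresponds to a finite separable extension $K\overline{\kappa}\subset K'\overline{\kappa}$, i.e. to a finite extension $K'$ of $K$ inside $K^{\text{sep}}$. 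For $g\in G_{K'}^{\geom}$ all the determinants $\det_{F'_{\pp'}}(g\mid V_{\pp'}(\phi'))$ are therefore $1$ for every $\pp\neq\pp_0$ simultaneously, and so $\rho_\pp(g)\in D^1_\pp$ for all such $\pp$, which is exactly $\rho_{\text{ad}}(G_{K'}^{\geom})\subset\prod_{\pp\neq\pp_0}D^1_\pp$.

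One technical point to be careful about: $K'$ is obtained as the fixed field of the kernel of a representation of $G_K^{\geom}$, not of $G_K$, so a priori $K'\overline{\kappa}/K\overline{\kappa}$ is finite but one must check this descends to a finite extension $K'/K$ with $K'\subset K^{\text{sep}}$; this is standard since $\overline{\kappa}/\kappa$ is pro-cyclic and any finite extension of $K\overline{\kappa}$ is already defined over a finite extension of $K$. I expect the main obstacle to be purely bookkeeping: matching the reduced-norm-$1$ condition on $D_\pp^\times$ with the determinant-$1$ condition on the $V_{\pp'}(\phi')$ through the isomorphism of Proposition \ref{boalo}, i.e. verifying that the reduced norm of $D_\pp$ over $F_\pp$ corresponds under (b) to the $F'_{\pp'}/F_\pp$-norm of $\det_{F'_{\pp'}}$, and that these in turn glue over all $\pp'\mid\pp$ to the rank-$1$ determinant representation of $\phi'|A$. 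Once that identification is in place, the finiteness statement does all the work and no estimates are needed.
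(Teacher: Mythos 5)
Your proposal is correct and follows essentially the same route as the paper: Anderson's result identifies the determinant of the adelic representation of $\phi'$ with the adelic representation of a rank-$1$ Drinfeld $A'$-module of characteristic $\pp'_0$, Proposition \ref{finite} makes its geometric image finite, one passes to the finite extension $K'$ killing that image, and Proposition \ref{boalo}(b),(c) translates reduced norm $1$ in $D_\pp$ into determinant $1$ on $V_{\pp'}(\phi')$. The only wobble is your phrase that the reduced norm corresponds to the $F'_{\pp'}/F_\pp$-norm of $\det_{F'_{\pp'}}$ --- it is simply equal to $\det_{F'_{\pp'}}$ of the image (an element of $F_\pp$), but your operative statement that $\rho_\pp(g)\in D^1_\pp$ iff these determinants are $1$ is the correct one and is exactly what the paper uses.
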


\begin{proof}
Let $\phi'\colon A'\to K\{\tau\}$ be as above. By Anderson \cite{Anderson_t_Motives}, \S4.2, the determinant of the adelic Galois representation associated to $\phi'$ is the adelic Galois representation associated to some Drinfeld $A'$-module of rank $1$ of special characteristic~$\pp_0'$. By Proposition \ref{finite} below the image of $G_K^{\geom}$ in that representation is finite. Choose a finite extension $K'\subset K^{\text{sep}}$ of $K$ such that $G_{K'}^{\geom}$ lies in its kernel. Then for any prime $\pp\not=\pp_0$ of $A$ and any prime $\pp'$ of $A'$ above $\pp$, Proposition \ref{boalo} (b) and (c) implies that $\rho_\pp(G_{K'}^{\geom}) \subset D^1_\pp$, as desired.
\end{proof}

\begin{prop}\label{labacu}
For almost all primes $\pp\not=\pp_0$ of $A$, we have $D_\pp \cong {\rm Mat}_{n\times n}(A_\pp)$ and $D_\pp^\times \cong \GL_n(A_\pp)$ and $D_\pp^1 \cong \SL_n(A_\pp)$.
%\begin{eqnarray*}
%\rlap{$D_\pp$} \phantom{\tilde D_\pp^\times}
%  \!\!&\cong&\! {\rm Mat}_{n\times n}(A_\pp), \\
%D_\pp^\times 
%  \!\!&\cong&\! \GL_n(A_\pp),\quad\hbox{and} \\
%\rlap{$D_\pp^1$} \phantom{\tilde D_\pp^\times}
%  \!\!&\cong&\! \SL_n(A_\pp).
%\end{eqnarray*}
\end{prop}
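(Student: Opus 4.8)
The plan is to localize the global algebraic structures at $\pp$ and show that for almost all $\pp$ the obstructions to the "generic" behavior vanish. Recall $R$ has center $A$ and $R\otimes_A F$ is a central division algebra of dimension $d^2$ over $F$, with $r = nd$. First I would observe that $R$ is a maximal $A$-order in $R\otimes_A F$ after discarding finitely many primes: in general $R$ is only an order, but there is a maximal $A$-order $\tilde R \supseteq R$, and $\tilde R / R$ is a finite $A$-module, hence supported on finitely many primes. For $\pp$ outside that finite set, $R_\pp = R\otimes_A A_\pp$ is a maximal $A_\pp$-order in $R\otimes_A F_\pp$. Next, the central division algebra $R\otimes_A F$ is ramified at only finitely many places of $F$ (its set of ramified places is finite by Brauer--Hasse--Noether-type finiteness for Brauer groups of global function fields). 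Hence for all $\pp$ outside a finite set, $R\otimes_A F_\pp$ is split, i.e.\ isomorphic to $\mathrm{Mat}_{d\times d}(F_\pp)$, and a maximal order in a split matrix algebra over a complete DVR is conjugate to $\mathrm{Mat}_{d\times d}(A_\pp)$. So for almost all $\pp$ we have $R_\pp \cong \mathrm{Mat}_{d\times d}(A_\pp)$.

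Now $T_\pp(\phi)$ is a module over $R_\pp \cong \mathrm{Mat}_{d\times d}(A_\pp)$, free of rank $r = nd$ over $A_\pp$, hence of $A_\pp$-length equal to that of $(A_\pp)^{nd}$. By Morita theory over the local ring $A_\pp$, every $\mathrm{Mat}_{d\times d}(A_\pp)$-module that is $A_\pp$-free is of the form $(A_\pp^d)^{\oplus m}$ for the standard column module $A_\pp^d$; counting $A_\pp$-ranks forces $m = n$. Therefore $T_\pp(\phi) \cong (A_\pp^d)^{\oplus n}$ as an $R_\pp$-module, and its commutant $D_\pp = \End_{R_\pp}(T_\pp(\phi))$ is, again by Morita equivalence, $\cong \mathrm{Mat}_{n\times n}(A_\pp)$. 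Passing to unit groups gives $D_\pp^\times \cong \GL_n(A_\pp)$. For the statement about $D_\pp^1$, I would note that the reduced norm on $D_\pp\otimes_{A_\pp}F_\pp \cong \mathrm{Mat}_{n\times n}(F_\pp)$ is just the determinant (this uses that $D_\pp\otimes F_\pp$ is already split, consistent with Proposition \ref{boalo}(a)), and under the isomorphism $D_\pp \cong \mathrm{Mat}_{n\times n}(A_\pp)$ the reduced-norm-one subgroup $D_\pp^1$ is exactly $\{g : \det g = 1\} = \SL_n(A_\pp)$. Finally, in the case $R = A$ we have $d = 1$ and $n = r$, and the localization is already an isomorphism $R_\pp = A_\pp$ for every $\pp$, so $D_\pp = \mathrm{Mat}_{r\times r}(A_\pp)$ and $D_\pp^1 = \SL_r(A_\pp)$ for all $\pp$, matching the remark made before Theorem \ref{main_theorem}.

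The main obstacle I anticipate is making the Morita-theoretic step fully rigorous over the (not necessarily maximal, a priori) local ring $A_\pp$ and correctly identifying the reduced norm: one must check that the Morita equivalence $\mathrm{Mat}_{d\times d}(A_\pp)\text{-mod} \simeq A_\pp\text{-mod}$ matches $A_\pp$-free modules with $A_\pp$-free modules and computes the commutant as claimed, and that the abstract reduced norm of the central simple algebra $D_\pp \otimes F_\pp$ of dimension $n^2$ transports to $\det$ under the chosen isomorphism. Both are standard once one invokes the classification of orders and modules over complete DVRs (e.g.\ Reiner, \emph{Maximal Orders}), so the proof is essentially a matter of citing the right localization and Morita statements rather than any genuinely new computation.
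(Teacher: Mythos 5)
Your proposal is correct and takes essentially the same route as the paper's proof, which likewise discards the finitely many primes where $R\otimes_A F_\pp$ fails to split or $R_\pp$ fails to be a maximal order, identifies $R_\pp\cong{\rm Mat}_{d\times d}(A_\pp)$ and $T_\pp(\phi)$ with a direct sum of $n$ copies of the tautological module $A_\pp^{d}$, and reads off the commutant as ${\rm Mat}_{n\times n}(A_\pp)$. Your write-up simply supplies the standard order-theoretic, Morita, and reduced-norm details that the paper leaves implicit.
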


\begin{proof}
For almost all $\pp$, the central simple algebra $R\otimes_AF_\pp$ is split and $R_\pp = R\otimes_AA_\pp$ is a maximal order therein. For these $\pp$ we have $R_\pp \cong {\rm Mat}_{d\times d}(A_\pp)$, and $T_\pp(\phi)$ is a direct sum of $n$ copies of the tautological representation $A_\pp^{d}$. Its commutant $D_\pp$ is then isomorphic to ${\rm Mat}_{n\times n}(A_\pp)$, and everything follows.
\end{proof}

\subsection{Non-singular model}\label{model} 

Any integral scheme of finite type over $\mathbb{F}_p$ with function field $K$ is called a \emph{model of $K$}. By de Jong's theorem on alterations \cite[Th.$\;$4.1]{deJong1996} we have:

\begin{thm}\label{deJong}
There exists a finite separable extension $K'$ of $K$ which possesses a smooth projective model.
\end{thm}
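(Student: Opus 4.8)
The plan is to deduce the statement from de Jong's theorem on alterations, the only point needing care beyond a direct citation being the \emph{separability} of the resulting extension.

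If $\mathrm{trdeg}(K/\mathbb{F}_p)=1$ there is nothing to prove: the normalization of any projective model of $K$ is a regular projective curve over $\mathbb{F}_p$ with function field $K$, and over the perfect field $\mathbb{F}_p$ a regular curve is smooth, so no extension is needed. In general, first produce a projective model of $K$: choose a finitely generated $\mathbb{F}_p$-subalgebra $B\subset K$ with $\mathrm{Quot}(B)=K$, fix a closed embedding $\mathrm{Spec}(B)\hookrightarrow\mathbb{A}^N_{\mathbb{F}_p}$, and let $X$ be the scheme-theoretic closure of $\mathrm{Spec}(B)$ in $\mathbb{P}^N_{\mathbb{F}_p}$; then $X$ is an integral projective $\mathbb{F}_p$-scheme with function field~$K$.

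Now apply de Jong's theorem \cite[Th.$\;$4.1]{deJong1996} to~$X$. It yields an alteration $\varphi\colon X'\to X$, that is, a proper dominant generically finite morphism, with $X'$ integral, regular, and projective over~$\mathbb{F}_p$. Since $\mathbb{F}_p$ is perfect, a regular scheme of finite type over $\mathbb{F}_p$ is automatically smooth over~$\mathbb{F}_p$; hence $X'$ is a smooth projective model of its function field $K':=\mathbb{F}_p(X')$, which is a finite extension of~$K$ because $\varphi$ is generically finite.

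The one delicate point, which I expect to be the main obstacle, is to arrange that $K'/K$ be separable. The cleanest route is to use de Jong's theorem in the form in which the alteration $\varphi$ is moreover generically \'etale: this refinement can be obtained by running de Jong's construction so that the auxiliary families of curves occurring in the induction are rigidified by prime-to-$p$ level structures and by sections through closed points with separable residue field, so that only separable extensions of the successive function fields are introduced; then $K'/K$ is separable from the outset. If one wants to quote only the bare regularity statement, one instead replaces $K'$ by the separable closure $K_0$ of $K$ in~$K'$ — which is finite separable over $K$ — and is then faced with descending the smooth projective model $X'$ of the purely inseparable overfield $K'$ to one of~$K_0$; this purely inseparable descent reduces, by induction on $[K':K_0]$, to the case of degree~$p$, which can be handled by spreading the corresponding height-one sub-$p$-Lie-algebra of $\mathrm{Der}_{\mathbb{F}_p}(K')$ out over $X'$ and passing to the quotient, after a further separable alteration resolving the singular locus of the associated foliation. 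In either approach the substance of the proof is de Jong's theorem, the rest being the observation that over the perfect field $\mathbb{F}_p$ regular schemes of finite type are smooth.
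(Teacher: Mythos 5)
Your core argument is the paper's own: the paper offers no proof beyond the citation of de Jong \cite[Th.~4.1]{deJong1996}, and your chain --- a projective model of $K$ exists, de Jong furnishes a regular projective alteration, regular of finite type over the perfect field $\mathbb{F}_p$ implies smooth, and the new function field is finite over $K$ --- is exactly what that citation is meant to carry. You are also right that separability is the one point not literally contained in the bare statement of Th.~4.1. However, your first route handles it by asserting, in two sentences, that de Jong's induction can be rerun with prime-to-$p$ rigidifications; that is the correct idea, but as written it is a claim rather than a proof --- the honest move is simply to cite the known refinement that over a perfect base field the alteration may be chosen generically \'etale (de Jong's method; see his subsequent work on families of curves and alterations, or Berthelot's Bourbaki report), instead of sketching a re-proof. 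Your fallback route, on the other hand, is not sound: the quotient of the smooth projective model $X'$ by the restricted Lie algebra of derivations corresponding to $K_0\subset K'$ is in general a singular (normal) variety, and the proposed ``further separable alteration resolving the singular locus'' is circular --- what is needed at that point is a resolution, which is unavailable in characteristic $p$, whereas a further alteration may well reintroduce an inseparable extension of $K_0$, so the induction on $[K':K_0]$ does not close. In short: same approach as the paper, and complete once the sketch in your first route is replaced by a citation of the generically \'etale refinement; the inseparable-descent alternative should be dropped.
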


In the following we assume that $\overline{X}$ is a smooth projective model of~$K$. Then we have:
% The following consequence was pointed out to us by Florian Pop:

\begin{thm}\label{Pop}
For any finite group $H$ there exist only finitely many continuous homomorphisms $G_K\to H$ which are unramified at all points of codimension $1$ of $\overline{X}$.
% For any integer $c\geq 1$, there exist only finitely many subextensions $K'\subset K^{\text{sep}}$ of degree $\leq c$ over $K$, such that every point of codimension $1$ of $\overline{X}$ is unramified in~$K'$.
\end{thm}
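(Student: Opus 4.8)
The key point is that a homomorphism $G_K\to H$ unramified at all codimension-$1$ points of $\overline X$ factors through the étale fundamental group $\pi_1^{\text{ét}}(\overline X)$ (with respect to a geometric point over the generic point). Indeed, by the Zariski--Nagata purity of the branch locus, a finite connected étale cover of the regular scheme $\overline X$ corresponds to a finite separable extension of $K$ that is unramified in codimension $1$; conversely any such extension arises this way. So the plan is: (i) identify the set of continuous homomorphisms $G_K\to H$ unramified in codimension $1$ on $\overline X$ with $\Hom_{\text{cont}}(\pi_1^{\text{ét}}(\overline X),H)$; (ii) invoke finite generation (topologically) of $\pi_1^{\text{ét}}(\overline X)$ for a smooth projective variety over $\mathbb F_p$; (iii) conclude that there are only finitely many continuous homomorphisms from a topologically finitely generated profinite group to a fixed finite group.

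For step (ii), the crucial input is that $\pi_1^{\text{ét}}(\overline X)$ is topologically finitely generated when $\overline X$ is a smooth projective (more generally proper) variety over a finite field. This follows, for instance, from the fundamental exact sequence $1\to\pi_1^{\text{ét}}(\overline X_{\overline{\mathbb F}_p})\to\pi_1^{\text{ét}}(\overline X)\to\Gal(\overline{\mathbb F}_p/\mathbb F_p)\to 1$ together with the finite generation of the geometric fundamental group $\pi_1^{\text{ét}}(\overline X_{\overline{\mathbb F}_p})$ of a smooth proper variety over an algebraically closed field (a consequence of resolution-free comparison results, or of de Jong's alterations plus the finiteness for curves; in characteristic $0$ it is the topological statement, and specialization handles positive characteristic). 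Since $\widehat{\mathbb Z}=\Gal(\overline{\mathbb F}_p/\mathbb F_p)$ is procyclic and $\pi_1^{\text{ét}}(\overline X_{\overline{\mathbb F}_p})$ is topologically finitely generated, so is the extension $\pi_1^{\text{ét}}(\overline X)$.

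Step (iii) is elementary: if $\Pi$ is topologically generated by elements $g_1,\dots,g_s$, then a continuous homomorphism $\Pi\to H$ is determined by the images of the $g_i$, so there are at most $|H|^s$ of them. Combining this with step (i) gives the theorem.

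The main obstacle is step (ii), the topological finite generation of $\pi_1^{\text{ét}}(\overline X)$: this is where all the geometry enters, and one must be careful that $\overline X$ is proper (for a general affine variety over $\mathbb F_p$ the étale fundamental group need \emph{not} be topologically finitely generated, which is exactly why the theorem is phrased in terms of a smooth projective model and unramifiedness at all codimension-$1$ points). It is precisely this subtlety that makes Theorem \ref{deJong} (existence of a smooth projective model after a finite separable extension) the natural companion result, and it explains the acknowledgement to Florian Pop for pointing out Theorem \ref{Pop}.
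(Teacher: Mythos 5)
Your proposal is correct and follows essentially the same route as the paper: Zariski--Nagata purity to reduce to $\pi_1^{\rm et}(\,\overline{X}\,)$, the exact sequence $1\to\pi_1^{\rm et}(\,\overline{X}_{\overline\kappa})\to\pi_1^{\rm et}(\,\overline{X}\,)\to\Gal(\overline\kappa/\kappa)\to1$ together with Grothendieck's topological finite generation of the geometric fundamental group of a proper variety (SGA1, Exp.\ X, Th.\ 2.9), and then the elementary counting of homomorphisms from a topologically finitely generated profinite group to a fixed finite group.
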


\begin{proof}
By the Zariski-Nagata purity theorem of the branch locus \cite{ZariskiPurity}, \cite{NagataPurity}, any such extension comes from a finite \'etale covering of $\overline{X}$. In other words it factors through the \'etale fundamental group $\pi_1^{\rm et}(\,\overline{X}\,)$. This group lies in a short exact sequence
$$\xymatrix{
1 \ar[r] & \pi_1^{\rm et}(\,\overline{X}_{\overline{\kappa}}) \ar[r] &
\pi_1^{\rm et}(\,\overline{X}\,) \ar[r] &
\Gal(\overline{\kappa}/\kappa)  \ar[r] & 1,\\}$$
where $\pi_1^{\rm et}(\,\overline{X}_{\overline{\kappa}}\,)$ is topologically finitely generated by Grothendieck \cite[Exp.$\,$X, Th.$\,$2.9]{SGA1}, and $\Gal(\overline{\kappa}/\kappa)$ is the free pro-cyclic group topologically generated by Frobenius. Thus $\pi_1^{\rm et}(\,\overline{X}\,)$ is topologically finitely generated and so possesses only finitely many continuous homomorphisms to $H$, as desired.
\end{proof}

We choose an open dense subscheme $X\subset\overline{X}$ such that $\phi$ extends to a family of Drinfeld $A$-modules of rank $r$ over $X$. Since $\phi$ has special characteristic $\pp_0$, the extended family has characteristic $\pp_0$ everywhere. For any $\pp\not=\pp_0$, the action of $G_K$ on $T_\pp(\phi)$ factors through the \'etale fundamental group $\pi_1^{\rm et}(X)$. In particular it is unramified at all points of codimension $1$ in~$X$.

In the next three subsections we look separately at information coming from points in $X$, respectively in $\overline{X}\smallsetminus X$.

%%%%%%%%%%%%%%%%%%%%%%%%%%%%%%%%%%%%%%%%%%%%%%%%%%%%%%%%%%%%%%%%%%%%%%

\subsection{Frobenius action}\label{chapter_frob} 

Consider any closed point $x\in X$ with finite residue field $\kappa_x$. By a Frobenius element $\Frob_x \in G_K$ we mean any element whose image in $\pi_1^{\rm et}(X)$ lies in a decomposition group above $x$ and acts by $u \mapsto u^{|\kappa_x|}$ on an algebraic closure of~$\kappa_x$. The action of $\Frob_x$ on $T_\pp(\phi)$ corresponds to the action on the Tate module $T_\pp(\phi_x)$, where $\phi_x$ denotes the reduction of $\phi$ at $x$.

Let $\pp$ be any prime of $A$ for which Proposition \ref{labacu} holds. Then $\rho_\pp(\Frob_x)\in D_\pp^\times \cong\GL_n(A_\pp)$, and we can consider its characteristic polynomial
\begin{myequation}\label{eq:charpol}
f_x(T) := \det\bigl(T\cdot{\rm Id}_n-\rho_\pp(\Frob_x)\bigr) \in A_\pp[T].
\end{myequation}%

\begin{prop}\label{F_charpol}
The polynomial $f_x$ has coefficients in $A$ and is independent of $\pp$. 
\end{prop}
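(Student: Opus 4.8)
The plan is to reduce the assertion to the classical description of the characteristic polynomial of Frobenius for the auxiliary rank-$n$ Drinfeld $A'$-module $\phi'$, and then to descend from $A'$ to~$A$. Fix a closed point $x\in X$. Since $\phi$ extends to a family over $X$, the Tate module $V_\pp(\phi)$ is unramified at~$x$; as $V_{\pp'}(\phi')$ is a $G_K$-direct summand of $V_\pp(\phi)$ by the isomorphism (\ref{eq:tensisom}), each $V_{\pp'}(\phi')$ is unramified at~$x$ as well, so $\phi'$ has good reduction there. Write $\phi'_x$ for its reduction over~$\kappa_x$. Applying Subsection \ref{chapter_frob} to $\phi'$, the element $\Frob_x$ then acts on $T_{\pp'}(\phi')=T_{\pp'}(\phi'_x)$ through the Frobenius endomorphism $\pi'_x\in\End_{\kappa_x}(\phi'_x)$, for every prime $\pp'\neq\pp'_0$ of~$A'$. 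By the classical theory of Frobenius endomorphisms of Drinfeld modules over finite fields, the characteristic polynomial
$$P_x(T)\ :=\ \det\bigl(T\cdot{\rm Id}_n-\pi'_x\mid V_{\pp'}(\phi')\bigr)\ \in\ F'_{\pp'}[T]$$
of $\pi'_x$ on the $n$-dimensional $F'_{\pp'}$-vector space $V_{\pp'}(\phi')$ has coefficients in $A'$ and is independent of~$\pp'$.

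Next I would identify $f_x$ with~$P_x$. Let $\pp\neq\pp_0$ be a prime of $A$ for which Proposition \ref{labacu} holds, and choose a prime $\pp'$ of $A'$ above~$\pp$. By Proposition \ref{boalo}(b) and~(c) the isomorphism $D_\pp\otimes_{A_\pp}F'_{\pp'}\cong\End_{F'_{\pp'}}(V_{\pp'}(\phi'))$ sends $\rho_\pp(\Frob_x)\in D_\pp^\times$ to~$\pi'_x$. Since $D_\pp\otimes_{A_\pp}F_\pp\cong{\rm Mat}_{n\times n}(F_\pp)$ is split, the reduced characteristic polynomial of $\rho_\pp(\Frob_x)$ is $f_x$ itself, and extending scalars further along $F_\pp\hookrightarrow F'_{\pp'}$ turns it into the characteristic polynomial of $\pi'_x$ on $V_{\pp'}(\phi')$. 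Thus $f_x$ and $P_x$ have the same coefficients under $F_\pp\hookrightarrow F'_{\pp'}$, that is $f_x=P_x$. As $P_x$ is independent of~$\pp'$ and every admissible $\pp$ lies below some~$\pp'$, this already shows that $f_x$ is independent of~$\pp$.

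Finally, the coefficients of $f_x$ lie in $A_\pp$ and, via the identification just made, in~$A'$; it remains to show they lie in~$A$. Fix one admissible~$\pp$. For every prime $\pp'\mid\pp$ the identity $f_x=P_x$ holds in $F'_{\pp'}[T]$, so inside the \'etale $F_\pp$-algebra $F'\otimes_F F_\pp\cong\prod_{\pp'\mid\pp}F'_{\pp'}$ each coefficient $c$ of $P_x$, viewed via $F'\hookrightarrow F'\otimes_F F_\pp$, lies in the diagonally embedded copy of~$F_\pp$. Choosing an $F$-basis $1=b_1,\ldots,b_d$ of $F'$ makes $F'\otimes_F F_\pp$ free over $F_\pp$ on the $b_i$, and then the $F$-subspace $F'=\bigoplus_i F b_i$ meets the diagonal $F_\pp=F_\pp b_1$ precisely in $F b_1=F$; hence $c\in F$. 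Being integral over $A$ and contained in $F=\mathop{\rm Quot}(A)$, with $A$ integrally closed, $c$ then lies in~$A$. Therefore $f_x\in A[T]$, which completes the proof.

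The one genuinely arithmetic ingredient is the quoted classical fact that $P_x\in A'[T]$ is independent of~$\pp'$; everything else is the formal combination of Propositions \ref{boalo} and~\ref{labacu} with the elementary identity $F'\cap F_\pp=F$. (In the degenerate case $d=1$ one has $F'=F$, $A'=A$, $\phi'=\phi$ and $n=r$, and the argument collapses directly to that classical fact for~$\phi_x$.)
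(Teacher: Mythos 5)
Your proof is correct and follows essentially the same route as the paper: identify $f_x$, via Proposition \ref{boalo}, with the characteristic polynomial of Frobenius acting on $V_{\pp'}(\phi')$, invoke the classical result for the rank-$n$ Drinfeld $A'$-module (Goss, Theorem 4.12.12(b)) to get coefficients in $A'$ independent of $\pp'$, and then descend to $A$ by the diagonal argument in $F'\otimes_F F_\pp\cong\prod_{\pp'|\pp}F'_{\pp'}$ together with integral closedness of $A$. Your write-up is in fact slightly more explicit than the paper's (justifying good reduction of $\phi'$ at $x$ and the intersection step $F'\cap F_\pp=F$); just note that the claim "$f_x$ is independent of $\pp$" only acquires its precise meaning after the final paragraph shows the coefficients lie in the global field $F$, so that step should logically come last, as it does in the paper.
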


\begin{proof}
Let $F'$ and $\phi'\colon A'\to K\{\tau\}$ be as in Subsection \ref{tatess}. Then Proposition \ref{boalo} shows that, for every $\pp$ as above and every $\pp'|\pp$, the image of $f_x(T)$ in $F'_{\pp'}[T]$ is the characteristic polynomial of the image of $\Frob_x$ in its representation on $V_{\pp'}(\phi')$. Applying \cite{GossFFA}, Theorem 4.12.12 (b), to the Drinfeld $A'$-module $\phi'$ shows that this image has coefficients in $F'$ and is independent of $\pp'$. Fixing $\pp$ and varying $\pp'|\pp$ it follows that the coefficients of $f_x(T)$ lie in $\diag(F') \subset \prod_{\pp'|\pp}F'_{\pp'}$, in other words, in the subring $A'\otimes_AF \subset A'\otimes_AF_\pp$. But by definition they also lie in the subring $A_\pp \cong A\otimes_AA_\pp$, whose intersection with the former is just $A$. Varying both $\pp$ and $\pp'$ then shows that $f_x(T)$ is independent of $\pp$.
\end{proof}

%Let $\pp$ be any prime of $A$ for which Proposition \ref{labacu} holds. Then $\rho_\pp(\Frob_x)\in \tilde D_\pp^\times \cong\GL_n(C\otimes_AA_\pp)$, and we can consider its characteristic polynomial
%\begin{myequation}\label{eq:charpol}
%f_x(T) := \det\bigl(T\cdot{\rm Id}_n-\rho_\pp(\Frob_x)\bigr) \in (C\otimes_AA_\pp)[T].
%\end{myequation}%
%
%\begin{prop}\label{F_charpol}
%The polynomial $f_x$ has coefficients in $C\otimes_AF$ and is independent of $\pp$. 
%\end{prop}
%
%\begin{proof}
%Choose $A'$, $\tilde A'$, and $\tilde\phi'$ as in Subsection \ref{tatess}. Then Proposition \ref{boalo} shows that, for every $\pp$ as above and every $\pp'|\pp$, the image of $f_x(T)$ in $F'_{\pp'}[T]$ is the characteristic polynomial of the image of $\Frob_x$ in its representation on $V_{\pp'}(\tilde\phi')$. Applying \cite{GossFFA}, Theorem 4.12.12 (b), to the Drinfeld $\tilde A'$-module $\tilde\phi'$ shows that this image has coefficients in $\tilde F'$ and is independent of $\pp'$. Fixing $\pp$ and varying $\pp'|\pp$ it follows that the coefficients of $f_x(T)$ lie in $\diag(F') \subset \prod_{\pp'|\pp}F'_{\pp'}$, or in other words in the subring $A'\otimes_AF \subset A'\otimes_AF_\pp$. But by definition they also lie in the subring $C\otimes_AF_\pp$, whose intersection with the former is just $C\otimes_AF$. Varying both $\pp$ and $\pp'$ then shows that $f_x(T)$ is independent of $\pp$.
%\end{proof}

\begin{prop}\label{F_eigenvalues}
Let $\alpha_1, \ldots, \alpha_n$ be the roots of $f_x$ in an algebraic closure $\overline{F}$ of $F$, with repetitions if necessary. Consider any normalized valuation $v$ of $F$ and an extension $\overline{v}$ of $v$ to $\overline{F}$. Let $k_v$ denote the residue field at $v$. 
\begin{enumerate}
\item[(a)] If $v$ does not correspond to $\pp_0$ or $\infty$, then for all $1 \leq i \leq n$ we have 
$$\overline{v}(\alpha_i)=0.$$
\item[(b)] If $v$ corresponds to $\infty$, then for all $1 \leq i \leq n$ we have 
$$\overline{v}(\alpha_i)=-\frac{1}{nd}\cdot \frac{[\kappa_x/\mathbb{F}_p]}{[k_v/\mathbb{F}_p]}.$$ 
\item[(c)] If $v$ corresponds to $\pp_0$, then there exists an integer $1\leq n_x \leq n$ such that 
$$\overline{v}(\alpha_i)=\left\{ \begin{array}{ll}
 \frac{1}{n_x d}\cdot \frac{[\kappa_x/\mathbb{F}_p]}{[k_v/\mathbb{F}_p]} & \text{ for precisely } n_x \text{ of the } \alpha_i, \text{ and} \\[7pt]
 0 & \text{ for the remaining } n-n_x \text{ of the } \alpha_i.
\end{array}\right.$$ 
\end{enumerate}
\end{prop}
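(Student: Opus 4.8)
The plan is to reduce the statement about $\phi$ to the corresponding statement for the rank-$n$ Drinfeld $A'$-module $\phi'$ via the decomposition $V_\pp(\phi) \cong \prod_{\pp'|\pp} V_{\pp'}(\phi')$ of (\ref{eq:tensisom}), and then invoke the known valuation estimates for the characteristic polynomial of Frobenius of an \emph{honest} Drinfeld module. Concretely, by Proposition \ref{F_charpol} and the proof of Proposition \ref{boalo}, the polynomial $f_x(T)\in A[T]$ coincides, after mapping into each $F'_{\pp'}[T]$, with the characteristic polynomial of $\Frob_x$ acting on $V_{\pp'}(\phi')$; hence its roots $\alpha_1,\dots,\alpha_n\in\overline F$ are precisely the eigenvalues of $\Frob_x$ on $T_\pp(\phi_x')$ for the reduction $\phi'_x$, which has rank $n$. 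So it suffices to analyze the Newton polygon of the Frobenius characteristic polynomial of a rank-$n$ Drinfeld $A'$-module of special characteristic $\pp'_0$ over the finite field $\kappa_x$, at the various places of $F$ (equivalently, of $F'$ lying over them).

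First I would recall the structure of $\End_{\kappa_x}(\phi'_x)$: the Frobenius endomorphism $\pi_x := \tau^{[\kappa_x/\mathbb F_p]}$ is an element of this ring, and by the theory of Drinfeld modules over finite fields (\cite{GossFFA}, \S4.12, and the analogue of Gekeler's/Yu's analysis in special characteristic) the $A'$-algebra $A'[\pi_x]$ sits inside a division algebra whose invariants at the places of $F'$ are computed from the local behavior of $\pi_x$. For a place $v$ of $F$ not lying over $\pp_0$ or $\infty$: the extended family over $X$ has good reduction at $x$, so $\phi'_x$ is a Drinfeld module in the usual sense and $T_{\pp'}(\phi'_x)$ is an unramified $G_{\kappa_x}$-module for $\pp'$ over such $v$; since $\pi_x$ generates an unramified extension locally and has $v$-integral inverse, all its eigenvalues are $\bar v$-units, giving (a). For $v=\infty$: the product formula (sum of valuations of $\alpha_i$ over all places, weighted by local degrees, is zero) together with (a) and (c) forces the total $\infty$-contribution, and the symmetry of the $\alpha_i$ under the Galois action over $F$ (they are permuted, all with the same $\infty$-valuation because $\phi'$ has rank $n$ and the characteristic-polynomial is "balanced" at $\infty$ — this is exactly \cite{GossFFA}, Theorem 4.12.8 or its analogue) pins down the common value $-\frac{1}{nd}\cdot\frac{[\kappa_x/\mathbb F_p]}{[k_v/\mathbb F_p]}$; the factor $d$ enters because $f_x$ is the \emph{reduced} characteristic polynomial over $D_\pp$ and $[F'/F]=nd/n = d$ relative to the degree-$r$ situation, so one must track the normalization carefully. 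For $v=\pp_0$: here $\phi'_x$ has special characteristic, the Tate module at $\pp'_0$ degenerates, and the Newton polygon at $\pp_0$ has exactly two slopes — slope $0$ with some multiplicity $n-n_x$ and a positive slope with multiplicity $n_x$, where $n_x$ is the "height" invariant of the reduction; the value of the positive slope is again forced by the product formula applied place-by-place, combined with (a) and (b).

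The key steps in order: (1) identify the $\alpha_i$ with the Frobenius eigenvalues of the rank-$n$ module $\phi'_x$ via Proposition \ref{boalo}; (2) treat places $v\nmid\pp_0\infty$ using good reduction and unramifiedness of the Tate module, obtaining $\bar v(\alpha_i)=0$; (3) at $v=\pp_0$, use that the reduction is still of special characteristic $\pp'_0$ to see the Newton polygon splits into an "unramified part" of rank $n-n_x$ (the tame part of the Tate module) and a part of rank $n_x$ on which $\pi_x$ acts with positive $\pp_0$-valuation, and bound $1\le n_x\le n$; (4) apply the product formula $\sum_w \bar w(\alpha_i)\cdot[k_w/\mathbb F_p] = 0$ to determine the exact slopes at $\pp_0$ and at $\infty$, using the known fact that the $\infty$-slope is the same for all $i$ and equals (total degree)/($n$-times-rank-normalization). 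The main obstacle I anticipate is bookkeeping the normalization constants — the factor $\frac{1}{nd}$ rather than $\frac{1}{n}$, and the $\frac{[\kappa_x/\mathbb F_p]}{[k_v/\mathbb F_p]}$ ratio — which requires being scrupulous about whether one works with $V_\pp(\phi)$ (dimension $r=nd$ over $F_\pp$), with $V_{\pp'}(\phi')$ (dimension $n$ over $F'_{\pp'}$, and $F'_{\pp'}/F_\pp$ contributing the extra $d$), and with the reduced versus ordinary norm; once the dictionary with \cite{GossFFA}, Theorem 4.12.8 and the special-characteristic refinement is set up correctly, the slope computations themselves are a routine application of the product formula.
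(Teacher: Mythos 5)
Your reduction of the roots of $f_x$ to the Frobenius eigenvalues of the rank-$n$ module $\phi'_x$ is legitimate (it is the same identification that underlies Proposition \ref{F_charpol}), and parts (a) and (b) can be handled roughly as you indicate. The genuine gap is in part (c): the purity statement --- that for one fixed extension $\overline{v}$ of $v_{\pp_0}$ all the $\alpha_i$ with $\overline{v}(\alpha_i)>0$ have the \emph{same} valuation --- is precisely the content that has to be proved, and the product formula cannot supply it. Once (a) and (b) are known, the product formula (applied to $f_x(0)=\pm\prod_i\alpha_i$, or to each $\alpha_i$ separately) only pins down the \emph{sum} of the positive slopes at $\overline{v}$; it says nothing about how that total is distributed among the $n_x$ roots, so it determines the common value only \emph{after} purity is known. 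Purity comes from the structure of the $\pp'_0$-divisible module of $\phi'_x$ over a finite field (the connected part is a one-dimensional formal module, hence isoclinic), or simply from quoting the finite-field theorem; your sketch asserts the two-slope shape but never establishes it.

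A second, smaller problem is created by routing through $\phi'$: the finite-field statements for $\phi'_x$ are formulated at places of $F'$, whereas the proposition concerns an arbitrary extension $\overline{v}$ of a place of $F$ to $\overline{F}$. At $\infty$ you implicitly need that $F'$ has a unique place above $\infty$ (otherwise the cited theorem only controls those $\overline{v}$ restricting to $\infty'$), and at $\pp_0$ you must handle the possibility of several places of $F'$ above $\pp_0$, only one of which is the characteristic $\pp'_0$ of $\phi'$; your bookkeeping paragraph addresses the normalization factor $d$ but not this splitting issue. The paper's proof avoids all of it: since $T_\pp(\phi)\cong (A_\pp^n)^{\oplus d}$ as a $D_\pp$-module, the degree-$r$ characteristic polynomial of $\Frob_x$ on $T_\pp(\phi)$ equals $f_x^d$, so the $\alpha_i$ are the Frobenius eigenvalues of the rank-$r=nd$ Drinfeld $A$-module $\phi_x$ itself with multiplicities divided by $d$, and all three parts --- including purity at $\pp_0$ and the value $\frac{1}{n_xd}$ --- are exactly Drinfeld's Proposition 2.1 in \cite{DriII} applied to $\phi$, with no passage to $\phi'$ and no renormalization between $F'$ and $F$. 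If you insist on your route, you must either quote the full finite-field result for $\phi'_x$ (including the part at the characteristic) and prove the statements about the places of $F'$ above $\pp_0$ and $\infty$, or supply the isoclinicity argument yourself.
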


\begin{proof}
By construction the $\alpha_i$ are the roots of the characteristic polynomial of $\rho_\pp(\Frob_x)$ associated to the Drinfeld module $\phi$ of rank $r=nd$, except that their multiplicities are divided by $d$. Thus the proposition is a direct consequence of \cite{DriII}, Proposition 2.1, to~$\phi$. 
\end{proof}

%%%%%%%%%%%%%%%%%%%%%%%%%%%%%%%%%%%%%%%%%%%%%%%%%%%%%%%%%%%%%%%%%%%%%%

\subsection{Good reduction and lattices} \label{lettuce} 

In this subsection we briefly leave the current setting and consider the following general situation. 

Let $L$ be a field containing $\mathbb{F}_p$ with a non-trivial discrete valuation $v$. Let $R\subset L$ denote the associated discrete valuation ring and ${\mathfrak m}$ its maximal ideal. Let $\psi:A\to R\{\tau\}$, $a\mapsto\psi_a$ be a Drinfeld $A$-module of rank $s>0$ with good reduction, i.e., such that for every $a\in A\smallsetminus\{0\}$ the highest non-zero coefficient of $\psi_a$ is a unit in $R$.
% \mathop{\rm mod}{\mathfrak m}$ is again a Drinfeld $A$-module of rank~$s$.
We view $L$ as an $A$-module with respect to the action $a\cdot u :=\psi_a(u)$ for all $a\in A$ and $u\in L$. Then $R$ is a submodule for this action, and we are interested in the structure of the $A$-module $L/R$. 

To any $A$-module $M$ are associated the following notions. The \emph{rank of $M$} is the maximal number of $A$-linearly independent elements of~$M$, or $\infty$ if the maximum does not exist. Of course, any finitely generated $A$-module has finite rank. Next, the \emph{division hull} of an $A$-submodule $N\subset M$ is defined as
\begin{myequation}\label{DivHull}
\sqrt{N}\ :=\ \bigl\{ \overline{u}\in M \bigm| 
\exists\, a\in A\smallsetminus\{0\}: a\cdot\overline{u}\in N\bigr\},
\end{myequation}%
which is an $A$-module of the same rank as $N$. The $A$-module $M$ is called \emph{tame} if every finitely generated $A$-submodule $N\subset M$ satisfies $[\sqrt{N}:N] < \infty$.

The following result was obtained by Poonen in \cite[Lemma$\,$5]{PoonenMordellWeil} when $L$ is a global field and $\psi$ has generic characteristic, and by Wang \cite{WangMordellWeil} in general. 
% See also \cite{PinkKummer}.

\begin{prop}\label{lettprop1}
$L/R$ is a tame $A$-module.
\end{prop}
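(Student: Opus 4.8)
The plan is to work locally and use the good-reduction hypothesis to pin down the torsion structure prime by prime. Fix a prime $\pp\not=\pp_0$ of $A$, where $\pp_0$ is the characteristic of $\psi$; the prime $\pp_0$ will have to be treated separately. First I would observe that for $\pp\not=\pp_0$ the good reduction of $\psi$ implies that for every $a\in A\smallsetminus\{0\}$ the polynomial $\psi_a$, viewed as an additive polynomial over $R$, has its roots generating a finite \'etale algebra over $R$ that is unramified over $\mathfrak m$; more precisely, reduction mod $\mathfrak m$ induces an isomorphism on $\pp^i$-torsion, so $\psi[\pp^i](L^{\rm sep})\cap R = \psi[\pp^i](L^{\rm sep})\cap \hat R$ maps injectively to the torsion of the reduced Drinfeld module. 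The upshot I want is: the $\pp$-primary torsion of $L/R$ is \emph{bounded}, i.e. killed by a fixed power of $\pp$ depending only on how much $\pp$-torsion already lies in $R$. Concretely, if $\psi[\pp^{i_0}]\cap R$ is all of $\psi[\pp^{i_0}](\bar L)$ for some $i_0$ — which happens for at most finitely many $i_0$, and for $\pp$ outside a finite set one has $i_0=0$ — then any element $\bar u\in L/R$ with $a\cdot\bar u=0$ for some $a\in\pp$ lifts to a $\pp$-torsion point of $\psi$ over a finite extension, and good reduction forces its image to lie in $R$, hence $\bar u=0$. So for almost all $\pp$ there is simply no $\pp$-torsion in $L/R$, and for the finitely many remaining $\pp\not=\pp_0$ the $\pp$-torsion is bounded.

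Next I would handle $\pp=\pp_0$. Here the argument via unramifiedness of torsion fails because $\psi[\pp_0^i]$ is a connected (infinitesimal) group scheme in the residue characteristic. Instead I would use the formal-group description: over $R$, after completing, $\psi$ gives a formal $A$-module structure on $\hat R = \mathfrak m R$-adically, and the relevant statement is that the $\pp_0$-divisible part of $L/R$ is controlled by the logarithm/height of this formal module. The key point is that $\psi$ has good reduction, so its formal module is a formal $A$-module of finite $\pp_0$-height over $R$; standard Newton-polygon estimates on $\psi_a$ for $a$ generating a power of $\pp_0$ then show that the valuations of torsion points in $\bar L$ are bounded below by a positive constant, so only finitely many of them can lie in the fixed valuation-$0$ locus $L\setminus\mathfrak m$, and again the $\pp_0$-torsion of $L/R$ is bounded. (This is exactly the kind of estimate that Proposition \ref{F_eigenvalues}(c) quantifies for reductions; here I only need boundedness, not the precise slope.)

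Now I assemble these. Let $N\subset L/R$ be a finitely generated $A$-submodule; I must show $[\sqrt N:N]<\infty$. Since $L/R$ is torsion as an $A$-module (every element of $L$ is killed by some nonzero $a\in A$ acting via $\psi$, because $\psi_a$ reduces $v$-adic size), $N$ is a finitely generated torsion $A$-module, hence $N$ and $\sqrt N$ have the same rank $0$, and $\sqrt N/N$ is itself a finitely generated torsion $A$-module — so it suffices to show $\sqrt N/N$ is supported on finitely many primes and bounded at each. Choose $a\in A\smallsetminus\{0\}$ annihilating $N$; then $\sqrt N$ is contained in $(L/R)[a^\infty]$, which by the local analysis above is a finite direct sum over the primes dividing $a$ of bounded $\pp$-primary groups, hence finite. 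Therefore $[\sqrt N:N]\le |\sqrt N|<\infty$, which is the definition of tameness.

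The main obstacle is the prime $\pp=\pp_0$: the clean \'etale argument is unavailable, and one genuinely needs the structure of the associated formal $A$-module and an explicit (Newton-polygon) lower bound on valuations of $\pp_0$-torsion points to conclude that the $\pp_0$-part of $L/R$ is bounded. Everything else — reducing to torsion modules of finite rank, reducing tameness to boundedness of $(L/R)[a^\infty]$, and the \'etale analysis at $\pp\not=\pp_0$ — is routine. I would also remark that, as the proposition is stated in the literature (Poonen for global $L$ in generic characteristic, Wang in general), one may alternatively just cite \cite{PoonenMordellWeil} and \cite{WangMordellWeil}; but the sketch above is the self-contained route.
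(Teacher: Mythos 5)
The paper gives no internal proof of this proposition at all: it simply cites Poonen \cite[Lemma$\,$5]{PoonenMordellWeil} (global $L$, generic characteristic) and Wang \cite{WangMordellWeil} (general case). So your closing fallback --- just cite those references --- is precisely what the paper does, and is fine. What needs reviewing is your self-contained sketch, and there the central structural claim is false.

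Under the good-reduction hypothesis, $L/R$ is \emph{torsion-free} as an $A$-module, not torsion. Indeed, write $\psi_a=\sum_{i=0}^{d}c_i\tau^i$ with $c_d\in R^\times$. If $v(u)<0$, then $v(c_iu^{p^i})\geq p^iv(u)>p^dv(u)=v(c_du^{p^d})$ for $i<d$, so $v(\psi_a(u))=p^dv(u)<0$; thus $\psi_a(u)\notin R$, i.e.\ $a\cdot\bar u\neq0$ in $L/R$ for every nonzero $a$ and every nonzero $\bar u$ (and, for the same reason, every torsion point of $\psi$ lying in $L$ already lies in $R$). Your parenthetical ``$\psi_a$ reduces $v$-adic size'' has the inequality backwards: good reduction forces $\psi_a$ to make negative valuations strictly more negative. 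Consequently the assembly step collapses: there is no nonzero $a\in A$ annihilating a nonzero finitely generated $N\subset L/R$, the module $(L/R)[a^\infty]$ is zero, and $\sqrt{N}$ is certainly not contained in it. The preliminary analysis of $\psi$-torsion (\'etale torsion at $\pp\neq\pp_0$, Newton-polygon bounds at $\pp_0$) therefore addresses the wrong object altogether: tameness here is a statement about \emph{divisibility of non-torsion classes} in the torsion-free module $L/R$ --- given $\bar u$ with $a\cdot\bar u\in N$, one must control $\bar u$ modulo $N$ --- and that is exactly what the local-height/valuation arguments of Poonen and Wang supply. Note also that in the application (Lemma \ref{inert_big}) $L$ is a maximal unramified extension, so the residue field is infinite and one cannot conclude finiteness of $\sqrt{N}/N$ merely from a lower bound on valuations of the elements involved; some genuine input of the Poonen--Wang type is needed, and your sketch does not provide a substitute for it.
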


\subsection{Bad reduction} \label{bad_red} 

Now we return to the situation and the notations of Subsections \ref{endos} through \ref{chapter_frob}. We assume in addition that there exists a prime $\pp\not=\pp_0$ of $A$ such that all $\pp$-torsion points of $\phi$ are defined over~$K$. This can be achieved on replacing $K$ by the finite separable extension corresponding to the action of $G_K$ on $\phi[\pp]$.

Let $x$ be one of the finitely many generic points of $\overline{X}\smallsetminus X$. Let $K_x$ denote the completion of $K$ with respect to the valuation at $x$, and let $R_x\subset K_x$ denote the associated discrete valuation ring. Since $\phi$ possesses a full level structure of some level $\pp\not=\pp_0$ over~$K$, it is known to have semistable reduction over $K_x$. Its Tate uniformization at $x$ (see \cite{DriI}, \S 7) then consists of a Drinfeld $A$-module $\psi_x$ over $R_x$ of some rank $1\leq r_x\leq r$ with good reduction and an $A$-lattice $\Lambda_x \subset K_x^\text{sep}$ of rank $r-r_x$ for the action of $A$ on $K_x^\text{sep}$ via $\psi_x$. Here by definition an $A$-lattice is a finitely generated projective $A$-submodule whose intersection with any ball of finite radius is finite. This implies that any non-zero element of $\Lambda_x$ has valuation $<0$. Also, being finitely generated, the lattice is already contained in some finite Galois extension $K_x'$ of $K_x$. 

Let $I_x\subset D_x\subset G_K$ denote the inertia group, respectively the decomposition group, at a fixed place of $K^{\text{sep}}$ above $x$. Then $D_x$ is also the absolute Galois group of $K_x$. Let $D'_x\triangleleft D_x$ denote the absolute Galois group of $K'_x$, and set $I'_x := I_x\cap D'_x$. Then $D_x$ acts on $\Lambda_x$ through the finite quotient $D_x/D'_x$. 

For any prime $\pp\not=\pp_0$ of $A$ and any positive integer $i$ the Tate uniformization yields a $D_x$-equivariant isomorphism 
\begin{myequation}\label{TateSeq0}
\phi[\pp^i]\ \cong\ 
\bigl\{ u \in K_x^{\text{sep}} \bigm| 
        \forall a\in \pp^i : \psi_{x,a}(u)\in\Lambda_x \bigr\} \bigm/ \Lambda_x
\end{myequation}%
and hence a $D_x$-equivariant short exact sequence
$$0 \longrightarrow \psi_x[\pp^i] \longrightarrow \phi[\pp^i] \longrightarrow \Lambda_x\otimes_A(\pp^{-i}/A)\longrightarrow 0.$$
Taking the inverse limit over $i$ yields a $D_x$-equivariant short exact sequence
% \begin{myequation}\label{TateSeq}
$$0 \longrightarrow T_\pp(\psi_x) \longrightarrow T_\pp(\phi) \longrightarrow \Lambda_x\otimes_AA_\pp \longrightarrow 0.$$
% \end{myequation}%
Here $I_x$ acts trivially on $T_\pp(\psi_x)$, and $D'_x$ acts trivially on $\Lambda_x\otimes_AA_\pp$. Thus in a suitable basis $\rho_\pp(D'_x)$ is contained in the group of block triangular matrices of the form
$$\left(\begin{array}{c|c} \ast & \ast \\ \hline 0 & 1 \\ \end{array}\right)
\ \subset\ \GL_r(A_\pp),$$
and $\rho_\pp(I'_x)$ is a $\rho_\pp(D_x)$-invariant subgroup of the group of block triangular matrices of the form
\begin{myequation}\label{inertunip}
\left(\begin{array}{c|c} 1 & \ast \\ \hline 0 & 1 \\ \end{array}\right)
\ \cong\ \Hom_A(\Lambda_x,T_\pp(\psi_x))
\ \cong\ T_\pp(\psi_x)^{r-r_x}.
\end{myequation}%
We are interested in the following three consequences:

\begin{lem}\label{no_inert}
Fix an integer $c\geq1$. Then for almost all primes $\pp\not=\pp_0$ of $A$, any continuous homomorphism from $\rho_\pp(D_x)$ to a finite group of order $\leq c$ is trivial on $\rho_\pp(I'_x)$.
\end{lem}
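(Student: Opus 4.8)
The plan is to endow $\rho_\pp(I'_x)$ with a module structure over a polynomial ring generated by a Frobenius of $\psi_x$, and to exploit that the eigenvalues of such a Frobenius are transcendental over $\mathbb{F}_p$: modulo every large prime $\pp$ they generate a field of more than $c$ elements, so no small equivariant quotient of $\rho_\pp(I'_x)$ can exist. First I would dispose of the case $r_x=r$: then $\Lambda_x=0$ and (\ref{inertunip}) gives $\rho_\pp(I'_x)=1$, so there is nothing to prove; assume henceforth $1\le r_x<r$.

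By (\ref{inertunip}) I regard $\rho_\pp(I'_x)$ as a closed subgroup of the free $A_\pp$-module $M_\pp:=\Hom_A(\Lambda_x,T_\pp(\psi_x))$; since $A_\pp$ has characteristic $p$, this subgroup is an abelian pro-$p$ group in which every element has order dividing $p$. Next I would pick a closed point $y$, with finite residue field $\kappa_y$, on a model of the residue field of $K'_x$ at which $\psi_x$ reduces to a Drinfeld module over $\kappa_y$, together with an associated Frobenius $\Frob_y\in D'_x/I'_x$, a lift $g_0\in D'_x$, and the operator $\pi:=\rho_\pp(g_0)|_{T_\pp(\psi_x)}$. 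Because $\psi_x$ has good reduction at $x$, the inertia group $I_x\supseteq I'_x$ acts trivially on $T_\pp(\psi_x)$, so $\pi$ is the Frobenius endomorphism of a Drinfeld module over a finite field, acting on its Tate module; and because $g_0\in D'_x$ fixes $\Lambda_x$, a short matrix computation with the block form (\ref{inertunip}) shows that conjugation by $\rho_\pp(g_0)$ sends $\phi\in\rho_\pp(I'_x)$ to $\pi\circ\phi$. Since $\rho_\pp(I'_x)$ is $\rho_\pp(D_x)$-invariant it is stable under this operation, so it is a module over the subring $\mathbb{F}_p[\pi]\subset\End_{A_\pp}(T_\pp(\psi_x))$ acting by post-composition; and $\mathbb{F}_p[\pi]$ is a polynomial ring, because the eigenvalues of $\pi$ --- the Frobenius eigenvalues of a Drinfeld module over a finite field --- are units outside the places over $\pp_0$ and $\infty$ but have negative valuation at the latter by \cite{DriII}, Proposition~2.1, hence lie in $\overline F\smallsetminus\overline{\mathbb{F}}_p$ and make $\pi$ transcendental over $\mathbb{F}_p$.

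Granting this, suppose $\psi\colon\rho_\pp(D_x)\to H$ is continuous with $|H|\le c$ and is not trivial on $\rho_\pp(I'_x)$, and set $N:=\psi(\rho_\pp(I'_x))\ne1$. Then $\ker\psi\cap\rho_\pp(I'_x)$ is normal in $\rho_\pp(D_x)$, hence stable under conjugation by $\rho_\pp(g_0)$, hence an $\mathbb{F}_p[\pi]$-submodule, so $N$ is a nonzero finite $\mathbb{F}_p[\pi]$-module quotient of $\rho_\pp(I'_x)$ with $|N|\le c$. Choosing a simple quotient $N\twoheadrightarrow\mathbb{F}_p[T]/(p(T))$ with $p\in\mathbb{F}_p[T]$ irreducible, the operator $p(\pi)$ is not surjective on $\rho_\pp(I'_x)$. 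I would then argue that this forces some reduction $\overline\alpha$ of an eigenvalue $\alpha$ of $\pi$, at a place of $\overline F$ above $\pp$, to be a root of $p$: otherwise $\det\bigl(p(\pi)\mid T_\pp(\psi_x)\bigr)=\prod_\alpha p(\alpha)$ is a unit of $A_\pp$ (using again that the $\alpha$ are $\pp$-adic units), so $p(\pi)$ is bijective on $T_\pp(\psi_x)$, hence on $M_\pp$, hence --- being $A_\pp$-linear --- on each finite quotient $\rho_\pp(I'_x)/(\rho_\pp(I'_x)\cap\pp^jM_\pp)$, hence on $\rho_\pp(I'_x)$, a contradiction. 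Since $p$ is irreducible and $\overline\alpha$ a root of it, $p$ is the minimal polynomial of $\overline\alpha$ over $\mathbb{F}_p$, so $|N|\ge|\mathbb{F}_p[T]/(p)|=|\mathbb{F}_p(\overline\alpha)|$.

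Finally, the eigenvalues of $\pi$ are a fixed finite subset of $\overline F\smallsetminus\overline{\mathbb{F}}_p$, independent of $\pp$. Fixing $N_0$ with $p^{N_0}>c$, each $\alpha^{p^{N_0}}-\alpha$ is a nonzero element of $\overline F$ and hence a unit at all but finitely many places, so for almost all $\pp$ every reduction $\overline\alpha$ of every such $\alpha$ at every place above $\pp$ satisfies $\overline\alpha\notin\mathbb{F}_{p^{N_0}}$, i.e.\ $|\mathbb{F}_p(\overline\alpha)|>c$; for these $\pp$ the bound $|N|\ge|\mathbb{F}_p(\overline\alpha)|>c$ contradicts $|N|\le c$, and the lemma follows. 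I expect the main obstacle to be the module-theoretic bookkeeping of the second and third paragraphs: that $\rho_\pp(I'_x)$ --- a priori merely a $\rho_\pp(D_x)$-invariant subgroup, not an $A_\pp$-submodule of $M_\pp$ --- is genuinely stable under post-composition by $\pi$, and that an element of $\mathbb{F}_p[\pi]$ invertible on the ambient lattice $M_\pp$ is automatically invertible on the compact subgroup $\rho_\pp(I'_x)$. Everything else reduces to the transcendence over $\mathbb{F}_p$ of the Frobenius of a Drinfeld module over a finite field.
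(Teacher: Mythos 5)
Your proposal is essentially correct and rests on the same arithmetic inputs as the paper's proof, but it organizes the group theory differently. The paper argues more compactly: a homomorphism that is nontrivial on $\rho_\pp(I'_x)$ yields a nontrivial finite $\rho_\pp(D_x)$-subquotient of $T_\pp(\psi_x)$ of order $\leq c$, on which $\Frob_y^{c!}$ must act trivially; this forces some $\beta_i^{c!}\equiv1$ modulo a prime above $\pp$, i.e.\ $\pp\mid a:=\prod_i(\beta_i^{c!}-1)$, which is a fixed nonzero element of $A$ because the eigenvalues are not roots of unity (again by \cite{DriII}, Proposition 2.1, plus the $\pp$-independence of the characteristic polynomial from \cite{GossFFA}, Theorem 4.12.12(b) --- a fact you use implicitly when you declare the eigenvalues of $\pi$ independent of $\pp$, and which you should cite). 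Your version replaces ``not a root of unity'' by the equivalent statement that the eigenvalues lie outside $\overline{\mathbb{F}}_p$, and replaces the paper's quick subquotient step by explicit $\mathbb{F}_p[\pi]$-module bookkeeping: the computation that conjugation by a lift $g_0\in D'_x$ acts on $\Hom_A(\Lambda_x,T_\pp(\psi_x))$ as post-composition with $\pi$ is correct, and your filtration-and-compactness argument showing that an element of $\mathbb{F}_p[\pi]$ invertible on the ambient lattice is surjective on the closed subgroup $\rho_\pp(I'_x)$ is a legitimate (and more explicit) substitute for what the paper leaves implicit. What the paper's route buys is brevity and a single excluded divisor $a\in A$; what yours buys is a transparent reduction to residue fields generated by reductions of the eigenvalues.

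There is one genuine slip, at the very end: from $\overline\alpha\notin\mathbb{F}_{p^{N_0}}$ you infer $|\mathbb{F}_p(\overline\alpha)|>c$, which does not follow. For instance with $p=2$, $c=9$, $N_0=4$, a reduction $\overline\alpha$ of degree $3$ over $\mathbb{F}_p$ avoids $\mathbb{F}_{16}$ yet generates a field with only $8\leq c$ elements. The repair is immediate and stays within your argument: you need $\overline\alpha\notin\mathbb{F}_{p^m}$ for \emph{every} $m$ with $p^m\leq c$, so either replace $N_0$ by $N_0!$ (or $\mathrm{lcm}(1,\dots,N_0)$), or exclude, for each eigenvalue $\alpha$ and each of the finitely many such $m$, the finitely many primes at which $\alpha^{p^m}-\alpha$ fails to be a unit. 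With this correction your proof is complete.
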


\begin{proof}
Fix a Drinfeld $A$-module $\psi_y$ of rank $r_x$ over a finite field that arises by good reduction from $\psi_x$. Let $\Frob_y$ be an associated Frobenius element in $D_x/I_x$, the absolute Galois group of the residue field at $x$. Then by \cite{GossFFA}, Theorem 4.12.12 (b), the characteristic polynomial of $\Frob_y$ on $T_\pp(\psi_x)$ has coefficients in $A$ and is independent of $\pp$. Moreover, \cite{DriII}, Proposition 2.1, implies that none of its eigenvalues $\beta_1,\ldots,\beta_{r_x}\in \overline{F}$ is a root of unity. Thus $a := \prod_{i=1}^{r_x}(\beta_i^{c!}-1)$ is a non-zero element of $A$. We claim that the assertion holds for all $\pp\not=\pp_0$ that do not divide $a$.

Indeed, let $f: \rho_\pp(D_x)\to H$ be a continuous homomorphism to a finite group of order $\leq c$, such that $f|\rho_\pp(I'_x)$ is non-trivial. Then $\mathop{\rm ker} f|\rho_\pp(I'_x)$ is a $\rho_\pp(D_x)$-invariant proper closed subgroup of $\rho_\pp(I'_x)$ of index $\leq c$. Thus $T_\pp(\psi_x)^{r-r_x}$ and hence $T_\pp(\psi_x)$, as a representation of $\rho_\pp(D_x)$, possesses a non-trivial finite subquotient of order $\leq c$. Then $\Frob_y^{c!}$ acts trivially on this subquotient. But this requires that some $\beta_i^{c!}$ is congruent to $1$ modulo a prime of $\overline{F}$ above $\pp$, or equivalently that $\pp|a$. This proves the claim.
\end{proof}

\begin{lem}\label{inert_big}
For almost all primes $\pp\not=\pp_0$ of $A$ we have $\psi_x[\pp] = \phi[\pp]^{I_x} = \phi[\pp]^{I'_x}$.
\end{lem}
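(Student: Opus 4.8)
The plan is to combine the Tate uniformization at $x$, in the form of the isomorphism (\ref{TateSeq0}) for $i=1$, with the tameness statement of Proposition \ref{lettprop1}. Write $K_x^{\prime\mathrm{ur}}$ for the maximal unramified extension of $K'_x$ inside $K_x^{\text{sep}}$, so that $(K_x^{\text{sep}})^{I'_x}=K_x^{\prime\mathrm{ur}}$, and let $R_x^{\prime\mathrm{ur}}\subset K_x^{\prime\mathrm{ur}}$ be its (discrete) valuation ring. Then $\psi_x$ still has good reduction over $R_x^{\prime\mathrm{ur}}$, and since $\Lambda_x\subset K'_x\subset K_x^{\prime\mathrm{ur}}$ consists, apart from $0$, of elements of negative valuation, we have $\Lambda_x\cap R_x^{\prime\mathrm{ur}}=0$; hence $\Lambda_x$ maps isomorphically onto its image $\bar\Lambda_x$ in the $A$-module $K_x^{\prime\mathrm{ur}}/R_x^{\prime\mathrm{ur}}$ for the $\psi_x$-action, and $\bar\Lambda_x$ is finitely generated over $A$. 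Since $I'_x\subseteq I_x$ acts trivially on $T_\pp(\psi_x)$, hence on $\psi_x[\pp]$, we have $\psi_x[\pp]\subseteq\phi[\pp]^{I_x}\subseteq\phi[\pp]^{I'_x}$ for every $\pp\neq\pp_0$, so it suffices to prove $\phi[\pp]^{I'_x}=\psi_x[\pp]$ for almost all $\pp$.

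Next I would identify $\phi[\pp]^{I'_x}$ explicitly. If $u\in K_x^{\text{sep}}$ satisfies $\psi_{x,a}(u)\in\Lambda_x$ for all $a\in\pp$ and $\sigma\in I'_x$, then $\sigma$ fixes $\Lambda_x$ and the coefficients of $\psi_{x,a}$ pointwise, so $\psi_{x,a}(\sigma u-u)=\sigma(\psi_{x,a}(u))-\psi_{x,a}(u)=0$ for all $a\in\pp$; choosing finitely many generators of $\pp$ this yields $\sigma u-u\in\psi_x[\pp]$. As $\psi_x[\pp]\cap\Lambda_x=0$, the class of $u$ in $\phi[\pp]$ is $I'_x$-invariant exactly when $\sigma u=u$ for all $\sigma\in I'_x$, i.e.\ when $u\in K_x^{\prime\mathrm{ur}}$. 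Hence $\phi[\pp]^{I'_x}$ is represented by $\{u\in K_x^{\prime\mathrm{ur}}\mid\psi_{x,a}(u)\in\Lambda_x\ \forall a\in\pp\}$ modulo $\Lambda_x$, and it contains the images of $\Lambda_x$ and of $\psi_x[\pp]$, which meet only in $0$.

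For such a $u$ and any $0\neq a\in\pp$, the residue class $\bar u:=u\bmod R_x^{\prime\mathrm{ur}}$ satisfies $a\cdot\bar u=\psi_{x,a}(u)\bmod R_x^{\prime\mathrm{ur}}\in\bar\Lambda_x$, so $\bar u$ lies in the division hull $\sqrt{\bar\Lambda_x}$ in $K_x^{\prime\mathrm{ur}}/R_x^{\prime\mathrm{ur}}$. The assignment $u\mapsto\bar u$ kills precisely $\psi_x[\pp]$ — if $u\in R_x^{\prime\mathrm{ur}}$ then $\psi_{x,a}(u)\in\Lambda_x\cap R_x^{\prime\mathrm{ur}}=0$ for all $a\in\pp$, so $u\in\psi_x[\pp]$ — and carries $\Lambda_x$ isomorphically onto $\bar\Lambda_x$. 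It therefore induces an injection $\phi[\pp]^{I'_x}/\psi_x[\pp]\hookrightarrow\sqrt{\bar\Lambda_x}/\bar\Lambda_x$. By Proposition \ref{lettprop1}, applied to $\psi_x$ over the discrete valuation ring $R_x^{\prime\mathrm{ur}}$, the $A$-module $K_x^{\prime\mathrm{ur}}/R_x^{\prime\mathrm{ur}}$ is tame, so $M:=[\sqrt{\bar\Lambda_x}:\bar\Lambda_x]$ is finite — and, crucially, independent of $\pp$.

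To conclude, observe that $\phi[\pp]^{I'_x}/\psi_x[\pp]$ is a vector space over the residue field $A/\pp$, so its cardinality is a power of $|A/\pp|$; being at most $M$, it is trivial as soon as $|A/\pp|>M$, which excludes only finitely many $\pp$. For the remaining $\pp$ we obtain $\phi[\pp]^{I'_x}=\psi_x[\pp]$, and then the inclusions from the first paragraph force $\phi[\pp]^{I_x}=\psi_x[\pp]$ as well. The fiddly part is the bookkeeping in the second step, identifying $\phi[\pp]^{I'_x}$ correctly through the uniformization; conceptually, however, the decisive point is that Proposition \ref{lettprop1} furnishes a single finite bound $M$ valid for all $\pp$ simultaneously, after which a cardinality count finishes the argument.
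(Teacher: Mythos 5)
Your proof is correct and follows essentially the same route as the paper's: identify the $I'_x$-invariant classes via the Tate uniformization (\ref{TateSeq0}) as classes of elements of the maximal unramified extension of $K'_x$, push them into $L/R$ where they land in the division hull of the image of $\Lambda_x$, and invoke the tameness of Proposition \ref{lettprop1} to exclude only finitely many $\pp$. The only (harmless) cosmetic difference is at the end, where you finish with a $k_\pp$-dimension/cardinality count against the fixed bound $M=[\sqrt{\bar\Lambda_x}:\bar\Lambda_x]$, while the paper argues elementwise that $\overline{u}\in N_x$ for all $\pp$ outside the support of the finite module $\sqrt{N_x}/N_x$.
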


\begin{proof}
The inclusions $\psi_x[\pp] \subset \phi[\pp]^{I_x} \subset \phi[\pp]^{I'_x}$ result from the fact that $I_x$ acts trivially on $\psi_x[\pp]$. To prove equality take any element of 
$\phi[\pp]^{I'_x}$. By (\ref{TateSeq0}) it corresponds to the residue class $u+\Lambda_x$ for some $u \in K_x^{\text{sep}}$ satisfying $\psi_{x,a}(u)\in\Lambda_x$ for all $a\in \pp$. That this residue class is $I'_x$-invariant means that $\sigma u-u \in \Lambda_x$ for all $\sigma\in I'_x$. But $\sigma\in I'_x$ acts trivially on $\psi_{x,a}(u)\in\Lambda_x$ for all $a\in\pp$; hence $\psi_{x,a}(\sigma u-u) = \sigma\psi_{x,a}(u) - \psi_{x,a}(u) = 0$. Since the homomorphism $\psi_{x,a}: \Lambda_x\to\Lambda_x$ is injective whenever $a\not=0$, it follows that $\sigma u-u=0$ and hence $u$ is $I'_x$-invariant.

Let $L$ denote the maximal unramified extension of $K'_x$, and $R\subset L$ its discrete valuation ring. As in Subsection \ref{lettuce} we denote the residue class in $L/R$ of an element $v\in L$ by $\overline{v}$ and abbreviate $a\cdot\overline{v} := \overline{\psi_{x,a}(v)}$ for all $a\in A$. Since every non-zero element of $\Lambda_x$ has valuation $< 0$, we have $\Lambda_x \cap R =\{0\}$ and thus the natural map $\Lambda_x\to L/R$ is injective; let $N_x$ denote its image.

The fact that $u$ is $I'_x$-invariant means that $u\in L$. On the other hand, the fact that $\psi_{x,a}(u)\in\Lambda_x$ for all $a\in \pp$ implies that $\pp\cdot\overline{u} \subset N_x$. In particular we have $\overline{u}\in\sqrt{N_x}$ in the notation of (\ref{DivHull}). But since $[\sqrt{N_x}:N_x] < \infty$ by Proposition \ref{lettprop1}, for almost all $\pp$ we can deduce that $\overline{u}\in N_x$. Then $u=v+\lambda$ for some $v\in R$ and $\lambda\in\Lambda_x$. For all $a\in \pp$ we then have $\psi_{x,a}(v)\in\Lambda_x\cap R=\{0\}$; in other words $v\in\psi_x[\pp]$. Thus the residue class in question $u+\Lambda_x$ comes from an element of $\psi_x[\pp]$, as desired.
\end{proof}

\begin{lem}\label{small_quotients1}
For any finite abelian group $H$ there exists a finite set $P'$ of primes of $A$, such that the number of continuous homomorphisms $G_K\to H$, which are trivial on $\mathop{\rm ker}(\rho_\pp)$ for some $\pp\not\in P'$, is finite.
% Fix any integer $c\geq 1$. Then there exist a finite set $S'$ of primes of $A$ and a finite extension $K'\subset K^{\text{sep}}$ of $K$, such that for all primes $\pp\not\in S'$ of $A$, any continuous homomorphism from $G_K$ to a finite abelian group of exponent dividing $c$, which is trivial on $\mathop{\rm ker}(\rho_\pp)$, is trivial on $G_{K'}$.
\end{lem}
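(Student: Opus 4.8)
The plan is to combine the control of inertia at the bad-reduction places provided by Lemma~\ref{no_inert} with the good reduction of $\phi$ over $X$ and with Theorem~\ref{Pop}. Write $c:=|H|$, and apply Lemma~\ref{no_inert} with this $c$ to each of the finitely many generic points $x$ of $\overline{X}\smallsetminus X$ of codimension $1$ in $\overline{X}$ (these are exactly the codimension-$1$ points of $\overline{X}$ not lying in $X$, and the set-up of Subsection~\ref{bad_red} applies to them). Taking the union of the resulting finite exceptional sets of primes yields a finite set $P'$ such that, for every $\pp\notin P'$ and every such $x$, each continuous homomorphism from $\rho_\pp(D_x)$ to a finite group of order $\le c$ is trivial on $\rho_\pp(I'_x)$.

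Next I reduce to a $\pp$-independent statement. Suppose $f\colon G_K\to H$ is a continuous homomorphism trivial on $\ker(\rho_\pp)$ for some $\pp\notin P'$; then $f=\bar f\circ\rho_\pp$ for a continuous homomorphism $\bar f\colon\rho_\pp(G_K)\to H$. Restricting $\bar f$ to $\rho_\pp(D_x)$ and using the property of $P'$ shows that $f$ is trivial on $I'_x$ for every codimension-$1$ point $x$ of $\overline{X}\smallsetminus X$; and since the action of $G_K$ on $T_\pp(\phi)$, and hence $f$, is unramified at all codimension-$1$ points of $X$ by Subsection~\ref{model}, so is $f$. Therefore every such $f$ lies in the set $\mathcal{F}$ of continuous homomorphisms $G_K\to H$ that are unramified at all codimension-$1$ points of $X$ and trivial on $I'_x$ for every codimension-$1$ point $x$ of $\overline{X}\smallsetminus X$. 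It now suffices to show that $\mathcal{F}$ is finite, and this no longer involves $\pp$.

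To do this, regard the continuous homomorphisms $G_K\to H$ as an abelian group under pointwise operations. Then $\mathcal{F}$ is the subgroup of those trivial on a prescribed finite family of closed subgroups of $G_K$, and inside it sits the subgroup $\mathcal{G}$ of homomorphisms that are in addition trivial on $I_x$ for every codimension-$1$ point $x$ of $\overline{X}\smallsetminus X$. Because the codimension-$1$ points of $\overline{X}$ are precisely those of $X$ together with these $x$, every element of $\mathcal{G}$ is unramified at all codimension-$1$ points of $\overline{X}$; hence $\mathcal{G}$ is finite by Theorem~\ref{Pop}. Restriction defines a group homomorphism $\Phi\colon\mathcal{F}\to\prod_{x}\Hom(I_x/I'_x,H)$, where the product runs over the finitely many codimension-$1$ points $x$ of $\overline{X}\smallsetminus X$, with $\ker\Phi=\mathcal{G}$; its target is a finite set, since each $I_x/I'_x$ embeds into the finite group $D_x/D'_x=\Gal(K'_x/K_x)$ and $H$ is finite. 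Thus $\mathcal{F}/\mathcal{G}$ is finite, so $\mathcal{F}$ is finite, which proves Lemma~\ref{small_quotients1}.

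The one step that is not bookkeeping is the input Lemma~\ref{no_inert}, i.e.\ bounding the ramification of $f$ at the bad-reduction places: this rests on the Tate uniformization, specifically the identification (\ref{inertunip}) of $\rho_\pp(I'_x)$ with $T_\pp(\psi_x)^{r-r_x}$, together with the fact that Frobenius eigenvalues at the good reduction of $\psi_x$ are never roots of unity. Once inertia has been cut down to the residual finite quotients $I_x/I'_x$, the remaining finiteness is exactly the Zariski--Nagata purity and topological finite generation of $\pi_1^{\rm et}(\overline{X})$ packaged in Theorem~\ref{Pop}.
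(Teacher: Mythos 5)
Your proposal is correct and follows essentially the same route as the paper: build $P'$ from Lemma~\ref{no_inert} with $c=|H|$, use that $f$ factors through $\pi_1^{\rm et}(X)$ and is trivial on each $I'_x$, note the finitely many possibilities for the restrictions to the finite quotients $I_x/I'_x$, and use the abelian group structure together with Theorem~\ref{Pop} to handle the homomorphisms unramified everywhere. Your packaging via the restriction homomorphism $\Phi$ with finite kernel $\mathcal{G}$ and finite target is just a slightly more formal rendering of the paper's "differ by an unramified $g$" step.
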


\begin{proof}
For each of the finitely many generic points $x$ of $\overline{X}\smallsetminus X$, let $P_x$ denote the finite set of primes of $A$ excluded by Lemma \ref{no_inert} with $c:=|H|$. We claim that the assertion holds with $P'$ the union of these sets $P_x$. 

Indeed, let $f: G_K \to H$ be a continuous homomorphism which is trivial on $\mathop{\rm ker}(\rho_\pp)$ for some $\pp\not\in P'$. From Subsection \ref{model} we know that $\rho_\pp$ and hence $f$ factors through the \'etale fundamental group $\pi_1^{\rm et}(X)$. Also, the restriction $f|I'_x$ is trivial for every generic point $x$ of $\overline{X}\smallsetminus X$ by Lemma \ref{no_inert}. There are therefore only finitely many possibilities for the restriction $f|I_x$. Since there are only finitely many $x$, it suffices to prove that the number of such $f$ with fixed restrictions $f|I_x$ for all $x$ is finite.

But since $H$ is abelian, any two such homomorphisms $f$ differ by a continuous homomorphism $g: G_K \to H$ which is unramified over $X$ and at all generic points of 
$\overline{X}\smallsetminus X$. By Theorem \ref{Pop} there are only finitely many possibilities for such $g$, and the desired finiteness follows.
% Thus there are only finitely many possibilities for the subgroup $\mathop{\rm ker}(f)$, and so their intersection is $G_{K'}$ for some finite extension $K'\subset K^{\text{sep}}$ of $K$. By construction this $K'$ has the desired property.
\end{proof}

%%%%%%%%%%%%%%%%%%%%%%%%%%%%%%%%%%%%%%%%%%%%%%%%%%%%%%%%%%%%%%%%%%%%%%

\subsection{Setup}\label{setup}

From here on we assume that $\phi$ satisfies the conditions of Theorem \ref{main_theorem}. Since we are only interested in the image of Galois groups up to commensurability, we may replace $K$ by a finite extension. We first replace it by the composite of the extensions provided by Propositions \ref{def_field_end} and \ref{Ggeom_in_SL} and the fields of definition of all $\pp$-torsion points of $\phi$ for some chosen prime $\pp\not=\pp_0$ of $A$. Thereafter we replace it by the extension from Theorem \ref{deJong}. 
% In the case $n=1$ Theorem \ref{main_theorem} (a) already follows trivially; so we also assume that $n\geq2$. 
By Proposition \ref{finite} below the assumption on $\End_{K^{\text{sep}}}(\phi|B)=R$ in Theorem \ref{main_theorem} implies that $n\geq2$.
Thus altogether we have the following assumptions:

\begin{asses}\label{asses}
\begin{enumerate}
\item[(a)] $R := \End_K(\phi) = \End_{K^{\text{sep}}}(\phi)$. 
\item[(b)] The center of $R$ is $A$.
\item[(c)] $n:=r/d \geq2$.
\item[(d)] For every integrally closed infinite subring $B \subset A$ we have $\End_{K^{\text{sep}}}(\phi|B)=\nobreak R$. 
\item[(e)] $\rho_{\text{ad}}(G_K^{\geom}) \subset \prod_{\pp\not=\pp_0} D_\pp^1$.
\item[(f)] There exists a prime $\pp\not=\pp_0$ of $A$ such that all $\pp$-torsion points of $\phi$ are defined over~$K$. 
\item[(g)] $K$ possesses a smooth projective model $\overline{X}$.
 \end{enumerate}
\end{asses}

% These assumptions will be in use from Section \ref{sect_surjective} until the end of Section \ref{sect_tr_deg}.

%%%%%%%%%%%%%%%%%%%%%%%%%%%%%%%%%%%%%%%%%%%%%%%%%%%%%%%%%%%%%%%%%%%%%%

\subsection{Images of Galois groups}\label{known}

% For any prime $\pp$ of $A$ for which Proposition \ref{labacu} holds, we set
Throughout the following we let $P_0$ denote the finite set of primes excluded by Proposition \ref{labacu}. For any $\pp\not\in P_0$ we set
% \begin{myequation}\label{eq:Gamma}
$$\begin{array}{lll}
\Gamma_\pp & :=\ \rho_\pp(G_K) 
& \!\!\!\subset\ \GL_n(A_\pp), \quad\hbox{and} \\[5pt]
\Gamma_\pp^{\geom}\! & :=\ \rho_\pp(G_K^{\geom}) 
& \!\!\!\subset\ \SL_n(A_\pp).
\end{array}$$ 
% \end{myequation}%
By construction the latter is a closed normal subgroup of the former and the quotient is pro-cyclic. Combining Proposition \ref{boalo} (c) with \cite{PinI}, Theorem 1.1, and applying \cite{PinIII}, Lemma 3.7, we obtain:

\begin{thm}\label{Zariski_dense}
For any $\pp$ as above $\Gamma_\pp$ is Zariski dense in $\GL_{n,F_\pp}$, and $\Gamma_\pp^{\geom}$ is Zariski dense in $\SL_{n,F_\pp}$.
\end{thm}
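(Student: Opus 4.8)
The plan is to transport the statement to the auxiliary rank-$n$ Drinfeld $A'$-module $\phi'$ introduced in Subsection~\ref{tatess}, where it becomes an instance of the known Zariski density theorem for Drinfeld modules, and then to descend along the finite extension $F'_{\pp'}/F_\pp$ of local fields. First I would use the isogeny $f\colon\phi\to\phi'|A$ together with Proposition~\ref{boalo}: for a prime $\pp'$ of $A'$ above $\pp$, the composite of $\rho_\pp\colon G_K\to D_\pp^\times$ with the inclusion $D_\pp^\times\hookrightarrow(D_\pp\otimes_{A_\pp}F'_{\pp'})^\times=\GL_{F'_{\pp'}}(V_{\pp'}(\phi'))$ is exactly the Galois representation $\rho_{\pp'}$ of $G_K$ on the rational Tate module $V_{\pp'}(\phi')$. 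Via the isomorphism~(\ref{EndIsog}) and Assumption~\ref{asses}~(a), the endomorphism algebra $\End_{K^{\text{sep}}}(\phi')\otimes_{A'}F'$ is the commutant of $F'$ in $\End_{K^{\text{sep}}}(\phi)\otimes_AF=R\otimes_AF$, which equals $F'$ because $F'$ is a maximal commutative subfield of the division algebra $R\otimes_AF$. With this (and the analogue of Assumption~\ref{asses}~(d), which one transfers to $\phi'$) the hypotheses of \cite{PinI}, Theorem~1.1, are met for $\phi'$.

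Applying that theorem to $\phi'$ gives that $\rho_{\pp'}(G_K^{\geom})$ is Zariski dense in $\SL_{n,F'_{\pp'}}$. For the full group I would add that $\det\rho_{\pp'}(\Frob_x)$, for a closed point $x$ of $X$, lies in $A$ and has positive valuation at $\pp_0$ by Propositions~\ref{F_charpol} and~\ref{F_eigenvalues}, hence is not a root of unity; therefore $\det\rho_{\pp'}(G_K)$ is Zariski dense in $\mathbb{G}_{m,F'_{\pp'}}$, and together with the density in $\SL_{n,F'_{\pp'}}$ it follows that $\rho_{\pp'}(G_K)$ is Zariski dense in $\GL_{n,F'_{\pp'}}$.

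It then remains to descend to $F_\pp$. Since $\pp\notin P_0$, Proposition~\ref{labacu} gives $D_\pp\cong{\rm Mat}_{n\times n}(A_\pp)$, so $\Gamma_\pp\subset\GL_n(A_\pp)\subset\GL_n(F_\pp)$ and $\Gamma_\pp^{\geom}\subset\SL_n(A_\pp)$. Let $G$ denote the Zariski closure of $\Gamma_\pp$ in $\GL_{n,F_\pp}$, a closed subgroup defined over $F_\pp$ with $\Gamma_\pp\subset G(F_\pp)$. Base-changing along $F_\pp\hookrightarrow F'_{\pp'}$, the group $G\times_{F_\pp}F'_{\pp'}$ contains the image of $\Gamma_\pp$ in $\GL_n(F'_{\pp'})$, which by the first paragraph equals $\rho_{\pp'}(G_K)$ and is Zariski dense in $\GL_{n,F'_{\pp'}}$. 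Hence $G\times_{F_\pp}F'_{\pp'}=\GL_{n,F'_{\pp'}}$, and since $\GL_{n,F_\pp}$ is irreducible of dimension $n^2$ this forces $G=\GL_{n,F_\pp}$; this descent step is exactly \cite{PinIII}, Lemma~3.7. Replacing $\GL$, $\Gamma_\pp$ and $\rho_{\pp'}(G_K)$ throughout by $\SL$, $\Gamma_\pp^{\geom}$ and $\rho_{\pp'}(G_K^{\geom})$ gives the Zariski density of $\Gamma_\pp^{\geom}$ in $\SL_{n,F_\pp}$.

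All of the arithmetic substance is in the cited theorem \cite{PinI}, Theorem~1.1; the remaining work is formal, namely the transfer of its hypotheses from $\phi$ to $\phi'$ and the faithfully flat descent of Zariski density along $F'_{\pp'}/F_\pp$ recorded in \cite{PinIII}, Lemma~3.7. I expect the hypothesis transfer — in particular checking that $\phi'$ inherits the non-growth condition~\ref{asses}~(d) — to be the more delicate of the two.
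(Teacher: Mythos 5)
Your proposal matches the paper's own proof, which is exactly this combination: Proposition \ref{boalo}(c) identifies the action on $V_{\pp'}(\phi')$ as induced by $\rho_\pp$, \cite{PinI}, Theorem 1.1 applied to $\phi'$ gives the density over $F'_{\pp'}$, and \cite{PinIII}, Lemma 3.7 performs the descent to $F_\pp$. The only superfluous point is your worry about transferring Assumption \ref{asses}(d) to $\phi'$: the cited density theorem requires only $\End_{K^{\text{sep}}}(\phi')=A'$ (which you verify via the maximal commutative subfield argument), not a non-growth condition, so no such transfer is needed.
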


The next result concerns the image of the group ring. By \cite{PinIII}, Theorem B, in the case that $K$ has transcendence degree $1$, and by \cite{PR1}, Theorem 0.2, in the general case, we know:

\begin{thm}\label{split_at_p}
For almost all primes $\pp$  of $A$ we have $A_\pp[\Gamma_\pp] ={\rm Mat}_{n\times n}(A_\pp)$.
\end{thm}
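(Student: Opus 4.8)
The plan is to reduce the statement to the known results \cite{PinIII}, Theorem B (for $K$ of transcendence degree $1$) and \cite{PR1}, Theorem 0.2 (in general), which establish exactly this for Drinfeld modules with trivial endomorphism ring, using the Zariski density of Theorem \ref{Zariski_dense} as the starting point. First I would fix a prime $\pp\notin P_0$, so that $D_\pp\cong{\rm Mat}_{n\times n}(A_\pp)$ is a maximal $A_\pp$-order in the central simple algebra $D_\pp\otimes_{A_\pp}F_\pp\cong{\rm Mat}_{n\times n}(F_\pp)$ of Proposition \ref{boalo} (a). The group ring $A_\pp[\Gamma_\pp]$ is an $A_\pp$-subalgebra of ${\rm Mat}_{n\times n}(A_\pp)$; it is a finitely generated $A_\pp$-module because that ring is noetherian, and it spans ${\rm Mat}_{n\times n}(F_\pp)$ over $F_\pp$ because $\Gamma_\pp$ is Zariski dense in $\GL_{n,F_\pp}$. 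Thus $A_\pp[\Gamma_\pp]$ is an $A_\pp$-order in ${\rm Mat}_{n\times n}(F_\pp)$ contained in the maximal order ${\rm Mat}_{n\times n}(A_\pp)$, and the entire content of the theorem is that this inclusion is an equality for almost all $\pp$.

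Next I would transport the question to the auxiliary Drinfeld $A'$-module $\phi'$ of rank $n$ fixed in Subsection \ref{tatess}, which satisfies $\End_{K^{\text{sep}}}(\phi')=A'$. Since $A$ is a Dedekind domain and $A'$ is a finitely generated torsion-free $A$-module, the inclusion $A\hookrightarrow A'$ is finite and flat, so $A_\pp\otimes_AA'\cong\prod_{\pp'|\pp}A'_{\pp'}$ is a faithfully flat $A_\pp$-algebra. For each $\pp'|\pp$, refining Proposition \ref{boalo} (b) and (c) (which is harmless since $\pp\notin P_0$) identifies the base change $D_\pp\otimes_{A_\pp}A'_{\pp'}\cong{\rm Mat}_{n\times n}(A'_{\pp'})\cong\End_{A'_{\pp'}}(T_{\pp'}(\phi'))$ with the endomorphism ring of the $\pp'$-adic Tate module of $\phi'$ in such a way that the image of $\Gamma_\pp$ in $\GL_n(A'_{\pp'})$ becomes $\rho_{\pp'}(G_K)$. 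Hence $A_\pp[\Gamma_\pp]\otimes_{A_\pp}A'_{\pp'}\cong A'_{\pp'}[\rho_{\pp'}(G_K)]$, and by faithful flatness $A_\pp[\Gamma_\pp]={\rm Mat}_{n\times n}(A_\pp)$ holds if and only if $A'_{\pp'}[\rho_{\pp'}(G_K)]={\rm Mat}_{n\times n}(A'_{\pp'})$ for every $\pp'|\pp$. As each prime of $A'$ lies over a single prime of $A$ and only finitely many primes of $A'$ lie over any given prime of $A$, it therefore suffices to prove that $A'_{\pp'}[\rho_{\pp'}(G_K)]={\rm Mat}_{n\times n}(A'_{\pp'})$ for almost all primes $\pp'$ of $A'$.

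Since $\phi'$ has rank $n$ and trivial endomorphism ring over $K^{\text{sep}}$, this last statement is \cite{PinIII}, Theorem B when $K$ has transcendence degree $1$, and \cite{PR1}, Theorem 0.2 in general, which finishes the plan. (Equivalently, one could unwind one step further: as $A_\pp$ is local with maximal ideal $\pp A_\pp$ and ${\rm Mat}_{n\times n}(A_\pp)$ is a finitely generated $A_\pp$-module, Nakayama's lemma reduces the desired equality to $k_\pp[\Delta_\pp]={\rm Mat}_{n\times n}(k_\pp)$, where $k_\pp:=A/\pp$ is the residue field and $\Delta_\pp$ is the image of $\Gamma_\pp$ in $\GL_n(k_\pp)$; by Burnside's theorem this amounts to $\Delta_\pp$ acting absolutely irreducibly on $k_\pp^n$, which via the identification above is the absolute irreducibility of the residual representation of $\phi'$, known for almost all $\pp$ by \cite{PinIII}.) The main obstacle is precisely this input: passing from the rational fullness supplied by Zariski density to integral fullness for almost all $\pp$ — equivalently, residual absolute irreducibility for almost all $\pp$ — is the deep part, supplied wholesale by \cite{PinIII} and \cite{PR1}; everything else above is formal bookkeeping, the only minor point being to confirm that $\phi'$, as constructed, meets the hypotheses of those theorems, which is clear from $\End_{K^{\text{sep}}}(\phi')=A'$.
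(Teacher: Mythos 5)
Your proposal is correct, but it is more roundabout than the paper's own treatment, which is a bare citation: \cite{PinIII}, Theorem B (for transcendence degree $1$) and \cite{PR1}, Theorem 0.2 (in general) are stated for arbitrary endomorphism rings and assert precisely that $A_\pp[\rho_\pp(G_K)]$ equals the commutant of $R_\pp$ in $\End_{A_\pp}(T_\pp(\phi))$ for almost all $\pp$, i.e.\ equals $D_\pp \cong {\rm Mat}_{n\times n}(A_\pp)$ for $\pp\not\in P_0$, so no passage to $\phi'$ is needed. Your detour through the auxiliary module $\phi'$ with $\End_{K^{\text{sep}}}(\phi')=A'$ is nevertheless sound and in the spirit of how the paper itself transports other inputs (Theorems \ref{openness} and \ref{field_trad}) via $\phi'$, and the faithfully flat descent from $\prod_{\pp'|\pp}A'_{\pp'}[\rho_{\pp'}(G_K)]$ back to $A_\pp[\Gamma_\pp]$ is fine. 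The one imprecision is the parenthetical claim that the integral identification $D_\pp\otimes_{A_\pp}A'_{\pp'}\cong\End_{A'_{\pp'}}(T_{\pp'}(\phi'))$, with the image of $\Gamma_\pp$ matching $\rho_{\pp'}(G_K)$, is ``harmless since $\pp\not\in P_0$'': Proposition \ref{boalo} gives this only rationally, and integrally one must additionally exclude the finitely many primes dividing the cokernel of the isogeny $f$ (and those where $A'\otimes_AA_\pp$ fails to lie in $R_\pp$ or to be maximal); since only finitely many primes are affected, the ``almost all'' conclusion is untouched, but the exclusion set is larger than $P_0$. In exchange, your route needs the cited group-ring theorems only in the trivial-endomorphism-ring case, whereas the paper's direct citation relies on (and has) their full generality.
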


Let $k_\pp := A/\pp$ denote the residue field of $\pp$, and let $\overline{\rho}_\pp: G_K\to \GL_n(k_\pp)$ denote the reduction of $\rho_\pp$ modulo $\pp$. 
% called the residual representation 
Theorem \ref{split_at_p} immediately implies:

\begin{cor}\label{abs_irred}
For almost all primes $\pp$  of $A$ the representation $\overline{\rho}_\pp$  on $k_\pp^n$ is absolutely irreducible.
\end{cor}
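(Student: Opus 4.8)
The plan is to deduce the statement directly from Theorem \ref{split_at_p} by reducing modulo $\pp$ and invoking Burnside's theorem on matrix algebras. Let $P_1$ be the union of $P_0$ with the finite set of primes for which the conclusion of Theorem \ref{split_at_p} fails; I claim that $\overline{\rho}_\pp$ is absolutely irreducible for every $\pp\not\in P_1$. For such $\pp$ we have $\Gamma_\pp := \rho_\pp(G_K)\subset\GL_n(A_\pp)$ and $A_\pp[\Gamma_\pp] = {\rm Mat}_{n\times n}(A_\pp)$. First I would observe that the reduction homomorphism $A_\pp\twoheadrightarrow k_\pp$ induces a surjection of matrix rings ${\rm Mat}_{n\times n}(A_\pp)\twoheadrightarrow{\rm Mat}_{n\times n}(k_\pp)$ which carries $\Gamma_\pp$ onto $\overline{\rho}_\pp(G_K)$; since forming the subring generated by a subset is preserved by surjective ring homomorphisms, the image of $A_\pp[\Gamma_\pp]$ is exactly the $k_\pp$-subalgebra $k_\pp[\overline{\rho}_\pp(G_K)]$ of ${\rm End}_{k_\pp}(k_\pp^n)$ generated by $\overline{\rho}_\pp(G_K)$. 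Hence $k_\pp[\overline{\rho}_\pp(G_K)] = {\rm Mat}_{n\times n}(k_\pp) = {\rm End}_{k_\pp}(k_\pp^n)$.

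It then remains to upgrade this to absolute irreducibility. Extending scalars to an algebraic closure $\overline{k}_\pp$ of $k_\pp$ gives $\overline{k}_\pp[\overline{\rho}_\pp(G_K)] = {\rm End}_{\overline{k}_\pp}(\overline{k}_\pp^n)$, so $\overline{k}_\pp^n$ is a simple module over the group algebra of $G_K$, i.e.\ $\overline{\rho}_\pp$ is irreducible over $\overline{k}_\pp$; by definition this means that $\overline{\rho}_\pp$ on $k_\pp^n$ is absolutely irreducible. (Equivalently, one may dispense with the extension of scalars altogether: a representation of a group over a field $k$ whose image spans ${\rm Mat}_{n\times n}(k)$ is automatically absolutely irreducible, since $k_\pp^n$ is then the unique simple module over the simple algebra ${\rm Mat}_{n\times n}(k_\pp)$ and its commutant is $k_\pp$.) I do not expect any real obstacle here; the only point that warrants a word of justification is the compatibility of the passage to the generated subalgebra with reduction modulo $\pp$, which is immediate because that reduction is a surjective ring homomorphism carrying the identity matrix to the identity matrix.
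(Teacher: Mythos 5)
Your proposal is correct and follows exactly the route the paper intends: the paper states the corollary as an immediate consequence of Theorem \ref{split_at_p}, and your argument (reduce $A_\pp[\Gamma_\pp]={\rm Mat}_{n\times n}(A_\pp)$ modulo $\pp$, note that generation of the subalgebra commutes with the surjection, and conclude absolute irreducibility since the image spans the full matrix algebra) is precisely the expected fleshing-out of that implication, including the correct handling of the finite exceptional set of primes.
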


\begin{thm} \label{openness} 
For any finite set $P$ of primes $\not=\pp_0$ of $A$, consider the combined representation
$$\rho_{P} := (\rho_\pp)_\pp:\ 
  G_K \longrightarrow \prod_{\pp\in P} D_\pp^\times 
      \ \subset\ \prod_{\pp\in P} \GL_r(A_\pp).$$
% We denote the image of $\rho_P$ by $\Gamma_P$ and the image of $G_K^{\geom}$ under $\rho_P$ by $\Gamma^{\geom}_P$.
Then $\rho_{P}(G_K^{\geom})$ has finite index in $\prod_{\pp\in P} D^1_\pp$.
\end{thm}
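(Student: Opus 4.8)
The plan is to obtain the theorem from the main result of \cite{PinII}, which determines, up to commensurability, the image of the Galois representation of a Drinfeld module of special characteristic at a finite set of places, under precisely the hypotheses in force here. First I would check that $\phi$, as normalized in Subsection~\ref{setup}, is of the type covered by that result: by Assumptions~\ref{asses}(a), (b) and~(d) the ring $R=\End_K(\phi)=\End_{K^{\text{sep}}}(\phi)$ has center~$A$ and satisfies $\End_{K^{\text{sep}}}(\phi|B)=R$ for every integrally closed infinite subring $B\subset A$. Then I would match the two formulations. As recalled in Subsection~\ref{tatess}, for each prime $\pp\neq\pp_0$ the homomorphism $\rho_\pp$ factors through $D_\pp^\times$, where $D_\pp$ is the commutant of $R_\pp=R\otimes_AA_\pp$ in $\End_{A_\pp}(T_\pp(\phi))$; hence $\rho_P$ takes values in $\prod_{\pp\in P}D_\pp^\times$, and, via Proposition~\ref{boalo} and the isogeny $f\colon\phi\to\phi'|A$, the subgroup $\prod_{\pp\in P}D_\pp^1$ is identified with the reduced-norm-$1$ part of this product. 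With these identifications, \cite{PinII} gives that $\rho_P(G_K)$ is commensurable with $\overline{\langle a_0\rangle}\cdot\prod_{\pp\in P}D_\pp^1$, equivalently with the subgroup of $\prod_{\pp\in P}D_\pp^\times$ whose reduced norm lies in $\overline{\langle a_0\rangle}$.

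It then remains to pass from $G_K$ to $G_K^{\geom}$. By Assumption~\ref{asses}(e) we have $\rho_P(G_K^{\geom})\subset\prod_{\pp\in P}D_\pp^1$, and by Proposition~\ref{finite} the image of $\rho_P(G_K^{\geom})$ under the reduced norm is finite. Intersecting the commensurability statement for $\rho_P(G_K)$ with $\prod_{\pp\in P}D_\pp^1$ shows that $\rho_P(G_K)\cap\prod_{\pp\in P}D_\pp^1$ has finite index in $\prod_{\pp\in P}D_\pp^1$; and since $G_K/G_K^{\geom}$ is pro-cyclic, topologically generated by $\Frob_\kappa$, whose reduced norm under $\rho_P$ has infinite order modulo the finite group formed by the reduced norm of $\rho_P(G_K^{\geom})$, the further index $\bigl[\rho_P(G_K)\cap\prod_{\pp\in P}D_\pp^1:\rho_P(G_K^{\geom})\bigr]$ is finite as well. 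Composing these two finite-index inclusions yields that $\rho_P(G_K^{\geom})$ has finite index in $\prod_{\pp\in P}D_\pp^1$, which is the assertion; if \cite{PinII} is instead stated directly for the geometric group, this step is unnecessary.

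The step needing the most care is precisely this last extraction: one must follow how the reduced-norm twist by $\overline{\langle a_0\rangle}$ interacts with the pro-cyclic quotient $G_K/G_K^{\geom}$, in order to conclude that $\rho_P(G_K^{\geom})$ is \emph{all} of the reduced-norm-$1$ part up to finite index rather than a proper closed subgroup of it. Should \cite{PinII} supply only the single-prime statement, the main obstacle instead becomes combining the primes of $P$: one would have to exclude a Goursat-type linkage between two distinct primes $\pp_1,\pp_2\in P$, and the natural route is to note that such a linkage would force the common characteristic polynomial of every Frobenius element (Proposition~\ref{F_charpol}) to be invariant under a topological ring isomorphism $A_{\pp_1}\stackrel{\sim}{\to}A_{\pp_2}$, contradicting the Zariski density of $\Gamma_\pp$ in $\GL_{n,F_\pp}$ from Theorem~\ref{Zariski_dense}.
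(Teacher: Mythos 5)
Your main route coincides with the paper's: Theorem \ref{openness} is deduced from \cite{PinII}, Theorems 6.1 and 6.2, where Assumption \ref{asses} (d) forces the subfield $E$ occurring in those theorems to equal $F$, so that the group supplied by \cite{PinII} is identified (via Proposition \ref{boalo}) with $\prod_{\pp\in P}(D_\pp\otimes_{A_\pp}F_\pp)^\times$ and its reduced-norm-one subgroup; since $\rho_P(G_K^{\geom})\subset\prod_{\pp\in P}D_\pp^1$ by Assumption \ref{asses} (e), and $\prod_{\pp\in P}D_\pp^1$ is a compact open subgroup of that norm-one group, commensurability with an open subgroup forces finite index. Two remarks: the paper first verifies that $\phi$ is not isotrivial (this is where $n\geq2$ and Proposition \ref{finite} enter) before invoking \cite{PinII}; and the statement actually used from \cite{PinII} is for the geometric group $G_K^{\geom}$, so, exactly as you anticipate, your second step is unnecessary, and so is the Goursat fallback (a linkage-exclusion argument is only needed later, in Proposition \ref{res_surj_fin}, and is carried out there with the ring of traces rather than with characteristic polynomials).

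That said, the descent you propose from a statement about $\rho_P(G_K)$ to one about $\rho_P(G_K^{\geom})$ is not valid as written, so it is fortunate that it is not needed. Put $\Gamma:=\rho_P(G_K)$, $\Gamma^{\geom}:=\rho_P(G_K^{\geom})$ and $N:=\prod_{\pp\in P}D_\pp^1$. Your intersection argument does give $[N:\Gamma\cap N]<\infty$, but the finiteness of $[\Gamma\cap N:\Gamma^{\geom}]$ does not follow from the infinite order of the reduced norm of a Frobenius element. Indeed, $(\Gamma\cap N)/\Gamma^{\geom}$ is exactly the kernel of the map induced by the reduced norm on the pro-cyclic group $\Gamma/\Gamma^{\geom}$, which is a quotient of $\Gal(\overline{\kappa}/\kappa)\cong\hat{\mathbb{Z}}$, while the image, the closure of the cyclic group generated by the reduced norm of $\rho_P(\Frob_\kappa)$ inside $\prod_{\pp\in P}A_\pp^\times$, is of the form (finite, of order prime to $p$) times $\mathbb{Z}_p$. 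A continuous surjection from such a pro-cyclic group onto a group of this shape can perfectly well have infinite kernel (for instance the entire prime-to-$p$ part of $\hat{\mathbb{Z}}$ dies). So from the arithmetic statement alone you would only learn that $\Gamma^{\geom}$ is a closed normal subgroup, with pro-cyclic quotient, of a finite-index subgroup of $N$; excluding an infinite such quotient would require a genuinely additional argument, and the paper sidesteps the issue by quoting \cite{PinII} directly for $G_K^{\geom}$.
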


\begin{proof}
Since $n\ge2$, Proposition \ref{finite} below shows that $\phi$ is not isomorphic over $K^{\text{sep}}$ to a Drinfeld module defined over a finite field. We may thus apply \cite{PinII}, Theorems 6.1 and 6.2. The subfield $E$ given there is contained in the center $F$ of $R\otimes_AF$, such that $B := E\cap A$ is infinite and $\End_{K^{\text{sep}}}(\phi|B)\otimes_BE$ has center $E$. But by Assumption \ref{asses} (d) we have $\End_{K^{\text{sep}}}(\phi|B)=R$ with center $A$. Thus we must have $E=F$.

The group $G_Q(E_Q)$ described in \cite{PinII}, Theorem 6.2, is then the centralizer of $R\otimes_A\prod_{\pp\in P}F_\pp$ in $\prod_{\pp\in P}\Aut_{F_\pp}(V_\pp(\phi))$. In our situation it is therefore equal to $\prod_{\pp\in P}(D_\pp\otimes_{A_\pp}F_\pp)^\times$. 
The subgroup $G_Q^{\rm der}(E_Q)$ is the subgroup of elements of reduced norm $1$.
Theorem 6.1 of \cite{PinII} says that $\rho_{P}(G_K^{\geom})$ is commensurable to an open subgroup of $G_Q^{\rm der}(E_Q)$. Since $\rho_{P}(G_K^{\geom})$ is already contained in $\prod_{\pp\in P} D^1_\pp$ by Assumption \ref{asses} (e), which is an open compact subgroup of $G_Q^{\rm der}(E_Q)$, the index must be finite, as desired.
\end{proof}

%%%%%%%%%%%%%%%%%%%%%%%%%%%%%%%%%%%%%%%%%%%%%%%%%%%%%%%%%%%%%%%%%%%%%%

\subsection{Ring of traces} \label{ring_of_trances} 

Let $\Ad$ denote the adjoint representation of $\GL_n$. Proposition \ref{F_charpol} implies that the trace $\Tr\Ad(\rho_\pp(\Frob_x))$ lies in $F$ and is independent of $\pp$. We let $R^{\rm trad}$ denote the subring of $F$ generated by $\Tr\Ad(\rho_\pp(\Frob_x))$ for all closed points $x\in X$, and let $F^{\rm trad} \subset F$ denote the quotient field of $R^{\rm trad}$.

% \begin{defn}
% The subring of $F$ generated by $\Tr\Ad(\rho_\pp(\Frob_x))$ for all $x\in X$
% Let $\Sigma$ be any set of closed points $x \in X$ of Dirichlet density 1. 
% \end{defn}

\begin{thm} \label{field_trad}
Either $F^{\rm trad} = F$, or $n=p=2$ and $F^{\rm trad}=F^2 := \{x^2\mid x\in F\}$.
\end{thm}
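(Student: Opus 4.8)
The plan is to reduce the statement to a property of the single morphism $\Tr\Ad$ by combining the Chebotarev density theorem with the Zariski density already available in Theorem~\ref{Zariski_dense}, and then to identify the resulting subfield of $F$ using the characterization of residue fields by adjoint traces from \cite{PinII}, the compatibility of the system of characteristic polynomials, and the determinant bookkeeping provided by Assumption~\ref{asses}(e).

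First I would note that $\rho_\pp$ factors through $\pi_1^{\rm et}(X)$ (Subsection~\ref{model}), so by the Chebotarev density theorem for the arithmetic scheme $X$ the elements $\rho_\pp(\Frob_x)$, for $x$ running over the closed points of $X$, are dense in $\Gamma_\pp$ for the $\pp$-adic topology, hence Zariski dense in $\GL_{n,F_\pp}$ by Theorem~\ref{Zariski_dense}. Consequently the characteristic polynomials $f_x$ — which lie in $A[T]$ and are independent of $\pp$ by Proposition~\ref{F_charpol} — form a Zariski dense subset of the affine variety of monic degree-$n$ polynomials with unit constant term. Since $\Tr\Ad$ is a class function of $\GL_n$ it factors through this variety, and a check on a maximal torus shows that $\Tr\Ad\colon\GL_n\to\mathbb{A}^1$ is a dominant morphism in every characteristic, including the case $p=n=2$, where $\Tr\Ad(g)=\Tr(g)^2\det(g)^{-1}$ (the formula recorded before Theorem~\ref{trace_crit_2}, the $+2$ term vanishing in characteristic $2$) is visibly non-constant on the diagonal torus. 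Hence the values $\Tr\Ad(\rho_\pp(\Frob_x))$ are Zariski dense in $\mathbb{A}^1_{F_\pp}$, so $R^{\rm trad}$ is infinite; since an infinite subring of $F$ has transcendence degree $1$ over $\mathbb{F}_p$ and $F$ is finitely generated, $F$ is a finite extension of $F^{\rm trad}$.

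Next I would rule out a separable part. Since $n\ge2$ by Assumption~\ref{asses}(c), Proposition~\ref{finite} shows $\phi$ is not isomorphic over $K^{\rm sep}$ to a Drinfeld module over a finite field, so the characterization of $k_\pp$ by traces of Frobenius in the adjoint representation from \cite{PinII} applies: for almost all $\pp$ the reductions $\Tr\Ad(\rho_\pp(\Frob_x))\bmod\pp$ generate $k_\pp=A/\pp$, i.e.\ $R^{\rm trad}$ surjects onto $A/\pp$. If $F^{\rm trad}\subseteq F''\subseteq F$ were a subextension with $F''/F^{\rm trad}$ separable of degree $>1$, then by Chebotarev (applied to the Galois closure of $F''/F^{\rm trad}$) a positive-density set of primes $\pp$ would satisfy $[A/\pp : R^{\rm trad}/(\pp\cap R^{\rm trad})]>1$, contradicting this surjectivity; hence $F/F^{\rm trad}$ is purely inseparable, and together with the surjectivity onto each $A/\pp$ one gets $F^p\subseteq F^{\rm trad}$. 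To finish, I would use that $\Tr\Ad$ equals, up to an additive constant in $\mathbb{F}_p$, the function $g\mapsto\Tr(g)\Tr(g^{-1})$, where $\Tr(g^{-1})$ is a ratio of coefficients of the characteristic polynomial. When $(p,n)\ne(2,2)$ this expression genuinely involves distinct coefficients of the $f_x$, which themselves generate $F$ by the compatible-system structure, and one checks it cannot collapse onto a proper $p$-power subfield, so $F^{\rm trad}=F$. When $p=n=2$ one has $\Tr\Ad(g)=\Tr(g)^2\det(g)^{-1}$, so the adjoint traces are squares up to the determinant; using Assumption~\ref{asses}(e), which forces $\det\rho_\pp$ to factor through the pro-cyclic group $\Gal(\overline{\kappa}/\kappa)$ and hence to be controlled by a single element, one shows $F^2\subseteq F^{\rm trad}$, and since $[F:F^2]=2$ this leaves exactly $F^{\rm trad}=F$ or $F^{\rm trad}=F^2$.

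The main obstacle is this last step: showing that the purely inseparable defect of $F/F^{\rm trad}$ is trivial unless $p=n=2$, and determining it precisely in the exceptional case. The shape $\Tr\Ad(g)=\Tr(g)\Tr(g^{-1})$ makes it plausible that the adjoint traces see all of $F$ rather than a proper $p$-power subfield, but making this rigorous requires knowing that the $f_x$ themselves — not merely their adjoint traces — generate $F$, and in the case $p=n=2$ it requires careful use of Assumption~\ref{asses}(e) to pin down the determinant contribution; the separable step, by contrast, is routine once the trace characterization from \cite{PinII} is granted.
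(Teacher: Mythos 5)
Your proposal does not close the theorem; it both skips the reduction on which the paper's proof rests and leaves the actual content asserted rather than proved. The paper's argument is a transfer: it applies Theorems 1.2 and 1.3 of \cite{PinII} to the auxiliary Drinfeld $A'$-module $\phi'$ of Subsection \ref{tatess}, uses Assumption \ref{asses} (d) to identify the subfield $E$ furnished by \cite{PinII} with $F$, and uses Proposition \ref{boalo} to identify the adjoint traces of Frobenius for $\phi'$ with $\Tr\Ad(\rho_\pp(\Frob_x))$; the dichotomy ``$F^{\rm trad}=F$, or $n=p=2$ and $F^{\rm trad}=F^2$'' is then exactly \cite{PinII}, Theorem 1.3. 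You never perform this reduction and never invoke Assumption \ref{asses} (d), although without it the statement is false (the trace field can be a proper subfield $E\subsetneq F$ when endomorphisms grow under restriction); your unproved claim that ``the coefficients of the $f_x$ generate $F$ by the compatible-system structure'' is precisely where this hypothesis must enter. Moreover, your key input --- that $\Tr\Ad(\rho_\pp(\Frob_x))\bmod\pp$ generates $k_\pp$ for almost all $\pp$ --- is circular in this paper: that is Proposition \ref{residue_field_trad1} (a), which is deduced from Theorem \ref{field_trad} via Proposition \ref{Rtradprop}, not an independent result you may quote here.

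Even granting that residual surjectivity, two steps fail. First, surjectivity onto the residue fields cannot control the purely inseparable part of $F/F^{\rm trad}$: finite fields are perfect, so any subfield $F^{p^m}$ (and a finite-index subring of $A_0^{p^m}$) still surjects onto every $k_\pp$; hence your inference ``together with the surjectivity onto each $A/\pp$ one gets $F^p\subseteq F^{\rm trad}$'' does not follow, and the inseparable degree remains unbounded. Second, the decisive claims --- that for $(p,n)\neq(2,2)$ the adjoint traces cannot all lie in a proper $p$-power subfield, and that for $p=n=2$ one has $F^2\subseteq F^{\rm trad}$ --- are exactly the substance of the theorem and are only asserted (``one checks \dots''); the sketch via $\Tr\Ad(g)=\Tr(g)^2\det(g)^{-1}$ and Assumption \ref{asses} (e) explains at best why the traces are squares, not why they generate all of $F$ respectively $F^2$. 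By contrast, your opening step (Frobenius density plus Theorem \ref{Zariski_dense} and Proposition \ref{F_charpol} to see that $R^{\rm trad}$ is infinite, hence $F/F^{\rm trad}$ finite) and the Chebotarev argument excluding a nontrivial separable subextension are sound in outline, but they rest on the circular input above and in any case only reduce the problem to the part that is left unproved.
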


\begin{proof}
Applying \cite{PinII}, Theorem 1.2, to the Drinfeld $A'$-module $\phi'$ from Subsection \ref{tatess} yields a subfield $E\subset F'$, which by Assumption \ref{asses} (d) turns out to be $F$. (One may equivalently combine \cite{PinII}, Theorem 1.1, for $\phi'$ with Theorem \ref{openness} above.) Thus by \cite{PinII}, Theorem 1.3, the subfield generated by the traces of Frobeniuses in the adjoint representation associated to $\phi'$ has the desired properties. But by Proposition \ref{boalo}, those traces are just $\Tr\Ad(\rho_\pp(\Frob_x))$; hence this subfield is $F^{\rm trad}$.
\end{proof}

As the following proposition shows, the second case in Theorem \ref{field_trad} really does occur:

\begin{prop}\label{Rtradsmall} 
Let $\kappa'\subset\overline{\kappa}$ denote the extension of degree $2$ of the constant field $\kappa$. If $n=p=2$, then after replacing $K$ by $K\kappa'$, we have $F^{\rm trad}=F^2$.
\end{prop}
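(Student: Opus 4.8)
The plan is to prove that, after replacing $K$ by $K\kappa'$, one has $R^{\rm trad}\subseteq F^2$; since the hypotheses of Theorem~\ref{main_theorem} (equivalently Assumptions~\ref{asses}) survive any finite extension of $K$ — and here $G_K^{\geom}$ is in fact unchanged, because $\kappa'\subset\overline\kappa$, while a smooth projective model of $K\kappa'$ is obtained by base change — Theorem~\ref{field_trad} then leaves only the option $F^{\rm trad}=F^2$. The first reduction uses that, in characteristic~$2$, the identity recalled just before Theorem~\ref{trace_crit_2} reads $\Tr\Ad(g)=\Tr(g)^2\det(g)^{-1}$ for all $g\in\GL_2$, the summand~$2$ vanishing. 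Thus for $\pp\notin P_0$ and $g=\rho_\pp(\Frob_x)\in D_\pp^\times\cong\GL_2(A_\pp)$, writing $a_x:=\Tr\rho_\pp(\Frob_x)$ and $b_x:=\det\rho_\pp(\Frob_x)$ — elements of $A$, independent of $\pp$, by Proposition~\ref{F_charpol} — one gets $\Tr\Ad(\rho_\pp(\Frob_x))=a_x^2/b_x$ with $b_x\neq0$. As $a_x^2\in F^2$ and $F^2$ is a field, it suffices to show $b_x\in F^2$ for every closed point $x$ of the model of $K\kappa'$.

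Next I would study the determinant character $\det\rho_\pp\colon G_K\to A_\pp^\times$ for $\pp\notin P_0$. Under $D_\pp\cong{\rm Mat}_{2\times2}(A_\pp)$ this determinant is the reduced norm of $D_\pp$, which is trivial on $D_\pp^1$; by Assumption~\ref{asses}~(e) we have $\rho_\pp(G_K^{\geom})\subset D_\pp^1$, so $\det\rho_\pp$ is trivial on $G_K^{\geom}$ and factors through $G_K/G_K^{\geom}\cong\Gal(\overline\kappa/\kappa)$, which is pro-cyclic with topological generator $\Frob_\kappa$. Hence for a closed point $x$ with residue field $\kappa_x$, the Frobenius $\Frob_x$ has image $\Frob_\kappa^{[\kappa_x:\kappa]}$ in $\Gal(\overline\kappa/\kappa)$, and therefore $b_x=\det\rho_\pp(\Frob_x)=c^{[\kappa_x:\kappa]}$ with $c:=\det\rho_\pp(\Frob_\kappa)\in A_\pp^\times$. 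This is exactly where passing to $K\kappa'$ enters: a closed point $x$ of a model over $K\kappa'$ has residue field containing $\kappa'$, so $[\kappa_x:\kappa]=2[\kappa_x:\kappa']$ is even, whence $b_x=\bigl(c^{[\kappa_x:\kappa']}\bigr)^2\in(A_\pp^\times)^2\subset F_\pp^2$.

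The remaining task is to descend from $F_\pp$ to $F$. I would prove the elementary fact that $F\cap F_\pp^2=F^2$ for any prime $\pp$ of $F$: if $b\in F$ equals $z^2$ with $z\in F_\pp$, then $z$ is the unique square root of $b$ in the purely inseparable extension $F^{1/2}:=\{w\in\overline F\mid w^2\in F\}$, of degree $[F:F^2]=2$; were $z\notin F$, we would get $F^{1/2}=F(z)\subset F_\pp$, so a uniformizer $\varpi$ at $\pp$ would satisfy $\varpi^{1/2}\in F_\pp$, i.e.\ $\varpi\in F_\pp^2$, contradicting that elements of $F_\pp^2$ have even $\pp$-adic valuation; hence $z\in F$ and $b\in F^2$. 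Applying this with $b=b_x$ yields $b_x\in F^2$, so $\Tr\Ad(\rho_\pp(\Frob_x))=a_x^2/b_x\in F^2$ for all closed points $x$; thus $R^{\rm trad}\subset F^2$, and Theorem~\ref{field_trad} gives $F^{\rm trad}=F^2$.

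The mathematical input here is light. The only points that will need care are the routine verifications that Assumptions~\ref{asses} persist under the base change from $K$ to $K\kappa'$, and that the two determinants in play — the one on $\GL_r$ attached to $\rho_{\text{ad}}$ and the one on $\GL_2$ coming from $D_\pp^\times\cong\GL_2(A_\pp)$ — are matched up correctly, which is why I formulate the argument via the reduced norm of $D_\pp$. I foresee no real obstacle.
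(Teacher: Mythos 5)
Your argument is correct and follows essentially the same route as the paper's proof: the characteristic-$2$ identity $\Tr\Ad(g)=\Tr(g)^2\det(g)^{-1}$, the observation via Assumption \ref{asses}~(e) that $\det\circ\rho_\pp$ factors through $\Gal(\overline{\kappa}/\kappa)$ so that Frobenius determinants become squares after passing to $K\kappa'$, and then $F\cap F_\pp^2=F^2$ together with Theorem \ref{field_trad}. The only difference is cosmetic: you spell out the descent $F\cap F_\pp^2=F^2$ and the bookkeeping with $a_x,b_x$, which the paper leaves implicit.
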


\begin{proof}
In characteristic $p=2$, let ${\rm \std}^{(2)}$ denote the pullback under $\Frob_2$ of the standard representation of $\GL_2$, and let $\det:\GL_2\to{\mathbb G}_m$ denote the determinant. Then the adjoint representation of $\GL_2$ is an extension of ${\rm \std}^{(2)} \otimes \det^{-1}$ with two copies of the trivial representation of dimension~$1$. Thus for every $g\in \GL_2$ we have $\Tr\Ad(g) = \Tr(g)^2\cdot\det(g)^{-1} + 2$.

Recall from Assumption \ref{asses} (e) that $\rho_\pp(G_K^{\geom}) \subset \SL_n(A_\pp)$. Thus $\det\circ\rho_\pp$ factors through a homomorphism $\Gal(\overline{\kappa}/\kappa) \to A_\pp^\times$. Its value on any element of $\Gal(\overline{\kappa}/\kappa')$ is therefore a square. After replacing $K$ by $K\kappa'$ we find that $\Tr\Ad(\rho_\pp(\Frob_x)) \in F\cap F_\pp^2 = F^2$ for every closed point $x\in X$. Thus now only the second case in Theorem \ref{field_trad} is possible.
% But $X' := X\times_\kappa\kappa'$ has function field $K\kappa'$, and for any closed point $x'\in X'$ the image of $\Frob_{x'}$ in $\Gal(\overline{\kappa}/\kappa)$ is a square.
\end{proof}

\begin{prop}\label{Rtradprop} 
Let $A_0$ be the ring of elements of $F$ which are regular outside $\pp_0$. Then either $R^{\rm trad}$ is a subring of finite index of $A_0$, or $n=p=2$ and $R^{\rm trad}$ is a subring of finite index in $A_0^2 := \{x^2\mid x\in A_0\}$.
\end{prop}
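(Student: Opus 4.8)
The plan is to establish, in two steps, that $R^{\rm trad}$ lies inside $A_0$ (or inside $A_0^2$ in the exceptional case) and that the inclusion has finite index. The first step is a valuation estimate on Frobenius eigenvalues; the second combines Theorem~\ref{field_trad}, which pins down the fraction field of $R^{\rm trad}$, with standard finiteness properties of function fields.

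\emph{Containment.} Fix a closed point $x\in X$ and a prime $\pp\not\in P_0$ with $\pp\ne\pp_0$, and let $\alpha_1,\dots,\alpha_n\in\overline F$ be the roots of $f_x$ as in Proposition~\ref{F_eigenvalues}. Every eigenvalue of $\Ad(\rho_\pp(\Frob_x))$ is of the form $\alpha_i\alpha_j^{-1}$, so $\Tr\Ad(\rho_\pp(\Frob_x))$, which by Proposition~\ref{F_charpol} lies in $F$ and is independent of $\pp$, is a sum of such ratios. Now take any place $v$ of $F$ with $v\ne\pp_0$ and extend it to a valuation $\overline v$ of $\overline F$. By Proposition~\ref{F_eigenvalues}(a) and~(b) the values $\overline v(\alpha_1),\dots,\overline v(\alpha_n)$ all coincide (they equal $0$ when $v\ne\infty$, and a common negative number when $v=\infty$); hence $\overline v(\alpha_i\alpha_j^{-1})=0$ for all $i,j$, and therefore $\overline v\bigl(\Tr\Ad(\rho_\pp(\Frob_x))\bigr)\ge 0$. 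Thus $\Tr\Ad(\rho_\pp(\Frob_x))$ is regular at every place $\ne\pp_0$, i.e.\ lies in $A_0$, and consequently $R^{\rm trad}\subseteq A_0$.

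\emph{Finite index.} By Theorem~\ref{field_trad} the quotient field $F^{\rm trad}$ of $R^{\rm trad}$ is either $F$, or $n=p=2$ and it is $F^2$. In the latter case $R^{\rm trad}\subseteq A_0\cap F^2$, and $A_0\cap F^2=A_0^2$: indeed, if $w^2\in A_0$ then $2v(w)\ge 0$ for every place $v\ne\pp_0$, so $w\in A_0$. Via the Frobenius isomorphism $F\stackrel{\sim}{\to}F^2$ the ring $A_0^2$ is again the ring of elements of the function field $F^2$ that are regular outside a single place. So in both cases it suffices to prove the following: if $L$ is a finitely generated field of transcendence degree $1$ over $\mathbb{F}_p$, if $B\subset L$ is the ring of elements of $L$ regular outside one fixed place, and if $C\subseteq B$ is a subring with $\mathop{\rm Quot}(C)=L$, then $[B:C]<\infty$. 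To see this, pick a nonconstant element $t\in C$, which exists because $L$ is not a finite field; since $t\in B$, the only pole of $t$ is the distinguished place, so $B$ is the integral closure of $\mathbb{F}_p[t]$ in $L$, and hence a finite $\mathbb{F}_p[t]$-module. Then $C$ is an $\mathbb{F}_p[t]$-submodule of $B$, hence a finitely generated $\mathbb{F}_p[t]$-module, in particular a finitely generated $\mathbb{F}_p$-algebra and Noetherian. Finally $B/C$ is a finitely generated $C$-module that is torsion, because $\mathop{\rm Quot}(C)=\mathop{\rm Quot}(B)$; so it has finite length over the one-dimensional Noetherian domain $C$, and since the residue fields of $C$ at its maximal ideals are finite, $B/C$ is finite. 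Applying this with $(B,L)=(A_0,F)$, respectively $(A_0^2,F^2)$, yields the proposition.

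I do not expect a genuine obstacle here: the argument is essentially the combination of the valuation data in Proposition~\ref{F_eigenvalues} with the field dichotomy in Theorem~\ref{field_trad}, plus elementary commutative algebra of function fields. The one point requiring a little care is the exceptional case $n=p=2$, where one descends along the purely inseparable degree-$2$ extension $F/F^2$, checks that $R^{\rm trad}$ indeed lands in $A_0^2$, and observes that $A_0^2$ still has the ``regular outside one place'' form, so that the same finiteness lemma applies verbatim.
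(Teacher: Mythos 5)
Your proof is correct and follows essentially the same route as the paper: containment $R^{\rm trad}\subset A_0$ via the valuations of the eigenvalue ratios from Proposition~\ref{F_eigenvalues}, then finite index by picking a non-constant trace $t$ so that $A_0$ (resp.\ $A_0^2$) is the integral closure of $\mathbb{F}_p[t]$ and hence a finite $\mathbb{F}_p[t]$-module. The only difference is cosmetic: the paper concludes by saying $R^{\rm trad}$ has finite index in its normalization, which is $A_0$ or $A_0^2$, whereas you spell out the same finiteness through a small self-contained lemma about subrings with full quotient field.
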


\begin{proof}
Let $\alpha_1, \ldots, \alpha_n \in \overline{F}$ denote the eigenvalues of $\rho_\pp(\Frob_x)$. By Proposition \ref{F_eigenvalues} they have valuation $0$ at all places not above $\pp_0$ or $\infty$, and the same negative valuation at any place above $\infty$. Thus their ratios $\alpha_i/\alpha_j$ have trivial valuation at all places not above $\pp_0$. The sum over all $i,j$ of these ratios is therefore regular at all places $\not=\pp_0$. This sum is just $\Tr\Ad(\rho_\pp(\Frob_x))$, proving that $R^{\rm trad} \subset A_0$. 

By Theorem \ref{field_trad} the ring $R^{\rm trad}$ must contain some non-constant element $x$. Then $F$ is a finite field extension of ${\mathbb F}_p(x)$. Moreover, $x$ as an element of $F$ is regular outside $\pp_0$, and therefore $\pp_0$ is the unique place of $F$ above the place of ${\mathbb F}_p(x)$ where $x$ has a pole. This implies that $A_0$ is the integral closure of ${\mathbb F}_p[x]$ in~$F$. It is therefore a module of finite type over ${\mathbb F}_p[x]$, and so $R^{\rm trad}$ is a submodule that is again of finite type. In particular, $R^{\rm trad}$ is already generated by finitely many traces.

Also, it follows that $R^{\rm trad}$ is of finite index in its normalization. Depending on the case in Theorem \ref{field_trad}, this normalization is either $A_0$ or $A_0^2$, and we are done.
\end{proof}

By construction any prime $\pp\not=\pp_0$ of $A$ corresponds to a unique prime of $A_0$. Thus there are natural homomorphisms $R^{\rm trad} \hookrightarrow A_0 \hookrightarrow A_\pp  \twoheadrightarrow k_\pp$.
% $A_0 \twoheadrightarrow A/\pp^i \twoheadrightarrow k_\pp$ for any integer $i\geq1$. We are interested in their restriction to $R^{\rm trad}$:

\begin{prop}\label{residue_field_trad1} 
There exists a finite set $P^{\rm trad}$ of primes of $A$, containing $\pp_0$, such that:
\begin{enumerate}
\item[(a)] For any prime $\pp \not\in P^{\rm trad}$ of $A$, the homomorphism $R^{\rm trad} \to k_\pp$ is surjective.
\item[(b)] For any two disctinct primes $\pp_1,\pp_2 \not\in P^{\rm trad}$ of $A$, the homomorphism $R^{\rm trad} \to k_{\pp_1}\times k_{\pp_2}$ is surjective.
\item[(c)] For any prime $\pp \not\in P^{\rm trad}$ of $A$, the image of the homomorphism $R^{\rm trad} \to A_\pp$ is dense in $A_\pp$ if $F^{\rm trad} = F$, respectively dense in $A_\pp^2 := \{a^2\mid a\in A_\pp\}$ if $F^{\rm trad}=F^2$.
\end{enumerate}
\end{prop}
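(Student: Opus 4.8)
The plan is to reduce the whole statement to the single fact that $R^{\rm trad}$ sits as a subring of finite index inside a Dedekind domain $R'$ whose maximal ideals, away from a finite set, are the $\mathfrak{q}_\pp$ for primes $\pp \neq \pp_0$ of $A$, with $R'/\mathfrak{q}_\pp \cong k_\pp$, and then to exploit the resulting nonvanishing of the conductor. By Theorem~\ref{field_trad} and Proposition~\ref{Rtradprop} there are two cases: either $F^{\rm trad} = F$ and $R^{\rm trad}$ has finite index in $R' := A_0$, or $n = p = 2$, $F^{\rm trad} = F^2$, and $R^{\rm trad}$ has finite index in $R' := A_0^2 = \{a^2\mid a\in A_0\}$. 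In the second case $R'$ is still a Dedekind domain, via the ring isomorphism $a \mapsto a^2$ from $A_0$ onto $A_0^2$, and I take $\mathfrak{q}_\pp \subset R'$ to be the image under this isomorphism of the prime of $A_0$ corresponding to $\pp$; then $R'/\mathfrak{q}_\pp \cong k_\pp$ because the residue extension of the purely inseparable inclusion $A_0^2 \subseteq A_0$ at any prime is algebraic over a finite field, hence separable, hence trivial. In both cases $R'$ is module-finite over $R^{\rm trad}$ with the same fraction field — both are module-finite over $\mathbb{F}_p[x]$ for a non-constant $x \in R^{\rm trad}$, exactly as in the proof of Proposition~\ref{Rtradprop} — so the conductor $\mathfrak{c} := \{a \in R' \mid aR' \subseteq R^{\rm trad}\}$ is a nonzero ideal of $R'$, obtained by clearing denominators in a finite generating set of $R'$ over $R^{\rm trad}$. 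Hence $\mathfrak{c}$ is contained in only finitely many maximal ideals of $R'$.

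I would then take $P^{\rm trad}$ to be $\{\pp_0\}$ together with the finitely many primes $\pp \neq \pp_0$ of $A$ with $\mathfrak{c} \subseteq \mathfrak{q}_\pp$. For (a), given $\pp \notin P^{\rm trad}$, choose $a \in \mathfrak{c} \smallsetminus \mathfrak{q}_\pp$; then $aR' \subseteq R^{\rm trad}$ and $a$ is a unit in the field $R'/\mathfrak{q}_\pp$, so the image of $R^{\rm trad}$ in $R'/\mathfrak{q}_\pp \cong k_\pp$ already contains $a\cdot(R'/\mathfrak{q}_\pp)$, which is all of it. For (b), given distinct $\pp_1,\pp_2 \notin P^{\rm trad}$, prime avoidance supplies $a \in \mathfrak{c} \smallsetminus (\mathfrak{q}_{\pp_1}\cup\mathfrak{q}_{\pp_2})$; since $R'/(\mathfrak{q}_{\pp_1}\cap\mathfrak{q}_{\pp_2}) \cong k_{\pp_1}\times k_{\pp_2}$ by the Chinese remainder theorem and $a$ is a unit there, the same argument gives surjectivity of $R^{\rm trad}$ onto the product.

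For (c) I would argue with closures inside $A_\pp$. Let $C$ denote the closure of the image of $R^{\rm trad}$ in $A_\pp$. Since $R^{\rm trad} \subseteq R'$, and the closure of the image of $R'$ in $A_\pp$ is $A_\pp$ when $R' = A_0$ (density of $A_0$ in its completion) and is $A_\pp^2 := \{x^2 \mid x\in A_\pp\}$ when $R' = A_0^2$ (squaring is continuous, $A_0$ is dense in $A_\pp$, and $A_\pp$ is compact so $A_\pp^2$ is closed), we get $C \subseteq A_\pp$, resp.\ $C \subseteq A_\pp^2$. Conversely, with $a \in \mathfrak{c}\smallsetminus\mathfrak{q}_\pp$ as before, $a$ is a unit of $A_\pp$, resp.\ of $A_\pp^2$, and $aR' \subseteq R^{\rm trad}$, so $C$ contains the closure of $aR'$, which is $a$ times the closure of the image of $R'$, hence all of $A_\pp$, resp.\ all of $A_\pp^2$. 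Thus $C = A_\pp$, resp.\ $C = A_\pp^2$, which is the claim.

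The one place that will need care is the inseparable case $F^{\rm trad} = F^2$: I must check that primes, residue fields, and $\pp$-adic completions of $A_0^2$ match up with those of $A$, and that $A_0^2$ is genuinely a Dedekind ring. All of this follows cleanly from the Frobenius isomorphism $A_0 \stackrel{\sim}{\to} A_0^2$ together with the perfectness of finite fields, so I do not expect a real obstruction; the remainder is the standard conductor-and-localization bookkeeping.
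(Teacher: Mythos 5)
Your proposal is correct and follows essentially the same route as the paper: Proposition \ref{Rtradprop} gives a nonzero conductor (the paper uses a single nonzero annihilator $x$ of $A_0/R^{\rm trad}$, resp.\ $A_0^2/R^{\rm trad}$, and notes $R^{\rm trad}[x^{-1}]=A_0[x^{-1}]$, resp.\ $A_0^2[x^{-1}]$), and $P^{\rm trad}$ is taken to be $\pp_0$ together with the finitely many primes meeting that conductor. Your verifications of (a), (b), (c) via prime avoidance, the Chinese remainder theorem, and density of $A_0$ (resp.\ closedness of $A_\pp^2$) in $A_\pp$ are exactly the details the paper leaves implicit, including the correct handling of the case $F^{\rm trad}=F^2$ via the Frobenius isomorphism and perfectness of the residue fields.
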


\begin{proof}
Depending on the case, Proposition \ref{Rtradprop} implies that the annihilator of $A_0/R^{\rm trad}$, respectively the annihilator of $A_0^2/R^{\rm trad}$, as an $R^{\rm trad}$-module contains a non-zero element $x\in R^{\rm trad}$. Then $R^{\rm trad}[x^{-1}]$ is equal to $A_0[x^{-1}]$, respectively to $A_0^2[x^{-1}]$. Let $P^{\rm trad}$ be the finite set of primes of $A$ consisting of $\pp_0$ and all those dividing $x$ within $A_0$. Then $P^{\rm trad}$ has all the desired properties.
%  if $F^{\rm trad} = F$. If $p=2$ and $F^{\rm trad}=F^2$, the image of each homomorphism is the set of squares, i.e., the image of the Frobenius $u\mapsto u^2$ on the target ring. In (a) and (b) this Frobenius is an isomorphism, and in (c) it induces an isomorphism $A_\pp\to A_\pp^2$.
\end{proof}

%%%%%%%%%%%%%%%%%%%%%%%%%%%%%%%%%%%%%%%%%%%%%%%%%%%%%%%%%%%%%%%%%%%%%%

%\setcounter{section}{4}
\newpage

%%%%%%%%%%%%%%%%%%%%%%%%%%%%%%%%%%%%%%%%%%%%%%%%%%%%%%%%%%%%%%%%%%%%%%

% \section{Surjectivity of the residual representation}\label{sect_surjective}
\section{Proof of the main result}\label{sect_surjective}

In this section we prove Theorem \ref{main_theorem}. Subsections \ref{51} through \ref{54} deal with the image of the geometric Galois group $G_K^{\geom}$, while Subsection \ref{55} finishes with the image of the absolute Galois group~$G_K$.
We keep all the notations from the preceding section and impose Assumptions \ref{asses}.

%%%%%%%%%%%%%%%%%%%%%%%%%%%%%%%%%%%%%%%%%%%%%%%%%%%%%%%%%%%%%%%%%%%%%%

\subsection{Residual surjectivity at a single prime}\label{51}

Recall that $P_0$ denotes the finite set of primes excluded by Proposition \ref{labacu}. For any prime $\pp\not\in P_0$ of $A$, we let $\Delta_\pp^{\text{geom}} \triangleleft \Delta_\mathfrak{p}\subset\GL_n(k_\pp)$ denote the images of $G_K^{\text{geom}} \triangleleft G_K$ under the residual representation $\overline{\rho}_\pp$. Thus
% \begin{myequation}\label{eq:Delta}
$$\begin{array}{lll}
\Gamma_\pp 
& \twoheadrightarrow\ \Delta_\pp 
& \!\!\!\subset\ \GL_n(k_\pp), 
% \rlap{\quad\hbox{and}} 
\\[5pt]
\Gamma_\pp^{\text{geom}}\!\!\! 
& \twoheadrightarrow\ \Delta_\pp^{\text{geom}}
& \!\!\!\subset\ \SL_n(k_\pp),
\end{array}$$ 
% \end{myequation}%
and the quotient $\Delta_\pp/\Delta_\pp^{\text{geom}}$ is cyclic. We will prove that $\Delta_\pp^{{\geom}} = \SL_n(k_\pp)$ for almost all $\pp$. 

\begin{lem}\label{fgh_on_delta1}
Fix any integer $c\geq 1$, and let $f$ denote the morphism from (\ref{Rdef}). Then for almost all primes $\pp\not\in P_0$ of $A$, the map $\Delta_\pp \to k_\pp$, $\delta \mapsto f(\delta^c)$ is not identically zero.
\end{lem}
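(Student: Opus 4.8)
The plan is to argue by contradiction, using that $f$ evaluated on powers of Frobenius elements produces elements of $A$ independent of $\pp$, and to play this off against the Zariski density of $\Gamma_\pp$ in $\GL_{n,F_\pp}$ from Theorem \ref{Zariski_dense}.

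Assume the set $\mathcal{P}$ of primes $\pp\notin P_0$ for which $\delta\mapsto f(\delta^c)$ vanishes identically on $\Delta_\pp$ is infinite. Fix a closed point $x\in X$. For every $\pp\notin P_0$ the matrix $\rho_\pp(\Frob_x)\in\GL_n(A_\pp)$ has characteristic polynomial $f_x$, which by Proposition \ref{F_charpol} has coefficients in $A$ and is independent of $\pp$; the coefficients of the characteristic polynomial of $\rho_\pp(\Frob_x)^c$, being universal polynomials with integer coefficients in those of $f_x$, therefore also lie in $A$ and are independent of $\pp$. Since $f$ is given by a polynomial with integer coefficients in the coefficients of the characteristic polynomial, it follows that $f(\rho_\pp(\Frob_x)^c)$ equals a single element $a_x\in A$ for all $\pp\notin P_0$. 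Now for $\pp\in\mathcal{P}$ we have $\overline\rho_\pp(\Frob_x)\in\Delta_\pp$, so the hypothesis gives $f(\overline\rho_\pp(\Frob_x)^c)=0$; as this is the reduction of $a_x$ modulo $\pp$, we obtain $a_x\in\pp$. Since $\mathcal{P}$ is infinite and a non-zero element of $A$ lies in only finitely many primes, we conclude that $a_x=0$, and $x$ was arbitrary.

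Next fix any prime $\pp_1\notin P_0$, which exists because $P_0$ is finite. Then $f\bigl(\rho_{\pp_1}(\Frob_x)^c\bigr)=a_x=0$ in $F_{\pp_1}$ for every closed point $x\in X$. Since $\rho_{\pp_1}$ factors through $\pi_1^{\rm et}(X)$ by Subsection \ref{model}, the Chebotarev density theorem shows that the Frobenius conjugacy classes $\rho_{\pp_1}(\Frob_x)$ are dense in $\Gamma_{\pp_1}$ for the $\pp_1$-adic topology. The map $\gamma\mapsto f(\gamma^c)$ is continuous and depends only on the characteristic polynomial of $\gamma$, hence is conjugation-invariant; it therefore vanishes on the whole of the compact group $\Gamma_{\pp_1}$. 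But $\Gamma_{\pp_1}$ is Zariski dense in $\GL_{n,F_{\pp_1}}$ by Theorem \ref{Zariski_dense}, so the morphism $\GL_{n,F_{\pp_1}}\to\mathbb{A}^1_{F_{\pp_1}}$, $\gamma\mapsto f(\gamma^c)$ vanishes identically. This contradicts Lemma \ref{R_nontrivial} (with $L=F_{\pp_1}$ and $N=c$). Hence $\mathcal{P}$ is finite, as claimed.

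The step I expect to need the most care is the passage from ``$f(\gamma^c)=0$ on every Frobenius class'' to ``$f(\gamma^c)=0$ on all of $\Gamma_{\pp_1}$'': this combines density of Frobenius conjugacy classes (Chebotarev) with the fact that $\gamma\mapsto f(\gamma^c)$ is a \emph{continuous} class function, so the vanishing must first be established on the compact group $\Gamma_{\pp_1}$ for the $\pp_1$-adic topology, and only afterwards spread over $\GL_{n,F_{\pp_1}}$ by Zariski density. The other ingredients --- the $\pp$-independence of characteristic polynomials of powers of Frobenius, the divisibility argument forcing $a_x=0$, and the non-vanishing of $\gamma\mapsto f(\gamma^c)$ on $\GL_n$ (Lemma \ref{R_nontrivial}) --- are routine given the cited results.
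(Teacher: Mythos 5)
Your proof is correct and uses exactly the same ingredients as the paper's own argument (the $\pp$-independence of Frobenius characteristic polynomials from Proposition \ref{F_charpol}, density of Frobenius images in $\Gamma_\pp$, Theorem \ref{Zariski_dense}, and Lemma \ref{R_nontrivial}); the only difference is that the paper runs it directly, using Zariski density and Frobenius density at one auxiliary prime to produce a single closed point $x$ with $a:=f(\rho_\pp(\Frob_x)^c)\in A\smallsetminus\{0\}$, so that the exceptional primes are precisely those dividing $a$, whereas you argue by contradiction from an infinite set of bad primes. Both versions are sound and essentially the same proof.
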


\begin{proof}
Take any prime $\pp\not\in P_0$ of $A$. Then by Theorem \ref{Zariski_dense} together with Lemma \ref{R_nontrivial}, the map $\Gamma_\pp \to F_\pp$, $\gamma\mapsto f(\gamma^c)$ is not identically zero. Since this map is continuous and the images of Frobenius elements are dense in $\Gamma_\pp$, we may fix a closed point $x\in X$ such that $a := f(\rho_\pp(\Frob_x)^c)\not=0$. By the definition of $f$, this value is a polynomial with coefficients in ${\mathbb Z}$ in the coefficients of the characteristic polynomial of $\rho_\pp(\Frob_x)^c$. With Proposition \ref{F_charpol} it follows that $a$ lies in $A$ and is independent of $\pp$. In other words, having found $x$ and $a\in A\smallsetminus\{0\}$ with the help of \emph{some} auxiliary prime $\pp\not\in P_0$, we then have $f(\rho_\pp(\Frob_x)^c) = a$ for \emph{every} prime $\pp\not\in P_0$.

Thus for $\delta := \overline{\rho}_\pp(\Frob_x) \in \Delta_\pp$ we now deduce that $f(\delta^c) = a\mathop{\rm mod}\pp$. This is non-zero whenever $\pp\nmid a$; hence the desired assertion holds whenever $\pp\not\in P_0$ and $\pp\nmid a$.
\end{proof}

Let $\bar k_\pp$ denote an algebraic closure of $k_\pp$, and set $W_\pp := \phi[\pp]\otimes_{k_\pp}\bar k_\pp$. By Corollary \ref{abs_irred} this is an irreducible representation of $\Delta_\pp$ over $\bar k_\pp$ for all $\pp$ outside some finite set of primes $P^{\rm irr}$. By Theorem \ref{L-P3} there then exists a normal subgroup $\Delta'_\pp \triangleleft\Delta_\pp$ of index $\leq c'_n$, such that $\Delta'_\pp/Z(\Delta'_\pp)$ is a direct product of finite simple groups of Lie type in characteristic~$p$. We fix such a subgroup $\Delta'_\pp$ for every $\pp\not\in P^{\rm irr}$.

\begin{lem}\label{Wdecomp}
For almost all primes $\pp\not\in P^{\rm irr}$ of $A$, we have $W_\pp = W_{\pp,1}\oplus\ldots\oplus W_{\pp,m_\pp}$ for pairwise inequivalent irreducible representations $W_{\pp,i}$ of $\Delta'_\pp$.
\end{lem}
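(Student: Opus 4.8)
The plan is to analyze the restriction of the irreducible $\Delta_\pp$-representation $W_\pp$ to the finite-index normal subgroup $\Delta'_\pp$ using Clifford theory, and to show that for almost all $\pp$ the restriction is in fact multiplicity-free with pairwise inequivalent constituents. First I would invoke Clifford's theorem: since $W_\pp$ is irreducible over $\bar k_\pp$ and $\Delta'_\pp \triangleleft \Delta_\pp$, the restriction $W_\pp|_{\Delta'_\pp}$ decomposes as a direct sum of $\Delta_\pp$-conjugates of a single irreducible $\Delta'_\pp$-module, each appearing with a common multiplicity $e_\pp$, so $n = \dim W_\pp = e_\pp \cdot t_\pp \cdot \dim W_{\pp,1}$ where $t_\pp$ is the number of distinct conjugates. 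Thus $m_\pp = e_\pp t_\pp \le n$ and the $W_{\pp,i}$ are automatically permuted transitively by $\Delta_\pp$; the only thing to rule out is the multiplicity $e_\pp > 1$ (and then group the constituents so that inequivalent ones are separated).

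The key step is to exclude $e_\pp \ge 2$ for almost all $\pp$, and here I would use the trace criterion already set up, namely Lemma~\ref{fgh_on_delta1} with $c := c$ the least common multiple of the orders of the non-regular finite simple groups of Lie type (as in Theorem~\ref{finite_main1}), or more simply with $c$ chosen so that $\gamma^c \in \Delta'_\pp$ for every $\gamma\in\Delta_\pp$; since $[\Delta_\pp:\Delta'_\pp] \le c'_n$ is bounded, such a $c$ depending only on $n$ exists. If $e_\pp \ge 2$, then every $\delta \in \Delta'_\pp$ acts on $W_\pp$ with each eigenvalue occurring with multiplicity $\ge e_\pp \ge 2$; in particular, for every $\gamma \in \Delta_\pp$ the element $\gamma^c \in \Delta'_\pp$ has all eigenvalues of multiplicity $\ge 2$, whence $f(\gamma^c) = 0$ by Lemma~\ref{Rprop}~(a). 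This contradicts Lemma~\ref{fgh_on_delta1} for all $\pp$ outside the finite exceptional set produced there. Therefore $e_\pp = 1$ for almost all $\pp$, i.e. $W_\pp|_{\Delta'_\pp}$ is multiplicity-free.

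Finally, with $e_\pp = 1$ the decomposition $W_\pp = W_{\pp,1}\oplus\ldots\oplus W_{\pp,m_\pp}$ into the $\Delta_\pp$-conjugates of a single irreducible $\Delta'_\pp$-module has the property that the $W_{\pp,i}$ are pairwise \emph{inequivalent} (two of them being isomorphic would reintroduce multiplicity), which is exactly the assertion. I would also note that $m_\pp \ge 2$ is not claimed here and need not hold; the statement is about the shape of the decomposition, not its length. The main obstacle is purely the bookkeeping of fixing the constant $c$ uniformly in $\pp$ — this works because $c'_n$ from Theorem~\ref{L-P3} bounds $[\Delta_\pp:\Delta'_\pp]$ independently of $\pp$ — and then the whole argument reduces to the eigenvalue-multiplicity observation feeding into Lemma~\ref{Rprop}~(a). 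No further input from Drinfeld modules beyond Lemma~\ref{fgh_on_delta1} and Corollary~\ref{abs_irred} is needed.
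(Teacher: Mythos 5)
Your proposal is correct and follows essentially the same route as the paper: the paper's first paragraph is exactly the Clifford-type argument (the sum of the $\Delta_\pp$-conjugates of an irreducible $\Delta'_\pp$-submodule is $\Delta_\pp$-invariant, hence all of $W_\pp$), and its second paragraph rules out two equivalent summands -- i.e.\ multiplicity $\ge 2$ -- by observing that then every $\delta^{c'_n!}\in\Delta'_\pp$ has repeated eigenvalues, so $f(\delta^{c'_n!})=0$, contradicting Lemma~\ref{fgh_on_delta1}. Only note that the correct uniform exponent is your second suggestion, $c=c'_n!$ coming from the index bound $[\Delta_\pp:\Delta'_\pp]\le c'_n$ of Theorem~\ref{L-P3}, not the constant $c$ of Theorem~\ref{finite_main1}, which plays no role here.
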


\begin{proof}
Let $W_{\pp,1}$ be any irreducible representation of $\Delta'_\pp$ contained in $W_\pp$. Then the sum of the conjugates $\delta W_{\pp,1}$ for all $\delta\in\Delta_\pp$ is a non-zero $\Delta_\pp$-invariant subspace. By irreducibility it is therefore equal to $W_\pp$ for all $\pp\not\in P^{\rm irr}$. Thus $W_\pp$ is the direct sum of certain conjugates $\delta W_{\pp,1}$. 

It remains to show that these summands are pairwise inequivalent. For this suppose that $\delta_1W_{\pp,1}$ and $\delta_2W_{\pp,1}$ are distinct but equivalent as representations of $\Delta'_\pp$ for some $\delta_1,\delta_2\in\Delta_\pp$. Then for every $\delta\in\Delta_\pp$, we have $\delta^{c'_n!} \in \Delta'_\pp$, and this element has the same eigenvalues on $\delta_1W_{\pp,1}$ and $\delta_2W_{\pp,1}$. By Lemma \ref{Rprop} (a) we thus have $f(\delta^{c'_n!})=0$. But since $\delta\in\Delta_\pp$ is arbitrary, by Lemma \ref{fgh_on_delta1} with $c=c'_n!$ this can happen only for finitely many primes~$\pp$, as desired.
\end{proof}

The stated properties imply that the decomposition in Lemma \ref{Wdecomp} is the isotypic decomposition of $W_\pp$ under $\Delta'_\pp$. It is therefore normalized by $\Delta_\pp$, and so the permutation action is given by a homomorphism from $\Delta_\pp$ to the symmetric group $S_{m_\pp}$ on $m_\pp$ letters. Let $\sigma_\pp$ denote the composite homomorphism $G_K\twoheadrightarrow \Delta_\pp \to S_{m_\pp}$.

\begin{lem}\label{PermRepUnram}
For almost all primes $\pp\not\in P^{\rm irr}$ of $A$, the homomorphism $\sigma_\pp$ is unramified at all points of codimension $1$ of $\overline{X}$.
\end{lem}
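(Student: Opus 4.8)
The goal is to show, for each of the finitely many generic points $x$ of $\overline{X}\smallsetminus X$ and almost all $\pp\notin P^{\rm irr}$, that the inertia group $I_x$ fixes every isotypic component $W_{\pp,1},\ldots,W_{\pp,m_\pp}$ of $\Delta'_\pp$ in $W_\pp$; since $\sigma_\pp$ factors through $\pi_1^{\rm et}(X)$ by Subsection \ref{model}, it is already unramified at the points of codimension $1$ lying in $X$, so this will prove the lemma. The first step is to apply Lemma \ref{no_inert} with $c:=n!$: the restriction $\sigma_\pp|_{D_x}$ factors through $\rho_\pp(D_x)$ and lands in $S_{m_\pp}$, a group of order $m_\pp!\le n!$, so for almost all $\pp$ it is trivial on $\rho_\pp(I'_x)$. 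Equivalently $\sigma_\pp|_{I'_x}$ is trivial, i.e. $I'_x$ preserves each component $W_{\pp,i}$.

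The remaining and main point is to pass from $I'_x$ to $I_x$, for which I would combine two structural facts about the semistable place $x$ from Subsection \ref{bad_red}. By (\ref{inertunip}), $\rho_\pp(I'_x)$ is a pro-$p$ group, so $\overline\rho_\pp(I'_x)$ is a finite $p$-group; acting on the nonzero $\bar k_\pp$-space $W_{\pp,i}$ in characteristic $p$ it therefore has nonzero invariants, whence $W_{\pp,i}^{I'_x}\ne 0$ for every $i$. Since $I'_x$ respects the decomposition $W_\pp=\bigoplus_i W_{\pp,i}$, it follows that $\bigoplus_i W_{\pp,i}^{I'_x}=W_\pp^{I'_x}$, with every summand nonzero. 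By Lemma \ref{inert_big}, $W_\pp^{I'_x}=\psi_x[\pp]\otimes_{k_\pp}\bar k_\pp$ for almost all $\pp$, and $I_x$ acts trivially on this space because it acts trivially on $T_\pp(\psi_x)$; in particular $I_x$ fixes each $W_{\pp,i}^{I'_x}$ pointwise. Now $I'_x=I_x\cap D'_x$ is normal in $I_x$ since $D'_x\triangleleft D_x$, so for any $g\in I_x$ and any $i$ we have $g\,W_{\pp,i}^{I'_x}=(g\,W_{\pp,i})^{I'_x}$, and the left side equals $W_{\pp,i}^{I'_x}$; if $g$ sent $W_{\pp,i}$ to a different component $W_{\pp,j}$, this would force the nonzero subspace $W_{\pp,i}^{I'_x}$ into $W_{\pp,i}\cap W_{\pp,j}=0$, a contradiction. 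Hence $I_x$ fixes every component and $\sigma_\pp$ is unramified at $x$. Enlarging the exceptional set to include $P^{\rm irr}$ and the finitely many primes excluded by Lemmas \ref{no_inert} and \ref{inert_big} for each of the finitely many $x$ then finishes the argument.

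The hard part is precisely this last passage from $I'_x$ to $I_x$: Lemma \ref{no_inert} only controls the unipotent part of inertia, and one must rule out that the remaining ``$\Lambda_x$-twisting'' part of $I_x$ permutes the isotypic components. This is exactly what the argument above does, by observing that such a permutation would have to move the nonzero $I'_x$-invariant subspaces sitting inside the components, while those all lie in the subspace fixed by all of $I_x$. A minor, purely bookkeeping point is that Subsection \ref{51} works with the rank-$n$ residual representation whereas Subsection \ref{bad_red} is phrased in terms of the rank-$r$ Tate module; this is reconciled via the isogeny $f\colon\phi\to\phi'|A$ and Proposition \ref{boalo}, and when $d=1$ the two pictures coincide.
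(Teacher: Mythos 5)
Your proof is correct and follows essentially the same route as the paper's: unramifiedness over $X$ is automatic, Lemma \ref{no_inert} with $c=n!$ kills $\sigma_\pp|I'_x$, unipotence of $\rho_\pp(I'_x)$ gives $W_{\pp,i}^{I'_x}\neq0$, and Lemma \ref{inert_big} forces $I_x$ to fix these nonzero subspaces pointwise, so it cannot permute the components. You merely make explicit (via normality of $I'_x$ in $I_x$) the step the paper leaves implicit in its final sentence.
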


\begin{proof}
This is clear for points in $X$, because $\overline{\rho}_\pp$ is already unramified there. So let $x$ be one of the finitely many generic points of $\overline{X}\smallsetminus X$. Since $|S_{m_\pp}| \leq m_\pp! \leq n!$ is bounded, Lemma \ref{no_inert} implies that $\sigma_\pp|I'_x$ is trivial for almost all $\pp$. Then $I'_x$ stabilizes each summand $W_{\pp,i}$. Since $I'_x$ acts unipotently by (\ref{inertunip}), we deduce that $W_{\pp,i}^{I'_x}\not=0$ for every $i$. On the other hand Lemma \ref{inert_big} implies that $W_\pp^{I_x} = W_\pp^{I'_x}$ for almost all~$\pp$. This means that $I_x$ acts trivially on $W_\pp^{I'_x} = W_{\pp,1}^{I'_x}\oplus\ldots\oplus W_{\pp,m_\pp}^{I'_x}$. But as all these summands are non-zero, and $I_x$ permutes them according to the restriction of the homomorphism $\sigma_\pp$, it follows that $\sigma_\pp|I_x$ is trivial, as desired.
\end{proof}

\begin{lem}\label{DeltaPrimeIrred}
For almost all primes $\pp\not\in P^{\rm irr}$ of $A$, the group $\Delta'_\pp$ acts irreducibly on~$W_\pp$.
\end{lem}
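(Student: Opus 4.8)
The plan is to show that $m_\pp=1$ for almost all $\pp$, by exploiting that the permutation homomorphism $\sigma_\pp$ is almost everywhere unramified and hence, by Theorem~\ref{Pop}, takes only finitely many values. First I would record the elementary observation that, for $\pp\notin P^{\rm irr}$ lying outside the finite exceptional set of Lemma~\ref{Wdecomp}, if $m_\pp\ge2$ then $\sigma_\pp$ is non-trivial: since the $W_{\pp,i}$ are pairwise inequivalent as $\Delta'_\pp$-modules, the decomposition of Lemma~\ref{Wdecomp} is the isotypic decomposition of $W_\pp$ under $\Delta'_\pp$ and each $W_{\pp,i}$ is a canonically determined subspace of $W_\pp$; so a trivial $\sigma_\pp$ would leave every $W_{\pp,i}$ invariant under all of $\Delta_\pp$, contradicting the irreducibility of $W_\pp$ under $\Delta_\pp$ (Corollary~\ref{abs_irred}) as soon as $m_\pp\ge2$.

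Next, by Lemma~\ref{PermRepUnram} the homomorphism $\sigma_\pp\colon G_K\to S_{m_\pp}$ is unramified at all points of codimension~$1$ of $\overline{X}$ for almost all $\pp$, and when $m_\pp\ge2$ the integer $m_\pp$ ranges only over $\{2,\dots,n\}$. Applying Theorem~\ref{Pop} to each of the finitely many symmetric groups $S_m$, $2\le m\le n$, shows that only finitely many non-trivial homomorphisms $\tau_1,\dots,\tau_N$ can occur as such a $\sigma_\pp$. Let $L_j\subset K^{\text{sep}}$ be the finite separable extension of $K$ fixed by $\ker(\tau_j)$. I would then check that Corollary~\ref{abs_irred} remains valid with $K$ replaced by $L_j$, furnishing a finite set $E_j$ of primes with $\overline{\rho}_\pp|_{G_{L_j}}$ absolutely irreducible on $k_\pp^n$ for all $\pp\notin E_j$. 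Finally, for $\pp$ avoiding $P^{\rm irr}$, the exceptional sets of Lemmas~\ref{Wdecomp} and~\ref{PermRepUnram}, and $E_1\cup\dots\cup E_N$, the assumption $m_\pp\ge2$ would force $\sigma_\pp=\tau_j$ for some $j$; but then $G_{L_j}=\ker(\sigma_\pp)$ stabilizes the proper non-zero subspace $W_{\pp,1}\subset W_\pp$, contradicting the absolute irreducibility of $\overline{\rho}_\pp|_{G_{L_j}}$. Hence $m_\pp=1$, i.e.\ $\Delta'_\pp$ acts irreducibly on $W_\pp$, for all but finitely many $\pp$.

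The step I expect to be the real obstacle is justifying that Corollary~\ref{abs_irred} — and Theorem~\ref{split_at_p}, on which it rests — still applies after replacing $K$ by the finite subextensions $L_j$ of $K^{\text{sep}}$. The conditions on $\End_{K^{\text{sep}}}(\phi)$ in Assumptions~\ref{asses} are intrinsic to $K^{\text{sep}}=L_j^{\text{sep}}$ and hence unaffected, and Proposition~\ref{def_field_end} ensures that $\End_{L_j}(\phi)=\End_{K^{\text{sep}}}(\phi)=R$, so the endomorphism ring does not grow; if the cited results also require a smooth projective model one may first enlarge $L_j$ further using Theorem~\ref{deJong}, noting that absolute irreducibility of $\overline{\rho}_\pp$ over the larger field implies it over $L_j$.
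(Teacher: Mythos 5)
Your proposal is correct and follows essentially the same route as the paper: use Lemma~\ref{PermRepUnram}, the bound $m_\pp\le n$, and Theorem~\ref{Pop} to see that only finitely many homomorphisms $\sigma_\pp$ occur, pass to the corresponding finite extension(s) of $K$, and apply Corollary~\ref{abs_irred} there to force $m_\pp=1$. The only differences are cosmetic — the paper intersects all the kernels into a single finite extension $K'$ rather than treating each $\tau_j$ separately, and it applies Corollary~\ref{abs_irred} over $K'$ without further comment, the point you rightly note being unproblematic since the relevant hypotheses are intrinsic to $K^{\text{sep}}$.
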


\begin{proof}
Combining Lemma \ref{PermRepUnram}, the inequality $m_\pp\leq n$, and Theorem \ref{Pop}, we find that there are only finitely many possibilities for the homomorphism $\sigma_\pp$. The intersection of their kernels is therefore equal to $G_{K'}$ for some subextension $K'\subset K^{\text{sep}}$ that is finite over $K$. Applying Corollary \ref{abs_irred} with $K'$ in place of $K$ implies that $\overline{\rho}_\pp(G_{K'})$ acts irreducibly on $W_\pp$ for almost all $\pp$. But by construction $\overline{\rho}_\pp(G_{K'})$ stabilizes each summand of the decomposition in Lemma \ref{Wdecomp}; hence $m_\pp=1$ and $W_\pp = W_{\pp,1}$ for almost all $\pp$. Then $\Delta'_\pp$ acts irreducibly on $W_\pp$, as desired.
\end{proof}

\begin{lem}\label{almost_surjective2}
For almost all primes $\pp\not\in P^{\rm irr}$ of $A$, there exist a finite subfield $k_\pp'$ of $\bar k_\pp$ and a model $G'_\pp$ of $\SL_{n,\bar k_\pp}$ over $k'_\pp$, such that $\Delta_\pp^{\prime\rm der} = G'_\pp(k'_\pp)$.
\end{lem}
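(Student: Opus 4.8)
The plan is to apply Theorem \ref{finite_main1} to the group $\Delta'_\pp \subset \GL_n(\bar k_\pp)$ for all $\pp$ outside a finite set. To do so I must verify the three hypotheses of that theorem for almost all $\pp$: (i) $\Delta'_\pp$ acts irreducibly on $W_\pp = \bar k_\pp^n$; (ii) $\Delta'_\pp/Z(\Delta'_\pp)$ is a direct product of finite simple groups of Lie type in characteristic $p$; and (iii) the map $\Delta'_\pp \to \bar k_\pp$, $\delta \mapsto f(\delta^c)$ is not identically zero, where $c$ is the constant appearing before Theorem \ref{finite_main1} (the least common multiple of the orders of the non-regular finite simple groups of Lie type). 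Hypothesis (i) is exactly Lemma \ref{DeltaPrimeIrred}, valid for almost all $\pp$. Hypothesis (ii) holds for all $\pp \notin P^{\rm irr}$ by the choice of $\Delta'_\pp$ furnished by Theorem \ref{L-P3}. Hypothesis (iii) follows from Lemma \ref{fgh_on_delta1} with $c$ the above constant: that lemma says $\delta \mapsto f(\delta^c)$ is not identically zero on $\Delta_\pp$ for almost all $\pp$, but since every $\delta \in \Delta_\pp$ satisfies $\delta^{c'_n!} \in \Delta'_\pp$, applying Lemma \ref{fgh_on_delta1} with exponent $c \cdot c'_n!$ in place of $c$ shows that $\delta' \mapsto f(\delta'^{c})$ is not identically zero on $\Delta'_\pp$ for almost all $\pp$ (write an arbitrary power as a power of an element of $\Delta'_\pp$).

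Once all three hypotheses are in place, Theorem \ref{finite_main1} yields, for each such $\pp$, a finite subfield $k'_\pp \subset \bar k_\pp$ and a model $G'_\pp$ of $\SL_{n,\bar k_\pp}$ over $k'_\pp$ with $\Delta_\pp^{\prime \rm der} = G'_\pp(k'_\pp)$, which is precisely the assertion. Note that Theorem \ref{finite_main1} also needs $n \geq 2$, which holds by Assumption \ref{asses} (c).

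The main bookkeeping obstacle is organizing the ``almost all'' quantifiers: there are finitely many bad primes from $P_0$, from $P^{\rm irr}$, from Lemma \ref{fgh_on_delta1} (applied with a specific large exponent), from Lemma \ref{DeltaPrimeIrred}, and from Lemma \ref{Wdecomp}; I would simply take the union of all these finite exceptional sets. The only genuinely substantive point is the reduction in hypothesis (iii): one must be slightly careful that non-vanishing of $f(\delta^N)$ on $\Delta_\pp$ for a suitable $N$ descends to non-vanishing of $f(\delta'^{c})$ on $\Delta'_\pp$. Concretely, if $\delta \in \Delta_\pp$ achieves $f(\delta^{c\cdot c'_n!}) \neq 0$ (possible for almost all $\pp$ by Lemma \ref{fgh_on_delta1}), then $\delta' := \delta^{c'_n!}$ lies in $\Delta'_\pp$ and satisfies $f(\delta'^{c}) \neq 0$, so hypothesis (iii) holds. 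This is the last input the proof requires, and the lemma follows.
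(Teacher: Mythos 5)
Your proposal is correct and follows essentially the same route as the paper: invoke Lemma \ref{DeltaPrimeIrred} for irreducibility, Theorem \ref{L-P3} for the structure of $\Delta'_\pp/Z(\Delta'_\pp)$, and Lemma \ref{fgh_on_delta1} with a large exponent to transfer non-vanishing of $f$ from $\Delta_\pp$ to $\Delta'_\pp$, then apply Theorem \ref{finite_main1}. Your use of the exponent $c'_n!$ (so that $\delta^{c'_n!}\in\Delta'_\pp$ is guaranteed by the index bound) is in fact slightly more careful than the paper's wording, which writes $\delta^{c'_n}\in\Delta'_\pp$, and it matches how the paper itself argues in Lemma \ref{Wdecomp}.
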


\begin{proof}
By Lemma \ref{DeltaPrimeIrred} the group $\Delta'_\pp$ acts irreducibly on $W_\pp$ for almost all~$\pp$. On the other hand let $c$ be the constant from Theorem \ref{finite_main1}. Then for almost all $\pp$, Lemma \ref{fgh_on_delta1} shows that the map $\Delta_\pp \to k_\pp$, $\delta \mapsto f(\delta^{c'_nc})$ is not identically zero. Since $\delta^{c'_n}\in\Delta'_\pp$ for all $\delta\in\Delta_\pp$, it follows that the map $\Delta'_\pp \to k_\pp$, $\delta' \mapsto f(\delta^{\prime c})$ is not identically zero. Together we find that $\Delta'_\pp$ satisfies the assumptions of Theorem \ref{finite_main1}, and so the desired assertion follows.
\end{proof}

\begin{lem}\label{kp_is_kp1}
For almost all primes $\mathfrak{p}$ of $A$ as in Lemma \ref{almost_surjective2} we have $k_\pp \subset k_\pp'$.
\end{lem}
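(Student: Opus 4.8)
The plan is to compare, for almost all $\pp$, the field $k'_\pp$ furnished by Lemma \ref{almost_surjective2} with the residue field $k_\pp$ by showing that the traces of Frobenius elements in the adjoint representation, which generate $k_\pp$ by Proposition \ref{residue_field_trad1}(a), actually land in $k'_\pp$. Throughout, $\pp$ ranges over the cofinite set of primes for which Lemma \ref{almost_surjective2} applies, so that $\Delta_\pp^{\prime\mathrm{der}}=G'_\pp(k'_\pp)=\SL_{n,\bar k_\pp}^{F'_\pp}$ for a standard Frobenius map $F'_\pp$ with associated finite field of definition $k'_\pp$; recall that $\Delta_\pp^{\prime\mathrm{der}}$ is normal in $\Delta_\pp$ with index bounded independently of $\pp$, that it acts absolutely irreducibly on $W_\pp$ by the proof of Lemma \ref{almost_surjective2}, and that $\Delta_\pp=\overline{\rho}_\pp(G_K)$.

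The first thing I would do is reduce to the case that $|k'_\pp|$ is large. Fix once and for all a closed point $x_0\in X$ such that the characteristic polynomial $f_{x_0}$ of $\rho_\pp(\Frob_{x_0})$, which lies in $A[T]$ and is independent of $\pp$ by Proposition \ref{F_charpol}, is separable; such $x_0$ exists because regular semisimple elements are Zariski dense in $\GL_{n,F_\pp}$ while $\Gamma_\pp$ is Zariski dense there by Theorem \ref{Zariski_dense}. By Proposition \ref{F_eigenvalues}(b) the roots $\alpha_1,\dots,\alpha_n\in\overline F$ of $f_{x_0}$ have nonzero valuation at $\infty$, hence are not roots of unity, so for each $C$ only finitely many $\pp$ divide $\alpha_1^{C!}-1$. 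Thus the multiplicative order of $\alpha_1\bmod\pp$, an eigenvalue of $\overline{\rho}_\pp(\Frob_{x_0})$, tends to infinity with $\pp$; since a bounded power of $\overline{\rho}_\pp(\Frob_{x_0})$ lies in $\Delta_\pp^{\prime\mathrm{der}}$, this group contains elements of order $\to\infty$, forcing $|k'_\pp|\to\infty$. So we may assume $|k'_\pp|$ exceeds any prescribed bound.

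The key step is to show that $\Tr\Ad(\overline{\rho}_\pp(\Frob_x))\in k'_\pp$ for every closed point $x\in X$. Put $\delta_x:=\overline{\rho}_\pp(\Frob_x)$. Since $\Delta_\pp^{\prime\mathrm{der}}\triangleleft\Delta_\pp$, conjugation by $\delta_x$ is an automorphism $\theta_x$ of the quasisimple group $\Delta_\pp^{\prime\mathrm{der}}$, and the tautological isomorphism $\delta_x\colon W_\pp\to W_\pp$ intertwines the natural action of $\Delta_\pp^{\prime\mathrm{der}}$ with its $\theta_x$-twist; hence the natural module is isomorphic to its $\theta_x$-twist. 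By the classification of automorphisms of finite simple groups of Lie type (\cite{GoLySo}, \cite{CarterShort}), since $|k'_\pp|$ is large, $\theta_x$ is the product of an inner-diagonal automorphism with a graph and a field automorphism. But the natural module, its nontrivial Frobenius twists, and (for $n\geq3$) its dual have pairwise distinct Brauer characters, since for $|k'_\pp|$ large the character field of the natural module is all of $k'_\pp$; so the graph and field components of $\theta_x$ are trivial. Therefore $\theta_x$ is inner-diagonal, induced by conjugation by a $k'_\pp$-rational point $\bar\delta$ of the corresponding model of $\PGL_{n,\bar k_\pp}$ over $k'_\pp$. As $\Delta_\pp^{\prime\mathrm{der}}$ is absolutely irreducible on $W_\pp$, its centralizer in $\PGL_n(\bar k_\pp)$ is trivial, so $\bar\delta$ is the image of $\delta_x$ in $\PGL_n(\bar k_\pp)$. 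Since $\Ad$ factors through $\PGL_n$ and restricts on the model to a morphism over $k'_\pp$ (the trivial summand $\gll_n/\sll_n$ contributing only a harmless constant), this gives $\Tr\Ad(\delta_x)=\Tr\Ad(\bar\delta)\in k'_\pp$.

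It then remains to combine this with the ring of traces: by Proposition \ref{F_charpol}, $\Tr\Ad(\rho_\pp(\Frob_x))$ lies in $R^{\mathrm{trad}}$, with image $\Tr\Ad(\delta_x)\in k'_\pp$ under $R^{\mathrm{trad}}\to k_\pp$; as these traces over all closed $x\in X$ generate $R^{\mathrm{trad}}$, the image of $R^{\mathrm{trad}}\to k_\pp$ lies in $k'_\pp$, while by Proposition \ref{residue_field_trad1}(a) this image equals $k_\pp$ for $\pp\notin P^{\mathrm{trad}}$, whence $k_\pp\subseteq k'_\pp$. I expect the main obstacle to be the key step, and specifically ruling out a nontrivial field-automorphism component of $\theta_x$: this is exactly why the reduction to $|k'_\pp|$ large is needed, and why one must invoke the precise structure of $\mathrm{Aut}\bigl(\SL_{n,\bar k_\pp}^{F'_\pp}\bigr)$ together with the fact that $W_\pp$ is not isomorphic to its Frobenius twists (or dual) in the relevant range.
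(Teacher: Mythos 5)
Your overall skeleton is the same as the paper's: show that $\Tr\Ad(\overline{\rho}_\pp(\Frob_x))$ lies in $k'_\pp$ for every closed point $x\in X$, then combine this with Proposition \ref{residue_field_trad1}(a) and the fact that these traces generate the image of $R^{\rm trad}$ in $k_\pp$. Where you differ is the mechanism for the key step: the paper observes that $\mathop{\rm int}(\delta_x)$, as an algebraic automorphism of $\SL_{n,\bar k_\pp}$, carries the model $G'_\pp$ to another model with the same group of $k'_\pp$-points, so by the uniqueness statements (Propositions \ref{fieldcontained}--\ref{same_k_and_model}) it preserves the $k'_\pp$-structure; its derivative then acts on the $k'_\pp$-form $\mathop{\rm Lie}G'_\pp$ of $\sll_n(\bar k_\pp)$ and has trace in $k'_\pp$, with no largeness hypothesis and no classification of abstract automorphisms of the finite group. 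Your route through $\Aut\bigl(G'_\pp(k'_\pp)\bigr)$ can in principle be made to work (and your final steps — identifying $\bar\delta$ with the image of $\delta_x$ via Schur's lemma and concluding $\Tr\Ad(\delta_x)\in k'_\pp$ — are fine), but as written it has two genuine gaps.

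First, the reduction to $|k'_\pp|$ large is incorrectly argued: you claim a bounded power of $\overline{\rho}_\pp(\Frob_{x_0})$ lies in $\Delta_\pp^{\prime\rm der}$, but only $[\Delta_\pp:\Delta'_\pp]\le c'_n$ is bounded, while $\Delta'_\pp/\Delta_\pp^{\prime\rm der}$ is a quotient of $Z(\Delta'_\pp)$, a group of scalars whose order can be comparable to $|k_\pp|-1$; so no bounded power of an element of $\Delta_\pp$ need land in $\Delta_\pp^{\prime\rm der}$, and $|k'_\pp|\to\infty$ does not follow as stated. Second, and more seriously since you identify it as the crux: Lemma \ref{almost_surjective2} does not exclude that $G'_\pp$ is a non-split (unitary) model, and for $G'_\pp(k'_\pp)\cong{\rm SU}_n(q)$ your distinctness claim fails — there $W_\pp^{*}\cong W_\pp^{(q)}$ as modules, and the traces of $G'_\pp(k'_\pp)$ on $W_\pp$ generate the quadratic extension of $k'_\pp$, not $k'_\pp$, so the justification via the "character field" also breaks down. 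The conclusion you need (that $\theta_x$ is inner-diagonal) is still true in the twisted case, because the coincidence $W_\pp^{*}\cong W_\pp^{(q)}$ is implemented by the defining Frobenius, which acts trivially on $G'_\pp(k'_\pp)$, and distinct $q$-restricted highest weights give inequivalent restrictions by Steinberg; but your argument as written does not establish this, and once the twist comparison is done via restricted highest weights the largeness reduction becomes unnecessary anyway. Alternatively, the paper's uniqueness-of-model argument avoids all of these case distinctions.
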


\begin{proof}
Let $P'$ be the finite set of primes excluded by Lemma \ref{almost_surjective2}, and let $P^{\rm trad}$ be the finite set of primes from Proposition \ref{residue_field_trad1}. We claim that the assertion holds whenever $\pp\not\in P'\cup P^{\rm trad}$.

To prove this let $\Ad$ denote the adjoint representation of $\GL_n$. Take any element $\delta\in\Delta_\pp$, and let $\mathop{\rm int}(\delta)$ denote the corresponding inner automorphism of $\GL_{n,k_\pp}$. Then $\Ad(\delta)$ is the derivative $d(\mathop{\rm int}(\delta))$, and its trace is an element of $k_\pp$.

On the other hand $\mathop{\rm int}(\delta)$ induces an algebraic automorphism of $\SL_{n,\bar k_\pp}$ which normalizes $\Delta_\pp^{\prime\rm der} = G'_\pp(k'_\pp)$. By the uniqueness in Proposition \ref{same_k_and_model} it therefore induces an algebraic automorphism of the model $G'_\pp$ over $k_\pp'$. The derivative of this automorphism is an automorphism of the Lie algebra $\mathop{\rm Lie}G'_\pp$, whose trace is therefore an element of $k_\pp'$. But the fact that $G'_\pp$ is a model of $\SL_{n,\bar k_\pp}$ yields an equivariant isomorphism $\mathop{\rm Lie}G'_\pp \otimes_{k_\pp'} \nobreak \bar k_\pp \allowbreak \cong \sll_n(\bar k_\pp)$, and so the trace in question is equal to the trace of $d(\mathop{\rm int}(\delta))|\sll_n(\bar k_\pp)$. Together we deduce that
$$\Tr\Ad(\delta) \ =\ \Tr\bigl(d(\mathop{\rm int}(\delta))|\sll_n(\bar k_\pp)\bigr) + 1 
\ \in\ k'_\pp.$$

In particular, we can apply this to $\delta = \overline{\rho}_\pp(\Frob_x)$ for any closed point $x\in X$. Then $\Tr\Ad(\overline{\rho}_\pp(\Frob_x))$ is the image of $\Ad(\rho_\pp(\Frob_x))$ in the residue field $k_\pp$. Varying $x$, the elements $\Ad(\rho_\pp(\Frob_x))$ generate the ring of traces $R^{\text{trad}}$ from Subsection \ref{ring_of_trances}. Thus by Proposition \ref{residue_field_trad1} (a) their images generate the residue field $k_\pp$. Since these images also lie in $k'_\pp$, we deduce that $k_\pp \subset k_\pp'$, as desired.
\end{proof}

\begin{prop}\label{res_surj1}
For almost all primes $\pp\not\in P_0$ of $A$, we have $\Delta_\pp^{{\geom}} = \SL_n(k_\pp)$.
\end{prop}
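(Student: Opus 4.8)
The plan is to combine the results just established about $\Delta'_\pp$ with the known arithmetic input about traces of Frobenius to pin down both the field and the arithmetic nature of the group, then bootstrap from the derived group of $\Delta'_\pp$ up to $\Delta_\pp^{\geom}$.

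First I would collect the facts available for almost all $\pp$: by Lemma~\ref{almost_surjective2} there is a finite subfield $k'_\pp\subset\bar k_\pp$ and a model $G'_\pp$ of $\SL_{n,\bar k_\pp}$ with $\Delta_\pp^{\prime\mathrm{der}}=G'_\pp(k'_\pp)$, and by Lemma~\ref{kp_is_kp1} we have $k_\pp\subset k'_\pp$. On the other hand, by Assumption~\ref{asses}~(e) the group $\Delta_\pp^{\geom}$ lies in $\SL_n(k_\pp)$, and by Corollary~\ref{abs_irred} together with Theorem~\ref{Zariski_dense} (Zariski density of $\Gamma_\pp^{\geom}$ in $\SL_{n,F_\pp}$, which forces $\Delta_\pp^{\geom}$ to be large — more precisely, its image in $\PGL_n(k_\pp)$ must be non-solvable and absolutely irreducible for almost all $\pp$), one sees that $\Delta_\pp^{\geom}$ is a perfect irreducible subgroup whose simple quotients are of Lie type in characteristic $p$; indeed one should argue that for almost all $\pp$ the subgroup $\Delta'_\pp$ furnished by Theorem~\ref{L-P3} can be taken to lie inside $\Delta_\pp^{\geom}$, since $[\Delta_\pp:\Delta_\pp^{\geom}]$ divides $|k_\pp^\times|$ which is coprime to $p$, whereas $[\Delta_\pp:\Delta'_\pp]\le c'_n$ — so $\Delta'_\pp\Delta_\pp^{\geom}$ has index dividing $c'_n$ in $\Delta_\pp$ and its intersection with $\Delta_\pp^{\geom}$ still has the product-of-Lie-type structure; replacing $\Delta'_\pp$ by this intersection we may assume $\Delta'_\pp\subset\Delta_\pp^{\geom}$.

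Next, the containment $\Delta_\pp^{\prime\mathrm{der}}=G'_\pp(k'_\pp)\subset\Delta_\pp^{\geom}\subset\SL_n(k_\pp)$, together with the standard model $\SL_{n,k_\pp}$ of $\SL_{n,\bar k_\pp}$ over $k_\pp$, lets me invoke Proposition~\ref{fieldcontained} to conclude $|k'_\pp|\le|k_\pp|$. Combined with $k_\pp\subset k'_\pp$ from Lemma~\ref{kp_is_kp1} this forces $k'_\pp=k_\pp$. Then Proposition~\ref{same_model} applied to $G'_\pp$ and the standard model over $k_\pp$ (using $G'_\pp(k_\pp)\subset\SL_n(k_\pp)$) shows $G'_\pp$ is the split model and $\Delta_\pp^{\prime\mathrm{der}}=\SL_n(k_\pp)$. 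Since $\SL_n(k_\pp)\subset\Delta_\pp^{\geom}\subset\SL_n(k_\pp)$, we get $\Delta_\pp^{\geom}=\SL_n(k_\pp)$ for almost all $\pp$, as desired.

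The main obstacle, and the step requiring the most care, is the bookkeeping that lets one assume $\Delta'_\pp\subset\Delta_\pp^{\geom}$ — i.e.\ transporting the Larsen–Pink structure of $\Delta_\pp$ to a subgroup actually sitting inside the geometric image — and verifying that after this replacement the group still satisfies the hypotheses of Lemmas~\ref{almost_surjective2} and~\ref{kp_is_kp1} for almost all $\pp$ (the non-vanishing of $\delta\mapsto f(\delta^{c})$ is unaffected since $\delta^{c'_n!}\in\Delta'_\pp\subset\Delta_\pp^{\geom}$, and irreducibility over $\bar k_\pp$ is preserved because the relevant index is bounded). Everything else is a direct application of the group-theoretic propositions from Section~\ref{LAG} and the trace input from Proposition~\ref{residue_field_trad1}.
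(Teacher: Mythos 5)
Your endgame is exactly the paper's: apply Proposition \ref{fieldcontained} to get $|k'_\pp|\le|k_\pp|$, combine with Lemma \ref{kp_is_kp1} to get $k'_\pp=k_\pp$, use Proposition \ref{same_model} to identify the model, and then sandwich $\Delta_\pp^{\geom}$ between $\Delta_\pp^{\prime\rm der}=\SL_n(k_\pp)$ and $\SL_n(k_\pp)$. The problem is the step you yourself single out as the crux, namely forcing $\Delta'_\pp\subset\Delta_\pp^{\geom}$. Your justification has genuine gaps: Zariski density of $\Gamma_\pp^{\geom}$ and Corollary \ref{abs_irred} (which concerns $\Delta_\pp$, not $\Delta_\pp^{\geom}$) do not by themselves yield that $\Delta_\pp^{\geom}$ is perfect or absolutely irreducible; and the claim that $[\Delta_\pp:\Delta_\pp^{\geom}]$ divides $|k_\pp^\times|$ is unproved --- the determinant only controls $\Delta_\pp/(\Delta_\pp\cap\SL_n(k_\pp))$, while $(\Delta_\pp\cap\SL_n(k_\pp))/\Delta_\pp^{\geom}$ is a priori an arbitrary cyclic group, possibly of order divisible by $p$. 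Moreover, replacing $\Delta'_\pp$ by $\Delta'_\pp\cap\Delta_\pp^{\geom}$ would invalidate the direct appeal to Lemmas \ref{almost_surjective2} and \ref{kp_is_kp1}, which were established for the fixed $\Delta'_\pp$; your remark that irreducibility is ``preserved because the relevant index is bounded'' is not an argument, and redoing the proof of Lemma \ref{DeltaPrimeIrred} for the new group would require repeating the whole Pop/finite-extension bookkeeping.

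Fortunately the step is unnecessary, and this is how the paper avoids it. For the application of Proposition \ref{fieldcontained} you only need $\Delta_\pp^{\prime\rm der}\subset\SL_n(k_\pp)$, which is immediate from $\Delta_\pp^{\prime\rm der}\subset\GL_n(k_\pp)^{\rm der}=\SL_n(k_\pp)$; no reference to $\Delta_\pp^{\geom}$ is needed at that point. For the final sandwich you only need $\Delta_\pp^{\prime\rm der}\subset\Delta_\pp^{\geom}$, and this is automatic: $\Delta_\pp^{\prime\rm der}\subset\Delta_\pp^{\rm der}$, and $\Delta_\pp^{\rm der}\subset\Delta_\pp^{\geom}$ because the quotient $\Delta_\pp/\Delta_\pp^{\geom}$ is cyclic, hence abelian. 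With that observation your second paragraph goes through verbatim and reproduces the paper's proof; the entire first paragraph can be deleted.
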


\begin{proof}
We prove that this holds for all primes $\pp$ satisfying Lemmas \ref{almost_surjective2} and \ref{kp_is_kp1}. Indeed, Lemma \ref{almost_surjective2} shows that $G'_\pp(k'_\pp) = \Delta_\pp^{\prime\rm der} \subset \GL_n(k_\pp)^{\rm der} = \SL_n(k_\pp)$. Applying Proposition \ref{fieldcontained} with $G=\SL_{n,k_\pp}$, this implies that $|k_\pp'|\le|k_\pp|$. On the other hand we have $k_\pp \subset k_\pp'$ by Lemma \ref{kp_is_kp1}; hence together we deduce that $k_\pp = k_\pp'$. Applying Proposition \ref{same_model} with $G=\SL_{n,k_\pp}$ then shows that $G'(k'_\pp) = \SL_n(k_\pp)$. In particular we have $\SL_n(k_\pp) = \Delta_\pp^{\prime\rm der} \subset \Delta_\pp^{\rm der} \subset \Delta_\pp^{\text{geom}} \subset \SL_n(k_\pp)$, and so these inclusions are equalities, as desired.
\end{proof} 

%%%%%%%%%%%%%%%%%%%%%%%%%%%%%%%%%%%%%%%%%%%%%%%%%%%%%%%%%%%%%%%%%%%%%%

\subsection{Surjectivity at a single prime}\label{52}

\begin{prop}\label{surj1} % \label{rho_surjective}
For almost all primes $\pp\not\in P_0$ of $A$, 
%the homomorphism 
%$$\rho_\pp: G_K^{{\geom}} \to \SL_n(A_\pp)$$
%is surjective.
we have $\Gamma_\pp^{\geom}=\SL_n(A_\pp)$.
\end{prop}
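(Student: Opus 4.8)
The plan is to apply the successive-approximation machinery of Section \ref{approx} — specifically Theorem \ref{strong_approx_SLn}, together with the trace criteria Theorems \ref{trace_crit_1} and \ref{trace_crit_2} applied to $\Gamma_\pp$ — to the closed subgroup $\Gamma_\pp^{\geom}\subset\SL_n(A_\pp)$. First note that for almost all $\pp$ the residue field $k_\pp=A/\pp$ satisfies $|k_\pp|>9$, since there are only finitely many primes of $A$ with residue field of bounded size; so the hypothesis $|k|>9$ of all the approximation results is met. By Proposition \ref{res_surj1} we already have $\Gamma_\pp^{\geom,[0]}=\Delta_\pp^{\geom}=\SL_n(k_\pp)$ for almost all $\pp$, so the only remaining task, by Theorem \ref{strong_approx_SLn}, is to show that $(\Gamma_\pp^{\geom})^{[1]}$ contains a non-scalar matrix for almost all $\pp$.

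To produce the non-scalar element in layer $1$, I would work with $\Gamma_\pp$ rather than $\Gamma_\pp^{\geom}$ and use the trace criteria. Apply Theorem \ref{trace_crit_1} (in the case $F^{\rm trad}=F$) or Theorem \ref{trace_crit_2} (in the case $n=p=2$ with $F^{\rm trad}=F^2$, which is the genuinely possible second alternative by Theorem \ref{field_trad}) to the closed subgroup $H:=\Gamma_\pp\subset\GL_n(A_\pp)$: the hypothesis $\SL_n(k_\pp)\subset\Gamma_\pp^{[0]}=\Delta_\pp$ holds by Proposition \ref{res_surj1}, and the required density statement — that $\Tr\Ad(\Gamma_\pp)$ topologically generates $A_\pp$, respectively that $\Tr\Ad(\Gamma_\pp')$ topologically generates $A_\pp^2$ for the index-$\le2$-subgroup intersection $\Gamma_\pp'$ — follows from Proposition \ref{residue_field_trad1}(c) for $\pp\notin P^{\rm trad}$, since the images of Frobenius elements $\Tr\Ad(\rho_\pp(\Frob_x))$ lie in $R^{\rm trad}$, are dense in $\Gamma_\pp$ by Chebotarev density, and $\Tr\Ad$ is continuous. (In the case $n=p=2$ one must also replace $K$ by $K\kappa'$ as in Proposition \ref{Rtradsmall} so that the second alternative is the operative one; this does not change Galois images up to commensurability, and a subtle point is to match the subgroup $\Gamma_\pp'$ with the subgroup $H'$ appearing in Theorem \ref{trace_crit_2}, which should follow because $\det\circ\rho_\pp$ has abelian, essentially pro-$2$, image on $G_K$ after this base change.) In either case the conclusion is $\Gamma_\pp^{\rm der}=\SL_n(A_\pp)$ for almost all $\pp$.

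Finally, since $\Gamma_\pp^{\rm der}\subset\Gamma_\pp^{\geom}$ — because $\Gamma_\pp/\Gamma_\pp^{\geom}$ is pro-cyclic, hence abelian, so the derived subgroup of $\Gamma_\pp$ lands inside $\Gamma_\pp^{\geom}$ — and $\Gamma_\pp^{\geom}\subset\SL_n(A_\pp)=\Gamma_\pp^{\rm der}$ by Assumption \ref{asses}(e), all these inclusions are equalities and $\Gamma_\pp^{\geom}=\SL_n(A_\pp)$, as desired. I expect the main obstacle to be the bookkeeping in the case $n=p=2$: one must arrange the base change $K\rightsquigarrow K\kappa'$, verify that the relevant ``intersection of index-$2$ subgroups'' $\Gamma_\pp'$ really receives a dense supply of Frobenius traces generating $A_\pp^2$ (using Proposition \ref{residue_field_trad1}(c) together with the fact that after base change $\Tr\Ad(\rho_\pp(\Frob_x))\in F^2\cap$ the appropriate local ring), and check that $\Gamma_\pp'$ contains the relevant commutators so that $(\Gamma_\pp')^{\rm der}$ still equals $\SL_n(A_\pp)$; the cases $(n,p)\ne(2,2)$ are comparatively routine once Proposition \ref{res_surj1} and Proposition \ref{residue_field_trad1}(c) are in hand.
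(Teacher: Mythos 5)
Your plan for the case $F^{\rm trad}=F$ coincides with the paper's proof: exclude the primes from Proposition \ref{res_surj1} and those with $|k_\pp|\le 9$, use Proposition \ref{residue_field_trad1}(c) to see that $\Tr\Ad(\Gamma_\pp)$ topologically generates $A_\pp$ (the Frobenius traces lie in $R^{\rm trad}$ and their images are dense in the set of traces of $\Gamma_\pp$), apply Theorem \ref{trace_crit_1} to get $\Gamma_\pp^{\rm der}=\SL_n(A_\pp)$, and conclude from the sandwich $\Gamma_\pp^{\rm der}\subset\Gamma_\pp^{\geom}\subset\SL_n(A_\pp)$. That part is correct and is exactly the argument in the paper. (The suggested base change $K\rightsquigarrow K\kappa'$ is unnecessary: the dichotomy of Theorem \ref{field_trad} is given, not something to be forced, and if $F^{\rm trad}=F$ one simply uses Theorem \ref{trace_crit_1} even when $n=p=2$. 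It is also harmless, since $G_{K\kappa'}^{\geom}=G_K^{\geom}$.)

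The genuine gap is in the case $n=p=2$, $F^{\rm trad}=F^2$, precisely at the point you flag as ``subtle'' but then dispose of with the determinant. Theorem \ref{trace_crit_2} needs $\Tr\Ad(\Gamma'_\pp)$ to topologically generate $A_\pp^2$, where $\Gamma'_\pp$ is the intersection of all closed index-$2$ subgroups of $\Gamma_\pp$. The Frobenius traces supplied by Proposition \ref{residue_field_trad1}(c) are traces of elements of $\Gamma_\pp$, and you must know that a sufficiently large supply of Frobenius elements actually lands in $\Gamma'_\pp$, uniformly in $\pp$. Your justification via $\det\circ\rho_\pp$ does not give this: index-$2$ closed subgroups of $\Gamma_\pp$ need not factor through the determinant, since at this stage $\Gamma_\pp^{\geom}$ is not yet known to be (topologically) perfect and may itself admit abelian $2$-group quotients through the congruence filtration; moreover the quadratic characters of $G_K$ cutting out these subgroups can be ramified at the bad-reduction places and can a priori vary with $\pp$. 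The paper's proof handles exactly this by Lemma \ref{small_quotients1} (which rests on the bad-reduction analysis, Lemma \ref{no_inert}, and Theorem \ref{Pop}): outside a finite set of primes, all homomorphisms $G_K\to{\mathbb Z}/2{\mathbb Z}$ factoring through some $\rho_\pp$ form a finite set, so their kernels cut out a single finite extension $K'$ of $K$ with $\rho_\pp(G_{K'})\subset\Gamma'_\pp$ for all such $\pp$; one then applies Proposition \ref{residue_field_trad1} with $K'$ in place of $K$ to get density of $\Tr\Ad(\rho_\pp(G_{K'}))\subset\Tr\Ad(\Gamma'_\pp)$ in $A_\pp^2$. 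Without this uniform-in-$\pp$ control of the index-$2$ quotients your argument does not close; this is the missing idea rather than mere bookkeeping. (A small additional slip: Theorem \ref{trace_crit_2} concludes $\Gamma_\pp^{\rm der}=\SL_2(A_\pp)$, not $(\Gamma'_\pp)^{\rm der}=\SL_2(A_\pp)$, so no separate check on commutators of $\Gamma'_\pp$ is needed.)
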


\begin{proof}
Let $P'$ be the finite set of primes $\pp$ excluded by Proposition \ref{res_surj1} or satisfying $|k_\pp|\leq9$. For all $\pp\not\in P'$ we have a surjective homomorphism $\Gamma_\pp^{{\geom}}\to\SL_n(k_\pp)$.

Suppose first that $F^{\text{trad}} = F$. Let $P^{\rm trad}$ be the finite set of primes from Proposition \ref{residue_field_trad1}. Then for any prime $\pp\not\in P'\cup P^{\rm trad}$, the set of traces $\Tr\Ad(\Gamma_\pp)$ topologically generates $A_\pp$. Applying Theorem \ref{trace_crit_1} to $\Gamma_\pp \subset \GL_n(A_\pp)$ thus shows that $\Gamma_\pp^{\rm der}=\SL_n(A_\pp)$. 

Suppose now that $F^{\text{trad}}=F^2$. Then $p=n=2$ by Theorem \ref{field_trad}. By Lemma \ref{small_quotients1} there exists a finite set $P''\supset P_0$ of primes of $A$, such that the number of continuous homomorphisms from $G_K$ to a finite group of order $2$, which factor through the surjection $\rho_\pp:G_K\twoheadrightarrow\Gamma_\pp$ for some $\pp\not\in P''$, is finite. The intersection of the kernels of these homomorphisms is then $G_{K'}$ for some finite extension $K'\subset K^{\text{sep}}$ of $K$. 
% By construction for all primes $\pp\not\in S''$ of $A$, any continuous homomorphism from $\Gamma_\pp$ to a finite group of order $2$, is trivial on $\rho_\pp(G_{K'})$. In other words, 
If $\Gamma_\pp'$ denotes the intersection of all closed subgroups of index $2$ of $\Gamma_\pp$, it follows that for all primes $\pp\not\in P''$ of $A$ we have $\rho_\pp(G_{K'}) \subset \Gamma_\pp'$. 
Let $P^{\rm trad}$ be the finite set of primes obtained by applying Proposition \ref{residue_field_trad1} with $K'$ in place of $K$. Then for any prime $\pp\not\in P'\cup P''\cup P^{\rm trad}$, the set of traces $\Tr\Ad(\rho_\pp(G_{K'}))$, and hence a fortiori the set of traces $\Tr\Ad(\Gamma_\pp')$, topologically generates the subring $A_\pp^2 := \{a^2\mid a\in A_\pp\}$. Applying Theorem \ref{trace_crit_2} to $\Gamma_\pp \subset \GL_2(A_\pp)$ thus shows that $\Gamma_\pp^{\rm der}=\SL_2(A_\pp)$. 

Since $\Gamma_\pp^{\rm der} \subset \Gamma_\pp^{{\geom}} \subset \SL_n(A_\pp)$, the proposition follows in either case.
\end{proof}

%%%%%%%%%%%%%%%%%%%%%%%%%%%%%%%%%%%%%%%%%%%%%%%%%%%%%%%%%%%%%%%%%%%%%%

\subsection{Residual surjectivity at several primes}\label{53}

For any finite set $P$ of primes $\pp\not=\pp_0$ of $A$, we let 
$$\Delta_P^{\geom}
\ \triangleleft\ \Delta_P\ \subset\ \prod_{\pp\in P} \;
(D_\pp/\pp D_\pp)^\times$$
denote the images of $G_K^{\geom} \triangleleft G_K$ under the combined representation induced by $\overline{\rho}_\pp$. Recall that $(D_\pp/\pp D_\pp)^\times \cong \GL_n(k_\pp)$ and $\Delta_\pp^{\geom} \subset \SL_n(k_\pp)$ whenever $\pp\not\in P_0$. Thus whenever $P\cap P_0=\varnothing$, we have 
$$\Delta_P^{\geom} \ \subset\ \prod_{\pp\in P} \SL_n(k_\pp).$$ 

\begin{prop}\label{res_surj_fin}
There exists a finite set $P_1$ of primes of $A$ containing $P_0$, such that for any finite set of primes $P$ of $A$ satisfying $P\cap P_1=\varnothing$,
%the homomorphism 
%$$\overline{\rho}_{\pp_1} \times \overline{\rho}_{\pp_2}:\ 
%G_K^{{\geom}} \to \SL_n(k_{\pp_1}) \times \SL_n(k_{\pp_2})$$
%is surjective.
we have $\Delta_P^{\geom}= \prod_{\pp\in P} \SL_n(k_\pp)$.
\end{prop}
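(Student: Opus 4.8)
The plan is to combine the single-prime surjectivity result (Proposition~\ref{res_surj1}) with Goursat's lemma and a trace argument, exactly along the lines sketched in the outline of the introduction. Starting from Proposition~\ref{res_surj1}, which gives $\Delta_\pp^{\geom} = \SL_n(k_\pp)$ for all $\pp$ outside some finite set, I would first enlarge this finite set to exclude in addition all $\pp$ with $|k_\pp| \le 9$, all $\pp$ where Corollary~\ref{abs_irred} fails, and all $\pp$ in the set $P^{\rm trad}$ of Proposition~\ref{residue_field_trad1}. The key structural input for more than one prime at a time is: $\SL_n(k_\pp)$ is quasisimple for $|k_\pp| > 3$, so by Goursat's lemma, if $\Delta_P^{\geom} \subsetneq \prod_{\pp\in P}\SL_n(k_\pp)$ even though it surjects onto each factor, then some pair of factors $\pp_1 \ne \pp_2$ in $P$ must already fail the two-factor surjectivity: the image in $\SL_n(k_{\pp_1})\times\SL_n(k_{\pp_2})$ is the graph of an isomorphism $\SL_n(k_{\pp_1}) \xrightarrow{\sim} \SL_n(k_{\pp_2})$ modulo centers (using perfectness). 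Hence it suffices to find a finite set $P_1 \supset P_0$ such that for all distinct $\pp_1,\pp_2 \notin P_1$ the map $G_K^{\geom} \to \SL_n(k_{\pp_1})\times\SL_n(k_{\pp_2})$ is surjective; then a standard induction on $|P|$ via Goursat yields surjectivity onto the full product for every finite $P$ with $P\cap P_1 = \varnothing$.

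The heart of the argument is therefore the two-prime case, and this is where I expect the main obstacle to lie. Suppose for contradiction that for distinct $\pp_1, \pp_2$ outside the finite set already excluded the image is the graph of an isomorphism modulo centers. Any isomorphism $\SL_n(k_{\pp_1})/Z \xrightarrow{\sim} \SL_n(k_{\pp_2})/Z$ of finite simple groups of type $A_{n-1}$ must, by the classification of such isomorphisms (automorphisms of $\mathrm{PSL}_n$ are field, diagonal, and graph automorphisms), arise from a field isomorphism $\sigma: k_{\pp_1} \xrightarrow{\sim} k_{\pp_2}$ together with possibly the transpose-inverse (graph) automorphism; in particular $|k_{\pp_1}| = |k_{\pp_2}|$. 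The graph automorphism is harmless for the trace argument since $\Tr\Ad$ is invariant under it (the adjoint representation is self-dual). So the image of $\Frob_x$ for any closed point $x\in X$, projected to the pair $(\Ad(\overline\rho_{\pp_1}(\Frob_x)), \Ad(\overline\rho_{\pp_2}(\Frob_x)))$, has its two adjoint-trace values related by $\sigma$: that is, $\Tr\Ad(\overline\rho_{\pp_2}(\Frob_x)) = \sigma\bigl(\Tr\Ad(\overline\rho_{\pp_1}(\Frob_x))\bigr)$ for all $x$. Since $\Tr\Ad(\rho_\pp(\Frob_x))$ is a fixed element of $F$ independent of $\pp$ (Proposition~\ref{F_charpol}), its images in $k_{\pp_1}$ and $k_{\pp_2}$ are the reductions of one element of $R^{\rm trad}$; the relation above says these reductions correspond under $\sigma$. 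In other words, the composite $R^{\rm trad} \to k_{\pp_1}\times k_{\pp_2}$ lands in the graph of $\sigma$, a proper subring of $k_{\pp_1}\times k_{\pp_2}$. But this directly contradicts Proposition~\ref{residue_field_trad1}(b), which guarantees that $R^{\rm trad} \to k_{\pp_1}\times k_{\pp_2}$ is \emph{surjective} for all $\pp_1,\pp_2 \notin P^{\rm trad}$.

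Thus taking $P_1 := P_0 \cup P^{\rm trad} \cup \{\pp : |k_\pp|\le 9\} \cup P^{\rm irr}$ (finite), we conclude that for distinct $\pp_1,\pp_2 \notin P_1$ the homomorphism $G_K^{\geom}\to \SL_n(k_{\pp_1})\times\SL_n(k_{\pp_2})$ is surjective, and combined with Proposition~\ref{res_surj1} this gives the full statement by the Goursat induction. The one technical point I would be careful about is the subring-of-the-graph step: I need that \emph{every} element of $R^{\rm trad}$, reduced mod $\pp_1$ and mod $\pp_2$, is matched by $\sigma$, which follows because $R^{\rm trad}$ is generated by the $\Tr\Ad(\rho_\pp(\Frob_x))$ and the graph relation holds simultaneously for all $x$ (each generator individually maps into $\mathrm{graph}(\sigma)$, and $\mathrm{graph}(\sigma)$ is a subring). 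I should also double-check that the possible presence of the graph automorphism and of diagonal automorphisms does not spoil the trace comparison — diagonal automorphisms are inner on $\mathrm{PGL}_n$ hence preserve $\Tr\Ad$, and the graph automorphism preserves $\Tr\Ad$ by self-duality of the adjoint representation — so in all cases the adjoint traces are related purely by the field isomorphism $\sigma$, which is exactly what contradicts Proposition~\ref{residue_field_trad1}(b).
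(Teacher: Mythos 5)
Your skeleton (single–prime surjectivity, Goursat reduction to two primes, an isomorphism $\SL_n(k_{\pp_1})\stackrel{\sim}{\to}\SL_n(k_{\pp_2})$ coming from a field isomorphism $\sigma$, and a contradiction with Proposition \ref{residue_field_trad1}(b) via adjoint traces) is exactly the paper's route. But there is a genuine gap at the decisive step: you derive the relation $\Tr\Ad(\delta_2)=\sigma(\Tr\Ad(\delta_1))$ only for pairs $(\delta_1,\delta_2)$ in the \emph{geometric} image $\Delta_P^{\geom}$ (that is where Goursat gives you a graph, since Proposition \ref{res_surj1} and the simplicity argument concern $\Delta_\pp^{\geom}\subset\SL_n(k_\pp)$), and then you apply it to $\delta=\overline{\rho}_P(\Frob_x)$. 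Frobenius elements do not lie in $G_K^{\geom}$, so their images lie in $\Delta_P$, not in $\Delta_P^{\geom}$, and nothing you have said forces them into the graph or makes their adjoint traces $\sigma$-compatible. One cannot dodge this by raising $\Frob_x$ to a power that lands in $\Delta_P^{\geom}$: the ring $R^{\rm trad}$ is generated by the traces $\Tr\Ad(\rho_\pp(\Frob_x))$ themselves, and $\Tr\Ad$ of a power is not a function of $\Tr\Ad$ of the element, so the contradiction with Proposition \ref{residue_field_trad1}(b) does not follow.

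The missing ingredient, which is where the paper invests its effort, is a rigidity argument transporting the $\sigma$-relation from $\Delta_P^{\geom}$ to all of $\Delta_P$. Concretely: the derived group $(\Delta_P^{\geom})^{\rm der}$ is the graph of the unique algebraic isomorphism $\tilde\alpha\colon\sigma^*\SL_{n,k_{\pp_1}}\stackrel{\sim}{\to}\SL_{n,k_{\pp_2}}$ lifting $\alpha$; by the uniqueness of the subfield and model (Propositions \ref{same_model}/\ref{same_k_and_model}) $\tilde\alpha$ is determined by this graph; since $\Delta_P$ normalizes $(\Delta_P^{\geom})^{\rm der}$, every $(\delta_1,\delta_2)\in\Delta_P$ satisfies ${\rm int}(\delta_2)\circ\tilde\alpha=\tilde\alpha\circ\sigma^*{\rm int}(\delta_1)$ as algebraic isomorphisms, and only then does taking the trace of the derivative give $\sigma(\Tr\Ad(\delta_1))=\Tr\Ad(\delta_2)$ for \emph{arithmetic} elements, in particular for the images of $\Frob_x$. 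Note also that the agreement of two algebraic isomorphisms cannot be argued by Zariski density of the finite group of points, so the appeal to the uniqueness statements is genuinely needed. With this step inserted, your argument (including the correct observations that diagonal and graph automorphisms do not affect $\Tr\Ad$) closes as in the paper; without it, the contradiction with Proposition \ref{residue_field_trad1}(b) is not reached.
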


\begin{proof}
Let $P'$ be the finite set of primes $\pp$ excluded by Proposition \ref{res_surj1} or satisfying $|k_\pp|\leq3$. Let $P^{\rm trad}$ be the finite set of primes from Proposition \ref{residue_field_trad1}, and set $P_1:=P'\cup P^{\rm trad}$. We claim that the assertion holds whenever $P\cap P_1=\varnothing$.

For any $\pp\in P$ abbreviate $\PSL(n,k_\pp) := \SL_n(k_\pp) / Z(\SL_n(k_\pp))$. The assumption $|k_\pp|>3$ implies that this is a non-abelian finite simple group and that $\SL_n(k_\pp)$ is perfect. Let
$$\overline{\Delta_P^{\geom}} \ \subset\ \prod_{\pp\in P} \PSL(n,k_\pp)$$ 
denote the image of $\Delta_P^{\geom}$. Then it suffices to prove that this inclusion is an equality. 

Assume otherwise. {}From Proposition \ref{res_surj1} we know that $\overline{\Delta_P^{\geom}}$ surjects to all factors. Since these factors are non-abelian simple groups, Goursat's Lemma implies that $\overline{\Delta_P^{\geom}}$ lies over the graph of an isomorphism between two factors, say associated to distinct primes $\pp_1,\pp_2\in P$. Then the situation persists after replacing $P$ by $\{\pp_1,\pp_2\}$; hence we may without loss of generality assume that $P=\{\pp_1,\pp_2\}$.

The isomorphism $\PSL(n,k_{\pp_1}) \stackrel{\sim}{\to} \PSL(n,k_{\pp_2})$ is induced by a field isomorphism  $\sigma: k_{\pp_1} \stackrel{\sim}{\to} k_{\pp_2}$ and a corresponding isomorphism of algebraic groups $\alpha: \sigma^* \PGL_{r,k_{\pp_1}} \stackrel{\sim}{\to} \PGL_{r,k_{\pp_2}}$ (see \cite{PSTAX}, Lemmas 9.4 and 9.5). Since $\SL_n(k_{\pp_1}) \times \SL_n(k_{\pp_2})$ is a central extension of $\PSL(n,k_{\pp_1}) \times \PSL(n,k_{\pp_2})$, the derived group $(\Delta_P^{\geom})^{\rm der}$ of $\Delta_P^{\geom}$ depends only on $\overline{\Delta_P^{\geom}}$. It is therefore the graph of the isomorphism $\SL_n(k_{\pp_1}) \stackrel{\sim}{\to} \SL_n(k_{\pp_2})$ induced by the unique isomorphism $\tilde\alpha: \sigma^* \SL_{r,k_{\pp_1}} \stackrel{\sim}{\to} \SL_{r,k_{\pp_2}}$ lifting $\alpha$. 

The uniqueness of the model from Proposition \ref{same_model} implies that the isomorphism $\tilde\alpha$ depends only on $(\Delta_P^{\geom})^{\rm der}$. Thus its graph depends only on $(\Delta_P^{\geom})^{\rm der}$. Since $\Delta_P$ normalizes $(\Delta_P^{\geom})^{\rm der}$ by construction, it thus also normalizes the graph of $\tilde\alpha$. In other words, for every $\delta=(\delta_1,\delta_2)\in\Delta_P$, the following diagram commutes:
$$\xymatrix{
\sigma^* \SL_{r,k_{\pp_1}} \ar[r]^-{\tilde\alpha} 
\ar[d]_{\sigma^*{\rm int}(\delta_1)}
& \SL_{r,k_{\pp_2}} \ar[d]^{{\rm int}(\delta_2)} \\
\sigma^* \SL_{r,k_{\pp_1}} \ar[r]^-{\tilde\alpha} 
& \SL_{r,k_{\pp_2}} \rlap{.}\\}$$
Taking traces and recalling that the trace on $\gll_n$ is the trace on $\sll_n$ plus $1$, we deduce that $\sigma(\Tr\Ad(\delta_1)) = \Tr\Ad(\delta_2)$.

In particular, we can apply this when $\delta$ is the image of $\Frob_x$ for any closed point $x\in X$. Then $\Tr\Ad(\delta_i) = \Tr\Ad(\overline{\rho}_{\pp_i}(\Frob_x))$ is the image of $\Ad(\rho_{\pp_i}(\Frob_x))$ in the residue field $k_{\pp_i}$, where $\Ad(\rho_{\pp_i}(\Frob_x)) \in R^{\text{trad}}$ is independent of $i$. Thus $\Tr\Ad(\delta) = \bigl( \Tr\Ad(\delta_1) , \sigma(\Tr\Ad(\delta_1)) \bigr)$ is the image of $\Ad(\rho_{\pp_1}(\Frob_x)) \in R^{\text{trad}}$ in the product of the residue fields $k_{\pp_1} \times k_{\pp_2}$. Since the elements $\Ad(\rho_{\pp_1}(\Frob_x))$ for all $x$ generate the ring of trances $R^{\text{trad}}$, it follows that the image of the reduction map $R^{\text{trad}} \to k_{\pp_1} \times k_{\pp_2}$ is contained in the graph of $\sigma$. But since $P
\cap P^{\rm trad}=\varnothing$, this contradicts Proposition \ref{residue_field_trad1} (b). Therefore $\overline{\Delta_P^{\geom}}$ cannot be a proper subgroup, and we are finished.
\end{proof}

\begin{lem} \label{res_indie}
There exists a finite set $P_2$ of primes $\pp\not=\pp_0$ of $A$ containing $P_0$, such that for every finite $P\supset P_2$ and every $\pp\not\in P$, we have 
$$\Delta^{\geom}_{P\cup\{\pp\}} \ =\ \Delta^{\geom}_P \times \SL_n(k_\pp).$$
\end{lem}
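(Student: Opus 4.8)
The plan is to apply Goursat's Lemma to the inclusion $\Delta^{\geom}_{P\cup\{\pp\}}\subseteq\Delta^{\geom}_P\times\SL_n(k_\pp)$. This subgroup surjects onto $\Delta^{\geom}_P$ tautologically and onto $\SL_n(k_\pp)$ because $\Delta^{\geom}_\pp=\SL_n(k_\pp)$ by Proposition~\ref{surj1}, provided $\pp$ lies outside a suitable finite set. So the only obstruction to equality is a common quotient, realised as $Q:=\SL_n(k_\pp)/N$ for some normal subgroup $N\triangleleft\SL_n(k_\pp)$, and the claim is equivalent to $N=\SL_n(k_\pp)$. I would take $P_2$ to be a finite set containing $P_1$ (the set of Proposition~\ref{res_surj_fin}), hence containing $P_0$, and large enough that for every $\pp\notin P_2$: $\SL_n(k_\pp)$ is quasisimple and $\PSL(n,k_\pp)$ is simple non-abelian, $\Delta^{\geom}_\pp=\SL_n(k_\pp)$ (Proposition~\ref{surj1}), and — the key point — $\PSL(n,k_\pp)$ is \emph{not} isomorphic to any composition factor of $\SL_n(k_{\pp''})$ for any $\pp''\in P_1$ or any $\pp''$ with small residue field. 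The last set of excluded primes is finite: $P_1$ together with the primes of small residue field is a finite set, giving a finite list of finite groups, and $A$ has only finitely many primes with residue field of any prescribed order.

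Now fix $P\supseteq P_2$ and $\pp\notin P$, and suppose $N\neq\SL_n(k_\pp)$. Since the proper normal subgroups of $\SL_n(k_\pp)$ are central, $N$ is central and $Q$ is quasisimple with $Q/Z(Q)\cong\PSL(n,k_\pp)=:S_0$. By Goursat, $\Delta^{\geom}_{P\cup\{\pp\}}$ is the fibre product $\Delta^{\geom}_P\times_Q\SL_n(k_\pp)$ for compatible surjections onto $Q$; in particular there is a surjection $\psi\colon\Delta^{\geom}_P\twoheadrightarrow Q$, and composing with $Q\twoheadrightarrow S_0$ exhibits the simple non-abelian group $S_0$ as a composition factor of $\Delta^{\geom}_P$, hence of $\prod_{\pp''\in P}\SL_n(k_{\pp''})$, hence $S_0\cong\PSL(n,k_{\pp''})$ for some $\pp''\in P$. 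By the choice of $P_2$ every such $\pp''$ lies outside $P_1$ and has large residue field; let $S\subseteq P$ be the (finite, non-empty, $P_1$-avoiding) set of all of them. Restricting $\bar\psi:=(Q\to S_0)\circ\psi$ to the kernel of the projection $\Delta^{\geom}_P\twoheadrightarrow\Delta^{\geom}_S$, whose non-abelian composition factors are the $\PSL(n,k_{\pp''})$ with $\pp''\in P\smallsetminus S$ and hence do not include $S_0$, one finds that $\bar\psi$ kills that kernel, so it factors through $\Delta^{\geom}_S$. By Proposition~\ref{res_surj_fin} (applicable since $S\cap P_1=\varnothing$) we have $\Delta^{\geom}_S=\prod_{\pp''\in S}\SL_n(k_{\pp''})$, and since a surjection from a product of quasisimple groups onto a simple group factors through a single factor, $\bar\psi$ factors through $\SL_n(k_{\pp_*})\twoheadrightarrow\PSL(n,k_{\pp_*})\xrightarrow{\ \sim\ }S_0$ for a single $\pp_*\in S$.

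Tracing this back through the fibre-product description of $\Delta^{\geom}_{P\cup\{\pp\}}$ shows that for every $g\in G_K^{\geom}$ the residual images $\overline{\rho}_{\pp_*}(g)$ and $\overline{\rho}_\pp(g)$ are identified modulo their centres by a fixed isomorphism $\PSL(n,k_{\pp_*})\xrightarrow{\sim}\PSL(n,k_\pp)$. Thus the image of $G_K^{\geom}$ in $\PSL(n,k_{\pp_*})\times\PSL(n,k_\pp)$ lies on the graph of this isomorphism and is a proper subgroup. But $\pp_*$ and $\pp$ are distinct primes both outside $P_1$, so Proposition~\ref{res_surj_fin} gives $\Delta^{\geom}_{\{\pp_*,\pp\}}=\SL_n(k_{\pp_*})\times\SL_n(k_\pp)$, whose image modulo centres is the full product $\PSL(n,k_{\pp_*})\times\PSL(n,k_\pp)$ — a contradiction. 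Hence $N=\SL_n(k_\pp)$, i.e.\ $\{1\}\times\SL_n(k_\pp)\subseteq\Delta^{\geom}_{P\cup\{\pp\}}$, and together with surjectivity onto $\Delta^{\geom}_P$ this yields $\Delta^{\geom}_{P\cup\{\pp\}}=\Delta^{\geom}_P\times\SL_n(k_\pp)$.

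The \emph{main obstacle} is the middle step: since we do not yet know that $\Delta^{\geom}_P$ is itself a direct product, the argument that any Goursat quotient must be "supported" on finitely many primes of $P$ sharing the residue field of $\pp$, and can then be eliminated via the already-established two-prime case, must be run entirely at the level of composition factors; organising the finitely many exceptional isomorphisms among small finite simple groups of Lie type into the definition of $P_2$ is the source of the book-keeping.
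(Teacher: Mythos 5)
Your overall strategy (Goursat's Lemma, then eliminating the common quotient by pinning it to a single auxiliary prime $\pp_*$ and contradicting Proposition~\ref{res_surj_fin} for the pair $\{\pp_*,\pp\}$) could be made to work, but as written there is a genuine gap in the composition-factor bookkeeping, and it occurs exactly at the two places where your argument needs it. You repeatedly pass from ``$S_0$ is a composition factor of a subgroup of a product'' to ``$S_0$ is a composition factor of the product, hence of one of the factors''. This inference is false: a composition factor of a subgroup $H\le G_1\times\cdots\times G_k$ is in general only a \emph{section} of some $G_i$, not a composition factor of it. Concretely, $\PSL(n,k_\pp)$ is a section of $\SL_n(k_{\pp''})$ whenever $k_\pp$ embeds into $k_{\pp''}$, which happens for infinitely many pairs of primes, so no finite set $P_2$ chosen by excluding isomorphism types (as you propose) can guarantee that the kernel of $\Delta^{\geom}_P\twoheadrightarrow\Delta^{\geom}_S$ has no composition factor isomorphic to $S_0$: that kernel is a priori just \emph{some} subgroup of the product over $P\smallsetminus S$, which contains primes of larger residue field as well as the primes of $P_1$, where moreover the ambient local group is $(D_{\pp''}/\pp''D_{\pp''})^\times$ rather than $\SL_n(k_{\pp''})$ and $\Delta^{\geom}_{P_1}$ is an unknown subgroup. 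The same defect already undermines the earlier step ``$S_0$ is a composition factor of $\Delta^{\geom}_P$, hence of $\prod_{\pp''\in P}\SL_n(k_{\pp''})$, hence $S_0\cong\PSL(n,k_{\pp''})$ for some $\pp''\in P$''.

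Two inputs, absent from your write-up, are needed to close this. First, the $P_1$-part must be controlled by \emph{order}, not isomorphism type: let $N$ be the maximal order of a Jordan--H\"older constituent of the fixed finite group $\Delta^{\geom}_{P_1}$ and put into $P_2$ every $\pp$ with $|\PSL(n,k_\pp)|\le N$; since composition factors of a subgroup are sections of composition factors of the group, this eliminates any contribution from the $P_1$-coordinates. Second, over $P\smallsetminus P_1$ you cannot argue with an unknown subgroup: you must invoke Proposition~\ref{res_surj_fin} itself, which says that $\Delta^{\geom}_{(P\smallsetminus P_1)\cup\{\pp\}}$ is the \emph{full} product; the relevant kernels are then normalized by this full product, hence are sub-products up to center, and only then are their non-abelian composition factors the expected groups $\PSL(n,k_{\pp''})$. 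Once both points are in place, the detour through a partner prime $\pp_*$ becomes unnecessary: the full-product statement forces $E:=\Delta^{\geom}_{P\cup\{\pp\}}\cap\bigl(\Delta^{\geom}_{P_1}\times\{1\}\times\SL_n(k_\pp)\bigr)$ to surject onto $\SL_n(k_\pp)$, the order bound places $\PSL(n,k_\pp)$ in the pure $\pp$-component, and normality in $\SL_n(k_\pp)$ gives $\{1\}\times\SL_n(k_\pp)\subset\Delta^{\geom}_{P\cup\{\pp\}}$, after which the $5$-Lemma finishes; this is essentially the paper's proof. (A minor point: the residual surjectivity $\Delta^{\geom}_\pp=\SL_n(k_\pp)$ is Proposition~\ref{res_surj1}, not Proposition~\ref{surj1}.)
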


\begin{proof}
Let $P_1$ be the finite set of primes excluded by Proposition \ref{res_surj_fin}. 
Let $N$ be the maximum of the orders of all Jordan-H\"older constituents of the finite group $\Delta^\text{geom}_{P_1}$. Let $P_2$ be the union of $P_1$ with the set of primes $\pp$ for which $|\PSL(n,k_\pp)|\leq N$ or $|k_\pp|\leq9$. We will prove the assertion whenever $P\supset P_2$.

Consider the natural inclusion
$$\Delta^{\geom}_{P\cup\{\pp\}} \ \subset\ 
\Delta^{\geom}_{P_1} \times \Delta^{\geom}_{P\smallsetminus P_1} \times \SL_n(k_\pp).$$
By definition the image of $\Delta^{\geom}_{P\cup\{\pp\}}$ under the projection to the second and third factors is the subgroup
$$\Delta^{\geom}_{(P\smallsetminus P_1)\cup\{\pp\}} \ \subset\ 
\Delta^{\geom}_{P\smallsetminus P_1} \times \SL_n(k_\pp)
\ \subset\prod_{\pp'\in P\smallsetminus P_1}\!\! \SL_n(k_{\pp'}) \ \times\ \SL_n(k_\pp).$$
These inclusions are equalities by Proposition \ref{res_surj_fin}. Therefore the projection homomorphism $\Delta^{\geom}_{P\cup\{\pp\}} \to \Delta^{\geom}_{P\smallsetminus P_1} \times \SL_n(k_\pp)$ is surjective. From this it follows that 
%the intersection 
$$E\ :=\ \Delta^{\geom}_{P\cup\{\pp\}} \cap \bigl(\Delta^{\geom}_{P_1} {\times} \{1\} {\times} \SL_n(k_\pp) \bigr)$$
surjects to $\SL_n(k_\pp)$. In particular $\PSL(n,k_\pp)$ is a Jordan-H\"older factor of $E$. The assumption $\pp\not\in P_1$ implies that the order of $\PSL(n,k_\pp)$ is greater than the order of any Jordan-H\"older constituent of $\Delta^\text{geom}_{P_1}$. Thus $\PSL(n,k_\pp)$ cannot be a Jordan-H\"older constituent of the image of $E$ in $\Delta^\text{geom}_{P_1}$. It must therefore be a Jordan-H\"older factor of $\Delta^{\geom}_{P\cup\{\pp\}} \cap \bigl(\{1\} {\times} \{1\} {\times} \SL_n(k_\pp) \bigr)$. Since $\SL_n(k_\pp)$ is perfect, it follows that 
$$\{1\} {\times} \{1\} {\times} \SL_n(k_\pp) \ \subset\ 
E  \ \subset\ \Delta^{\geom}_{P\cup\{\pp\}}.$$
The short exact sequence
$$\xymatrix{
1 \ar[r] &
\{1\} {\times} \{1\} {\times} \SL_n(k_\pp) \ar[r] &
\Delta^{\geom}_{P\cup\{\pp\}} \ar[r] &
\Delta^{\geom}_P \ar[r] & 1\\}$$
and the $5$-Lemma then show that $\Delta^{\geom}_{P\cup\{\pp\}} \ =\ \Delta^{\geom}_P \times \SL_n(k_\pp)$, as desired.
\end{proof}

%%%%%%%%%%%%%%%%%%%%%%%%%%%%%%%%%%%%%%%%%%%%%%%%%%%%%%%%%%%%%%%%%%%%%%

\subsection{Adelic openness}\label{54}

For any finite set $P$ of primes $\pp\not=\pp_0$ of $A$, we let $\Gamma_P^{\text{geom}}$ denote the image of the combined homomorphism
$$(\rho_\pp)_{\pp\in P}:\ 
  G_K^{\text{geom}} \longrightarrow \prod_{\pp\in P} D_\pp^1.$$
Recall that $D_\pp^1 \cong \SL_n(A_\pp)$ whenever $\pp\not\in P_0$.

\begin{lem} \label{indie}
There exists a finite set $P_3$ of primes $\pp\not=\pp_0$ of $A$ containing $P_0$, such that for every finite $P\supset P_3$ and every $\pp\not\in P$, we have 
$$\Gamma^{\geom}_{P\cup\{\pp\}} \ =\ \Gamma^{\geom}_P \times \SL_n(A_\pp).$$
\end{lem}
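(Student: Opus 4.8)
The plan is to mimic the structure of the proof of Lemma \ref{res_indie}, but working over the rings $A_\pp$ instead of over the residue fields $k_\pp$, and to reduce the $A_\pp$-level statement to the residual statement already established in Lemma \ref{res_indie} together with the single-prime surjectivity of Proposition \ref{surj1}. First I would set $P_3$ to be the union of $P_2$ (the set from Lemma \ref{res_indie}), the finite set of primes excluded by Proposition \ref{surj1}, and all primes $\pp$ with $|k_\pp|\le 9$. Let $P\supset P_3$ be finite and $\pp\notin P$. For such $\pp$ we know $\Gamma^{\geom}_\pp=\SL_n(A_\pp)$ by Proposition \ref{surj1}, so the inclusion $\Gamma^{\geom}_{P\cup\{\pp\}}\subset\Gamma^{\geom}_P\times\SL_n(A_\pp)$ has both projections surjective. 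I then consider
$$E\ :=\ \Gamma^{\geom}_{P\cup\{\pp\}}\cap\bigl(\{1\}\times\SL_n(A_\pp)\bigr),$$
which by Goursat's lemma is a closed normal subgroup of $\SL_n(A_\pp)$, and the lemma is equivalent to showing $E=\SL_n(A_\pp)$.

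To identify $E$ I would pass to the residual level: reduction modulo $\pp$ maps $\Gamma^{\geom}_{P\cup\{\pp\}}$ onto $\Delta^{\geom}_{P\cup\{\pp\}}$, and by Lemma \ref{res_indie} (applicable since $P\supset P_2$ and $\pp\notin P$) the latter equals $\Delta^{\geom}_P\times\SL_n(k_\pp)$. Hence the image $\bar E$ of $E$ under reduction modulo $\pp$ contains $\{1\}\times\SL_n(k_\pp)$, i.e. $E$ surjects onto $\SL_n(k_\pp)$. Since $|k_\pp|>9$, Proposition \ref{normal_sa} applies to the closed normal subgroup $E\triangleleft\SL_n(A_\pp)$: its image in $\SL_n(k_\pp)$ is all of $\SL_n(k_\pp)$, so $E^{[0]}=\SL_n(k_\pp)$, and therefore $E=\SL_n(A_\pp)$. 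This gives $\{1\}\times\SL_n(A_\pp)\subset\Gamma^{\geom}_{P\cup\{\pp\}}$, and combining with the surjectivity of the projection $\Gamma^{\geom}_{P\cup\{\pp\}}\to\Gamma^{\geom}_P$ via the five-lemma applied to
$$\xymatrix{
1\ar[r] & \{1\}\times\SL_n(A_\pp)\ar[r] & \Gamma^{\geom}_{P\cup\{\pp\}}\ar[r] & \Gamma^{\geom}_P\ar[r] & 1\\}$$
yields $\Gamma^{\geom}_{P\cup\{\pp\}}=\Gamma^{\geom}_P\times\SL_n(A_\pp)$, as desired.

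The main obstacle I anticipate is the verification that $E$ is in fact a \emph{normal} subgroup of $\SL_n(A_\pp)$ — Goursat gives that $E$ is normalized by the image of the first factor's stabilizer, and one needs that the full $\Gamma^{\geom}_{P\cup\{\pp\}}$ (hence, via surjectivity of the $\pp$-projection, all of $\SL_n(A_\pp)$) normalizes $E$; this is a routine diagram chase but must be spelled out. A secondary point requiring care is that Proposition \ref{normal_sa} is stated for \emph{closed} normal subgroups, so one should note that $E$, being an intersection of $\Gamma^{\geom}_{P\cup\{\pp\}}$ (closed, since $G_K^{\geom}$ is compact) with a closed subgroup, is itself closed. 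Everything else is a direct transcription of the argument in Lemma \ref{res_indie}.
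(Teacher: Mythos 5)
Your overall architecture (choice of $P_3$, the subgroup $E=\Gamma^{\geom}_{P\cup\{\pp\}}\cap(\{1\}\times\SL_n(A_\pp))$, normality of $E$ under $\SL_n(A_\pp)$ via Proposition \ref{surj1}, then Proposition \ref{normal_sa} and the $5$-lemma) is exactly the paper's, but there is a genuine gap at the pivotal step where you claim that $E$ surjects onto $\SL_n(k_\pp)$ "hence" from Lemma \ref{res_indie}. Knowing that the full reduction of $\Gamma^{\geom}_{P\cup\{\pp\}}$ equals $\Delta^{\geom}_P\times\SL_n(k_\pp)$ only tells you that for each $g\in\SL_n(k_\pp)$ there is an element of $\Gamma^{\geom}_{P\cup\{\pp\}}$ reducing to $(1,g)$; such an element has its $P$-component in the kernel $H$ of $\Gamma^{\geom}_P\twoheadrightarrow\Delta^{\geom}_P$, not necessarily equal to $1$. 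So what follows from Lemma \ref{res_indie} is only that $\Gamma^{\geom}_{P\cup\{\pp\}}\cap\bigl(H\times\SL_n(A_\pp)\bigr)$ surjects onto $\SL_n(k_\pp)$; your argument silently replaces $H$ by $\{1\}$, which is precisely the point that needs proof and is where the whole difficulty of lifting the residual product decomposition sits.

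The repair (and the paper's actual argument) is the following: $H$ is a pro-$p$-group, being contained in the product of the congruence kernels $\ker\bigl(\SL_n(A_{\pp'})\to\SL_n(k_{\pp'})\bigr)$ for $\pp'\in P$, while $\SL_n(k_\pp)$ has no Jordan--H\"older factor of order $p$ (equivalently, being perfect for $|k_\pp|>3$, it has no non-trivial $p$-group quotient). Since $E$ is normal in $\Gamma^{\geom}_{P\cup\{\pp\}}\cap(H\times\SL_n(A_\pp))$ and the quotient embeds into $H$, the image of $E$ in $\SL_n(k_\pp)$ is a normal subgroup with $p$-group quotient, hence all of $\SL_n(k_\pp)$. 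With this step inserted, the rest of your proof goes through as you describe and coincides with the paper's.
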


\begin{proof}
Let $P'$ be the finite set of primes $\pp$ excluded by Proposition \ref{surj1} or satisfying $|k_\pp|\leq9$. Let $P_3$ be the union of $P'$ with the set of primes $P_2$ from Lemma \ref{res_indie}. We will prove the assertion whenever $P\supset P_3$.

For this we consider the commutative diagram
\def\twoheaddownarrow{\hbox{$\xymatrix@R-4pt{\ar@{->>}[d]\\ \\}$}}
$$\begin{array}{ccccc}
\Gamma^{\geom}_{P\cup\{\pp\}}
&\subset& \Gamma^{\geom}_P
&\!\!\!\times\!\!\!& \SL_n(A_\pp) \\[-7pt]
\twoheaddownarrow && \twoheaddownarrow && \twoheaddownarrow \\
\Delta^{\geom}_{P\cup\{\pp\}} 
&\subset& \Delta^{\geom}_P
&\!\!\!\times\!\!\!& \SL_n(k_\pp)\rlap{.}
\end{array}$$
The inclusion in the lower row is an equality by Lemma \ref{res_indie}. Thus if $H$ denotes the kernel of the surjection $\Gamma^{\geom}_P \twoheadrightarrow \Delta^{\geom}_P$, it follows that $\Gamma^{\geom}_{P\cup\{\pp\}} \cap \bigl(H {\times} \SL_n(A_\pp) \bigr)$ surjects to $\{1\}\times\SL_n(k_\pp)$. But by construction $H$ is a pro-$p$-group, and $\SL_n(k_\pp)$ has no Jordan-H\"older factor of order $p$. Since all groups in question are pro-finite, we deduce that
$$\Gamma_\pp'\ :=\ \Gamma^{\geom}_{P\cup\{\pp\}} 
  \cap \bigl(\{1\} {\times} \SL_n(A_\pp) \bigr)$$
also surjects to $\SL_n(k_\pp)$. 

By construction $\Gamma'_\pp$ is a closed normal subgroup of $\Gamma^{\geom}_{\{\pp\}\cup P}$, and the conjugation action of $\Gamma^{\geom}_{\{\pp\}\cup P}$ on it factors through the projection $\Gamma^{\geom}_{\{\pp\}\cup P} \twoheadrightarrow \Gamma^{\geom}_\pp \subset \SL_n(A_\pp)$. Since $\pp\not\in P'$, the last inclusion is an equality by Proposition \ref{surj1}. Together this implies that $\Gamma'_\pp$ is normalized by $\SL_n(A_\pp)$. 

Combining this with the assumption $|k_\pp|>9$ and the fact that $\Gamma_\pp'$ surjects to $\SL_n(k_\pp)$, Proposition \ref{normal_sa} now implies that $\Gamma'_\pp = \{1\}\times\SL_n(A_\pp)$. The short exact sequence
$$\xymatrix{
1 \ar[r] &
\{1\} {\times} \SL_n(A_\pp) \ar[r] &
\Gamma^{\geom}_{P\cup\{\pp\}} \ar[r] &
\Gamma^{\geom}_P \ar[r] & 1\\}$$
and the $5$-Lemma then show that $\Gamma^{\geom}_{P\cup\{\pp\}} \ =\ \Gamma^{\geom}_P \times \SL_n(A_\pp)$, as desired.
\end{proof}

\begin{proof}[Proof of Theorem \ref{main_theorem} (a).]
Let $P_3$ be as in Lemma \ref{indie}. Then induction on $P$ shows that for every finite $P\supset P_3$, we have 
$$\Gamma^{\geom}_P \ =\ \Gamma^{\geom}_{P_3} \times \prod_{\pp\in P\smallsetminus P_3} \SL_n(A_\pp).$$
In the limit this implies that
$$\rho_{\text{ad}}(G_K^{\text{geom}}) \ =\ \Gamma^{\geom}_{P_3} \times \prod_{\pp\not\in P_3} \SL_n(A_\pp).$$
But $\Gamma^{\text{geom}}_{P_3}$ has finite index in $\prod_{\pp\in P_3} D^1_\pp$ by Theorem \ref{openness}. Therefore $\rho_{\text{ad}}(G_K^{\text{geom}})$ has finite index in $\prod_{\pp\not=\pp_0} D^1_\pp$, as desired.
\end{proof} 

%%%%%%%%%%%%%%%%%%%%%%%%%%%%%%%%%%%%%%%%%%%%%%%%%%%%%%%%%%%%%%%%%%%%%%

\subsection{Absolute Galois group}\label{55}

% In this subsection we prove Theorem \ref{main_theorem}. 
\begin{proof}[Proof of Theorem \ref{main_theorem} (b).]
Recall that $R := \End_K(\phi) = \End_{K^{\text{sep}}}(\phi)$ by Assumption \ref{asses} (a), and that $D_\pp$ was defined as the commutant of $R_\pp := R\otimes_AA_\pp$ in $\End_{A_\pp}(T_\pp(\phi))$. Thus $\rho_{\text{ad}}(G_K)$ is contained in $\prod_{\pp\not=\pp_0} D^\times_\pp$. We will look at its image under the determinant map.

Let $F'$ be a maximal commutative $F$-subalgebra of $R\otimes_A F$, let $A'$ denote the integral closure of $A$ in~$F'$, and choose a Drinfeld $A'$-module $\phi'\colon A'\to K\{\tau\}$ and an isogeny $f\colon\phi\to\phi'|A$, as in Subsection \ref{tatess}. The characteristic of $\phi'$ is then a prime $\pp_0'$ of $A'$ that divides~$\pp_0$. 
By Anderson \cite{Anderson_t_Motives}, \S4.2, there exists a Drinfeld $A'$-module $\psi'\colon A'\to K\{\tau\}$ of rank~$1$ and characteristic $\pp_0'$ whose adelic Galois representation is isomorphic to the determinant of the adelic Galois representation associated to~$\phi'$. With Proposition \ref{boalo} it follows that the composite homomorphism
$$\xymatrix{
\det\rho_{\text{ad}}\colon\ G_K \ar[r]^-{\rho_{\text{ad}}} &
\prod\limits_{\pp\not=\pp_0}\!\! D^\times_\pp \ar[r]^-{\det} &
\prod\limits_{\pp\not=\pp_0}\!\! A^\times_\pp \ar@{^{ (}->}[r] & 
\prod\limits_{\pp'\nmid\pp_0}\!\! A^{\prime\times}_{\pp'} \\ }$$
describes the Galois representation on the Tate modules $\prod_{\pp'\nmid\pp_0} T_{\pp'}(\psi')$.

Without loss of generality we may assume that $\psi'$ is defined over the finite field~$\kappa$. Let $m$ denote the degree of $\kappa$ over~$\mathbb{F}_p$. Then $\Frob_\kappa = \tau^m$ lies in the center of $\kappa\{\tau\}$. In particular it commutes with $\psi'_{a'}$ for all $a'\in A'$ and is therefore an endomorphism of~$\psi'$. As $\psi'$ has rank $1$, its endomorphism ring is equal to~$A'$; hence $\Frob_\kappa$ represents an element $a'\in A'$. The action of $\Frob_\kappa$ as an element of the Galois group $G_\kappa$ on all Tate modules of $\psi'$ is then just multiplication by~$a'$. Since $a'$ is the single eigenvalue of $\Frob_\kappa$ associated to $\psi'$, Proposition \ref{F_eigenvalues} implies that $a'$ is divisible by $\pp'_0$ but not by any other prime of~$A'$. 

For every element $\sigma\in G_K$ whose restriction to $\bar\kappa$ is $\Frob_\kappa$ we thus have $\det\rho_{\text{ad}}(\sigma) = a'$ diagonally embedded into $\prod_{\pp'\nmid\pp_0} A^{\prime\times}_{\pp'}$. But it also lies in the subgroup $\prod_{\pp\not=\pp_0} A^\times_\pp$, whose intersection with the diagonally embedded $A'$ is~$A$. Thus $a'$ is actually an element of~$A$, divisible by $\pp_0$ but not by any other prime of~$A$. Moreover, we have $\det\rho_{\text{ad}}(G_K) = \smash{\overline{\langle a'\rangle}}$, the pro-cyclic subgroup topologically generated by~$a'$.

Now both $a'$ and the $a_0$ in Theorem \ref{main_theorem} are elements of $A$ that are divisible by $\pp_0$ but not by any other prime of~$A$. Thus the corresponding ideals are $(a')=\pp_0^i$ and $(a_0)=\pp_0^j$ for some positive integers $i$ and~$j$. Together it follows that $(a^{\prime j}) = \pp_0^{ij} = (a_0^i)$, and so $a^{\prime j}/a_0^i$ is a unit in~$A^\times$. As the group of units is finite, we deduce that $a^{\prime j\ell} = a_0^{i\ell}$ for some positive integer~$\ell$. Thus the subgroup $\smash{\overline{\langle a'\rangle}}$ is commensurable to $\smash{\overline{\langle a_0\rangle}}$.

On adjoining to $K$ a suitable finite extension of the constant field~$\kappa$ we can replace $a'$ by any positive integral power. We can therefore reduce ourselves to the case that $\smash{\overline{\langle a'\rangle}} \subset \smash{\overline{\langle a_0^n\rangle}}$ with $n$ as in Assumption \ref{asses} (c). Then $\det(a_0) = a_0^n$, and from this we see that the middle row in the following commutative diagram is exact and the upper right rectangle is cartesian. This together with the inclusion $\smash{\overline{\langle a'\rangle}} \subset \smash{\overline{\langle a_0^n\rangle}}$ yields the inclusions in the lower half of the diagram:
$$\xymatrix@R-10pt@C+10pt{
1 \ar[r] &
\prod\limits_{\pp\not=\pp_0}\!\!D^1_\pp \ar[r] &
\prod\limits_{\pp\not=\pp_0}\!\!D^\times_\pp \ar[r]^-{\det} &
\prod\limits_{\pp\not=\pp_0}\!\!A^\times_\pp & \\
1 \ar[r] &
\prod\limits_{\pp\not=\pp_0}\!\!D^1_\pp \ar[r] \ar@{}[u]|<<<<{\big\Vert} &
\overline{\langle a_0\rangle} \cdot \!\prod\limits_{\pp\not=\pp_0}\!\!D^1_\pp \ar[r] \ar@{}[u]|<<<<{\bigcup} &
\overline{\langle a_0^n\rangle} \ar[r] \ar@{}[u]|<<<<{\bigcup} & 1 \\
1 \ar[r] &
\rho_{\text{ad}}(G_K) \cap \!\prod\limits_{\pp\not=\pp_0}\!\!D^1_\pp \ar[r] \ar@{}[u]|<<<<{\bigcup} &
\rho_{\text{ad}}(G_K) \ar[r] \ar@{}[u]|<<<<{\bigcup} &
\overline{\langle a'\rangle} \ar[r] \ar@{}[u]|<<<<{\bigcup} &1\rlap{.} \\}$$
Theorem \ref{main_theorem} (a) implies that the inclusion at the lower left is of finite index. By the above the same is true for the inclusion at the lower right. Since the bottom row is also exact, it follows that the inclusion at the lower middle is also of finite index. This shows that $\rho_{\text{ad}}( G_K)$ is commensurable to $\overline{\langle a_0\rangle} \cdot \prod_{\pp\not=\pp_0} D^1_\pp$, finishing the proof of Theorem \ref{main_theorem}~(b).
\end{proof} 

%%%%%%%%%%%%%%%%%%%%%%%%%%%%%%%%%%%%%%%%%%%%%%%%%%%%%%%%%%%%%%%%%%%%%%

%\setcounter{section}{5}
\newpage

%%%%%%%%%%%%%%%%%%%%%%%%%%%%%%%%%%%%%%%%%%%%%%%%%%%%%%%%%%%%%%%%%%%%%%

\section{Arbitrary endomorphism ring}
\label{generalcase}

% In this section we drop any assumption on endomorphism rings. 
% to describe $\rho_{\text{ad}}(G_K^{\geom})$ and  $\rho_{\text{ad}}(G_K)$ in general.

As in Section \ref{notation}, we let $K$ be a field that is finitely generated over a finite field $\kappa$ and let $\phi: A \rightarrow K\{\tau\}$ be a Drinfeld $A$-module of rank $r$ over $K$ of special characteristic~$\pp_0$. We keep the relevant notations of Section \ref{notation}, but do not impose any other restrictions.
Set $R:= \End_{K^{\text{sep}}}(\phi)$ and $F := \mathop{\rm Quot}(A)$. Then $R \otimes_A F$ is a division algebra of finite dimension over $F$. Let $Z$ denote its center and write
$$\dim_Z (R\otimes_A F) =d^2 \quad \text{and} \quad [Z/F]=e.$$ 
Then $de$ divides $r$ by Proposition \ref{numerics}.

%%%%%%%%%%%%%%%%%%%%%%%%%%%%%%%%%%%%%%%%%%%%%%%%%%%%%%%%%%%%%%%%%%%%%%

\subsection{The isotrivial case}\label{61}

\begin{defn}\label{isotrivialdef}
We call $\phi$ \emph{isotrivial} if over some field extension it is isomorphic to a Drinfeld $A$-module defined over a finite field.
\end{defn}

Clearly this property is invariant under extending~$K$.

\begin{prop}\label{isotrivialprop}
\begin{enumerate}
\item[(a)] $\phi$ is isotrivial if and only if it is isomorphic over $K^{\text{sep}}$ to a Drinfeld $A$-module defined over a finite subfield of $K^{\text{sep}}$.
\item[(b)] Let $\phi'$ be another Drinfeld $A$-module over $K$ that is isogenous to~$\phi$. Then $\phi$ is isotrivial if and only if $\phi'$ is isotrivial.
\item[(c)] Let $B$ be any integrally closed infinite subring of~$A$. Then $\phi$ is isotrivial if and only if $\phi|B$ is isotrivial.
\end{enumerate}
\end{prop}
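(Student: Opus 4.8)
The plan is to reduce all three parts to a single clean invariant. For part (a), the key point is that isotriviality is a statement about the $j$-invariant(s) of $\phi$. More precisely, for a Drinfeld module of rank $r$ over $K$, being isomorphic over $\overline{K}$ to one defined over a finite field is equivalent to the finitely many basic $j$-invariants (the coefficients of $\phi$ modulo the scaling action of $\overline{K}^\times$, suitably normalized) lying in $\overline{\kappa}$. Once the $j$-invariants lie in $\overline{\kappa}$, a descent argument using Hilbert 90 (the automorphism group of the scaling action is $\mathbb{G}_m$) produces a model over a finite subfield of $K^{\text{sep}}$, and in fact the required twist can be realized separably since it amounts to extracting a root whose degree divides $q_0-1$ for the relevant finite field and one can enlarge the finite field; alternatively, one notes that a purely inseparable extension of $K$ is of the form $K^{1/p^n}$ and the $j$-invariants already in $\overline\kappa$ are unaffected, so any necessary twist descends to $K^{\text{sep}}$. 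I would phrase this via: if $\phi$ is isotrivial, then over some extension $L$ it is isomorphic to $\phi_0$ defined over a finite field $\kappa_0$; comparing leading coefficients shows the $j$-invariants of $\phi$ equal those of $\phi_0$, hence lie in $\kappa_0\subset\overline\kappa$; then the descent produces the model over a finite subfield of $K^{\text{sep}}$. The converse is trivial.

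For part (b), the natural approach is: an isogeny $f\colon\phi\to\phi'$ over $K$ reduces, after passing to $\overline K$, to an isogeny over $\overline K$; if $\phi$ is isotrivial it is isomorphic over $\overline K$ to some $\phi_0$ over a finite field $\kappa_0$, and the composite gives an isogeny $\phi_0\to\phi'_{\overline K}$. Now one invokes the standard fact that any Drinfeld module isogenous over an algebraically closed field to one defined over a finite field is itself isomorphic to one defined over a finite field — this follows because isogenous Drinfeld modules have the same (bad-reduction-free, and in special characteristic the reduction) behavior, or more directly because the target of an isogeny from $\phi_0$ lies in the isogeny class of $\phi_0$, which over $\overline\kappa$ contains only finitely many isomorphism classes each defined over a finite field; concretely one can use that $\phi_0$ has good reduction everywhere and an isogeny preserves this, combined with rigidity. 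Having shown $\phi'_{\overline K}$ is defined over a finite field, part (a) finishes it, and the statement is symmetric in $\phi,\phi'$.

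For part (c), I would argue in both directions. If $\phi$ is isotrivial, then over $\overline K$ it equals $\phi_0$ over a finite field $\kappa_0$, and restricting the $A$-action to $B$ gives $\phi_0|B$, again a Drinfeld $B$-module over $\kappa_0$; hence $\phi|B$ is isotrivial. Conversely, suppose $\phi|B$ is isotrivial. Over $\overline K$, $\phi|B$ is isomorphic to $\psi_0$ defined over a finite field, so after applying that isomorphism we may assume $\phi|B$ itself is defined over a finite subfield $\kappa_0\subset\overline K$. The remaining task is to see that the full $A$-action is then also defined over $\overline\kappa$: for each $a\in A$, the operator $\phi_a\in\overline K\{\tau\}$ commutes with $\phi_b$ for all $b\in B$, i.e. $\phi_a\in\End_{\overline K}(\phi|B)$; since $\phi|B$ is defined over the finite field $\kappa_0$, its endomorphism ring over $\overline K$ coincides with its endomorphism ring over a finite extension of $\kappa_0$ inside $\overline\kappa$ (endomorphisms of a Drinfeld module over a finite field are defined over a finite field — this is the analogue of Proposition \ref{def_field_end} together with the fact that the base is already finite), hence $\phi_a$ has coefficients in $\overline\kappa$. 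Thus all of $\phi$ is defined over $\overline\kappa$, and by part (a) $\phi$ is isotrivial.

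The main obstacle I anticipate is the descent step in part (a) — turning "$j$-invariants lie in $\overline\kappa$" into "there is a model over a finite subfield of $K^{\text{sep}}$", being careful about separability in special characteristic — and, closely related, the fact used in part (b) that an isogeny does not destroy isotriviality, which ultimately rests on the finiteness of isogeny classes over finite fields together with rigidity of Drinfeld modules under purely inseparable base change. Both of these are presumably available in the literature (Goss, or the bad-reduction discussion in Subsection \ref{bad_red}), so in the write-up I would cite rather than reprove them, and concentrate the actual argument on the reductions described above.
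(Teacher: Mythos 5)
Parts (a) and (c) of your plan are essentially sound. Your (c) is the paper's argument: after conjugating so that $\phi|B$ is defined over a finite field $\ell$, each $\phi_a$ is an endomorphism of $\phi|B$, and endomorphisms are defined over a finite separable, hence finite, extension of $\ell$ (Proposition \ref{def_field_end}). Your (a) is routed through $j$-invariants and a Galois descent, which is heavier than necessary and leaves the descent step vague; but your own remark about extracting roots of degree prime to $p$ is the germ of a direct proof: if $\phi_a=u\circ\psi_a\circ u^{-1}$ with $\psi$ defined over a finite field $\ell\subset L$, comparing any nonzero higher coefficient gives $u^{p^i-1}\in K\ell$, so $u$ is separable over $K\ell$ and the whole configuration embeds into $K^{\text{sep}}$. (The paper's trick is different again: it writes $u=ut/t$ where $t$ is a nonzero torsion point of $\phi$ prime to the characteristic, hence in $K^{\text{sep}}$, and $ut$ is a torsion point of $\psi$, hence separable over $\ell$.)

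The genuine gap is in (b). Your key claim --- that a Drinfeld module over $\overline{K}$ isogenous to one defined over a finite field is itself isomorphic to one defined over a finite field --- is precisely what has to be proved there, and neither justification you offer supports it. The finiteness of the set of isomorphism classes in the isogeny class over $\overline{\kappa}$ is dubious (for rank at least $2$ the endomorphism rings within an isogeny class can run through infinitely many orders, as for ordinary elliptic curves) and in any case beside the point, because your isogeny lives over $\overline{K}$, not over $\overline{\kappa}$, so nothing you say prevents its target from leaving the class of finite-field-definable modules. The appeal to ``good reduction everywhere plus rigidity'' is not a citable statement over a finitely generated base of arbitrary transcendence degree. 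What is needed, and what the paper proves, is a statement about the kernel: for an isogeny $f:\psi\to\phi'$ over $L$ with $\psi$ defined over $\ell$, the finite subgroup scheme $\ker(f)$ is stable under all $\psi_a$; its geometric points are torsion points of $\psi$, hence algebraic (indeed separable) over $\ell$, and its connected part is the kernel of a power of $\tau$. Therefore $\ker(f)$ is defined over a finite extension $\ell'$ of $\ell$ and equals $\ker(g)$ for some $g\in\ell'\{\tau\}$; writing $f=u\circ g$ with $u\in L^\times$ and twisting, $u^{-1}\circ\phi'_a\circ u$ has coefficients in $\ell'$ for all $a$, so $\phi'$ is isotrivial. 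Without this (or an equivalent) argument, your part (b) does not go through.
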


\begin{proof}
In (a) the `if' part is obvious. For the `only if' part assume that $L$ is a field extension of $K$ such that $\phi$ is isomorphic over $L$ to a Drinfeld $A$-module $\psi$ defined over a finite subfield $\ell\subset L$. By the definition of isomorphisms there is then an element $u\in L^\times$ such that $\phi_a=u\circ\psi_a\circ u^{-1}$ in $L\{\tau\}$ for all $a\in A$. Choose a prime $\pp\not=\pp_0$ of $A$ and, after replacing $L$ by a finite extension, a non-zero torsion point $t\in\phi[\pp](L)$. Then $t$ is separably algebraic over~$K$. On the other hand $ut$ is a non-zero torsion point of $\psi$ and therefore algebraic over~$\ell$. Since $\ell$ is finite, $ut$ is actually separable over~$\ell$. Thus the subfiend $K\ell(u,ut)\subset L$ is separably algebraic over~$K$ and can therefore be embedded into $K^{\text{sep}}$. Then $u=ut/t$ defines an isomorphism $\phi\cong\psi$ over $K^{\text{sep}}$, as desired.

In (b) by symmetry it suffices to prove the `if' part. So assume that $L$ is a field extension of $K$ such that $\phi$ is isomorphic over $L$ to a Drinfeld $A$-module $\psi$ defined over a finite subfield $\ell\subset L$. Then $\phi'$ is isogenous to $\psi$ over~$L$. By the definition of isogenies this means that there is a non-zero element $f\in L\{\tau\}$ such that $\phi'_a\circ f=f\circ\psi_a$ for all $a\in A$. Its scheme theoretic kernel $\mathop{\rm ker}(f)$ is then a finite subgroup scheme of ${\mathbb G}_{a,L}$ that is mapped to itself under $\psi_a$ for all $a\in A$. Its identity component is a finite infinitesimal subgroup scheme of ${\mathbb G}_{a,L}$ and therefore the kernel of some power of~$\tau$. On the other hand all its geometric points are torsion points of $\psi$ and therefore algebraic over~$\ell$. Together it follows that $\mathop{\rm ker}(f)$ is defined over some finite extension $\ell'\subset L$ of $\ell$ and is therefore the kernel of some non-zero element $g\in L\{\tau\}$. Since $\mathop{\rm ker}(f) = \mathop{\rm ker}(g)$, it now follows that $f=u\circ g$ for some element $u\in L^\times$. Consider the Drinfeld $A$-module $\psi': A\to L\{\tau\}$ defined by $\psi'_a := u^{-1}\circ\phi'_a\circ u$. Then the relation $\phi'_a\circ f=f\circ\psi_a$ implies that $\psi'_a\circ g=g\circ\psi_a$ for all $a\in A$. Since $g$ and $\psi_a$ have coefficients in~$\ell'$, this relation implies that $\psi'_a$ also has coefficients in~$\ell'$. In other words $\psi'$ is really defined over~$\ell'$, and since $\phi'\cong\psi'$, it follows that $\phi'$ is isotrivial, as desired.

In (c) the `only if' part is obvious. For the `if' part assume that $L$ is a field extension of $K$ such that $\phi|B$ is isomorphic over $L$ to a Drinfeld $B$-module $\psi'$ defined over a finite subfield $\ell\subset L$. By the definition of isomorphisms there is then an element $u\in L^\times$ such that $\phi_b=u\circ\psi'_b\circ u^{-1}$ in $L\{\tau\}$ for all $b\in B$. Consider the Drinfeld $A$-module $\psi: A\to L\{\tau\}$ defined by $\psi_a := u^{-1}\circ\phi_a\circ u$. By construction it satisfies $\psi|B=\psi'$; hence it defines an embedding $B\hookrightarrow \End_L(\psi')$. Thus by Proposition \ref{def_field_end} applied to $\psi'$ over $\ell$ the coefficients of $\psi_a$ for all $a\in A$ lie in some fixed finite extension $\ell'$ of~$\ell$. This means that $\psi$ is really defined over~$\ell'$, and since $\phi\cong\psi$, it follows that $\phi$ is isotrivial, as desired.
\end{proof}

\begin{prop}\label{finite}
The following assertions are equivalent:
\begin{enumerate}
\item[(a)] $\phi$ is isotrivial.
\item[(b)] $\rho_{\text{ad}}( G_K^{\geom})$ is finite. 
\item[(c)] $\rho_\pp( G_K^{\geom})$ is finite for every prime $\pp\not=\pp_0$ of $A$.
\item[(d)] $\rho_\pp( G_K^{\geom})$ is finite for some prime $\pp\not=\pp_0$ of $A$.
\item[(e)] $de=r$.
\end{enumerate}
\end{prop}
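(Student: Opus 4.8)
## Proof proposal for Proposition \ref{finite}

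\textbf{Strategy.} The plan is to prove the equivalences by establishing a cycle of implications together with a few direct links, using the structure theory already developed. The two "anchor" facts are: (i) Proposition \ref{numerics}, which gives $de\mid r$, so that $de=r$ is equivalent to saying that $R\otimes_AF$ is a maximal commutative subalgebra of $\End_{F_\pp}(V_\pp(\phi))$ for each $\pp$; and (ii) the reduction via isogeny from Subsection \ref{tatess}, which lets us pass to a Drinfeld $A'$-module $\phi'$ of rank $n=r/(de)$ with the same essential Galois data, compatibly with (\ref{eq:tensisom}) and Proposition \ref{boalo}. I would prove (a)$\Rightarrow$(b)$\Rightarrow$(c)$\Rightarrow$(d)$\Rightarrow$(e)$\Rightarrow$(a), which is the most economical route.

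\textbf{Key steps.} First, (a)$\Rightarrow$(b): if $\phi$ is isotrivial then by Proposition \ref{isotrivialprop}(a) it is isomorphic over $K^{\text{sep}}$ to a Drinfeld module $\psi$ defined over a finite field $\ell\subset K^{\text{sep}}$; an isomorphism does not change $\rho_{\text{ad}}$ up to conjugation, and for a module over a finite field all torsion points are defined over $\overline{\ell}\subset\overline\kappa$, so $G_K^{\geom}$ acts trivially on them and $\rho_{\text{ad}}(G_K^{\geom})$ is trivial, in particular finite. The implications (b)$\Rightarrow$(c)$\Rightarrow$(d) are immediate from the definitions, since each $\rho_\pp$ is a quotient of $\rho_{\text{ad}}$. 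For (d)$\Rightarrow$(e): suppose $\rho_\pp(G_K^{\geom})$ is finite for some $\pp\ne\pp_0$; after passing to the isogenous $\phi'$ via Proposition \ref{boalo}(c) and picking a prime $\pp'\mid\pp$, the image of $G_K^{\geom}$ on $V_{\pp'}(\phi')$ is finite, so the image of $G_K^{\geom}$ on $V_{\pp'}(\phi')$ lands in a finite — hence, after another finite extension of $K$, trivial — group; this means the reduction of $\phi'$ at a suitable place, or rather $\phi'$ itself over $K\overline\kappa$, has a Tate module on which $G^{\geom}$ acts trivially, which by the standard theory (e.g.\ the Zariski-density/semisimplicity results, or directly \cite{DriII}, Proposition 2.1 on Frobenius eigenvalues) forces $\phi'$ to be isotrivial; but $\phi'$ has rank $n$, and an isotrivial Drinfeld module has commutative endomorphism algebra of full rank, i.e.\ $\dim_F(\End\otimes F)=n$ would have to match — I will instead argue that finiteness of the $\pp$-adic image forces $\End_{K^{\text{sep}}}(\phi)\otimes_AF$ to have $F$-dimension at least $r$ (because the commutant of the image in $\End_{F_\pp}(V_\pp)$ then contains the commutant of $\rho_\pp(G_K^{\geom})$, which by Zariski density considerations / finiteness is everything commuting with a finite group, hence large), giving $r=\dim_F(R\otimes_AF)=d^2e\cdot(\text{something})$; combined with $de\mid r$ this pins down $de=r$. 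Finally (e)$\Rightarrow$(a): if $de=r$ then $\phi'$ has rank $n=1$, so by the rank-one theory its Galois representation is abelian and, being of special characteristic, the Frobenius eigenvalue argument of Proposition \ref{F_eigenvalues}(c) (with $n=1$) shows every Frobenius acts through a root of unity times a power of a fixed element, forcing $\phi'$ — and hence $\phi$, by Proposition \ref{isotrivialprop}(b) — to be isotrivial. More cleanly: a rank-one Drinfeld module of special characteristic over a finitely generated field always becomes, over $K\overline\kappa$, definable over a finite field (its $j$-type invariants are constants because the associated rank-one representation factors through $\Gal(\overline\kappa/\kappa)$ up to finite data), so $\phi'$ is isotrivial, and then so is $\phi$.

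\textbf{Main obstacle.} The delicate implication is (d)$\Rightarrow$(e): turning "finite $\pp$-adic geometric image at one place" into the numerical identity $de=r$. The cleanest path is to reduce to $\phi'$ of rank $n$ via Proposition \ref{boalo}, observe that $\rho_{\pp'}(G_K^{\geom})$ finite means that after a finite extension $G_{K''}^{\geom}$ acts trivially on $T_{\pp'}(\phi')$, and then invoke the fact (standard for Drinfeld modules, and implicit in the compatible-system formalism used throughout Section \ref{known_results}) that trivial geometric $\pp'$-adic action forces $\phi'$ over $K''\overline\kappa$ to be isotrivial; since $\phi'$ has rank $n$ and an isotrivial Drinfeld module of rank $n$ over a finite field has $\End\otimes F$ of $F$-dimension $n$, one gets that the commutant construction in Proposition \ref{boalo}(a) degenerates, i.e.\ $D_\pp\otimes F_\pp$ has dimension $n^2=1$, forcing $n=1$ and $de=r$. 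The subtlety I will need to handle carefully is justifying "trivial geometric action $\Rightarrow$ isotrivial" without circularity; I expect to cite the Tate-module description of good reduction together with \cite{DriII}, Proposition 2.1, applied to Frobenius elements at places of good reduction, exactly as in the proof of Lemma \ref{no_inert}, to see that all Frobenius eigenvalues are roots of unity, which for a Drinfeld module over a function field is only possible in the isotrivial case.
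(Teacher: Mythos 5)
Your chain (a)$\Rightarrow$(b)$\Rightarrow$(c)$\Rightarrow$(d) is fine and agrees with the paper, but both remaining implications have genuine gaps. For (d)$\Rightarrow$(e), your main argument silently identifies the commutant of the Galois image inside $\End_{F_\pp}(V_\pp(\phi))$ with $\End_{K^{\text{sep}}}(\phi)\otimes_AF_\pp$. That identification is precisely the Tate conjecture for Drinfeld modules (together with semisimplicity), due to Taguchi and Tamagawa; it is the essential arithmetic input of the paper's proof and cannot be replaced by ``Zariski density considerations / finiteness''. Moreover it applies to the image of $G_{K''}$ for finite extensions $K''$ of $K$, not to the image of $G_K^{\geom}$: after a finite extension killing the finite geometric image, $\rho_\pp(G_{K''})$ is only pro-cyclic, so its centralizer need only have $F_\pp$-dimension $\geq r$, and the resulting inequality $d^2e\geq r$ together with $de\mid r$ does \emph{not} force $de=r$ (for instance $d=2$, $e=1$, $r=4$ satisfies both); your displayed count ``$r=\dim_F(R\otimes_AF)=d^2e\cdot(\text{something})$'' is also off, since $\dim_F(R\otimes_AF)=d^2e$. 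The paper argues in the opposite direction: since $\rho_{\pp'}(G_{K''})$ on $V_{\pp'}(\phi')$ is abelian, the algebra $F'_{\pp'}[\rho_{\pp'}(G_{K''})]$ is commutative, and by Tate plus semisimplicity this algebra is the full commutant of $\End_{K''}(\phi')\otimes_{A'}F'_{\pp'}$, which is all of $\End_{F'_{\pp'}}(V_{\pp'}(\phi'))$ because $\End_{K''}(\phi')=A'$; commutativity then forces $\dim_{F'_{\pp'}}V_{\pp'}(\phi')=r/de\leq1$. Your fallback, ``trivial geometric action implies all Frobenius eigenvalues are roots of unity'', is false: when $G^{\geom}$ acts trivially the eigenvalues of $\Frob_x$ are powers of the eigenvalues of a single topological generator, and by Proposition \ref{F_eigenvalues}(b) they have a pole at $\infty$, so they are never roots of unity --- this holds for isotrivial modules as well.

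For (e)$\Rightarrow$(a) your justification is circular or unsupported. The claim that ``the associated rank-one representation factors through $\Gal(\overline{\kappa}/\kappa)$ up to finite data'' is essentially assertion (b) for $\phi'$, i.e.\ the very statement being proved (indeed the introduction deduces finiteness of the rank-one geometric image \emph{from} this Proposition). The eigenvalue variant also does not stand as written: Proposition \ref{F_eigenvalues} controls only valuations, $A'$ may have several places above $\infty$ so $A'^\times$ need not be finite, and even full control of the Frobenius scalars still requires an argument that this forces isotriviality. The paper's actual step is different and short: the moduli space of rank-one Drinfeld $A'$-modules is finite over $\Spec A'$, so a rank-one module of special characteristic $\pp'_0$ corresponds to a point lying over the finite field $A'/\pp'_0$ and is therefore isomorphic, over a suitable extension, to one defined over a finite field; Proposition \ref{isotrivialprop} then transfers isotriviality from $\phi'$ to $\phi'|A$ and to $\phi$. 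To repair your proof, cite Taguchi--Tamagawa (semisimplicity and the Tate conjecture) for (d)$\Rightarrow$(e) and run the commutativity/bicommutant argument, and import the moduli-finiteness statement (or an equivalent) for (e)$\Rightarrow$(a).
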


\begin{proof}
(Compare \cite{PinII}, Proposition 2.2.)
The implications (a)$\Rightarrow$(b)$\Rightarrow$(c)$\Rightarrow$(d) are obvious. 
For the rest of the proof we may assume that $\End_K(\phi)=R$ after replacing $K$ by a finite extension, using Proposition \ref{def_field_end}. Let $F'$ be a maximal commutative $F$-subalgebra of $R\otimes_A F$, let $A'$ denote the integral closure of $A$ in~$F'\!$, and choose a Drinfeld $A'$-module $\phi'\colon A'\to K\{\tau\}$ and an isogeny $f\colon\phi\to\phi'|A$, as in the proof of Proposition \ref{numerics}. Then $\phi'$ has rank~$r/de$ and endomorphism ring $\End_K(\phi') = A'$.

If (d) holds, there exist a prime $\pp\not=\pp_0$ of $A$ and a finite extension $K'\subset K^{\text{sep}}$ of $K$ such that $\rho_\pp( G_{K'}^{\geom})$ is trivial and hence $\rho_\pp(G_{K'})$ is abelian. After replacing $K$ by $K'$ we may therefore assume that $\rho_\pp(G_K)$ is abelian. Moreover, as in (\ref{eq:tensisom}) we have a $G_K$-equivariant isomorphism $V_\pp(\phi) \cong V_\pp(\phi'|A) \cong \prod_{\pp'|\pp} V_{\pp'}(\phi')$. Thus for any prime $\pp'|\pp$, the image $\rho_{\pp'}(G_K)$ of the Galois representation $\rho_{\pp'}$ on $V_{\pp'}(\phi')$ is abelian, and so the subring $F'_{\pp'}[\rho_{\pp'}(G_K)]$ of $\End_{F'_{\smash{\pp'}}}(V_{\pp'}(\phi'))$ is commutative. By the semisimplicity and Tate conjectures for Drinfeld modules (see \cite{Tagu}, \cite{Tama2}, \cite{Tama1}, \cite{Tama3}) this subring is the commutant of $\End_K(\phi')\otimes_{A'}F'_{\pp'}$. But as $\End_K(\phi') = A'$, this commutant is equal to $\End_{F'_{\smash{\pp'}}}(V_{\pp'}(\phi'))$. It is therefore commutative if and only if $r/de = \dim_{F'_{\smash{\pp'}}}(V_{\pp'}(\phi')) \leq 1$. Thus (d) implies (e).

If (e) holds, then $\phi'$ is a Drinfeld $A'$-module of rank~$1$ and of special characteristic. Since the moduli stack of Drinfeld $A'$-modules of rank $1$ is finite over $\Spec A'$, the Drinfeld module $\phi'$ is isomorphic to one defined over a finite field, i.e., isotrivial.
% By the Tate conjecture [loc.\ cit.] any isomorphism is already defined over $K^{\text{sep}}$; hence $\phi'$ is isotrivial. 
By Proposition \ref{isotrivialprop} the same then also follows for $\phi'|A$ and for~$\phi$. Thus (e) implies (a), and we are done.
\end{proof}

To determine the images of Galois up to commensurability for an isotrivial Drinfeld module we may reduce ourselves to the case of a Drinfeld module defined over a finite field. In that case the situation is as follows:

\begin{prop}\label{FinGal}
Suppose that $\phi$ is defined over a finite field~$\kappa$. Let $C$ denote the center of $\End_\kappa(\phi)$ and $C'$ the normalization of~$C$. Then there exists an element $c_0\in C$ with the properties:
\begin{enumerate}
\item[(a)] $c_0$ generates a positive power of a unique prime $\pp'_0$ of $C'$ above~$\pp_0$.
\item[(b)] $\rho_{\text{ad}}(\Frob_\kappa)$ coincides with the action of $c_0$ on $\prod_{\pp\not=\pp_0} T_\pp(\phi)$. 
\item[(c)] 
% $\rho_{\text{ad}}(G_\kappa^{\geom})=\{1\}$ and 
$\rho_{\text{ad}}(G_\kappa) = \smash{\overline{\langle c_0\rangle}}$, the pro-cyclic subgroup topologically generated by~$c_0$.
\end{enumerate}
\end{prop}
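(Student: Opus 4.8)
The plan is to prove Proposition \ref{FinGal} by analysing the Galois representation on the Tate modules of $\phi$ directly in the case of a Drinfeld module over a finite field, where the arithmetic Frobenius generates the whole Galois group. First I would recall that $G_\kappa = \Gal(\bar\kappa/\kappa)$ is the free pro-cyclic group topologically generated by $\Frob_\kappa$, so that $\rho_{\text{ad}}(G_\kappa)$ is the closure of the cyclic group generated by $\rho_{\text{ad}}(\Frob_\kappa)$; hence part (c) will follow from parts (a) and (b) once we know that $\rho_{\text{ad}}(\Frob_\kappa)$ is the image of an element $c_0$ of $C$ with the divisibility property in (a). The key observation is that $\Frob_\kappa = \tau^m$, where $m = [\kappa/\mathbb{F}_p]$, lies in the center of $\kappa\{\tau\}$ and therefore commutes with $\phi_a$ for all $a\in A$; thus $\Frob_\kappa$ is an element of $\End_\kappa(\phi)$, and being central in $\kappa\{\tau\}$ it in fact lies in the center $C$ of $\End_\kappa(\phi)$. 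Set $c_0 := \Frob_\kappa$ viewed in $C$. Since the Galois action on each $T_\pp(\phi)$ is compatible with the action of endomorphisms, the action of $\Frob_\kappa\in G_\kappa$ on $T_\pp(\phi)$ is precisely multiplication by $c_0\in C\subset\End_\kappa(\phi)$, which gives (b) after assembling over all $\pp\neq\pp_0$.

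For part (a) I would use the information about the valuations of the eigenvalues of Frobenius. The element $c_0\in C\subset C'$ acts on $V_\pp(\phi)$ as a scalar from the center, and the characteristic polynomial of $\Frob_x$ attached to $\phi$ over the finite field $\kappa$ has, by \cite{DriII}, Proposition 2.1 (applied as in Proposition \ref{F_eigenvalues}), all of its roots with valuation $0$ at every place of the relevant field not above $\pp_0$ or $\infty$, strictly negative valuation at places above $\infty$, and strictly positive valuation at places above $\pp_0$ — in particular the eigenvalues, and hence $c_0$ itself as an element of $C'$, are units at every finite prime of $C'$ not above $\pp_0$, are non-units at $\infty$, and generate a power of the primes of $C'$ above $\pp_0$. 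Since $c_0$ lies in the subring $C'$ of the quotient field $\mathop{\rm Quot}(C)$ which is regular outside the places above $\infty$, being a non-unit only at places above $\pp_0$ forces the ideal $(c_0)$ in $C'$ to be supported entirely on primes above $\pp_0$; I would then invoke that a Drinfeld module of special characteristic $\pp_0$ with $\End$ of center $C$ has its Frobenius purely $\pp_0$-adic (again via Proposition \ref{F_eigenvalues} or directly \cite{DriII}, Prop.\ 2.1, noting $\phi$ is defined over $\kappa$ so there is a single Frobenius) to conclude that there is in fact a \emph{unique} prime $\pp'_0$ of $C'$ above $\pp_0$ for which $(c_0) = \pp_0^{\prime i}$ with $i\ge1$. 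This is exactly (a).

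Finally, for (c), since $\rho_{\text{ad}}(\Frob_\kappa)$ equals the action of $c_0$ by (b), and $\Frob_\kappa$ topologically generates $G_\kappa$, we get $\rho_{\text{ad}}(G_\kappa) = \overline{\langle c_0\rangle}$ directly from the continuity of $\rho_{\text{ad}}$, where on the right $c_0$ is viewed as the scalar in $\prod_{\pp\neq\pp_0} A_\pp^\times$ (or rather in the appropriate product involving $C'$) given by its diagonal image. One small point to check is that multiplication by $c_0$ genuinely lands in $\prod_{\pp\neq\pp_0}D_\pp^\times$, which is immediate because $c_0$ is a central endomorphism and is a unit at every $\pp\neq\pp_0$ by part (a); this ensures the formula makes sense inside the target of $\rho_{\text{ad}}$.

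The main obstacle I expect is part (a): one must extract the precise valuation-theoretic behaviour of $c_0 = \Frob_\kappa$ at the places of $C'$ from the theorem on Newton polygons of Frobenius for Drinfeld modules (\cite{DriII}, Prop.\ 2.1, in the form of Proposition \ref{F_eigenvalues}), and argue carefully that this pins down a \emph{unique} prime $\pp'_0$ of $C'$ above $\pp_0$ — rather than merely that $(c_0)$ is a product of primes above $\pp_0$. The uniqueness comes from the fact that over a finite field all the eigenvalues of Frobenius generate the same field $Z$ (up to Galois conjugacy) and have strictly positive $\pp_0$-adic valuation only along one branch, reflecting that the reduction of $\phi$ at the single place $\pp_0$ is supersingular in a controlled way; making this rigorous is the delicate step, whereas (b) and (c) are essentially formal.
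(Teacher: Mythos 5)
Your identification $c_0 = \Frob_\kappa = \tau^m \in C$ and the deduction of (b) and (c) from it match the paper's argument exactly and are fine. The problem is part (a), and specifically the point you yourself flag as "the delicate step": your argument, applying Proposition \ref{F_eigenvalues} to $\phi$ itself, only yields that the conjugates of $c_0$ in $\overline{F}$ are units at all places not above $\pp_0$ or $\infty$, hence that the ideal $(c_0)$ in $C'$ is supported on primes above $\pp_0$. It does \emph{not} give uniqueness: Proposition \ref{F_eigenvalues} (c) allows $n_x$ eigenvalues (with $1\le n_x\le n$) to have positive valuation at the chosen extension $\overline{v}$ of $v_{\pp_0}$, and nothing in that statement prevents the corresponding embeddings of $\mathop{\rm Quot}(C')$ from singling out two or more distinct primes of $C'$ above $\pp_0$ at which $c_0$ is a non-unit. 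The heuristic you offer in its place (that the positive valuation occurs "only along one branch" because the reduction is "supersingular in a controlled way") is not a proof and is not how the uniqueness actually arises, so this is a genuine gap rather than a routine verification.

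The missing idea is the isogeny trick of Proposition \ref{ConstIsog}: apply it with $S:=C$ and $S':=C'$ to obtain a Drinfeld $C'$-module $\phi'\colon C'\to\kappa\{\tau\}$ and an isogeny $f\colon\phi\to\phi'|A$. The characteristic of $\phi'$ is by definition a single prime $\pp'_0$ of $C'$ above $\pp_0$ (the kernel of the lowest-coefficient homomorphism $C'\to\kappa$), and under the isogeny the endomorphism $\Frob_\kappa$ still corresponds to the same element $c_0\in C\subset C'$, which now acts as a scalar from the base ring on the Tate modules of $\phi'$ and is therefore the single eigenvalue of Frobenius for $\phi'$. Applying Proposition \ref{F_eigenvalues} to $\phi'$ (so with places of $\mathop{\rm Quot}(C')$, whose characteristic place is $\pp'_0$) gives that $c_0$ has valuation $0$ at every finite place of $C'$ other than $\pp'_0$ — including the other primes of $C'$ above $\pp_0$ — and positive valuation at $\pp'_0$, i.e.\ $(c_0)=\pp_0'^{\,i}$ with $i\ge1$. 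Incidentally, your phrase that $c_0$ "acts on $V_\pp(\phi)$ as a scalar from the center" is also imprecise: before the isogeny, $c_0$ acts as an element of $C\otimes_AA_\pp$, not as an $A_\pp$-scalar; it is exactly the passage to the Drinfeld $C'$-module $\phi'$ that makes it a genuine scalar and lets the valuation theorem apply to it directly.
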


\begin{proof}
Let $m$ denote the degree of $\kappa$ over~$\mathbb{F}_p$. Then $\Frob_\kappa = \tau^m$ lies in the center of $\kappa\{\tau\}$. In particular it commutes with $\phi_a$ for all $a\in A$ and is therefore an endomorphism of~$\phi$, and more specifically it lies in the center $C$ of $\End_\kappa(\phi)$. As such let us denote it by~$c_0$. The action of $\Frob_\kappa$ as an element of the Galois group $G_\kappa$ on all Tate modules of $\phi$ is then the same as that obtained from the natural action of $c_0$ as an endomorphism. This directly implies (b) and (c). 

For (a) we apply Proposition \ref{ConstIsog} to $S:= C$ and $S':=C'$, obtaining a Drinfeld $C'$-module $\phi'\colon C'\to \kappa\{\tau\}$ and an isogeny $f\colon\phi\to\phi'|A$. The characteristic of $\phi'$ is then a prime $\pp'_0$ of $C'$ above~$\pp_0$. Also, the endomorphism $\Frob_\kappa$ of $\phi'$ still corresponds to the same element $c_0\in C$. Since $c_0$ acts as a scalar on the Tate modules of $\phi'$, it constitutes the single eigenvalue of $\Frob_\kappa$. Thus Proposition \ref{F_eigenvalues} implies that $c_0$ is divisible by $\pp'_0$ but not by any other prime of~$C'$. This shows (a), and we are done.
\end{proof}

%%%%%%%%%%%%%%%%%%%%%%%%%%%%%%%%%%%%%%%%%%%%%%%%%%%%%%%%%%%%%%%%%%%%%%

\subsection{The non-isotrivial case}\label{62}

To determine the images of Galois in the general non-isotrivial case, we will use some reduction steps which end in the situation of Theorem \ref{main_theorem}. Recall that Theorem \ref{main_theorem} involves two conditions, namely that $A$ is the center of $R:=\End_{K^{\text{sep}}}(\phi)$ and that the endomorphism ring does not grow under restriction of~$A$. We will achieve the first condition by enlarging~$A$, and then the second condition by shrinking $A$ again until the endomorphism ring stops growing. That this process terminates is a non-trivial fact from \cite{PinII}.

\medskip
To enlarge $A$ we first choose a finite extension $K' \subset K^{\text{sep}}$ of $K$ such that $R=\End_{K'}(\phi)$. Recall that $Z$ denotes the center of $R \otimes_A F$; hence $C := Z\cap R$ is the center of~$R$. Let $C'$ denote the normalization of~$C$. Applying Proposition \ref{ConstIsog} to $S:= C$ and $S':=C'$ over~$K'$, we obtain a Drinfeld $C'$-module $\phi'\colon C'\to K'\{\tau\}$ and an isogeny $f\colon\phi\to\phi'|A$ over~$K'$. The characteristic of $\phi'$ is then a prime $\pp'_0$ of $C'$ above~$\pp_0$. Since $R\otimes_AF \cong \End_{K^{\text{sep}}}(\phi')\otimes_AF$, the construction implies that $C'$ is the center of $\End_{K^{\text{sep}}}(\phi')$. Also, the isogeny $f$ induces a $G_{K'}$-equivariant inclusion of finite index
\begin{myequation}\label{TateIsog}
  \prod_{\pp\not=\pp_0} T_\pp(\phi)
\ \hookrightarrow\ 
  \prod_{\pp\not=\pp_0} T_\pp(\phi'|A).
\end{myequation}%

% Since $R\subset S$ by Proposition \ref{ringB} (c), it follows that $\chi$ is defined over $K'$ and that $R=\End_{K'}(\phi)$. Moreover, since $C$ is the center of $R$, and $\End_{K^{\text{sep}}}(\chi)$ is by construction the centralizer of $C$ in $\End_{K^{\text{sep}}}(\phi)=R$, we also have $\End_{K^{\text{sep}}}(\chi) = \End_{K'}(\chi) = R$.

Next we restrict $\phi'$ to suitable subrings $B$ of~$C'$. By Theorem 6.2 of \cite{PinII} there is a canonical choice for which the endomorphism ring of $\phi'|B$ is maximal:

\begin{prop}\label{ringB}
If $\phi$ is not isotrivial, there exists a unique integrally closed infinite subring $B$ of $C'$ with the following properties:
\begin{enumerate}
\item[(a)] The center of $\End_{K^{\text{sep}}}(\phi'|B)$ is $B$.
\item[(b)] For every integrally closed infinite subring $B'$ of $C'$ we have $\End_{K^{\text{sep}}}(\phi'|B') \allowbreak \subset\nobreak 
\End_{K^{\text{sep}}}(\phi'|B)$.
\end{enumerate}
% Moreover, if $S\otimes_BE$ has dimension $d^{\prime 2}$ over $E$, the rank of $\psi$ is $nd'$ for some integer $n\geq2$.
\end{prop}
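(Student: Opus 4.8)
The plan is to reduce this statement, which asserts the existence and uniqueness of a canonical maximal subring $B \subset C'$, to the cited result from \cite{PinII}, Theorem 6.2, while carefully checking that its hypotheses are met and that the output is phrased in the form claimed here. First I would observe that $\phi'$ is non-isotrivial: by Proposition \ref{isotrivialprop}(b) applied to the isogeny $f\colon\phi\to\phi'|A$, the Drinfeld $A$-module $\phi'|A$ is isotrivial if and only if $\phi$ is, and then by Proposition \ref{isotrivialprop}(c) applied to the infinite integrally closed subring $A\subset C'$, the Drinfeld $C'$-module $\phi'$ is isotrivial if and only if $\phi'|A$ is. Hence the assumption that $\phi$ is not isotrivial guarantees that $\phi'$ is not isotrivial, which is the hypothesis needed to invoke the structural results of \cite{PinII}.

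Next I would unpack what \cite{PinII}, Theorem 6.2, produces. That theorem, applied to the non-isotrivial Drinfeld $C'$-module $\phi'$ over $K'$, yields a distinguished infinite subfield $E$ of $\operatorname{Quot}(C')$, equivalently (after taking integral closures) a distinguished integrally closed infinite subring $B := E\cap C'$ of $C'$, characterized by the two properties that $\operatorname{End}_{K^{\text{sep}}}(\phi'|B)\otimes_B E$ has center $E$ — which translates to property (a), that the center of $\operatorname{End}_{K^{\text{sep}}}(\phi'|B)$ is $B$ — and that it is maximal in the sense that for any other integrally closed infinite $B'\subset C'$ one has $\operatorname{End}_{K^{\text{sep}}}(\phi'|B')\subset\operatorname{End}_{K^{\text{sep}}}(\phi'|B)$, which is property (b). I would need to check that the natural inclusion of endomorphism rings under restriction — namely $\operatorname{End}_{K^{\text{sep}}}(\psi)\subset\operatorname{End}_{K^{\text{sep}}}(\psi|B')$ for $B'\subset C'$ as discussed in Subsection \ref{endos} — combined with the maximality of $B$ gives the stated containment; here one uses that the endomorphism rings are identified as $C'$-subalgebras of a fixed algebra via the obvious inclusions, so that "$\subset$" makes sense. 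Uniqueness follows from property (b) itself: if $B_1$ and $B_2$ both satisfy (a) and (b), then (b) for $B_1$ gives $\operatorname{End}_{K^{\text{sep}}}(\phi'|B_2)\subset\operatorname{End}_{K^{\text{sep}}}(\phi'|B_1)$ and symmetrically, so the two endomorphism rings coincide; then property (a) forces $B_1$ = center = $B_2$.

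The main obstacle I anticipate is bookkeeping rather than deep mathematics: one must match the exact formulation of \cite{PinII}, Theorem 6.2 — which is stated in terms of a subfield $E$ and an associated algebraic group, and whose hypotheses may include that the base field be finitely generated and that $\phi'$ have the appropriate characteristic — against the present set-up, verifying that $K'$ is finitely generated (it is, being finite over $K$), that $\phi'$ has special characteristic $\pp'_0$ (established in the paragraph preceding the proposition), and that the passage between the subfield $E$ and the subring $B = E\cap C'$ is compatible with everything. One should also confirm that the notion of "integrally closed infinite subring of $C'$" in the present statement corresponds precisely to "$E\cap C'$ for $E$ an infinite subfield of $\operatorname{Quot}(C')$" — this is the standard dictionary between such subrings and subfields, since an integrally closed infinite subring $B$ has $\operatorname{Quot}(B)\cap C' = B$. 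Once these identifications are in place, the proof is essentially a citation. I would therefore write: \emph{This is a restatement of \cite{PinII}, Theorem 6.2, applied to $\phi'$, which is non-isotrivial by Proposition \ref{isotrivialprop}; the uniqueness follows formally from property (b) together with the fact that (a) determines $B$ as the center of $\operatorname{End}_{K^{\text{sep}}}(\phi'|B)$.}
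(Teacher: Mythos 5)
Your proposal is correct and coincides with the paper's treatment: the paper gives no argument beyond the sentence preceding the proposition, which presents it as a direct restatement of \cite{PinII}, Theorem 6.2, applied to $\phi'$; your added checks (non-isotriviality of $\phi'$ via Proposition \ref{isotrivialprop}, the translation between the subfield $E$ and $B=E\cap C'$, and the formal uniqueness argument from (a) and (b)) are exactly the bookkeeping left implicit there. One side remark in your write-up is inaccurate: an integrally closed infinite subring $B\subset C'$ need not satisfy $\mathop{\rm Quot}(B)\cap C'=B$ (this fails, e.g., if one omits one of several primes of $C'$ above $\infty$), but the subring-to-subfield reduction you want holds anyway because $\End_{K^{\text{sep}}}(\phi'|B')$ depends only on $\mathop{\rm Quot}(B')$: if $u$ commutes with $\phi'_s$ and $\phi'_{sb}=\phi'_s\circ\phi'_b$, then cancellation in the domain $K^{\text{sep}}\{\tau\}$ gives that $u$ commutes with $\phi'_b$.
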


With $B$ as in Proposition \ref{ringB} we abbreviate $\psi := \phi'|B$ and $S:= \End_{K^{\text{sep}}}(\psi)$. The characteristic of $\psi$ is $\pq_0:=B\cap\pp_0'$ and hence a maximal ideal of~$B$. By Proposition 3.5 of \cite{PinII} we have:

\begin{prop}\label{uniqueprime}
$\pp_0'$ is the unique prime of $C'$ above $\pq_0$.
\end{prop}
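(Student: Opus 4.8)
The final statement to prove is Proposition~\ref{uniqueprime}: that $\pp'_0$ is the unique prime of $C'$ above $\pq_0 := B\cap\pp'_0$. Here $B\subset C'$ is the canonical integrally closed infinite subring from Proposition~\ref{ringB}, $\psi=\phi'|B$, and $\pq_0$ is the characteristic of~$\psi$.

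The plan is to cite Proposition~3.5 of \cite{PinII} essentially directly, after verifying that the hypotheses of that proposition are met in the present situation. Concretely, I would proceed as follows. First I would recall the setup: $\phi'$ is a Drinfeld $C'$-module over $K'$ whose center of $\End_{K^{\text{sep}}}$ is $C'$ itself, with characteristic $\pp'_0$; and $B\subset C'$ is integrally closed and infinite with the property that the center of $\End_{K^{\text{sep}}}(\phi'|B)$ equals $B$ (Proposition~\ref{ringB}(a)), i.e.\ the endomorphism ring does not grow further at the level of restriction to $B$ beyond having center exactly $B$. This is precisely the hypothesis under which the cited result of \cite{PinII} applies: it says that when one restricts a Drinfeld module along an inclusion of coefficient rings in this optimal way, the prime below the characteristic splits in a controlled manner, and in fact the original characteristic is the only prime above the restricted characteristic. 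Thus the proof really consists of observing that $\phi'$, $B$, $\psi=\phi'|B$, $\pp'_0$, $\pq_0$ satisfy the running hypotheses of Proposition~3.5 of \cite{PinII} and invoking its conclusion.

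For a self-contained sketch of why the statement is true, here is the mechanism I would expect: the uniqueness of $B$ (Proposition~\ref{ringB}(b)) forces $B$ to be "as large as possible" subject to being the center of its own endomorphism ring, and this maximality is incompatible with $\pq_0$ having two distinct primes $\pp'_0,\pp''_0$ of $C'$ above it. Indeed, if there were a second prime $\pp''_0 \neq \pp'_0$ above $\pq_0$, one could use the extra endomorphism structure detected at $\pp''_0$ (the Galois action on $T_{\pp''_0}$ would commute with a strictly larger ring after a suitable further restriction of coefficients, by the special-characteristic phenomenon of growing endomorphism rings described in \cite{PinII}) to produce an integrally closed infinite subring $B'\subsetneq B$ or incomparable to $B$ whose endomorphism ring over $K^{\text{sep}}$ strictly contains $\End_{K^{\text{sep}}}(\phi'|B)$, contradicting Proposition~\ref{ringB}(b). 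In other words, the canonical choice of $B$ already "sees" the unique prime above $\pq_0$ in its characteristic, and a splitting of $\pq_0$ would manifest as a further growth of the endomorphism ring, which is excluded by maximality.

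I would expect the only real obstacle to be purely expository: making precise the link between "prime of $C'$ above $\pq_0$" and "further growth of the endomorphism ring of $\psi$ under restriction," which is exactly the content packaged in Proposition~3.5 of \cite{PinII}. Since that reference is available to us, the honest proof is short: state the hypotheses, check they hold (the center of $\End_{K^{\text{sep}}}(\psi)$ is $B$ by Proposition~\ref{ringB}(a), $\pq_0 = B\cap\pp'_0$ by definition, and $\phi'$ is non-isotrivial by Proposition~\ref{isotrivialprop}(b)--(c) since $\phi$ is assumed non-isotrivial), and quote the conclusion. No lengthy calculation is needed; the substance lives in \cite{PinII}.
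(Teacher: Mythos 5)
Your proposal matches the paper exactly: the paper gives no argument of its own and simply invokes Proposition 3.5 of \cite{PinII}, which applies because $\psi=\phi'|B$ satisfies its hypotheses by the construction of $B$ in Proposition \ref{ringB}. Your additional heuristic about maximality of $B$ is only a sketch and is not needed, since the substance indeed lives in the cited result.
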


% By definition $\PP_0$ is the kernel of the homomorphism $C \rightarrow K'$ determined by the lowest coefficient of $\chi$ and therefore a prime ideal. By construction its intersection with $A$ is $\pp_0$, which by assumption is a maximal ideal; hence $\PP_0$ is a maximal ideal. By the same argument its intersection with $B$ is $\pq_0$. Since $C$, being contained in $\End_{K'}(\psi)$, is a finite ring extension of $B$, it follows that $\pq_0$ is again a maximal ideal. By Proposition 3.5 of \cite{PinII} it is in fact the only prime ideal of $C$ above $\pq_0$, finishing the proof of (a).

% After enlarging $K'$ we may assume that $S=\End_{K'}(\psi)$.

Let $P_0'$ denote the finite set of primes of $C'$ lying above $\pp_0$; then in particular $\pp'_0\in P_0'$. Let $Q_0$ denote the finite set of primes $\pq$ of $B$ such that all primes of $C'$ above $\pq$ lie in~$P_0'$. Then Proposition \ref{uniqueprime} implies that $\pq_0\in Q_0$. Combining the natural isomorphisms $T_\pq(\psi) \cong \prod_{\pp'|\pq} T_{\pp'}(\phi')$ for all primes $\pq\not\in Q_0$ of $B$ and the natural isomorphisms $T_\pp(\phi'|A) \cong \prod_{\pp'|\pp} T_{\pp'}(\phi')$ for all primes $\pp\not=\pp_0$ of~$A$, we obtain a natural $G_{K'}$-equivariant surjection
\begin{myequation}\label{TateCompare}
\prod_{\pq\not\in Q_0} T_\pq(\psi) \ \cong\ 
  \prod_{\pq\not\in Q_0} \prod_{\pp'|\pq} T_{\pp'}(\phi') \ \twoheadrightarrow\ 
  \prod_{\pp'\not\in P'_0} T_{\pp'}(\phi') \ \cong\ 
  \prod_{\pp\not=\pp_0} T_\pp(\phi'|A).
\end{myequation}%
% where $\pq$, $\pp'$, $\pp$ run over primes of $B$, $C'$, $A$, respectively.
For every prime $\pq\not\in Q_0$ of $B$ let $D_\pq$ denote the commutant of $S\otimes_BB_\pq$ in $\End_{B_\pq}(T_\pq(\psi))$. As in Proposition \ref{boalo} (a) this is an order in a central simple algebra over the quotient field of~$B_\pq$. The product of these rings acts on the left hand side in (\ref{TateCompare}).

\begin{lem}\label{equivariance}
The kernel of the surjection (\ref{TateCompare}) is a $\vphantom{\Big|}\prod_{\pq\not\in Q_0} D_\pq$-submodule, and the induced action of\/ $\prod_{\pq\not\in Q_0} D_\pq$ on the quotient\/ $\prod_{\pp\not=\pp_0} T_\pp(\phi'|A)$ is faithful.
\end{lem}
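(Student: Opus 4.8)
The plan is to unwind the natural isomorphisms defining the surjection (\ref{TateCompare}) and reduce everything to a statement about the decomposition of $T_{\pp'}(\phi')$ according to how primes of $C'$ sit over primes of $B$ versus over primes of $A$. First I would recall that, by construction, $\psi = \phi'|B$, so the endomorphism ring $S := \End_{K^{\text{sep}}}(\psi)$ contains $C'$ (acting via $\phi'$) and contains the image of $B$ in its center; the commutant $D_\pq$ of $S\otimes_B B_\pq$ in $\End_{B_\pq}(T_\pq(\psi))$ therefore certainly commutes with the action of $C'\otimes_B B_\pq \cong \prod_{\pp'|\pq} C'_{\pp'}$. This product of fields gives the isotypic decomposition $T_\pq(\psi) \cong \prod_{\pp'|\pq} T_{\pp'}(\phi')$, and since $D_\pq$ commutes with the idempotents of $C'\otimes_B B_\pq$, each factor $T_{\pp'}(\phi')$ is a $D_\pq$-submodule. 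Thus the left-hand side of (\ref{TateCompare}), as a $\prod_{\pq\not\in Q_0} D_\pq$-module, is precisely $\prod_{\pp'\not\in P_0'} T_{\pp'}(\phi')$ with the $\pp'$-component acted on through the factor $D_\pq$ for $\pq := B\cap\pp'$.

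Next I would observe that the identification $\prod_{\pp'\not\in P_0'} T_{\pp'}(\phi') \cong \prod_{\pp\not=\pp_0} T_\pp(\phi'|A)$ comes from the isotypic decompositions $T_\pp(\phi'|A)\cong\prod_{\pp'|\pp} T_{\pp'}(\phi')$ over $C'\otimes_A F_\pp$, so it is again compatible with the action of the idempotents, hence with the full $D_\pq$-action on each factor. Therefore the surjection (\ref{TateCompare}) is, factor by factor over the primes $\pp'\not\in P_0'$, an \emph{isomorphism} of $D_\pq$-modules; the only thing being collapsed is the grouping of the factors. Concretely: the kernel of (\ref{TateCompare}) is $\prod_{\pp'\in P_0'\smallsetminus\{\text{those over primes }\pq\in Q_0\}} T_{\pp'}(\phi')$ — wait, more precisely it is the sum of the $T_{\pp'}(\phi')$ for those $\pp'$ which divide some $\pq\not\in Q_0$ of $B$ but lie in $P_0'$. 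By the definition of $Q_0$, such $\pp'$ exist exactly when some prime of $C'$ over $\pq$ lies in $P_0'$ while $\pq\not\in Q_0$, i.e. $\pq$ has at least one prime above it outside $P_0'$. Since this kernel is a sub-product of the $T_{\pp'}(\phi')$ and each such factor is a $D_\pq$-submodule of the corresponding $T_\pq(\psi)$, the kernel is a $\prod_{\pq\not\in Q_0} D_\pq$-submodule, which is the first assertion.

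For the second assertion, faithfulness, I would argue as follows. An element of $\prod_{\pq\not\in Q_0} D_\pq$ acts on the quotient $\prod_{\pp\not=\pp_0} T_\pp(\phi'|A) \cong \prod_{\pp'\not\in P_0'} T_{\pp'}(\phi')$ componentwise, where the $\pp'$-component of the element — namely its image in $D_\pq$ with $\pq = B\cap\pp'$, then acting on the summand $T_{\pp'}(\phi') \subset T_\pq(\psi)$ — must act as zero if the total action is trivial. I then need that an element $d\in D_\pq$ acting trivially on all summands $T_{\pp'}(\phi')$ with $\pp'\not\in P_0'$ must be zero. Here the key point, and the main obstacle, is that for $\pq\not\in Q_0$ there is \emph{at least one} prime $\pp'|\pq$ with $\pp'\not\in P_0'$; since $D_\pq$ is an order in a central simple algebra over $\Quot(B_\pq)$ and $D_\pq\otimes_{B_\pq}C'_{\pp'}\cong\End_{C'_{\pp'}}(T_{\pp'}(\phi'))$ by the analogue of Proposition \ref{boalo}(b), the action of $D_\pq$ on a single such $T_{\pp'}(\phi')$ is already faithful — indeed $D_\pq$ injects into $\End_{B_\pq}(T_{\pp'}(\phi'))$ because its extension of scalars to $C'_{\pp'}$ does, and a torsion-free module over an order that becomes injective after a faithfully flat base change stays injective. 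Hence $d=0$, and the full action on $\prod_{\pp'\not\in P_0'} T_{\pp'}(\phi')$ is a fortiori faithful. Assembling the componentwise statements over all $\pq\not\in Q_0$ gives faithfulness of $\prod_{\pq\not\in Q_0} D_\pq$ on the quotient, completing the proof. I expect the bookkeeping of which $\pp'$ lie in the kernel versus the quotient — and in particular invoking Proposition \ref{uniqueprime} and the definition of $Q_0$ to guarantee each relevant $\pq$ really does have a prime above it outside $P_0'$ — to be the only delicate point.
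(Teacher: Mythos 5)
Your argument is essentially the paper's: the first assertion is proved, as in the paper, by noting that $T_\pq(\psi)\cong\prod_{\pp'|\pq}T_{\pp'}(\phi')$ is the isotypic decomposition under $C'\otimes_BB_\pq$, that $D_\pq$ commutes with $C'\subset S$ and hence preserves each factor, and that the kernel of (\ref{TateCompare}) is a sub-product of factors; and the second assertion is proved by the same two steps as in the paper, namely that every $\pq\not\in Q_0$ admits a prime $\pp'|\pq$ with $\pp'\not\in P_0'$, and that $D_\pq$ acts faithfully on that single factor $T_{\pp'}(\phi')$. The one flaw is the intermediate claim $D_\pq\otimes_{B_\pq}C'_{\pp'}\cong\End_{C'_{\pp'}}(T_{\pp'}(\phi'))$, which is \emph{not} an analogue of Proposition \ref{boalo}\,(b): in Proposition \ref{boalo} the field is a \emph{maximal} commutative subalgebra, whereas here $C'$ is only the center of $\End_{K^{\text{sep}}}(\phi')$, and since $D_\pq$ commutes with all of $S\supset\End_{K^{\text{sep}}}(\phi')$, the image of $D_\pq\otimes_{B_\pq}C'_{\pp'}$ lands in the commutant of $\End_{K^{\text{sep}}}(\phi')\otimes_{C'}C'_{\pp'}$, which is a proper subalgebra of $\End_{C'_{\pp'}}(V_{\pp'}(\phi'))$ whenever $\End_{K^{\text{sep}}}(\phi')$ is non-commutative; so the asserted isomorphism fails precisely in the interesting cases of special characteristic. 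Fortunately you do not need it: the ingredient you already cite, that $D_\pq$ is an order in a central \emph{simple} algebra over the quotient field of $B_\pq$, suffices, since $V_{\pp'}(\phi')$ is a non-zero module over that simple algebra and hence faithful, and torsion-freeness of $D_\pq$ over $B_\pq$ (or the inclusion $T_{\pp'}(\phi')\subset V_{\pp'}(\phi')$) transfers faithfulness to the integral level --- which is exactly how the paper concludes. So: correct conclusion, same structure as the paper, but delete or repair the appeal to an ``analogue of Proposition \ref{boalo}\,(b)''.
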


\begin{proof}
For every prime $\pq\not\in Q_0$ of~$B$, the isomorphism $T_\pq(\psi) \cong \prod_{\pp'|\pq} T_{\pp'}(\phi')$ is the isotypic decomposition of $T_\pq(\psi)$ under $C'\otimes_BB_\pq$. Since $C'$ is contained in~$S$, the definition of $D_\pq$ shows that the actions of $C'\otimes_BB_\pq$ and $D_\pq$ commute; hence the decomposition is $D_\pq$-invariant. As the kernel of the surjection (\ref{TateCompare}) is a product of certain factors $T_{\pp'}(\phi')$, this implies the first assertion of the lemma. For the second note that, by the construction of~$Q_0$, for every prime $\pq\not\in Q_0$ of $B$ there exists a prime $\pp'\not\in P_0'$ of $C'$ with $\pp'|\pq$. Then $T_{\pp'}(\phi')$ is a non-trivial module over $D_\pq$, and it remains so after tensoring with the quotient field of~$B_\pq$; hence $D_\pq$ acts faithfully on it. Taking the product over all $\pq\not\in Q_0$ proves the second assertion.
\end{proof}

Let ${\mathcal D}$ denote the stabilizer in $\prod_{\pq\not\in Q_0} D_\pq$ of the image of the homomorphism (\ref{TateIsog}). By construction this is a closed subring of finite index, and Lemma \ref{equivariance} implies that ${\mathcal D}$ acts faithfully on $\prod_{\pp\not=\pp_0} T_\pp(\phi)$. For each $\pq\not\in Q_0$ let $D^1_\pq$ denote the multiplicative group of elements of $D_\pq$ of reduced norm~$1$. Then ${\mathcal D}^1 := {\mathcal D}^\times \cap \prod_{\pq\not\in Q_0} D_\pq^1$ is a closed subgroup of finite index of $\prod_{\pq\not\in Q_0} D_\pq^1$. We can identify ${\mathcal D}^\times$ and ${\mathcal D}^1$ with closed subgroups of $\prod_{\pp\not=\pp_0}\Aut_{A_\pp}(T_\pp(\phi))$.

Finally let $c_0$ be any element of $C'$ that generates a positive power of~$\pp'_0$. Let ${\mathfrak c}' \subset C'$ be the annihilator ideal of the cokernel of the inclusion (\ref{TateIsog}). Then ${\mathfrak c}'\not\subset\pp'_0$; hence it is relatively prime to~$c_0$. Thus after replacing $c_0$ by some positive power we may assume that $c_0\equiv1$ modulo~${\mathfrak c}'$. Then multiplication by $c_0$ is an automorphism of $\prod_{\pp\not=\pp_0} T_\pp(\phi'|A)$ that maps the image of (\ref{TateIsog}) to itself. We can thus view it as an element of $\prod_{\pp\not=\pp_0}\Aut_{A_\pp}(T_\pp(\phi))$. Let $\smash{\overline{\langle c_0\rangle}}$ denote the pro-cyclic subgroup of $\prod_{\pp\not=\pp_0}\Aut_{A_\pp}(T_\pp(\phi))$ that is topologically generated by it. Since $c_0\in C' \subset S$, this subgroup commutes with the action of ${\mathcal D}$ and hence with~${\mathcal D}^1$.

\begin{thm} \label{main_theorem_2}
Let $\phi$ be a non-isotrivial Drinfeld $A$-module over a finitely generated field $K$ of special characteristic~$\pp_0$. Let ${\mathcal D}^1$ and $\smash{\overline{\langle c_0\rangle}}$ denote the subgroups of\/ $\prod_{\pp\not=\pp_0}\Aut_{A_\pp}(T_\pp(\phi))$ defined above. Then
\begin{enumerate}
\item[(a)] $\rho_{\text{ad}}( G_K^{\geom})$ is commensurable to ${\mathcal D}^1$, and 
\item[(b)] $\rho_{\text{ad}}( G_K)$ is commensurable to $\overline{\langle c_0\rangle} \cdot {\mathcal D}^1$.
\end{enumerate}
\end{thm}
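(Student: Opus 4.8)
The plan is to reduce Theorem \ref{main_theorem_2} to Theorem \ref{main_theorem} applied to the Drinfeld $B$-module $\psi = \phi'|B$, using the dictionary established by the isogeny $f$ and the restriction from $C'$ to $B$. First I would observe that $\psi$ satisfies the hypotheses of Theorem \ref{main_theorem} for the ring $B$ in place of $A$: indeed $S := \End_{K^{\text{sep}}}(\psi)$ has center $B$ by Proposition \ref{ringB}(a), and for every integrally closed infinite subring $B'\subset B$ we have $\End_{K^{\text{sep}}}(\psi|B') = \End_{K^{\text{sep}}}(\phi'|B') \subset \End_{K^{\text{sep}}}(\phi'|B) = S$ by Proposition \ref{ringB}(b), with equality since the reverse inclusion is automatic; also $\psi$ is non-isotrivial by Proposition \ref{isotrivialprop}(b),(c). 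Theorem \ref{main_theorem} then tells us that $\rho_{\text{ad}}^\psi(G_{K'}^{\geom})$ is commensurable to $\prod_{\pq}D^1_{\pq}$ (product over all primes $\pq\neq\pq_0$ of $B$) and $\rho_{\text{ad}}^\psi(G_{K'})$ is commensurable to $\overline{\langle b_0\rangle}\cdot\prod_{\pq}D^1_{\pq}$, where $b_0\in B$ generates a power of $\pq_0$.

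Next I would transport this along the maps relating the various Tate modules. The key point is that the adelic representation of $\psi$ decomposes as a product over the primes of $B$, that the primes in the finite set $Q_0$ contribute only finitely much (so may be ignored up to commensurability), and that for $\pq\notin Q_0$ the surjection (\ref{TateCompare}) together with Lemma \ref{equivariance} identifies $\prod_{\pq\notin Q_0}D^1_\pq$ acting on $\prod_{\pp\neq\pp_0}T_\pp(\phi'|A)$ through a faithful quotient, while the isogeny $f$ identifies the representation of $\phi$ with a finite-index sub-object of that of $\phi'|A$ via (\ref{TateIsog}). Since $\mathcal D^1$ was defined precisely as the finite-index stabilizer subgroup of $\prod_{\pq\notin Q_0}D^1_\pq$ preserving the image of (\ref{TateIsog}), and since passing to finite extensions of $K$, to isogenous modules, and to the finite-index subgroup $Q_0$-truncation all preserve commensurability of Galois images, one gets $\rho_{\text{ad}}^\phi(G_K^{\geom})$ commensurable to $\mathcal D^1$. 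For part (b) one combines this with the determinant/constant-field computation: $\rho_{\text{ad}}^\phi(G_K)/\rho_{\text{ad}}^\phi(G_K^{\geom})$ is topologically generated (up to finite index) by the image of $\Frob_\kappa$, which acts on the Tate modules of $\psi$ through an element of $C'$ (an endomorphism, as in the proof of Proposition \ref{FinGal} and the proof of Theorem \ref{main_theorem}(b)) that is a unit away from $\pp'_0$ and not a unit at $\pp'_0$; comparing prime factorizations with $c_0$ as in that argument shows this element and $c_0$ generate commensurable pro-cyclic groups.

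The main obstacle I anticipate is bookkeeping the commensurability through the two layers of indices simultaneously: the cokernel of the isogeny $f$ (controlled by the ideal $\mathfrak c'$), the difference between $\prod_{\pq\notin Q_0}D^1_\pq$ and its finite-index subgroup $\mathcal D^1$, and the fact that the image of a product homomorphism can be smaller than the product of images — exactly the subtlety flagged in the outline for Theorem \ref{main_theorem}(a). Here, though, we are in better shape because Theorem \ref{main_theorem} already gives openness/finite-index on the $\psi$ side; the surjection (\ref{TateCompare}) is genuinely surjective and $G_{K'}$-equivariant, so the image of $\rho_{\text{ad}}^\psi(G_{K'}^{\geom})$ maps onto a finite-index subgroup of $\prod_{\pp\neq\pp_0}\SL$-type factors attached to $\phi'|A$, and intersecting with the image of (\ref{TateIsog}) only loses a further finite index. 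The one place requiring care is verifying that the element of $C'$ representing $\Frob_\kappa$ on $T_\pq(\psi)$ really is, after a suitable power, congruent to $1$ modulo $\mathfrak c'$ and of the stated valuation type — but this is precisely the argument already carried out for $c_0$ in the paragraph preceding Theorem \ref{main_theorem_2}, applied to the product of $c_0$ with this Frobenius element. Everything else is a formal consequence of invariance of Galois images under finite extensions, isogenies, and truncation at finite sets of primes.
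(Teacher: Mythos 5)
Your proposal is correct and follows essentially the same route as the paper: check via Proposition \ref{ringB} that $\psi=\phi'|B$ over $K'$ satisfies the hypotheses of Theorem \ref{main_theorem}, transport the conclusion through (\ref{TateCompare}), Lemma \ref{equivariance}, (\ref{TateIsog}) and the definition of ${\mathcal D}^1$, and identify the pro-cyclic part with $\overline{\langle c_0\rangle}$ by the unit-group argument of Subsection \ref{55}, using Proposition \ref{uniqueprime} to see that $b_0$ is divisible in $C'$ only by $\pp'_0$. One small imprecision: since $\psi$ is not defined over a finite field, $\Frob_\kappa$ is not an endomorphism of $\psi$, so the relevant element of $C'$ comes (as in the proof of Theorem \ref{main_theorem}(b)) from the rank-one determinant module rather than from a Proposition \ref{FinGal}-type argument applied to $\psi$ itself; this does not affect the structure of your argument.
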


\begin{proof}
By Proposition \ref{ringB} the assumptions of Theorem \ref{main_theorem} are satisfied for the Drinfeld $B$-module $\psi$ over~$K'$. Let $\rho^\psi_{\text{ad}} :\ G_{K'} \to \prod_{\pq\not\in Q_0} D_\pq^\times$ denote the homomorphism describing the action of $G_{K'}$ on $\prod_{\pq\not\in Q_0} T_\pq(\psi)$, and let $b_0$ be any element of $B$ that is divisible by $\pq_0$ but not by any other prime of~$B$. Then Theorem \ref{main_theorem} implies that $\rho^\psi_{\text{ad}}( G_{K'}^{\geom})$ is commensurable to $\prod_{\pq\not\in Q_0} D^1_\pq$ and $\rho^\psi_{\text{ad}}(G_{K'})$ is commensurable to $\overline{\langle b_0\rangle} \cdot \prod_{\pq\not\in Q_0} D^1_\pq$.

Viewing $b_0$ as an element of~$C'$, Proposition \ref{uniqueprime} implies that $b_0$ is divisible by~$\pp'_0$ but not by any other prime of~$C'$. The same argument as in Section \ref{55} for $a'$ and~$a_0$ shows here that some positive power of $b_0$ is equal to some positive power of~$c_0$. Thus $\rho^\psi_{\text{ad}}(G_{K'})$ is commensurable to $\overline{\langle c_0\rangle} \cdot \prod_{\pq\not\in Q_0} D^1_\pq$.

By (\ref{TateCompare}) and Lemma \ref{equivariance} the group $G_{K'}$ acts on $\prod_{\pp\not=\pp_0} T_\pp(\phi'|A)$ through the composite of $\rho^\psi_{\text{ad}}$ with the faithful action of\/ $\prod_{\pq\not\in Q_0} D_\pq$. Combining this with (\ref{TateIsog}) and the construction of ${\mathcal D}^1$ we deduce that $\rho_{\text{ad}}( G_{K'}^{\geom})$ is commensurable to ${\mathcal D}^1$ and $\rho_{\text{ad}}( G_{K'})$ is commensurable to $\overline{\langle c_0\rangle} \cdot {\mathcal D}^1$. Since $K'$ is a finite extension of~$K$, the same then follows with $K$ in place of $K'$, as desired.
\end{proof}

%%%%%%%%%%%%%%%%%%%%%%%%%%%%%%%%%%%%%%%%%%%%%%%%%%%%%%%%%%%%%%%%%%%%%%

\newpage

\nocite{*}
\bibliographystyle{alpha}
\bibliography{biblio}

%\backmatter
%\newpage
%\input{cv}

\end{document}